\renewcommand{\Re}{\operatorname{Re}}
\newtheorem{theorem}{Theorem}
\theoremstyle{plain}
\newtheorem{corollary}{Corollary}
\newtheorem{lemma}{Lemma}
\newtheorem{problem}{Problem}
\numberwithin{theorem}{section}  
\numberwithin{proposition}{section}  
\numberwithin{lemma}{section}  
\numberwithin{corollary}{section}  
\begin{document}
\title[1D Focusing NLS from 3D Focusing Quantum N-Body Dynamics]{Focusing
Quantum Many-body Dynamics II: \\
The Rigorous Derivation of the 1D Focusing Cubic Nonlinear Schr\"{o}dinger
Equation from 3D}
\author{Xuwen Chen}
\address{Department of Mathematics, Brown University, 151 Thayer Street,
Providence, RI 02912}
\email{chenxuwen@math.brown.edu}
\urladdr{http://www.math.brown.edu/\symbol{126}chenxuwen/}
\author{Justin Holmer}
\address{Department of Mathematics, Brown University, 151 Thayer Street,
Providence, RI 02912}
\email{holmer@math.brown.edu}
\urladdr{http://www.math.brown.edu/\symbol{126}holmer/}
\date{06/21/2014}
\subjclass[2010]{Primary 35Q55, 35A02, 81V70; Secondary 35A23, 35B45, 81Q05.}
\keywords{3D Focusing Many-body Schr\"{o}dinger Equation, 1D Focusing
Nonlinear Schr\"{o}dinger Equation (NLS), BBGKY Hierarchy, Focusing
Gross-Pitaevskii Hierarchy.}

\begin{abstract}
We consider the focusing 3D quantum many-body dynamic which models a dilute
bose gas strongly confined in two spatial directions. We assume that the
microscopic pair interaction is attractive and given by $a^{3\beta
-1}V(a^{\beta }\cdot )$ where $\int V\leqslant 0$ and $a$ matches the
Gross-Pitaevskii scaling condition. We carefully examine the effects of the
fine interplay between the strength of the confining potential and the
number of particles on the 3D $N$-body dynamic. We overcome the difficulties
generated by the attractive interaction in 3D and establish new focusing
energy estimates. We study the corresponding BBGKY hierarchy which contains
a diverging coefficient as the strength of the confining potential tends to $%
\infty $. We prove that the limiting structure of the density matrices
counterbalances this diverging coefficient. We establish the convergence of
the BBGKY sequence and hence the propagation of chaos for the focusing
quantum many-body system. We derive rigorously the 1D focusing cubic NLS as
the mean-field limit of this 3D focusing quantum many-body dynamic and
obtain the exact 3D to 1D coupling constant.
\end{abstract}

\maketitle
\tableofcontents

\section{Introduction}

Since the Nobel prize winning first observation of Bose-Einstein condensate
(BEC) in 1995 \cite{Anderson, Davis}, the investigation of this new state of
matter has become one of the most active areas of contemporary research. A
BEC, first predicted theoretically by Einstein for non-interacting particles
in 1925, is a peculiar gaseous state that particles of integer spin (bosons)
occupy a macroscopic quantum state.

Let $t\in \mathbb{R}$ be the time variable and $\mathbf{r}_{N}=\left(
r_{1},r_{2},...,r_{N}\right) \in \mathbb{R}^{nN}$ be the position vector of $%
N$ particles in $\mathbb{R}^{n}$, then, naively, BEC means that, up to a
phase factor solely depending on $t,$ the $N$-body wave function $\psi
_{N}(t,\mathbf{r}_{N})$ satisfies 
\begin{equation*}
\psi _{N}(t,\mathbf{r}_{N})\sim \dprod\limits_{j=1}^{N}\varphi (t,r_{j})
\end{equation*}%
for some one particle state $\varphi .$ That is, every particle takes the
same quantum state. Equivalently, there is the Penrose-Onsager formulation
of BEC: if we let $\gamma _{N}^{(k)}$ be the $k$-particle marginal densities
associated with $\psi _{N}$ by 
\begin{equation}
\gamma _{N\,}^{(k)}(t,\mathbf{r}_{k};\mathbf{r}_{k}^{\prime })=\int \psi
_{N}(t,\mathbf{r}_{k},\mathbf{r}_{N-k})\overline{\psi _{N}}(t,\mathbf{r}%
_{k}^{\prime },\mathbf{r}_{N-k})d\mathbf{r}_{N-k},\quad \mathbf{r}_{k},%
\mathbf{r}_{k}^{\prime }\in \mathbb{R}^{nk},  \label{E:marginal}
\end{equation}%
then BEC equivalently means 
\begin{equation}
\gamma _{N}^{(k)}(t,\mathbf{r}_{k};\mathbf{r}_{k}^{\prime })\sim
\dprod\limits_{j=1}^{k}\varphi (t,r_{j})\bar{\varphi}(t,r_{j}^{\prime }).
\label{formula:BEC state}
\end{equation}%
It is widely believed that the cubic nonlinear Schr\"{o}dinger equation (NLS)%
\begin{equation*}
i\partial _{t}\phi =L\phi +\mu \left\vert \phi \right\vert ^{2}\phi ,
\end{equation*}%
where $L$ is the Laplacian $-\triangle $ or the Hermite operator $-\triangle
+\omega ^{2}\left\vert x\right\vert ^{2}$, fully describes the one particle
state $\varphi $ in (\ref{formula:BEC state}), also called the condensate
wave function since it characterizes the whole condensate. Such a belief is
one of the main motivations for studying the cubic NLS. Here, the nonlinear
term $\mu \left\vert \phi \right\vert ^{2}\phi $ represents a strong on-site
interaction taken as a mean-field approximation of the pair interactions
between the particles: a repelling interaction gives a positive $\mu $ while
an attractive interaction yields a $\mu <0$. Gross and Pitaevskii proposed
such a description of the many-body effect. Thus the cubic NLS is also
called the Gross-Pitaevskii equation. Because the cubic NLS is a
phenomenological mean-field type equation, naturally, its validity has to be
established rigorously from the many-body system which it is supposed to
characterize.

In a series of works \cite{Lieb2, AGT, E-E-S-Y1, E-S-Y1,E-S-Y2,E-S-Y5,
E-S-Y3,TChenAndNP,ChenAnisotropic,TChenAndNPSpace-Time, Chen3DDerivation,
SchleinNew, C-H3Dto2D, GM1,Sohinger3}, it has been proven rigorously that,
for a repelling interaction potential with suitable assumptions, relation %
\eqref{formula:BEC state} holds, moreover, the one-particle state $\varphi $
solves the defocusing cubic NLS ($\mu >0$).

It is then natural to ask if BEC happens (whether relation %
\eqref{formula:BEC state} holds) when we have attractive interparticle
interactions and if the condensate wave function $\varphi $ satisfies a
focusing cubic NLS ($\mu <0$) if relation \eqref{formula:BEC
state} does hold. In contemporary experiments, both positive \cite%
{Khaykovich,Streker} and negative \cite{Cornish,JILA2} results exist. To
present the mathematical interpretations of the experiments, we adopt the
notation 
\begin{equation*}
r_{i}=(x_{i},z_{i})\in \mathbb{R}^{2+1}
\end{equation*}%
and investigate the procedure of laboratory experiments of BEC subject to
attractive interactions according to \cite{Cornish,JILA2,Khaykovich,Streker}.

\begin{itemize}
\item[Step A.] Confine a large number of bosons, whose interactions are
originally \emph{repelling}, inside a trap. Reduce the temperature of the
system so that the many-body system reaches its ground state. It is expected
that this ground state is a BEC state / factorized state. This step then
corresponds to the following mathematical problem:

\begin{problem}
\label{Problem:Lieb}Show that if $\psi _{N,0}$ is the ground state of the
N-body Hamiltonian $H_{N,0}$ defined by%
\begin{equation}
H_{N,0}=\sum_{j=1}^{N}\left( -\triangle _{r_{j}}+\omega _{0,x}^{2}\left\vert
x_{j}\right\vert ^{2}+\omega _{0,z}^{2}z_{j}^{2}\right) +\sum_{1\leqslant
i<j\leqslant N}\frac{1}{a^{3\beta -1}}V_{0}\left( \frac{r_{i}-r_{j}}{%
a^{\beta }}\right)  \label{Hamiltonian:Step A}
\end{equation}%
where $V_{0}\geqslant 0$, then the marginal densities $\left\{ \gamma
_{N,0}^{(k)}\right\} $ associated with $\psi _{N,0}$, defined in (\ref%
{E:marginal}), satisfy relation \eqref{formula:BEC state}.
\end{problem}

Here, the quadratic potential $\omega ^{2}\left\vert \cdot \right\vert ^{2}$
stands for the trapping since \cite{Cornish,JILA2,Khaykovich,Streker} and
many other experiments of BEC use the harmonic trap and measure the strength
of the trap with $\omega $. We use $\omega _{0,x}$ to denote the trapping
strength in the $x$ direction and $\omega _{0,z}$ to denote the trapping
strength in the $z$ direction as we will explain later that, at the moment,
in order to have a BEC with attractive interaction, either experimentally or
mathematically, it is important to have $\omega _{0,x}\neq \omega _{0,z}$.
Moreover, we denote 
\begin{equation*}
\frac{1}{a}V_{0,a}\left( r\right) =\frac{1}{a^{3\beta -1}}V_{0}\left( \frac{r%
}{a^{\beta }}\right) \text{, }\beta >0
\end{equation*}%
the interaction potential.\footnote{%
From here on out, we consider the $\beta >0$ case solely. For $\beta =0$
(Hartree dynamic), see \cite%
{Frolich,E-Y1,KnowlesAndPickl,RodnianskiAndSchlein,MichelangeliSchlein,GMM1,GMM2,Chen2ndOrder,Ammari2,Ammari1,LChen}%
.} On the one hand, $V_{0,a}$ is an approximation of the identity as $%
a\rightarrow 0$ and hence matches the Gross-Pitaevskii description that the
many-body effect should be modeled by an on-site strong self interaction. On
the other hand, the extra $1/a$ is to make sure that the Gross-Pitaevskii
scaling condition is satisfied. This step is exactly the same as the
preparation of the experiments with repelling interactions and satisfactory
answers to Problem \ref{Problem:Lieb} have been given in \cite{Lieb3Dto1D}.

\item[Step B.] Use the property of Feshbach resonance, strengthen the trap
(increase $\omega _{0,x}\ $or $\omega _{0,z}$) to make the interaction
attractive and observe the evolution of the many-body system. This technique
continuously controls the sign and the size of the interaction in a certain
range.\footnote{%
See \cite[Fig.1]{Cornish}, \cite[Fig.2]{Khaykovich}, or \cite[Fig.1]{Streker}
for graphs of the relation between $\omega $ and $V$.} The system is then
time dependent. In order to observe BEC, the factorized structure obtained
in Step A must be preserved in time. Assuming this to be the case, we then
reset the time so that $t=0$ represents the point at which this Feshbach
resonance phase is complete. The subsequent evolution should then be
governed by a focusing time-dependent $N$-body Schr\"{o}dinger equation with
an attractive pair interaction $V$ subject to an asymptotically factorized
initial datum. The confining strengths are different from Step A as well and
we denote them by $\omega _{x}$ and $\omega _{z}$. A mathematically precise
statement is the following:

\begin{problem}
\label{Problem:Ours}Let $\psi _{N}\left( t,\mathbf{x}_{N}\right) $ be the
solution to the $N-body$ Schr\"{o}dinger equation%
\begin{equation}
i\partial _{t}\psi _{N}=\sum_{j=1}^{N}\left( -\triangle _{r_{j}}+\omega
_{x}^{2}\left\vert x_{j}\right\vert ^{2}+\omega _{z}^{2}z_{j}^{2}\right)
\psi _{N}+\sum_{1\leqslant i<j\leqslant N}\frac{1}{a^{3\beta -1}}V\left( 
\frac{r_{i}-r_{j}}{a^{\beta }}\right) \psi _{N}  \label{Hamiltonian:Step B}
\end{equation}%
where $V\leqslant 0,$ with $\psi _{N,0}$ from Step A as initial datum. Prove
that the marginal densities $\left\{ \gamma _{N}^{(k)}(t)\right\} $
associated with $\psi _{N}\left( t,\mathbf{x}_{N}\right) $ satisfies
relation \eqref{formula:BEC state}.\footnote{%
Since $\omega \neq \omega _{0}$, $V\neq V_{0}$, one could not expect that $%
\psi _{N,0},$ the ground state of (\ref{Hamiltonian:Step A}), is close to
the ground state of (\ref{Hamiltonian:Step B}).}
\end{problem}
\end{itemize}

In the experiment \cite{Cornish} by Cornell and Wieman's group (the JILA
group), once the interaction is tuned attractive, the condensate suddenly
shrinks to below the resolution limit, then after $\sim 5ms$, the many-body
system blows up. That is, there is no BEC once the interaction becomes
attractive. Moreover, there is no condensate wave function due to the
absence of the condensate. Whence, the current\ NLS theory, which is about
the condensate wave function when there is a condensate, cannot explain this 
$5ms$ of time or the blow up. This is currently an open problem in the study
of quantum many systems. The JILA group later conducted finer experiments 
\cite{JILA2} and remarked on \cite[p.299]{JILA2} that these are simple
systems with dramatic behavior and this behavior is providing puzzling
results when mean-field theory is tested against them.

In \cite{Khaykovich,Streker}, the particles are confined in a strongly
anisotropic cigar-shape trap to simulate a 1D system. That is, $\omega
_{x}\gg $ $\omega _{z}$. In this case, the experiment is a success in the
sense that one obtains a persistent BEC after the interaction is switched to
attractive. Moreover, a soliton is observed in \cite{Khaykovich} and a
soliton train is observed in \cite{Streker}. The solitons in \cite%
{Khaykovich,Streker} have different motion patterns.

In paper I \cite{C-HFocusing}, we have studied the simplified 1D version of (%
\ref{Hamiltonian:Step B}) as a model case and derived the 1D focusing cubic
NLS from it. In the present paper, we consider the full 3D problem of (\ref%
{Hamiltonian:Step B}) as in the experiments \cite{Khaykovich,Streker}: we
take $\omega _{z}=0$ and let $\omega _{x}\rightarrow \infty $ in (\ref%
{Hamiltonian:Step B}). We derive rigorously the 1D cubic focusing NLS
directly from a real 3D quantum many-body system. Here, "directly" means
that we are not passing through any 3D cubic NLS. On the one hand, one
infers from the experiment \cite{Cornish} that not only it is very difficult
to prove the 3D focusing NLS as the mean-field limit of a 3D focusing
quantum many-body dynamic, such a limit also may not be true. On the other
hand, the route which first derives 
\begin{equation}
i\partial _{t}\varphi =-\triangle _{x}+\omega ^{2}\left\vert x\right\vert
^{2}\varphi -\partial _{z}^{2}\varphi -\left\vert \varphi \right\vert
^{2}\varphi \text{,}  \label{eqn:3D Cubic NLS}
\end{equation}%
as a $N\rightarrow \infty $ limit, from the 3D $N$-body dynamic, and then
considers the $\omega \rightarrow \infty $ limit of \eqref{eqn:3D Cubic NLS}%
, corresponds to the iterated limit ($\lim_{\omega \rightarrow \infty
}\lim_{N\rightarrow \infty }$) of the $N$-body dynamic, i.e. the 1D focusing
cubic NLS coming from such a path approximates the 3D focusing $N$-body
dynamic when $\omega $ is large and $N$ is infinity (if not substantially
larger than $\omega $). In experiments, it is fully possible to have $N$ and 
$\omega $ comparable to each other. In fact, $N$ is about $10^{4}$ and $%
\omega $ is about $10^{3}$ in \cite{Kettle3Dto2DExperiment,
FrenchExperiment, NatureExperiment, Another2DExperiment}. Moreover, as seen
in the experiment \cite{JILA2}, even if $\omega _{x}$ is one digit larger
than $\omega _{z}$, negative result persists if $N$ is three digits larger
than $\omega _{x}$. Thus, in this paper, we derive rigorously the 1D
focusing cubic NLS as the double limit ($\lim_{N,\omega \rightarrow \infty }$%
) of a real focusing 3D quantum $N$-body dynamic directly, without passing
through any 3D cubic NLS. Furthermore, the interaction between the two
parameters $N$ and $\omega $ plays a central role. To be specific, we
establish the following theorem.

\begin{theorem}[main theorem]
\label{Theorem:3D->2D BEC (Nonsmooth)}Assume that the pair interaction $V\ $%
is an even Schwartz class function, which has a nonpositive integration,
i.e. $\int_{\mathbb{R}^{3}}V(r)dr\leqslant 0$, but may not be negative
everywhere. Let $\psi _{N,\omega }\left( t,\mathbf{r}_{N}\right) $ be the $%
N-body$ Hamiltonian evolution $e^{itH_{N,\omega }}\psi _{N,\omega }(0)$ with
the focusing $N-body$ Hamiltonian $H_{N,\omega }$ given by 
\begin{equation}
H_{N,\omega }=\sum_{j=1}^{N}\left( -\triangle _{r_{j}}+\omega ^{2}\left\vert
x_{j}\right\vert ^{2}\right) +\sum_{1\leqslant i<j\leqslant N}\left( N\omega
\right) ^{3\beta -1}V\left( \left( N\omega \right) ^{\beta }\left(
r_{i}-r_{j}\right) \right)  \label{Hamiltonian:3D to 1D N-body unscaled}
\end{equation}%
for some $\beta \in \left( 0,3/7\right) $. Let $\left\{ \gamma _{N,\omega
}^{(k)}\right\} $ be the family of marginal densities associated with $\psi
_{N,\omega }$. Suppose that the initial datum $\psi _{N,\omega }(0)$
verifies the following conditions:

\textnormal{(a)} $\psi _{N,\omega }(0)$ is normalized, that is, $\Vert \psi
_{N,\omega }(0)\Vert _{L^{2}}=1$,

\textnormal{(b)} $\psi _{N,\omega }(0)$ is asymptotically factorized in the
sense that 
\begin{equation}
\lim_{N,\omega \rightarrow \infty }\limfunc{Tr}\left\vert \frac{1}{\omega }%
\gamma _{N,\omega }^{(1)}(0,\frac{x_{1}}{\sqrt{\omega }},z_{1};\frac{%
x_{1}^{\prime }}{\sqrt{\omega }},z_{1}^{\prime })-h(x_{1})h(x_{1}^{\prime
})\phi _{0}(z_{1})\overline{\phi _{0}}(z_{1}^{\prime })\right\vert =0,
\label{convergence:initial}
\end{equation}%
for some one particle state $\phi _{0}\in H^{1}\left( \mathbb{R}\right) $
and $h$ is the normalized ground state for the 2D Hermite operator $%
-\triangle _{x}+\left\vert x\right\vert ^{2}$ i.e. $h(x)=\pi ^{-\frac{1}{2}%
}e^{-\left\vert x\right\vert ^{2}/2}$.

\textnormal{(c)} Away from the $x$-directional ground state energy, $\psi
_{N,\omega }(0)$ has finite energy per particle: 
\begin{equation*}
\sup_{\omega ,N}\frac{1}{N}\langle \psi _{N,\omega }(0),(H_{N,\omega
}-2N\omega )\psi _{N,\omega }(0)\rangle \leqslant C,
\end{equation*}%
Then there exist $C_{1}$ and $C_{2}$ which depend solely on $V$ such that $%
\forall k\geqslant 1,t\geqslant 0,$ and $\varepsilon >0$, we have the
convergence in trace norm (propagation of chaos) that 
\begin{equation}
\lim_{\substack{ N,\omega \rightarrow \infty  \\ C_{1}N^{v_{1}(\beta
)}\leqslant \omega \leqslant C_{2}N^{v_{2}(\beta )}}}\limfunc{Tr}\left\vert 
\frac{1}{\omega ^{k}}\gamma _{N,\omega }^{(k)}(t,\frac{\mathbf{x}_{k}}{\sqrt{%
\omega }},\mathbf{z}_{k};\frac{\mathbf{x}_{k}^{\prime }}{\sqrt{\omega }},%
\mathbf{z}_{k}^{\prime })-\dprod\limits_{j=1}^{k}h(x_{j})h(x_{j}^{\prime
})\phi (t,z_{j})\overline{\phi }(t,z_{j}^{\prime })\right\vert =0,
\label{convergence:conclusion of main theorem}
\end{equation}%
where $v_{1}(\beta )$ and $v_{2}(\beta )$ are defined by%
\begin{equation}
v_{1}(\beta )=\frac{\beta}{1-\beta }  \label{E:vofbeta1}
\end{equation}%
\begin{equation}
v_{2}(\beta )=\min \left( \frac{1-\beta }{\beta },\frac{\frac{3}{5}-\beta }{%
\beta -\frac{1}{5}}\mathbf{1}_{\beta \geqslant \frac{1}{5}}+\infty \cdot 
\mathbf{1}_{\beta <\frac{1}{5}},\frac{2\beta }{%
1-2\beta }-,\frac{\frac{7}{8}-\beta }{\beta }\right)   \label{E:vofbeta}
\end{equation}%
(see Fig. \ref{F:vofbeta}) and $\phi (t,z)$ solves the 1D focusing cubic NLS
with the "3D to 1D" coupling constant $b_{0}\left( \int \left\vert
h(x)\right\vert ^{4}dx\right) $ that is 
\begin{equation}
i\partial _{t}\phi =-\partial _{z}\phi -b_{0}\left( \int \left\vert
h(x)\right\vert ^{4}dx\right) \left\vert \phi \right\vert ^{2}\phi \quad 
\text{ in }\mathbb{R}  \label{equation:2D Cubic NLS}
\end{equation}%
with initial condition $\phi \left( 0,z\right) =\phi _{0}(z)$ and $%
b_{0}=\left\vert \int V\left( r\right) dr\right\vert $.
\end{theorem}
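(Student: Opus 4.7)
The plan is the BBGKY-hierarchy approach of Erd\H{o}s-Schlein-Yau, adapted to the focusing regime as in Paper I, with the additional difficulty that the confining strength $\omega$ diverges along with $N$. Introduce the rescaled marginals $\tilde\gamma^{(k)}_{N,\omega}(t,\mathbf{x}_k,\mathbf{z}_k;\mathbf{x}'_k,\mathbf{z}'_k)=\omega^{-k}\gamma^{(k)}_{N,\omega}(t,\mathbf{x}_k/\sqrt{\omega},\mathbf{z}_k;\mathbf{x}'_k/\sqrt{\omega},\mathbf{z}'_k)$, so that the $x$-trap is normalized to $-\Delta_x+|x|^2$ with ground state $h$, while the pair potential is multiplied by an $\omega$-dependent factor. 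The Gross-Pitaevskii scaling $(N\omega)^{3\beta-1}V((N\omega)^\beta r)\rightharpoonup b_0\,\delta(r)$, combined with the forced concentration of the $x$-marginals on $h$, produces the 1D effective coupling $b_0\bigl(\int|h(x)|^4\,dx\bigr)$ in the limiting equation.

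The first major ingredient is a focusing energy estimate. Since $\int V\le 0$ the interaction is attractive and standard coercivity fails, so following Paper I I would prove that
\begin{equation*}
\langle\psi_{N,\omega},(H_{N,\omega}-2N\omega+c_0 N)^{k}\psi_{N,\omega}\rangle \;\geq\; C_0^{k}\,N^{k}\,\bigl\langle\psi_{N,\omega},\textstyle\prod_{j=1}^{k}(1-\partial_{z_j}^{2})\,\psi_{N,\omega}\bigr\rangle,
\end{equation*}
uniformly in $\omega$, using a 3D Sobolev inequality to dominate the attractive pair energy by a fraction of the kinetic energy, and subtracting the $x$-ground-state energy $2N\omega$. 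The subtraction is precisely what removes the leading $\omega$-divergence and forces every $L^2$-normalized state with bounded rescaled energy to place its $x$-marginals asymptotically in $h$. This is where the regime $C_1N^{v_1(\beta)}\le\omega\le C_2N^{v_2(\beta)}$ first enters: the lower bound $v_1$ ensures that the interaction range $(N\omega)^{-\beta}$ is smaller than the $x$-rescaling length $\omega^{-1/2}$, so that the scaled potential effectively looks like a $\delta$ only in $z$, while the four terms in the minimum defining $v_2$ each correspond to a specific Sobolev/interpolation/Strichartz bound needed below.

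With energy estimates in hand one writes the BBGKY hierarchy for $\{\tilde\gamma^{(k)}_{N,\omega}\}$ and runs the standard compactness argument: uniform bounds plus equicontinuity in $t$ produce a weak-$\ast$ limit $\gamma^{(k)}$ along a subsequence. The main obstacle is now to pass to the limit in the collision term, whose coefficient $N\omega^{-1}(N\omega)^{3\beta}V((N\omega)^\beta\cdot)$ formally diverges in $\omega$. The resolution is the counterbalancing mentioned in the abstract: the energy estimate implies that $\tilde\gamma^{(k)}_{N,\omega}$ decomposes, modulo an error that vanishes in the admissible $(\beta,N,\omega)$ regime, as $h^{\otimes k}\otimes\bar h^{\otimes k}\otimes\rho^{(k)}_{N,\omega}(t,\mathbf{z}_k;\mathbf{z}'_k)$ where $\rho^{(k)}_{N,\omega}$ lives only in $z$; integrating the $x$-variables against $|h|^2$ replaces the 3D approximate delta by $b_0\int|h|^4\,\delta(z_i-z_j)$ with finite strength. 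The careful balance of $x$-concentration error against BBGKY collision error is precisely what produces the multi-term minimum in $v_2(\beta)$, and this is where I expect the technical bulk of the proof to lie.

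Finally the limit satisfies the 1D focusing Gross-Pitaevskii hierarchy
\begin{equation*}
i\partial_t\gamma^{(k)}=\sum_{j=1}^{k}\bigl[-\partial_{z_j}^{2},\gamma^{(k)}\bigr]\;-\;b_0\Bigl(\int|h|^{4}\Bigr)\sum_{j=1}^{k}B_{j,k+1}\gamma^{(k+1)},
\end{equation*}
with factorized initial datum $\phi_0^{\otimes k}\bar\phi_0^{\otimes k}$ coming from assumption (b) and the already-established $x$-factorization. Applying the uniqueness theorem for this focusing hierarchy from Paper I (a Klainerman-Machedon board-game / quantum de Finetti argument tailored to the $\int V\le 0$ case) identifies $\gamma^{(k)}(t)=\phi(t)^{\otimes k}\bar\phi(t)^{\otimes k}$ with $\phi$ solving \eqref{equation:2D Cubic NLS}. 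Uniqueness of the limit upgrades subsequential convergence to convergence of the full sequence, and weak-$\ast$ convergence of nonnegative trace-class operators to a limit of equal trace upgrades to trace-norm convergence by the standard dominated-convergence argument in Schatten spaces, completing the proof.
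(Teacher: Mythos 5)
Your proposal follows the same overall strategy as the paper (focusing energy estimate, BBGKY compactness, passage to the coupled then 1D GP hierarchy, Klainerman--Machedon uniqueness, upgrade of weak-$*$ to trace-norm), but there are two genuine gaps. First, the energy estimate you write down is too weak: you lower-bound by $C_0^k N^k\langle\psi,\prod_j(1-\partial_{z_j}^2)\psi\rangle$, but the paper's Theorem \ref{Theorem:Energy Estimate} produces the full weighted energy $\|S^{(k)}\psi\|_{L^2}^2$ with $S_j^2 = 1-\Delta_{x_j}+\omega^2|x_j|^2-2\omega-\partial_{z_j}^2$. The $\omega$-weighted $x$-Hermite piece (which is $\omega(-\Delta_{x_j}+|x_j|^2-2)$ after rescaling) is exactly what yields the quantitative projection bound $\|\mathcal{P}_{\mathbf{\alpha}}\tilde\psi_{N,\omega}\|_{L^2}\lesssim C^k\omega^{-|\mathbf{\alpha}|/2}$ of Corollary \ref{Corollary:Energy Bound for Marginal Densities}, and that bound is the engine behind both the compactness of the $\infty-\infty$ hierarchy (to tame the $\omega\sum_j[-\Delta_{x_j}+|x_j|^2,\cdot]$ term) and the $x$-factorization $\tilde\gamma^{(k)}=(\prod_j h\otimes\bar h)\,\tilde\gamma_z^{(k)}$ of Corollary \ref{Corollary:LimitMustBeAProduct}. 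You appeal to exactly this factorization ("the energy estimate implies that $\tilde\gamma^{(k)}$ decomposes as $h^{\otimes k}\otimes\bar h^{\otimes k}\otimes\rho$"), but your stated estimate does not contain the $x$-directional information needed to prove it. Relatedly, the estimate cannot hold "uniformly in $\omega$": the upper constraint $\omega\leq C_2 N^{v_E(\beta)}$ already enters at the level of Theorem \ref{Theorem:Energy Estimate}, which is a new phenomenon absent in the pure 1D problem.

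Second, the theorem as stated assumes only the first-moment bound (c), but the $k$-th order energy estimate and the consequent uniform bounds on $\tilde\gamma_{N,\omega}^{(k)}$ for all $k$ require the higher-moment hypothesis (c'), namely $\langle\psi_{N,\omega}(0),(H_{N,\omega}-2N\omega)^k\psi_{N,\omega}(0)\rangle\leq C^k N^k$ for every $k$. These higher moments do not follow from (c). The paper handles this by first proving Theorem \ref{Theorem:3D->2D BEC} under (c') and then deducing Theorem \ref{Theorem:3D->2D BEC (Nonsmooth)} via the frequency-cutoff approximation $\tilde\psi_{N,\omega}^{\kappa}(0)=\chi(\kappa(\tilde H_{N,\omega}-2N\omega)/N)\tilde\psi_{N,\omega}(0)/\|\cdots\|$ in Appendix \ref{A:equivalence} (Lemma \ref{Prop:approximation of initial}), which does satisfy (c'), is $L^2$-close up to $O(\kappa^{1/2})$, and preserves the asymptotic factorization of the datum. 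Your proposal essentially proves the (c')-version and does not address this reduction, which is a necessary step for the statement as given.
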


Theorem \ref{Theorem:3D->2D BEC (Nonsmooth)} is equivalent to the following
theorem.

\begin{theorem}[main theorem]
\label{Theorem:3D->2D BEC}Assume that the pair interaction $V\ $is an even
Schwartz class function, which has a nonpositive integration, i.e. $\int_{%
\mathbb{R}^{3}}V(r)dr\leqslant 0$, but may not be negative everywhere. Let $%
\psi _{N,\omega }\left( t,\mathbf{r}_{N}\right) $ be the $N-body$
Hamiltonian evolution $e^{itH_{N,\omega }}\psi _{N,\omega }(0)$, where the
focusing $N-body$ Hamiltonian $H_{N,\omega }$ is given by (\ref%
{Hamiltonian:3D to 1D N-body unscaled}) for some $\beta \in \left(
0,3/7\right) $. Let $\left\{ \gamma _{N,\omega }^{(k)}\right\} $ be the
family of marginal densities associated with $\psi _{N,\omega }$. Suppose
that the initial datum $\psi _{N,\omega }(0)$ is normalized, asymptotically
factorized in the sense of (a) and (b) of Theorem \ref{Theorem:3D->2D BEC
(Nonsmooth)} and satisfies the energy condition that

\textnormal{(c')} there is a $C>0$ such that%
\begin{equation}
\langle \psi _{N,\omega }(0),(H_{N,\omega }-2N\omega )^{k}\psi _{N,\omega
}(0)\rangle \leqslant C^{k}N^{k}\text{, }\forall k\geqslant 1,
\label{Condition:EnergyBoundOnInitialData}
\end{equation}%
Then there exists $C_{1}$,$C_{2}$ which depends solely on $V$ such that $%
\forall k\geqslant 1,\forall t\geqslant 0,$ we have the convergence in trace
norm (propagation of chaos) that 
\begin{equation*}
\lim_{\substack{ N,\omega \rightarrow \infty  \\ C_{1}N^{v_{1}(\beta
)}\leqslant \omega \leqslant C_{2}N^{v_{2}(\beta )}}}\limfunc{Tr}\left\vert 
\frac{1}{\omega ^{k}}\gamma _{N,\omega }^{(k)}(t,\frac{\mathbf{x}_{k}}{\sqrt{%
\omega }},\mathbf{z}_{k};\frac{\mathbf{x}_{k}^{\prime }}{\sqrt{\omega }},%
\mathbf{z}_{k}^{\prime })-\dprod\limits_{j=1}^{k}h(x_{j})h(x_{j}^{\prime
})\phi (t,z_{j})\overline{\phi }(t,z_{j}^{\prime })\right\vert =0,
\end{equation*}%
where $v_{1}(\beta )$ and $v_{2}(\beta )$ are given by (\ref{E:vofbeta1})
and (\ref{E:vofbeta}) and $\phi (t,z)$ solves the 1D focusing cubic NLS %
\eqref{equation:2D Cubic NLS}.
\end{theorem}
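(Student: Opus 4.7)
The plan is to follow the standard BBGKY/Gross--Pitaevskii hierarchy strategy of Erdős--Schlein--Yau, adapted to our focusing, dimensionally-reducing setting. First I would rescale the $x$ variable by $\tilde{x}=\sqrt{\omega}\,x$, so the transverse harmonic oscillator ground state $h$ is independent of $\omega$ and the rescaled marginals $\tilde{\gamma}_{N,\omega}^{(k)}$ are exactly the objects appearing inside the trace in the conclusion. Writing the BBGKY hierarchy in these new coordinates produces, in each collision term, a factor of $\omega$ in front of the interaction (coming from the Jacobian in $x$) together with an $\omega$-dependent rescaled potential $V_{N,\omega}$; this is the ``diverging coefficient'' referred to in the abstract. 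The first step of the proof proper is to record this hierarchy and verify that, formally, the diverging $\omega$ is cancelled by the concentration of $\tilde{\gamma}_{N,\omega}^{(k)}$ onto the transverse ground state mode $h(x_j)h(x_j')$.

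Second, I would establish \emph{a priori} focusing energy estimates of the form
\begin{equation*}
\langle\psi_{N,\omega}(t),(H_{N,\omega}+M N\omega)^{k}\psi_{N,\omega}(t)\rangle \gtrsim \omega^{k}\,\tr\bigl(1-\Delta_{\mathbf{z}_k}+\omega^{-1}(-\Delta_{\mathbf{x}_k}+|\mathbf{x}_k|^2)\bigr)^k\tilde{\gamma}_{N,\omega}^{(k)},
\end{equation*}
for some $M$ depending only on $V$. Because $V\leqslant 0$ is allowed only in integral, the attractive part must be absorbed into the kinetic and trapping energies; this is where the restriction $\beta<3/7$ and the two-sided window $C_1 N^{v_1}\leqslant \omega\leqslant C_2 N^{v_2}$ enter, via a Sobolev/Hölder bookkeeping that controls $\int V_{N,\omega}|\psi|^2$ by a small multiple of the positive kinetic terms. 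This is the step I expect to be the main obstacle: one must simultaneously handle the attractive sign, the $\omega$-anisotropy (2D trapped $+$ 1D free), and keep enough room in the Sobolev norms to iterate to all $k$. Together with hypothesis $(c')$, these give uniform (in $N,\omega$) bounds in $k$-particle Sobolev trace norms, which I would use to extract subsequential limits $\gamma^{(k)}_{\infty}(t)$ converging in the weak-$\ast$ topology associated with appropriate compact operators.

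Third, I would pass to the limit in the rescaled BBGKY hierarchy. Using the energy bounds, the noninteracting part of the hierarchy converges to the free evolution generated by $-\partial_{z_j}^2$ on the $z$-variables, while the interaction terms converge, thanks to the convergence of $\omega V_{N,\omega}$ to $b_0\left(\int|h|^4\right)$ times a delta concentrated on the $z$-diagonal after projecting against $h$. The output is that any limit point $\{\gamma^{(k)}_{\infty}\}$ has the factorized transverse structure $h(x_j)h(x_j')\sigma^{(k)}(t,\mathbf{z}_k;\mathbf{z}'_k)$ and that $\{\sigma^{(k)}\}$ solves the 1D focusing Gross--Pitaevskii hierarchy with coupling $-b_0\int|h|^4$ and factorized initial datum $\prod_j \phi_0(z_j)\bar\phi_0(z'_j)$ guaranteed by assumption~(b).

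Finally, I would invoke uniqueness for the 1D focusing GP hierarchy, which I expect to import from Paper~I \cite{C-HFocusing}: this is the Klainerman--Machedon board-game argument in the focusing regime, whose applicability is secured by the energy bound from~$(c')$ propagated in $t$ by the hierarchy. Uniqueness forces $\sigma^{(k)}(t)=\prod_j\phi(t,z_j)\overline{\phi}(t,z_j')$ where $\phi$ solves \eqref{equation:2D Cubic NLS}, and since the limit is unique the whole sequence converges. A standard upgrade from weak-$\ast$ to trace-norm convergence, using that the limit is a rank-one projection in each tensor factor, completes the proof.
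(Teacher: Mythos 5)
Your proposal follows essentially the same route as the paper: rescale $x\mapsto \sqrt{\omega}\,x$, prove a focusing energy estimate to get uniform $H^{1}$-type bounds, extract weak-$*$ compact subsequences, show any limit concentrates on the transverse ground state and satisfies the 1D focusing GP hierarchy, then invoke the Klainerman--Machedon-style uniqueness from Paper~I and upgrade to trace-norm convergence via the rank-one-projection argument. One imprecision worth flagging: your energy estimate subtracts a generic $M N\omega$ ``depending only on $V$,'' but the correct normalization must subtract exactly the transverse ground-state energy $2N\omega$ (the bottom of the spectrum of the 2D Hermite operator summed over the $N$ particles), with only an additional $O(N)$ correction depending on $V$, i.e.\ the shift is $-2N\omega + \alpha N$; a shift of $(M-2)N\omega$ with $M\neq 0$ would make the conserved quantity grow like $(N\omega)^{k}$ and thus be incompatible with hypothesis $(c')$, while $M=0$ without the $\alpha N$ correction fails to make the operator nonnegative. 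You should also note that the upper bound $\omega \leqslant C_{2}N^{v_{2}(\beta)}$ is not forced entirely by the energy estimate: the extra constraint $\omega \leqslant N^{\frac{2\beta}{1-2\beta}-}$ arises in the convergence-to-GP step when comparing $V_{N,\omega}$ to a delta function (the approximation error scales like $(\sqrt{\omega}/(N\omega)^{\beta})^{\kappa}$), so $v_{2}$ is the minimum of the energy-estimate constraint $v_{E}$ and this convergence constraint.
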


\begin{figure}[tbp]
\includegraphics[scale=0.75]{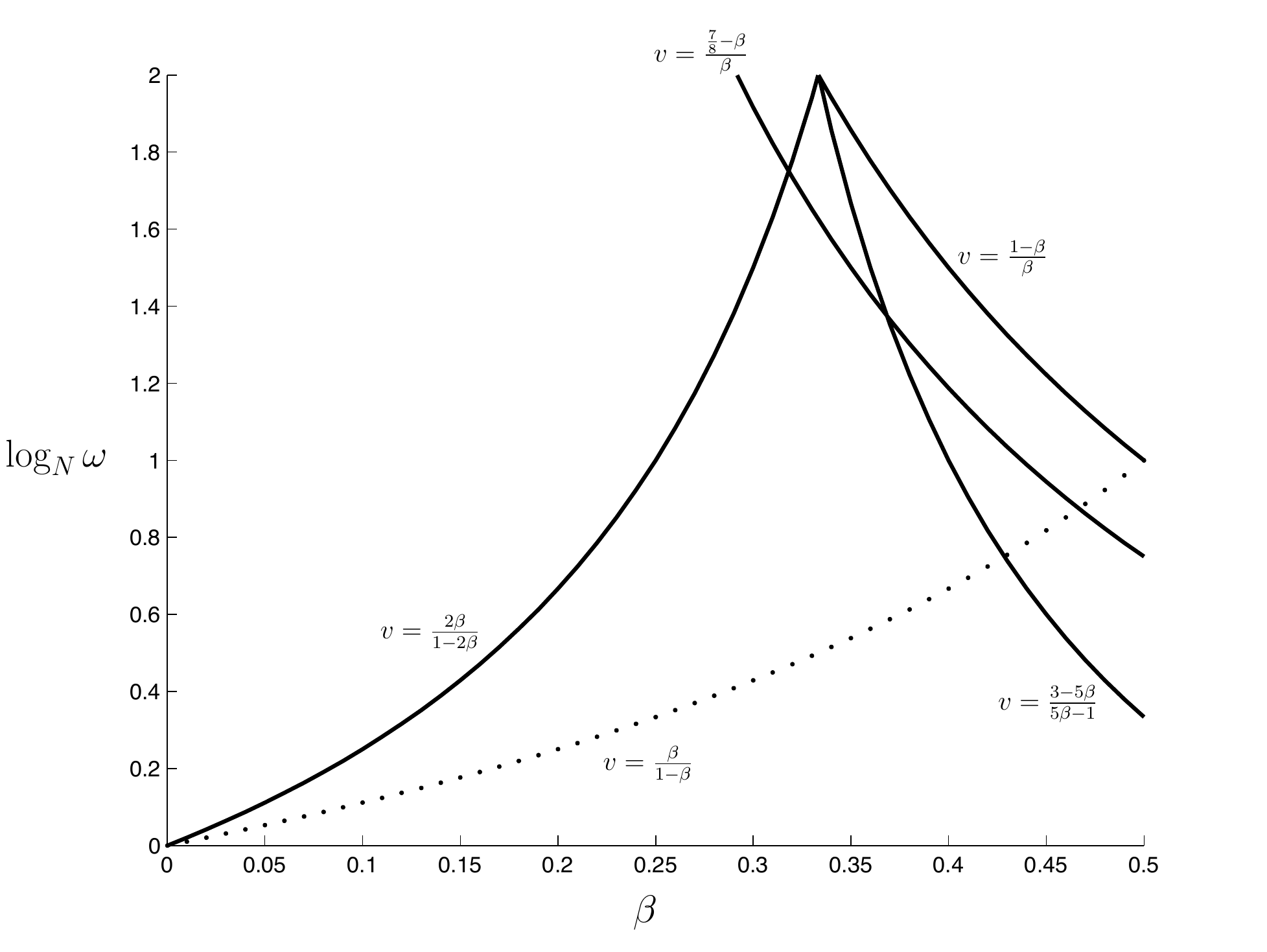}
\caption{ A graph of the various rational functions of $\protect\beta $
appearing in (\protect\ref{E:vofbeta1}) and (\protect\ref{E:vofbeta}). In
Theorems \protect\ref{Theorem:3D->2D BEC (Nonsmooth)}, \protect\ref%
{Theorem:3D->2D BEC}, the limit $(N,\protect\omega )\rightarrow \infty $ is
taken with $v_{1}(\protect\beta )\leqslant \log _{N}\protect\omega \leqslant
v_{2}(\protect\beta )$. The region of validity is above the dashed curve and
below the solid curves. It is a nonempty region for $0<\protect\beta %
\leqslant 3/7$. As shown here, there are values of $\protect\beta $ for
which $v_{1}(\protect\beta )\leqslant 1\leqslant v_{2}(\protect\beta )$,
which allows $N\sim \protect\omega $, as in the experimental paper 
\protect\cite{Cornish,JILA2,Khaykovich,Streker,Kettle3Dto2DExperiment,
FrenchExperiment, NatureExperiment,Another2DExperiment}. Moreover, our
result includes part of the $\protect\beta >1/3$ self-interaction region. We
will explain why we call the $\protect\beta >1/3$ case self-interaction
later in this introduction. At the moment, we remark that it is not a
coincidence that three restrictions intersect at $\protect\beta =1/3$}
\label{F:vofbeta}
\end{figure}

We remark that the assumptions in Theorem \ref{Theorem:3D->2D BEC
(Nonsmooth)} are reasonable assumptions on the initial datum coming from
Step A. In \cite[(1.10)]{Lieb3Dto1D}, a satisfying answer has been found by
Lieb, Seiringer, and Yngvason for Step A (Problem \ref{Problem:Lieb}) in the 
$\omega _{0,x}\gg \omega _{0,z}$ case. For convenience, set $\omega _{0,z}=1$
in the defocusing $N$-body Hamiltonian (\ref{Hamiltonian:Step A}) in Step A.
Let $\func{scat}(W)$ denote the 3D scattering length of the potential $W$.
By \cite[Lemma A.1]{E-S-Y2}, for $0<\beta \leq 1$ and $a\ll 1$, we have 
\begin{equation*}
\func{scat}\left( a\cdot \frac{1}{a^{3\beta }}V\left( \frac{r}{a^{\beta }}%
\right) \right) \sim 
\begin{cases}
\frac{a}{8\pi }\int_{\mathbb{R}^{3}}V & \text{if }0<\beta <1 \\ 
a\func{scat}\left( V\right) & \text{if }\beta =1%
\end{cases}%
\end{equation*}%
In \cite[(1.10)]{Lieb3Dto1D}, Lieb, Seiringer, and Yngvason define the
quantity $g=g(\omega _{0,x},N,a)$ by%
\begin{equation*}
g\overset{\mathrm{def}}{=}8\pi a\omega _{0,x}\left( \int \left\vert
h(x)\right\vert ^{4}dx\right) \text{.}
\end{equation*}%
Then if $Ng\sim 1$, they proved in \cite[Theorem 5.1]{Lieb3Dto1D} that BEC
happens in Step A and the Gross-Pitaevskii limit holds.\footnote{%
This corresponds to Region 2 of \cite{Lieb3Dto1D}. The other four regions
are, the ideal gas case, the 1D Thomas-Fermi case, the Lieb-Liniger case,
and the Girardeau-Tonks case. As mentioned in \cite[p.388]{Lieb3Dto1D}, BEC
is not expected in the Lieb-Liniger case and the Girardeau-Tonks case, and
is an open problem in the Thomas-Fermi case, we deal with Region 2 only in
this paper.} To be specific, they proved that 
\begin{equation*}
\lim_{N,\omega _{0,x}\rightarrow \infty }\limfunc{Tr}\left\vert \frac{1}{%
\omega _{0,x}}\gamma _{N,\omega _{0,x}}^{(1)}(0,\frac{x_{1}}{\sqrt{\omega
_{0,x}}},z_{1};\frac{x_{1}^{\prime }}{\sqrt{\omega _{0,x}}},z_{1}^{\prime
})-h(x_{1})h(x_{1}^{\prime })\phi _{0}(z_{1})\overline{\phi _{0}}%
(z_{1}^{\prime })\right\vert =0
\end{equation*}%
provided that $\phi _{0}$ is the minimizer to the 1D defocusing NLS energy
functional 
\begin{equation}
E_{\omega _{z},Ng}=\int_{\mathbb{R}}\left( |\partial _{z}\phi
(z)|^{2}+z^{2}|\phi (z)|^{2}+4\pi Ng|\phi (z)|^{4}\right) \,dz
\label{E:GP-Hamiltonian}
\end{equation}%
subject to the constraint $\Vert \phi \Vert _{L^{2}(\mathbb{R})}=1$. Hence,
the assumptions in Theorem \ref{Theorem:3D->2D BEC (Nonsmooth)} are
reasonable assumptions on the initial datum drawn from Step A. To be
specific, we have chosen $a=(N\omega )^{-1}$ in the interaction so that $%
Ng\sim 1$ and assumptions (a), (b) and (c) are the conclusions of \cite[%
Theorem 5.1]{Lieb3Dto1D}.

The equivalence of Theorems \ref{Theorem:3D->2D BEC (Nonsmooth)} and \ref%
{Theorem:3D->2D BEC} for asymptotically factorized initial data is
well-known. In the main part of this paper, we prove Theorem \ref%
{Theorem:3D->2D BEC} in full detail. For completeness, we discuss briefly
how to deduce Theorem \ref{Theorem:3D->2D BEC (Nonsmooth)} from Theorem \ref%
{Theorem:3D->2D BEC} in Appendix \ref{A:equivalence}.

To our knowledge, Theorems \ref{Theorem:3D->2D BEC (Nonsmooth)} and \ref%
{Theorem:3D->2D BEC} offer the first rigorous derivation of the 1D focusing
cubic NLS (\ref{equation:2D Cubic NLS}) from the 3D focusing quantum $N$%
-body dynamic (\ref{Hamiltonian:3D to 1D N-body unscaled}). Moreover, our
result covers part of the $\beta >1/3$ self-interaction region in 3D. As
pointed out in \cite{E-E-S-Y1}, the study of Step B is of particular
interest when $\beta \in \left( 1/3,1\right] $ in 3D. The reason is the
following. The initial datum coming from Step A is the ground state of (\ref%
{Hamiltonian:Step A}) with $\omega _{0,x},\omega _{0,z}$ $\neq 0$ and hence
is localized in space. We can assume all $N$ particles are in a box of
length $1$. Let the effective radius of the pair interaction $V$ be $R_{0},$
then the effective radius of $V\left( \left( N\omega \right) ^{\beta }\left(
r_{i}-r_{j}\right) \right) $ is about $R_{0}/\left( N\omega \right) ^{\beta
} $. Thus every particle in the box interacts with $\left( R_{0}/\left(
N\omega \right) ^{\beta }\right) ^{3}\times N$ other particles. Thus, for $%
\beta >1/3$ and large $N$, every particle interacts with only itself. This
exactly matches the Gross-Pitaevskii theory that the many-body effect should
be modeled by a strong on-site self-interaction. Therefore, for the
mathematical justification of the Gross-Pitaevskii theory, it is of
particular interest to prove Theorems \ref{Theorem:3D->2D BEC (Nonsmooth)}
and \ref{Theorem:3D->2D BEC} for self-interaction ($\beta >1/3)$.

A main tool used to prove Theorem \ref{Theorem:3D->2D BEC} is the analysis
of the BBGKY hierarchy of $\left\{ \tilde{\gamma}_{N,\omega }^{(k)}(t)=\frac{%
1}{\omega ^{k}}\gamma _{N,\omega }^{(k)}(t,\frac{\mathbf{x}_{k}}{\sqrt{%
\omega }},\mathbf{z}_{k};\frac{\mathbf{x}_{k}^{\prime }}{\sqrt{\omega }},%
\mathbf{z}_{k}^{\prime })\right\} _{k=1}^{N}$ as $N,\omega \rightarrow
\infty .$ In the classical setting, deriving mean-field type equations by
studying the limit of the BBGKY hierarchy was proposed by Kac and
demonstrated by Landford's work on the Boltzmann equation. In the quantum
setting, the usage of the BBGKY hierarchy was suggested by Spohn \cite{Spohn}
and has been proven to be successful by Elgart, Erd\"{o}s, Schlein, and Yau
in their fundamental papers \cite{E-E-S-Y1, E-S-Y1,E-S-Y2,E-S-Y5, E-S-Y3}%
\footnote{%
Around the same time, there was the 1D defocusing work \cite{AGT}.} which
rigorously derives the 3D cubic defocusing NLS from a 3D quantum many-body
dynamic with repulsive pair interactions and no trapping. The Elgart-Erd\"{o}%
s-Schlein-Yau program\footnote{%
See \cite{SchleinNew,GM1,Pickl} for different approaches.} consists of two
principal parts: in one part, they consider the sequence of the marginal
densities $\left\{ \gamma _{N}^{(k)}\right\} $ associated with the
Hamiltonian evolution $e^{itH_{N}}\psi _{N}(0)$ where 
\begin{equation*}
H_{N}=\sum_{j=1}^{N}-\triangle _{r_{j}}+\frac{1}{N}\sum_{1\leqslant
i<j\leqslant N}N^{3\beta }V(N^{\beta }\left( r_{i}-r_{j}\right) )
\end{equation*}%
and prove that an appropriate limit of as $N\rightarrow \infty $ solves the
3D Gross-Pitaevskii hierarchy 
\begin{equation}
i\partial _{t}\gamma ^{(k)}+\sum_{j=1}^{k}\left[ \triangle _{r_{k}},\gamma
^{(k)}\right] =b_{0}\sum_{j=1}^{k}\limfunc{Tr}\nolimits_{r_{k+1}}[\delta
(r_{j}-r_{k+1}),\gamma ^{(k+1)}],\text{ for all }k\geq 1\,.
\label{equation:Gross-Pitaevskii hiearchy without a trap}
\end{equation}%
In another part, they show that hierarchy 
\eqref{equation:Gross-Pitaevskii
hiearchy without a trap} has a unique solution which is therefore a
completely factorized state. However, the uniqueness theory for hierarchy 
\eqref{equation:Gross-Pitaevskii hiearchy
without a trap} is surprisingly delicate due to the fact that it is a system
of infinitely many coupled equations over an unbounded number of variables.
In \cite{KlainermanAndMachedon}, by assuming a space-time bound on the limit
of $\left\{ \gamma _{N}^{(k)}\right\} $, Klainerman and Machedon gave
another uniqueness theorem regarding (\ref{equation:Gross-Pitaevskii
hiearchy without a trap}) through a collapsing estimate originating from the
multilinear Strichartz estimates and a board game argument inspired by the
Feynman graph argument in \cite{E-S-Y2}.

The method by Klainerman and Machedon \cite{KlainermanAndMachedon} was taken
up by Kirkpatrick, Schlein, and Staffilani \cite{Kirpatrick}, who derived
the 2D cubic defocusing NLS from the 2D quantum many-body dynamic; by Chen
and Pavlovi\'{c} \cite{TChenAndNP}, who considered the 1D and 2D 3-body
repelling interaction problem; by X.C. \cite{ChenAnisotropic,
Chen3DDerivation}, who investigated the defocusing problem with trapping in
2D and 3D; and by X.C. and J.H. \cite{C-H3Dto2D}, who proved the
effectiveness of the defocusing 3D to 2D reduction problem. Such a method
has also inspired the study of the general existence theory of hierarchy $%
\eqref{equation:Gross-Pitaevskii hiearchy without a
trap}$, see \cite{TCNPNT, TCNPNT1, TChenAndNpGP1,Sohinger,SoSt13}.

One main open problem in Klainerman-Machedon theory is the verification of
the uniqueness condition in 3D though it is fully solved in 1D and 2D using
trace theorems by Kirkpatrick, Schlein, and Staffilani \cite{Kirpatrick}. In 
\cite{TChenAndNPSpace-Time}, for the 3D defocusing problem without traps,
Chen and Pavlovi\'{c} showed that, for $\beta \in (0,1/4)$, the limit of the
BBGKY sequence satisfies the uniqueness condition.\footnote{%
See also \cite{TCKT}.} In \cite{Chen3DDerivation}, X.C. extended and
simplified their method to study the 3D trapping problem for $\beta \in
(0,2/7].$ X.C. and J.H. \cite{C-H2/3}\ then extended the $\beta \in (0,2/7]$
result by X.C. to $\beta \in (0,2/3)$ using $X_{b}$ spaces and
Littlewood-Paley theory. The $\beta \in \left( 2/3,1\right] $ case is still
open.

Recently, using a version of the quantum de finite theorem from \cite{Lewin}%
, Chen, Hainzl, Pavlovi\'{c}, and Seiringer provided an alternative proof to
the uniqueness theorem in \cite{E-S-Y2} and showed that it is an
unconditional uniqueness result in the sense of NLS theory. With this
method, Sohinger derived the 3D defocusing cubic NLS in the periodic case 
\cite{Sohinger3}. See also \cite{C-PUniqueness,HoTaXi14}.

\subsection{Organization of the Paper}

We first outline the proof of our main theorem, Theorem \ref{Theorem:3D->2D
BEC}, in \S \ref{Sec:ProofOfMainTHM}. The components of the proof are in \S %
\ref{Section:EnergyEstimate}, \ref{Section:Compactness}, and \ref%
{Section:Convergence of The Infinite Hierarchy}.

The first main part is the proof of the needed focusing energy estimate,
stated and proved as Theorem \ref{Theorem:Energy Estimate} in \S \ref%
{Section:EnergyEstimate}. The main difficulty in establishing the energy
estimate is understanding the interplay between two parameters $N$ and $%
\omega $. On the one hand, as suggested by the experiments \cite%
{Cornish,JILA2,Khaykovich,Streker}, in order to have to a BEC in this
focusing setting, one has to explore "the 1D feature" of the 3D focusing $N$%
-body Hamiltonian (\ref{Hamiltonian:3D to 1D N-body unscaled}) which comes
from a large $\omega $. At the same time, an $N$ too large would allow the
3D effect to dominate, and one has to avoid this. This suggests that an
inequality of the form $N^{v_1(\beta)} \leq \omega$ is a natural
requirement. On the other hand, according to the uncertainty principle, in
3D, as the $x$-component of the particles' position becomes more and more
determined to be $0$, the $x$-component of the momentum and thus the energy
must blow up. Hence the energy of the system is dominated by its $x$%
-directional part which is in fact infinity as $\omega \rightarrow \infty $.
Since the particles are interacting via 3D potential, to avoid the excessive 
$x$-directional energy being transferred to the $z-$ direction, during the $%
N,\omega \rightarrow \infty $ process, $\omega $ can not be too large
either. Such a problem is totally new and does not exists in the 1D model 
\cite{C-HFocusing}. It suggests that an inequality of the form $\omega \leq
N^{\nu_2(\beta)}$ is a natural requirement.

The second main part of the proof is the analysis of the focusing "$\infty
-\infty $" BBGKY hierarchy of $\left\{ \tilde{\gamma}_{N,\omega }^{(k)}(t)=%
\frac{1}{\omega ^{k}}\gamma _{N,\omega }^{(k)}(t,\frac{\mathbf{x}_{k}}{\sqrt{%
\omega }},\mathbf{z}_{k};\frac{\mathbf{x}_{k}^{\prime }}{\sqrt{\omega }},%
\mathbf{z}_{k}^{\prime })\right\} _{k=1}^{N}$ as $N,\omega \rightarrow
\infty $. With our definition, the sequence of the marginal densities $%
\left\{ \tilde{\gamma}_{N,\omega }^{(k)}\right\} _{k=1}^{N}$ satisfies the
BBGKY hierarchy

\begin{eqnarray*}
i\partial _{t}\tilde{\gamma}_{N,\omega }^{(k)} &=&\omega
\sum_{j=1}^{k}[-\Delta _{x_{j}}+\left\vert x_{j}\right\vert ^{2},\tilde{%
\gamma}_{N,\omega }^{(k)}]+\sum_{j=1}^{k}[-\partial _{z_{j}}^{2},\tilde{%
\gamma}_{N,\omega }^{(k)}] \\
&&+\frac{1}{N}\sum_{1\leqslant i<j\leqslant k}[V_{N,\omega }(r_{i}-r_{j}),%
\tilde{\gamma}_{N,\omega }^{(k)}] \\
&&+\frac{N-k}{N}\sum_{j=1}^{k}\limfunc{Tr}\nolimits_{r_{k+1}}[V_{N,\omega
}(r_{j}-r_{k+1}),\tilde{\gamma}_{N,\omega }^{(k+1)}].
\end{eqnarray*}%
where $V_{N,\omega }$ is defined in (\ref{E:V}). We call it an "$\infty
-\infty $" BBGKY hierarchy because it is not clear whether the term 
\begin{equation*}
\omega \lbrack -\Delta _{x_{j}}+\left\vert x_{j}\right\vert ^{2},\tilde{%
\gamma}_{N,\omega }^{(k)}]
\end{equation*}%
tends to a limit as $N,\omega \rightarrow \infty $. Since $\tilde{\gamma}%
_{N,\omega }^{(k)}\ $is not a factorized state for $t>0$, one cannot expect
the commutator to be zero. This is in strong contrast with the "$n$D to $n$%
D" work \cite{AGT, E-E-S-Y1, E-S-Y1,E-S-Y2,E-S-Y5,
E-S-Y3,TChenAndNP,ChenAnisotropic,TChenAndNPSpace-Time, Chen3DDerivation,
Sohinger3} in which the formal limit of the corresponding BBGKY hierarchy is
fairly obvious. With the aforementioned focusing energy estimate, we find
that this diverging coefficient is counterbalanced by the limiting structure
of the density matrices and establish the weak* compactness and convergence
of this focusing BBGKY hierarchy in \S \ref{Section:Compactness} and \S \ref%
{Section:Convergence of The Infinite Hierarchy}.

\subsection{Acknowledgements}

J.H. was supported in part by NSF grant DMS-1200455.

\section{Proof of the Main Theorem\label{Sec:ProofOfMainTHM}}

We start by setting up some notation for the rest of the paper. Recall $%
h(x)=\pi ^{-\frac{1}{2}}e^{-\left\vert x\right\vert ^{2}/2}$, which is the
ground state for the 2D Hermite operator $-\triangle _{x}+\left\vert
x\right\vert ^{2}$ i.e. it solves $(-2-\Delta _{x}+\left\vert x\right\vert
^{2})h=0$. Then the normalized ground state eigenfunction $h_{\omega }(x)$
of $-\triangle _{x}+\omega ^{2}\left\vert x\right\vert ^{2}$ is given by $%
h_{\omega }(x)=\omega ^{1/2}h(\omega ^{1/2}x)$, i.e. it solves $(-2\omega
-\triangle _{x}+\omega ^{2}\left\vert x\right\vert ^{2})h_{\omega }=0$. In
particular, $h_{1}=h$. Noticing that both of the convergences (\ref%
{convergence:initial}) and (\ref{convergence:conclusion of main theorem})
involves scaling, we introduce the rescaled solution 
\begin{equation}
\tilde{\psi}_{N,\omega }(t,\mathbf{r}_{N})\overset{\mathrm{def}}{=}\frac{1}{%
\omega ^{N/2}}\psi _{N,\omega }(t,\frac{\mathbf{x}_{N}}{\sqrt{\omega }},%
\mathbf{z}_{N})  \label{E:rescaled}
\end{equation}%
and the rescaled Hamiltonian 
\begin{equation}
\tilde{H}_{N,\omega }=\left[ \sum_{j=1}^{N}-\partial _{z_{j}}^{2}+\omega
(-\triangle _{x}+\left\vert x\right\vert ^{2})\right] +\frac{1}{N}%
\sum_{1\leq i<j\leq N}V_{N,\omega }(r_{i}-r_{j}),
\label{E:rescaled-Hamiltonian}
\end{equation}%
where 
\begin{equation}
V_{N,\omega }(r)=N^{3\beta }\omega ^{3\beta -1}V\left( \frac{\left( N\omega
\right) ^{\beta }}{\sqrt{\omega }}x,\left( N\omega \right) ^{\beta }z\right)
.  \label{E:V}
\end{equation}%
Then 
\begin{equation*}
(\tilde{H}_{N,\omega }\tilde{\psi}_{N,\omega })(t,\mathbf{x}_{N},\mathbf{z}%
_{N})=\frac{1}{\omega ^{N/2}}(H_{N,\omega }\psi _{N,\omega })(t,\frac{%
\mathbf{x}_{N}}{\sqrt{\omega }},\mathbf{z}_{N}),
\end{equation*}%
and hence when $\psi _{N,\omega }(t)$ is the Hamiltonian evolution given by (%
\ref{Hamiltonian:3D to 1D N-body unscaled}) and $\tilde{\psi}_{N,\omega }$
is defined by \eqref{E:rescaled}, we have 
\begin{equation*}
\tilde{\psi}_{N,\omega }(t,\mathbf{r}_{N})=e^{it\tilde{H}_{N,\omega }}\tilde{%
\psi}(0,\mathbf{r}_{N})\text{.}
\end{equation*}%
If we let $\left\{ \tilde{\gamma}_{N,\omega }^{(k)}\right\} _{k=1}^{N}$ be
the marginal densities associated with $\tilde{\psi}_{N,\omega }$, then $%
\left\{ \tilde{\gamma}_{N,\omega }^{(k)}\right\} _{k=1}^{N}$ satisfies the "$%
\infty -\infty $" focusing BBGKY hierarchy%
\begin{eqnarray}
i\partial _{t}\tilde{\gamma}_{N,\omega }^{(k)} &=&\omega
\sum_{j=1}^{k}[-\Delta _{x_{j}}+\left\vert x_{j}\right\vert ^{2},\tilde{%
\gamma}_{N,\omega }^{(k)}]+\sum_{j=1}^{k}[-\partial _{z_{j}}^{2},\tilde{%
\gamma}_{N,\omega }^{(k)}]
\label{hierarchy:BBGKY hierarchy for scaled marginal densities} \\
&&+\frac{1}{N}\sum_{1\leqslant i<j\leqslant k}[V_{N,\omega }(r_{i}-r_{j}),%
\tilde{\gamma}_{N,\omega }^{(k)}]  \notag \\
&&+\frac{N-k}{N}\sum_{j=1}^{k}\limfunc{Tr}\nolimits_{r_{k+1}}[V_{N,\omega
}(r_{j}-r_{k+1}),\tilde{\gamma}_{N,\omega }^{(k+1)}].  \notag
\end{eqnarray}%
We will always take $\omega \geq 1$. For the rescaled marginals $\left\{ 
\tilde{\gamma}_{N,\omega }^{(k)}\right\} _{k=1}^{N}$, we define 
\begin{equation}
\tilde{S}_{j}\overset{\mathrm{def}}{=}\left[ 1-\partial _{z_{j}}^{2}+\omega
\left( -\Delta _{x_{j}}+\left\vert x_{j}\right\vert ^{2}-2\right) \right] ^{%
\frac{1}{2}}.  \label{E:tilde-S-def}
\end{equation}%
Two immediate properties of $\tilde{S}_{j}$ are the following. On the one
hand, $\tilde{S}_{j}^{2}(h_{1}(x_{j})\phi (z_{j}))=h_{1}(x_{j})(1-\partial
_{z_{j}}^{2})\phi (z_{j})\,$ and thus the diverging parameter $\omega $ has
no consequence when $\tilde{S}_{j}$ is applied to a tensor product function $%
h_{1}(x_{j})\phi (z_{j})$ for which the $x_{j}$-component rests in the
ground state. On the other hand, $\tilde{S}_{j}\geqslant 0$ as an operator
because $-\Delta _{x_{j}}+\left\vert x_{j}\right\vert ^{2}-2\geqslant 0$.

Now, noticing that the eigenvalues of $-\triangle _{x}+\omega ^{2}\left\vert
x\right\vert ^{2}$ in 2D are $\left\{ 2\left( l+1\right) \omega \right\}
_{l=0}^{\infty }$, let $P_{l\omega }$ the orthogonal projection onto the
eigenspace associated with eigenvalue $2\left( l+1\right) \omega $. That is, 
$I=\sum_{l=0}^{\infty }P_{l\omega }$ where $I:L^{2}(\mathbb{R}%
^{3})\rightarrow L^{2}(\mathbb{R}^{3})$. As a matter of notation for our
multi-coordinate problem, $P_{l\omega }^{j}$ will refer to the projection in 
$x_{j}$ coordinate at energy $2\left( l+1\right) \omega $, i.e.%
\begin{equation}
I=\prod_{j=1}^{k}\left( \sum_{l=0}^{\infty }P_{l\omega }^{j}\right) .
\label{def:I projection for omega}
\end{equation}%
In particular, when $\omega =1$, we use simply $P_{l}$. That is, $P_{0}$
denotes the orthogonal projection onto the ground state of $-\Delta
_{x}+\left\vert x\right\vert ^{2}$ and $P_{\geqslant 1}$ means the
orthogonal projection onto all higher energy modes of $-\Delta
_{x}+\left\vert x\right\vert ^{2}$ so that $I=P_{0}+P_{\geqslant 1}$, where $%
I:L^{2}(\mathbb{R}^{3})\rightarrow L^{2}(\mathbb{R}^{3})$. Since we will
only use $P_{0}$ and $P_{\geqslant 1}$ for the $\omega =1$ case, we define%
\begin{eqnarray*}
\mathcal{P}_{0} &=&P_{0} \\
\mathcal{P}_{1} &=&P_{\geqslant 1}
\end{eqnarray*}%
and 
\begin{equation}
\mathcal{P}_{\mathbf{\alpha }}=\mathcal{P}_{\alpha _{1}}^{1}\cdots \mathcal{P%
}_{\alpha _{k}}^{k}  \label{def:multiple projection for omega=1}
\end{equation}%
for a $k$-tuple $\mathbf{\alpha }=(\alpha _{1},\ldots ,\alpha _{k})$ with $%
\alpha _{j}\in \{0,1\}$ and adopt the notation $|\mathbf{\alpha }|=\alpha
_{1}+\cdots +\alpha _{k}$, then%
\begin{equation}
I=\sum_{\alpha }\mathcal{P}_{\mathbf{\alpha }}\text{.}  \label{E:cpct1}
\end{equation}

We next introduce an appropriate topology on the density matrices as was
previously done in \cite{E-E-S-Y1, E-Y1, E-S-Y1,E-S-Y2,E-S-Y5,
E-S-Y3,Kirpatrick,TChenAndNP,ChenAnisotropic,Chen3DDerivation,C-H3Dto2D,C-H2/3,C-HFocusing,Sohinger3}%
. Denote the spaces of compact operators and trace class operators on $%
L^{2}\left( \mathbb{R}^{3k}\right) $ as $\mathcal{K}_{k}$ and $\mathcal{L}%
_{k}^{1}$, respectively. Then $\left( \mathcal{K}_{k}\right) ^{\prime }=%
\mathcal{L}_{k}^{1}$. By the fact that $\mathcal{K}_{k}$ is separable, we
pick a dense countable subset $\{J_{i}^{(k)}\}_{i\geqslant 1}\subset 
\mathcal{K}_{k}$ in the unit ball of $\mathcal{K}_{k}$ (so $\Vert
J_{i}^{(k)}\Vert _{\func{op}}\leqslant 1$ where $\left\Vert \cdot
\right\Vert _{\func{op}}$ is the operator norm). For $\gamma
_{1}^{(k)},\gamma _{2}^{(k)}\in \mathcal{L}_{k}^{1}$, we then define a
metric $d_{k}$ on $\mathcal{L}_{k}^{1}$ by 
\begin{equation*}
d_{k}(\gamma _{1}^{(k)},\gamma _{2}^{(k)})=\sum_{i=1}^{\infty
}2^{-i}\left\vert \limfunc{Tr}J_{i}^{(k)}\left( \gamma _{1}^{(k)}-\gamma
_{2}^{(k)}\right) \right\vert .
\end{equation*}%
A uniformly bounded sequence $\tilde{\gamma}_{N,\omega }^{(k)}\in \mathcal{L}%
_{k}^{1}$ converges to $\tilde{\gamma}^{(k)}\in \mathcal{L}_{k}^{1}$ with
respect to the weak* topology if and only if 
\begin{equation*}
\lim_{N,\omega \rightarrow \infty }d_{k}(\tilde{\gamma}_{N,\omega }^{(k)},%
\tilde{\gamma}^{(k)})=0.
\end{equation*}%
For fixed $T>0$, let $C\left( \left[ 0,T\right] ,\mathcal{L}_{k}^{1}\right) $
be the space of functions of $t\in \left[ 0,T\right] $ with values in $%
\mathcal{L}_{k}^{1}$ which are continuous with respect to the metric $d_{k}.$
On $C\left( \left[ 0,T\right] ,\mathcal{L}_{k}^{1}\right) ,$ we define the
metric 
\begin{equation*}
\hat{d}_{k}(\gamma ^{(k)}\left( \cdot \right) ,\tilde{\gamma}^{(k)}\left(
\cdot \right) )=\sup_{t\in \left[ 0,T\right] }d_{k}(\gamma ^{(k)}\left(
t\right) ,\tilde{\gamma}^{(k)}\left( t\right) ),
\end{equation*}%
and denote by $\tau _{prod}$ the topology on the space $\oplus _{k\geqslant
1}C\left( \left[ 0,T\right] ,\mathcal{L}_{k}^{1}\right) $ given by the
product of topologies generated by the metrics $\hat{d}_{k}$ on $C\left( %
\left[ 0,T\right] ,\mathcal{L}_{k}^{1}\right) .$

With the above topology on the space of marginal densities, we prove Theorem %
\ref{Theorem:3D->2D BEC}. The proof is divided into five steps.

\begin{itemize}
\item[Step I] (Focusing Energy Estimate) We first establish, via an
elaborate calculation in Theorem \ref{Theorem:Energy Estimate}, that one can
compensate the negativity of the interaction in the focusing many-body
Hamiltonian (\ref{Hamiltonian:3D to 1D N-body unscaled}) by adding a product
of $N$ and some constant $\alpha $ depending on $V$, provided that $%
C_{1}N^{v_{1}(\beta )}\leqslant \omega \leqslant C_{2}N^{v_{2}(\beta )}$
where $C_{1}$ and $C_{2}$ depend solely on $V$. Henceforth, though $%
H_{N,\omega }$ is not positive-definite, we derive, from the energy
condition (\ref{Condition:EnergyBoundOnInitialData}), a\ $H^{1}$ type energy
bound:%
\begin{equation*}
\left\langle \psi _{N,\omega },\left( \alpha +N^{-1}H_{N,\omega }-2\omega
\right) ^{k}\psi _{N,\omega }\right\rangle \geqslant C^{k}\left\Vert
\dprod\limits_{j=1}^{k}S_{j}\psi _{N,\omega }\right\Vert _{L^{2}(\mathbb{R}%
^{3N})}^{2}
\end{equation*}%
where%
\begin{equation*}
S_{j}\overset{\mathrm{def}}{=}(1-\Delta _{x_{j}}+\omega ^{2}\left\vert
x_{j}\right\vert ^{2}-2\omega -\partial _{z_{j}}^{2})^{1/2}.
\end{equation*}%
Since the quantity $\left\langle \psi _{N,\omega },\left( H_{N,\omega
}-2N\omega \right) ^{k}\psi _{N,\omega }\right\rangle $ is conserved by the
evolution, via Corollary \ref{Corollary:Energy Bound for Marginal Densities}%
, we deduce the \emph{a priori} bounds, crucial to the analysis of the "$%
\infty -\infty $" BBGKY hierarchy (\ref{hierarchy:BBGKY hierarchy for scaled
marginal densities}), on the scaled marginal densities: 
\begin{equation*}
\sup_{t}\limfunc{Tr}\left( \dprod\limits_{j=1}^{k}\tilde{S}_{j}\right) 
\tilde{\gamma}_{N,\omega }^{(k)}\left( \dprod\limits_{j=1}^{k}\tilde{S}%
_{j}\right) \leqslant C^{k},
\end{equation*}%
\begin{equation*}
\sup_{t}\limfunc{Tr}\dprod\limits_{j=1}^{k}\left( 1-\triangle
_{r_{j}}\right) \tilde{\gamma}_{N,\omega }^{(k)}\leqslant C^{k},
\end{equation*}%
\begin{equation*}
\sup_{t}\func{Tr}\mathcal{P}_{\mathbf{\alpha }}\tilde{\gamma}_{N,\omega
}^{(k)}\mathcal{P}_{\mathbf{\beta }}\leq C^{k}\omega ^{-\frac{1}{2}|\mathbf{%
\alpha }|-\frac{1}{2}\mathbf{|\beta }|},
\end{equation*}%
where $\mathcal{P}_{\mathbf{\alpha }}$ and $\mathcal{P}_{\mathbf{\beta }}$
are defined as in (\ref{def:multiple projection for omega=1}). We remark
that the quantity 
\begin{equation*}
\limfunc{Tr}\left( 1-\triangle _{r_{1}}\right) \tilde{\gamma}_{N,\omega
}^{(1)}
\end{equation*}%
is not the one particle kinetic energy of the system; the one particle
kinetic energy of the system is $\limfunc{Tr}\left( 1-\omega \triangle
_{x_{1}}-\partial _{z_{1}}^{2}\right) \tilde{\gamma}_{N,\omega }^{(1)}$ and
grows like $\omega $. This is also in contrast to the $n$D to $n$D work,

\item[Step II] (Compactness of BBGKY). We fix $T>0$ and work in the
time-interval $t\in \lbrack 0,T].$ In Theorem \ref{Theorem:Compactness of
the scaled marginal density}, we establish the compactness of the BBGKY
sequence $\left\{ \Gamma _{N,\omega }(t)=\left\{ \tilde{\gamma}_{N,\omega
}^{(k)}\right\} _{k=1}^{N}\right\} \subset \oplus _{k\geqslant 1}C\left( %
\left[ 0,T\right] ,\mathcal{L}_{k}^{1}\right) $ with respect to the product
topology $\tau _{prod}$ even though hierarchy 
\eqref{hierarchy:BBGKY hierarchy for scaled
marginal densities} contains attractive interactions and an indefinite $%
\infty -\infty $. Moreover, in Corollary \ref{Corollary:LimitMustBeAProduct}%
, we prove that, to be compatible with the energy bound obtained in Step I,
every limit point $\Gamma (t)=\left\{ \tilde{\gamma}^{(k)}\right\}
_{k=1}^{\infty }$ must take the form 
\begin{equation*}
\tilde{\gamma}^{(k)}\left( t,\left( \mathbf{x}_{k},\mathbf{z}_{k}\right)
;\left( \mathbf{x}_{k}^{\prime },\mathbf{z}_{k}^{\prime }\right) \right)
=\left( \dprod\limits_{j=1}^{k}h_{1}\left( x_{j}\right) h_{1}\left(
x_{j}^{\prime }\right) \right) \tilde{\gamma}_{z}^{(k)}(t,\mathbf{z}_{k};%
\mathbf{z}_{k}^{\prime }),
\end{equation*}%
where $\tilde{\gamma}_{z}^{(k)}=\limfunc{Tr}_{x}\tilde{\gamma}^{(k)}$ is the 
$z$-component of $\tilde{\gamma}^{(k)}.$

\item[Step III] (Limit points of BBGKY satisfy GP). In Theorem \ref%
{Theorem:Convergence to the Coupled Gross-Pitaevskii}, we prove that if $%
\Gamma (t)=\left\{ \tilde{\gamma}^{(k)}\right\} _{k=1}^{\infty }$ is a $%
C_{1}N^{v_{1}(\beta )}\leqslant \omega \leqslant C_{2}N^{v_{2}(\beta )}$
limit point of $\left\{ \Gamma _{N,\omega }(t)=\left\{ \tilde{\gamma}%
_{N,\omega }^{(k)}\right\} _{k=1}^{N}\right\} $ with respect to the product
topology $\tau _{prod}$, then $\left\{ \tilde{\gamma}_{z}^{(k)}=\limfunc{Tr}%
_{x}\tilde{\gamma}^{(k)}\right\} _{k=1}^{\infty }$ is a solution to the
focusing coupled Gross-Pitaevskii (GP) hierarchy subject to initial data $%
\tilde{\gamma}_{z}^{(k)}\left( 0\right) =\left\vert \phi _{0}\right\rangle
\left\langle \phi _{0}\right\vert ^{\otimes k}$ with coupling constant $%
b_{0}=$ $\left\vert \int V\left( r\right) dr\right\vert $, which written in
differential form, is 
\begin{equation}
i\partial _{t}\tilde{\gamma}_{z}^{(k)}=\sum_{j=1}^{k}\left[ -\partial
_{z_{j}}^{2},\tilde{\gamma}_{z}^{(k)}\right] -b_{0}\sum_{j=1}^{k}\limfunc{Tr}%
\nolimits_{z_{k+1}}\limfunc{Tr}\nolimits_{x}\left[ \delta \left(
r_{j}-r_{k+1}\right) ,\tilde{\gamma}^{(k+1)}\right] .
\label{hierarchy:Coupled GP in differential form}
\end{equation}%
Together with the limiting structure concluded in Corollary \ref%
{Corollary:LimitMustBeAProduct}, we can further deduce that $\left\{ \tilde{%
\gamma}_{z}^{(k)}=\limfunc{Tr}_{x}\tilde{\gamma}^{(k)}\right\}
_{k=1}^{\infty }$ is a solution to the 1D focusing GP hierarchy subject to
initial data $\tilde{\gamma}_{z}^{(k)}\left( 0\right) =\left\vert \phi
_{0}\right\rangle \left\langle \phi _{0}\right\vert ^{\otimes k}$ with
coupling constant $b_{0}\left( \int \left\vert h_{1}\left( x\right)
\right\vert ^{4}dx\right) $, which, written in differential form, is 
\begin{equation}
i\partial _{t}\tilde{\gamma}_{z}^{(k)}=\sum_{j=1}^{k}\left[ -\partial
_{z_{j}}^{2},\tilde{\gamma}_{z}^{(k)}\right] -b_{0}\left( \int \left\vert
h_{1}\left( x\right) \right\vert ^{4}dx\right) \sum_{j=1}^{k}\limfunc{Tr}%
\nolimits_{z_{k+1}}\left[ \delta \left( z_{j}-z_{k+1}\right) ,\tilde{\gamma}%
_{z}^{(k+1)}\right] .  \label{hierarchy:1D GP in differential form}
\end{equation}

\item[Step IV] (GP has a unique solution). When $\tilde{\gamma}%
_{z}^{(k)}\left( 0\right) =\left\vert \phi _{0}\right\rangle \left\langle
\phi _{0}\right\vert ^{\otimes k},$ we know one solution to the 1D focusing
GP hierarchy (\ref{hierarchy:1D GP in differential form}), namely $%
\left\vert \phi \right\rangle \left\langle \phi \right\vert ^{\otimes k}$ if 
$\phi $ solves the 1D focusing NLS \eqref{equation:2D Cubic NLS}. Since we
have proven the \emph{a priori} bound 
\begin{equation*}
\sup_{t}\limfunc{Tr}\left( \dprod\limits_{j=1}^{k}\left\langle \partial
_{z_{j}}\right\rangle \right) \tilde{\gamma}_{z}^{(k)}\left(
\dprod\limits_{j=1}^{k}\left\langle \partial _{z_{j}}\right\rangle \right)
\leqslant C^{k},
\end{equation*}%
A trace theorem then shows that $\left\{ \tilde{\gamma}_{z}^{(k)}\right\} $
verifies the requirement of the following uniqueness theorem and hence we
conclude that $\tilde{\gamma}_{z}^{(k)}=\left\vert \phi \right\rangle
\left\langle \phi \right\vert ^{\otimes k}$.
\end{itemize}

\begin{theorem}[{\protect\cite[Theorem 1.3]{C-HFocusing}}]
\footnote{%
For other uniqueness theorems or related estimates regarding the GP
hierarchies, see \cite%
{E-S-Y2,KlainermanAndMachedon,Kirpatrick,GM,ChenDie,ChenAnisotropic,Beckner,Sohinger,TCNPdeFinitte,HoTaXi14,Sohinger3}%
}\label{THM:OptimalUniqueness of 1D GP}Let%
\begin{equation*}
B_{j,k+1}\gamma _{z}^{(k+1)}=\limfunc{Tr}\nolimits_{z_{k+1}}\left[ \delta
\left( z_{j}-z_{k+1}\right) ,\gamma _{z}^{(k+1)}\right] .
\end{equation*}%
If $\left\{ \gamma _{z}^{(k)}\right\} _{k=1}^{\infty }$ solves the 1D
focusing GP hierarchy (\ref{hierarchy:1D GP in differential form}) subject
to zero initial data and the space-time bound\footnote{%
Though the space-time bound (\ref{Condition:Space-Time Bound}) follows from
a simple trace theorem here, verifying such a condition in 3D is highly
nontrivial and is merely partially solved so far. See \cite%
{TChenAndNPSpace-Time,Chen3DDerivation,C-H2/3}} 
\begin{equation}
\int_{0}^{T}\left\Vert \left( \dprod\limits_{j=1}^{k}\left\langle \partial
_{z_{j}}\right\rangle ^{\varepsilon }\left\langle \partial _{z_{j}^{\prime
}}\right\rangle ^{\varepsilon }\right) B_{j,k+1}\gamma _{z}^{(k+1)}(t,%
\mathbf{\cdot };\mathbf{\cdot })\right\Vert _{L_{\mathbf{z,z}^{\prime
}}^{2}}dt\leqslant C^{k}  \label{Condition:Space-Time Bound}
\end{equation}%
for some $\varepsilon ,C>0$ and all $1\leqslant j\leqslant k.$ Then $\forall
k,t\in \lbrack 0,T]$, $\gamma _{z}^{(k+1)}=0.$
\end{theorem}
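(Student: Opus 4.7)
The plan is to establish uniqueness by combining an iterated Duhamel expansion with the Klainerman--Machedon board-game reduction and a one-dimensional collapsing (trace) estimate, using the hypothesized space-time bound in place of positivity arguments which are unavailable in the focusing setting.

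First, since $\gamma _{z}^{(k)}(0)=0$, Duhamel's principle applied to \eqref{hierarchy:1D GP in differential form} yields, for each $k$,
\[
\gamma _{z}^{(k)}(t) = ic \int_{0}^{t} U^{(k)}(t-s)\, B^{(k+1)} \gamma _{z}^{(k+1)}(s)\, ds,
\]
where $c = b_{0}\int |h_{1}(x)|^{4}\,dx$, $B^{(k+1)} = \sum_{j=1}^{k}B_{j,k+1}$, and $U^{(k)}(\tau)$ denotes the free 1D Schr\"odinger evolution $e^{i\tau\sum_{j}\partial _{z_{j}}^{2}}$ acting by conjugation on density matrices. Iterating $n$ times produces a sum of $k(k+1)\cdots(k+n-1)$ terms, each an $n$-fold time-simplex integral of an alternating composition $U^{(k)} B U^{(k+1)} B \cdots B\, \gamma _{z}^{(k+n)}$.

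Second, I would invoke the Klainerman--Machedon board-game argument to collapse the $(k+n-1)!/(k-1)!$ terms into at most $C^{n}$ equivalence classes, each represented by a single ordered composition of operators $B_{j_{i},k+i}$ integrated over a union-of-simplices region $D \subset [0,t]^{n}$. This converts the factorial blow-up into geometric growth and is the core combinatorial step.

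Third, I would estimate each class using the 1D collapsing (trace) estimate
\[
\Bigl\| \Bigl(\prod_{j=1}^{k}\langle \partial _{z_{j}}\rangle^{\varepsilon}\langle \partial _{z_{j}'}\rangle^{\varepsilon}\Bigr) B_{i,k+1} U^{(k+1)}(\tau) \phi \Bigr\|_{L^{2}_{\tau}L^{2}_{\mathbf{z},\mathbf{z}'}} \leq C \Bigl\| \Bigl(\prod_{j=1}^{k+1}\langle \partial _{z_{j}}\rangle^{\varepsilon}\langle \partial _{z_{j}'}\rangle^{\varepsilon}\Bigr) \phi \Bigr\|_{L^{2}_{\mathbf{z},\mathbf{z}'}},
\]
which in 1D reduces to a standard trace theorem for $e^{i\tau \partial _{z}^{2}}$ together with an $\varepsilon$-Sobolev regularization. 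Applying it $n-1$ times reduces the outermost $BU$ factors, and at the innermost step I would retain $B^{(k+n)}$ and invoke hypothesis \eqref{Condition:Space-Time Bound} directly to bound its weighted $L^{1}_{t}L^{2}$ norm by $C^{k+n}$. Combining the $C^{n}$ board-game classes with the simplex time-integral factor $t^{n-1}/(n-1)!$, I would arrive at
\[
\Bigl\| \Bigl(\prod_{j=1}^{k}\langle \partial _{z_{j}}\rangle^{\varepsilon}\langle \partial _{z_{j}'}\rangle^{\varepsilon}\Bigr) \gamma _{z}^{(k)}(t) \Bigr\|_{L^{2}_{\mathbf{z},\mathbf{z}'}} \leq C^{k} \frac{(Ct)^{n-1}}{(n-1)!} \longrightarrow 0 \quad \text{as } n \to \infty,
\]
for every $t \in [0,T]$, forcing $\gamma _{z}^{(k)} \equiv 0$ for all $k$.

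The principal obstacle is the board-game reduction: without it, the $(k+n-1)!/(k-1)!$ terms would swamp the $1/(n-1)!$ coming from the simplex and the series would not converge. The 1D collapsing estimate itself is elementary, which is precisely why the space-time bound \eqref{Condition:Space-Time Bound} follows from a simple trace theorem in 1D, in stark contrast to the 3D situation where verifying the analogous bound is the main difficulty and only partially resolved.
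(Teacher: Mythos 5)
Your proposal is the standard Klainerman--Machedon framework, which is indeed what the cited source \cite{C-HFocusing} uses (this paper does not reproduce the proof, only cites it, but the footnote and the space-time hypothesis make the KM lineage explicit). The overall skeleton---Duhamel iteration from zero data, board-game reduction of the factorial sum to $C^{n}$ equivalence classes, one-dimensional collapsing estimate, and insertion of the space-time bound at the innermost level---is the right route.

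Two imprecisions are worth flagging. First, you cannot keep both the $C^{n}$ count \emph{and} the simplex volume $t^{n-1}/(n-1)!$. The whole point of the board game is to trade the factorial number of terms for a geometric count, and the price is that each equivalence class is integrated over a \emph{union} of simplices whose measure can be as large as $t^{n}$; the $1/(n-1)!$ is precisely what gets spent. What one actually obtains after iterated Cauchy--Schwarz in time and the collapsing estimate is a bound of order $C^{k}(C\sqrt{t})^{n}$ (or similar), which still forces $\gamma_{z}^{(k)}(t)=0$, but only for $t<t_{0}$ small; one then re-initializes at $t_{0}$ and bootstraps to all of $[0,T]$. The final display as written, with both $C^{n}$ and $(Ct)^{n-1}/(n-1)!$ appearing, is internally inconsistent. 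Second, the collapsing estimate you state---with the weight $\prod\langle\partial_{z_{j}}\rangle^{\varepsilon}\langle\partial_{z_{j}'}\rangle^{\varepsilon}$ at level $k$ on the left and the same $\varepsilon$ at level $k+1$ on the right---is precisely the point that requires proof and is where the 1D setting is forgiving; you should at least indicate that this reduces to a bilinear Fourier/Strichartz computation for $e^{it\partial_{z}^{2}}$ rather than treat it as an obvious $\varepsilon$-Sobolev regularization, since for a delta-contraction a genuinely free (no derivative loss) collapsing bound is not automatic and is the content of the trace lemma that makes the hypothesis \eqref{Condition:Space-Time Bound} ``easy'' in 1D and hard in 3D.

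Neither issue changes the verdict that your approach matches the paper's; they are corrections of the bookkeeping, not of the strategy.
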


Thus the compact sequence $\left\{ \Gamma _{N,\omega }(t)=\left\{ \tilde{%
\gamma}_{N,\omega }^{(k)}\right\} _{k=1}^{N}\right\} $ has only one $%
C_{1}N^{v_{1}(\beta )}\leqslant \omega \leqslant C_{2}N^{v_{2}(\beta )}$
limit point, namely 
\begin{equation*}
\tilde{\gamma}^{(k)}=\dprod\limits_{j=1}^{k}h_{1}\left( x_{j}\right)
h_{1}(x_{j}^{\prime })\phi (t,z_{j})\overline{\phi }(t,z_{j}^{\prime })\,.
\end{equation*}%
We then infer from the definition of the topology that as trace class
operators 
\begin{equation*}
\tilde{\gamma}_{N,\omega }^{(k)}\rightarrow
\dprod\limits_{j=1}^{k}h_{1}\left( x_{j}\right) h_{1}(x_{j}^{\prime })\phi
(t,z_{j})\overline{\phi }(t,z_{j}^{\prime })\text{ weak*.}
\end{equation*}

\begin{itemize}
\item[Step V] (Weak* convergence upgraded to strong). Since the limit
concluded in Step IV is an orthogonal projection, the well-known argument in 
\cite{E-S-Y3} upgrades the weak* convergence to strong. In fact, testing the
sequence against the compact observable 
\begin{equation*}
J^{(k)}=\dprod\limits_{j=1}^{k}h_{1}\left( x_{j}\right) h_{1}(x_{j}^{\prime
})\phi (t,z_{j})\overline{\phi }(t,z_{j}^{\prime }),
\end{equation*}%
and noticing the fact that $\left( \tilde{\gamma}_{N,\omega }^{(k)}\right)
^{2}\leqslant \tilde{\gamma}_{N,\omega }^{(k)}$ since the initial data is
normalized, we see that as Hilbert-Schmidt operators 
\begin{equation*}
\tilde{\gamma}_{N,\omega }^{(k)}\rightarrow
\dprod\limits_{j=1}^{k}h_{1}\left( x_{j}\right) h_{1}(x_{j}^{\prime })\phi
(t,z_{j})\overline{\phi }(t,z_{j}^{\prime })\text{ strongly.}
\end{equation*}%
Since $\limfunc{Tr}\tilde{\gamma}_{N,\omega }^{(k)}=\limfunc{Tr}\tilde{\gamma%
}^{(k)},$ we deduce the strong convergence%
\begin{equation*}
\lim_{\substack{ N,\omega \rightarrow \infty  \\ C_{1}N^{v_{1}(\beta
)}\leqslant \omega \leqslant C_{2}N^{v_{2}(\beta )}}}\limfunc{Tr}\left\vert 
\tilde{\gamma}_{N,\omega }^{(k)}(t,\mathbf{x}_{k},\mathbf{z}_{k};\mathbf{x}%
_{k}^{\prime },\mathbf{z}_{k}^{\prime })-\dprod\limits_{j=1}^{k}h_{1}\left(
x_{j}\right) h_{1}(x_{j}^{\prime })\phi (t,z_{j})\overline{\phi }%
(t,z_{j}^{\prime })\right\vert =0,
\end{equation*}%
via the Gr\"{u}mm's convergence theorem \cite[Theorem 2.19]{Simon}.\footnote{%
One can also use the argument in \cite[Appendix A]{Chen3DDerivation} if one
would like to conclude the convergence with general datum.}
\end{itemize}

\section{Focusing Energy Estimate\label{Section:EnergyEstimate}}

We find it more convenient to prove the energy estimate for $\psi _{N,\omega
}$ and then convert it by scaling to an estimate for $\tilde{\psi}_{N,\omega
}$ (see \eqref{E:rescaled}). Note that, as an operator, we have the
positivity: 
\begin{equation*}
-\Delta _{x_{j}}+\omega ^{2}\left\vert x_{j}\right\vert ^{2}-2\omega
\geqslant 0
\end{equation*}%
Define 
\begin{equation*}
S_{j}\overset{\mathrm{def}}{=}(1-\Delta _{x_{j}}+\omega ^{2}\left\vert
x_{j}\right\vert ^{2}-2\omega -\partial _{z_{j}}^{2})^{1/2}=(1-2\omega
-\Delta _{r_{j}}+\omega ^{2}\left\vert x_{j}\right\vert ^{2})^{1/2},
\end{equation*}%
and write%
\begin{equation*}
S^{(k)}=\dprod\limits_{j=1}^{k}S_{j}.
\end{equation*}

\begin{theorem}[energy estimate]
\label{Theorem:Energy Estimate}For $\beta \in (0,\frac{3}{7})$, let\footnote{%
One notices that $v_{E}(\beta )$ is different from $v_{2}(\beta )$ in the
sense that the term $\frac{2\beta }{1-2\beta }-$ is missing. That
restriction comes from Theorem \ref{Theorem:Convergence to the Coupled
Gross-Pitaevskii}.} 
\begin{equation}
v_{E}(\beta )=\min \left( \frac{1-\beta }{\beta },\frac{\frac{3}{5}-\beta }{%
\beta -\frac{1}{5}}\mathbf{1}_{\beta \geqslant \frac{1}{5}}+\infty \cdot 
\mathbf{1}_{\beta <\frac{1}{5}},\frac{\frac{7}{8}%
-\beta }{\beta }\right) \text{.}  \label{E:vofbetaE}
\end{equation}%
There are constants\footnote{%
By \emph{absolute} constant we mean a constant independent of $V$, $N$, $%
\omega $, etc. Formulas for $C_{1},C_{2}$ in terms of $\Vert V\Vert _{L^{1}}$%
, $\Vert V\Vert _{L^{\infty }}$ can, in principle, be extracted from the
proof.} $C_{1}=C_{1}(\Vert V\Vert _{L^{1}},\Vert V\Vert _{L^{\infty }})$, $%
C_{2}=C_{2}(\Vert V\Vert _{L^{1}},\Vert V\Vert _{L^{\infty }})$, and
absolute constant $C_{3}$, and for each $k\in \mathbb{N}$, there is an
integer $N_{0}(k)$, such that for any $k\in \mathbb{N}$, $N\geq N_{0}(k)$
and $\omega $ which satisfy 
\begin{equation}
C_{1}N^{v_{1}(\beta )}\leqslant \omega \leqslant C_{2}N^{v_{E}(\beta )},
\label{E:3Dto1D}
\end{equation}%
there holds%
\begin{equation}
\langle (\alpha +N^{-1}H_{N,\omega }-2\omega )^{k}\psi ,\psi \rangle
\geqslant \frac{1}{2^{k}}(\Vert S^{(k)}\psi \Vert _{L^{2}}^{2}+N^{-1}\Vert
S_{1}S^{(k-1)}\psi \Vert _{L^{2}}^{2}),  \label{E:energy-nonscaled}
\end{equation}%
where 
\begin{equation*}
\alpha =C_{3}\Vert V\Vert _{L^{1}}^{2}+1.
\end{equation*}
\end{theorem}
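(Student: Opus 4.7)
The plan is to prove the estimate by induction on $k$. Write $\mathcal{H} \defeq \alpha + N^{-1}H_{N,\omega} - 2\omega$, which decomposes as $(\alpha - 1) + N^{-1}K + N^{-2}W$ where $K \defeq \sum_{j=1}^N S_j^2 \geq 0$ (the kinetic and trap energy shifted by the $x$-directional ground state) and $W \defeq N\sum_{i<j} V_{N,\omega}(r_i-r_j)$ is possibly negative. The central difficulty is that $W$ can be strongly negative, so every step of the induction must absorb the $W$-contribution into the positive quadratic form built from $S^{(k)}$ together with the constant $\alpha$.

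For the base case $k=1$, Bose symmetry gives $\langle K\psi,\psi\rangle = N\|S_1\psi\|^2$ and $\langle W\psi,\psi\rangle = \binom{N}{2}\langle V_{N,\omega}(r_1-r_2)\psi,\psi\rangle$, so the inequality reduces to dominating $|N^{-2}\langle W\psi,\psi\rangle|$ by $\tfrac{1}{2}\|S_1\psi\|^2 + (\alpha - 1)\|\psi\|^2$. I would bound $|\langle V_{N,\omega}(r_1-r_2)\psi,\psi\rangle|$ by an anisotropic Sobolev/Hardy-type inequality that returns a prefactor times $\|S_1 S_2\psi\|^2$; the scaling $\|V_{N,\omega}\|_{L^1} = (N\omega)^{-1}\|V\|_{L^1}$ together with the anisotropic $x$-versus-$z$ weights in $S_j$ shows the prefactor is $o(1)$ precisely when $\omega \geq C_1 N^{v_1(\beta)} = C_1 N^{\beta/(1-\beta)}$, giving the lower bound in \eqref{E:3Dto1D}. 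The constant $\alpha = C_3\|V\|_{L^1}^2 + 1$ is sized to absorb the complementary low-frequency piece of a Young-type decomposition of $V_{N,\omega}$.

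For the inductive step $k \to k+1$, the strategy is a two-sided peel-off adapted from the Erd\"{o}s-Schlein-Yau defocusing energy method and its focusing variant in \cite{C-HFocusing}: write $\langle \mathcal{H}^{k+1}\psi,\psi\rangle = \langle \mathcal{H}\cdot \mathcal{H}^{k-1}\cdot \mathcal{H}\psi,\psi\rangle$ and apply the induction hypothesis to the middle factor. The outer copies of $\mathcal{H}$ expand as $(\alpha - 1) + N^{-1}K + N^{-2}W$; the $(\alpha - 1) + N^{-1}K$ pieces combine directly with the lower bound from the hypothesis to yield the desired $(k+1)$-st power, while the $N^{-2}W$ pieces generate commutators of two types, $[S_j^2, V_{N,\omega}(r_i-r_m)]$ and $[V_{N,\omega}(r_i-r_m), V_{N,\omega}(r_p-r_q)]$, which must be treated as errors. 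The auxiliary term $N^{-1}\|S_1 S^{(k-1)}\psi\|^2$ on the right of \eqref{E:energy-nonscaled} is precisely what supplies the spare $S_1$ factor used to dominate these commutators while keeping the induction on track.

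The main obstacle is the anisotropy of $S_j$: its $x$-component carries weight $\omega$ while its $z$-component does not, so Sobolev and Hardy estimates applied to $V_{N,\omega}$ yield different scalings in $(N,\omega)$ depending on whether derivatives land on $x$-coordinates or $z$-coordinates. This two-parameter bookkeeping produces the three distinct rational functions of $\beta$ that define $v_E(\beta)$ in \eqref{E:vofbetaE}: the $(1-\beta)/\beta$ branch from the direct $V\cdot V$ pairing, the $(\tfrac{3}{5} - \beta)/(\beta - \tfrac{1}{5})$ branch from a "$j\in\{i,m\}$" commutator controlled by a three-dimensional Sobolev embedding, and the $(\tfrac{7}{8} - \beta)/\beta$ branch from a "$j\notin\{i,m\}$" commutator controlled via a Hardy-type inequality. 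Verifying that all three estimates can be closed simultaneously with constants independent of $k$ (up to the claimed $2^{-k}$ loss), and that Bose symmetry correctly reduces the many-term commutator sums to a single representative per type, is the principal technical content of the theorem.
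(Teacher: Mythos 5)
Your high-level outline (two-step induction, peel-off via the middle factor, anisotropic bookkeeping driven by the $\omega$-weighted $x$-direction) matches the paper, but the base case and the mechanism you propose for the $\omega$-constraints both have genuine problems.

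The base case $k=1$ is where the proposal actually fails. You propose to bound $|\langle V_{N,\omega}(r_1-r_2)\psi,\psi\rangle|$ by an $o(1)$ prefactor times $\|S_1S_2\psi\|_{L^2}^2$ and then absorb this into $\tfrac12\|S_1\psi\|^2+(\alpha-1)\|\psi\|^2$. But $\|S_1S_2\psi\|^2$ is a two-particle Sobolev quantity at a strictly higher level than anything on the $k=1$ right-hand side, so no amount of smallness in the prefactor lets you close the estimate: you cannot bound $\|S_1S_2\psi\|^2$ by $\|S_1\psi\|^2+\|\psi\|^2$. The paper's actual mechanism (Lemma \ref{L:potential-bound}) is qualitatively different. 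It splits $\psi$ into low, middle, and high Hermite energy shells in the $x_1$-coordinate, bounds $\tfrac1\omega\int|V_{N\omega 12}||\psi|^2\,dr_1$ by $\tfrac1{100}\langle\psi,(-\Delta_{r_1}+\omega^2|x_1|^2-2\omega)\psi\rangle+C_3\|V\|_{L^1}^2\|\psi\|^2$ (a \emph{one-particle} kinetic energy plus a constant $\times L^2$), and the constraint $\omega\geq C_1N^{\beta/(1-\beta)}$ arises from the high-shell term through $\|V_{N\omega}\|_{L^\infty}\sim(N\omega)^{3\beta}\|V\|_{L^\infty}$ — not, as you suggest, from an $L^1$-scaling argument and a two-particle Sobolev bound. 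Equally, $\alpha-1=C_3\|V\|_{L^1}^2$ is sized to cancel the $L^2$ error from the low-shell term, not a "complementary low-frequency piece of a Young-type decomposition."

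The attribution of the three branches of $v_E(\beta)$ is also off in ways that matter. All three constraints come from commutators of the form $[S_j,V_{N\omega\,im}]$ with $j\in\{i,m\}$ (when $j\notin\{i,m\}$ the multiplication operator $V_{N\omega\,im}$ commutes with $S_j^2$, so there is no commutator to estimate, contradicting your $j\notin\{i,m\}$ case; the genuine $V\cdot V$ and cross terms give the non-binding constraint $\omega\leq N^{(2-\beta)/\beta}$). The $\tfrac{7/8-\beta}{\beta}$ and $\tfrac{3/5-\beta}{\beta-1/5}$ branches both arise from a single term (the paper's $D_{11}$), distinguished not by index placement but by whether the Hermite ground-state projection $P_{0\omega}$ or the excited-state projection $P_{\geq1\omega}$ is applied; the estimates used are the Hermite spectral-projection $L^\infty$ bound $\|P_{\ell\omega}f\|_{L^\infty_x}\lesssim\omega^{1/2}\|f\|_{L^2_x}$ combined with 1D Sobolev in $z$ (ground state) and the 3D Sobolev embedding $H^1\hookrightarrow L^6$ (excited state), not a Hardy-type inequality. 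Finally, while your $(\alpha-1)+N^{-1}K+N^{-2}W$ split is a reasonable ESY-style decomposition, the paper instead writes $\alpha+N^{-1}H_{N,\omega}-2\omega=N^{-2}\sum_{i<j}H_{ij}$ with kinetic, constant, and potential all packaged into pairwise $H_{ij}$, which simplifies the symmetry counting considerably; you would need to redo that bookkeeping if you kept your split.
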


\begin{proof}
For smoothness of presentation, we postpone the proof to \S \ref{Section:Pf
of EnergyEstimate}.
\end{proof}

Recall the rescaled operator \eqref{E:tilde-S-def} 
\begin{equation*}
\tilde{S}_{j}=\left[ 1-\partial _{z_{j}}^{2}+\omega \left( -\Delta
_{x_{j}}+\left\vert x_{j}\right\vert ^{2}-2\right) \right] ^{\frac{1}{2}},
\end{equation*}%
we notice that 
\begin{equation*}
(S_{j}\psi )(t,\mathbf{x}_{N},\mathbf{z}_{N})=\omega ^{N/2}(\tilde{S}_{j}%
\tilde{\psi})(t,\sqrt{\omega }\mathbf{x}_{N},\mathbf{z}_{N})\,,
\end{equation*}%
if $\tilde{\psi}_{N,\omega }$ is defined via (\ref{E:rescaled}). Thus we can
convert the conclusion of Theorem \ref{Theorem:Energy Estimate} into
statements about $\tilde{\psi}_{N,\omega }$, $\tilde{S}_{j}$, and $\tilde{%
\gamma}_{N,\omega }^{(k)}$ which we will utilize in the rest of the paper.

\begin{corollary}
\label{Corollary:Energy Bound for Marginal Densities}Define%
\begin{equation*}
\tilde{S}^{(k)}=\prod_{j=1}^{k}\tilde{S}_{j}\text{, }L^{(k)}=\prod_{j=1}^{k}%
\left\langle \nabla _{r_{j}}\right\rangle
\end{equation*}%
Assume $C_{1}N^{v_{1}(\beta )}\leqslant \omega \leqslant C_{2}N^{v_{E}(\beta
)}$. Let $\tilde{\psi}_{N,\omega }(t)=e^{it\tilde{H}_{N,\omega }}\tilde{\psi}%
_{N,\omega }(0)$ and $\{\tilde{\gamma}_{N,\omega }^{(k)}(t)\}$ be the
associated marginal densities, then for all $\omega \geqslant 1$ , $%
k\geqslant 0$, $N\ $large enough, we have the uniform-in-time bound 
\begin{equation}
\func{Tr}\tilde{S}^{(k)}\tilde{\gamma}_{N,\omega }^{(k)}\tilde{S}%
^{(k)}=\left\Vert \tilde{S}^{(k)}\tilde{\psi}_{N,\omega }(t)\right\Vert
_{L^{2}(\mathbb{R}^{3N})}^{2}\leqslant C^{k}.  \label{E:e-1}
\end{equation}%
Consequently, 
\begin{equation}
\func{Tr}L^{(k)}\tilde{\gamma}_{N,\omega }^{(k)}L^{(k)}=\left\Vert L^{(k)}%
\tilde{\psi}_{N,\omega }(t)\right\Vert _{L^{2}(\mathbb{R}^{3N})}^{2}%
\leqslant C^{k},  \label{E:e-2}
\end{equation}%
and 
\begin{equation}
\Vert \mathcal{P}_{\mathbf{\alpha }}\tilde{\psi}_{N,\omega }\Vert _{L^{2}(%
\mathbb{R}^{3N})}\leqslant C^{k}\omega ^{-|\mathbf{\alpha }|/2}\,\text{, }%
\left\vert \func{Tr}\mathcal{P}_{\mathbf{\alpha }}\tilde{\gamma}_{N,\omega
}^{(k)}\mathcal{P}_{\mathbf{\beta }}\right\vert \leqslant C^{k}\omega ^{-%
\frac{1}{2}|\mathbf{\alpha }|-\frac{1}{2}\mathbf{|\beta }|}  \label{E:e-3}
\end{equation}%
where $\mathcal{P}_{\mathbf{\alpha }}$ and $\mathcal{P}_{\mathbf{\beta }}$
are defined as in (\ref{def:multiple projection for omega=1}).
\end{corollary}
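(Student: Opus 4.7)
The plan is to deduce all three bounds from Theorem \ref{Theorem:Energy Estimate} together with two auxiliary ingredients already in hand: the scaling isometry $\|S^{(k)}\psi_{N,\omega}(t)\|_{L^2}^2 = \|\tilde{S}^{(k)}\tilde{\psi}_{N,\omega}(t)\|_{L^2}^2$ (immediate from the identity $(S_j\psi_{N,\omega})(\mathbf{x}_N,\mathbf{z}_N) = \omega^{N/2}(\tilde{S}_j\tilde{\psi}_{N,\omega})(\sqrt{\omega}\mathbf{x}_N,\mathbf{z}_N)$ noted in the text and a change of variables), and the standard partial-trace identity $\operatorname{Tr}\tilde{S}^{(k)}\tilde{\gamma}_{N,\omega}^{(k)}\tilde{S}^{(k)} = \|\tilde{S}^{(k)}\tilde{\psi}_{N,\omega}\|_{L^2}^2$ arising from $\tilde\gamma_{N,\omega}^{(k)}$ being the $k$th marginal of the pure state $\tilde\psi_{N,\omega}$.

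To obtain (\ref{E:e-1}), I apply Theorem \ref{Theorem:Energy Estimate} to $\psi_{N,\omega}(t)$ at an arbitrary time $t$. Since $N\alpha$ is scalar, I expand
\[
N^k(\alpha + N^{-1}H_{N,\omega} - 2\omega)^k = \sum_{j=0}^{k}\binom{k}{j}(N\alpha)^{k-j}(H_{N,\omega} - 2N\omega)^j
\]
and bound each moment $m_j(t) \defeq \langle \psi_{N,\omega}(t), (H_{N,\omega} - 2N\omega)^j\psi_{N,\omega}(t)\rangle$. Because $H_{N,\omega}$ commutes with $e^{itH_{N,\omega}}$, $m_j(t) = m_j(0)$; for even $j$, $m_j(0)\leq C^jN^j$ is exactly hypothesis (c'); for odd $j$, I interpolate by Cauchy--Schwarz in the spectral measure of $H_{N,\omega} - 2N\omega$, obtaining $|m_j(0)| \leq \sqrt{m_{j-1}(0)\,m_{j+1}(0)} \leq C^jN^j$. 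Summing the binomial series gives $\langle\psi_{N,\omega}(t), (\alpha + N^{-1}H_{N,\omega} - 2\omega)^k\psi_{N,\omega}(t)\rangle \leq (\alpha + C)^k$, and combining with the lower bound from Theorem \ref{Theorem:Energy Estimate} and the scaling isometry produces (\ref{E:e-1}).

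For (\ref{E:e-2}), the 2D harmonic oscillator identity $-\Delta_{x_j} + |x_j|^2 - 2 = 2\mathcal{N}_{x_j}$ with $\mathcal{N}_{x_j}\geq 0$, combined with $|x_j|^2\geq 0$, yields $-\Delta_{x_j}\leq 2\mathcal{N}_{x_j} + 2$; for $\omega\geq 1$ this produces the one-particle comparison $L_j^2 \leq 3\tilde{S}_j^2$, and since the single-particle factors commute across different $j$, $(L^{(k)})^2 \leq 3^k(\tilde{S}^{(k)})^2$, so (\ref{E:e-2}) follows from (\ref{E:e-1}). For (\ref{E:e-3}), on the range of $\mathcal{P}_1^j = P_{\geq 1}^j$ the 2D Hermite operator has eigenvalue $\geq 4$, so $\tilde{S}_j^2\mathcal{P}_1^j \geq 2\omega\mathcal{P}_1^j$; since $\mathcal{P}_1^j$ commutes with $\tilde{S}_j^2$, this becomes the one-particle operator inequality $\mathcal{P}_1^j \leq (2\omega)^{-1}\tilde{S}_j^2$. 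Multiplying across $j$ (using $\mathcal{P}_0^j \leq I$ and $\tilde{S}_j^2 \geq 1$ to drop the unneeded factors) yields $\mathcal{P}_\alpha \leq (2\omega)^{-|\alpha|}(\tilde{S}^{(k)})^2$, hence $\|\mathcal{P}_\alpha\tilde{\psi}_{N,\omega}\|^2 \leq C^k\omega^{-|\alpha|}$; the trace bound follows by writing $\operatorname{Tr}\mathcal{P}_\alpha\tilde{\gamma}_{N,\omega}^{(k)}\mathcal{P}_\beta = \langle(\mathcal{P}_\beta \otimes I)\tilde{\psi}_{N,\omega}, (\mathcal{P}_\alpha \otimes I)\tilde{\psi}_{N,\omega}\rangle$ and applying Cauchy--Schwarz.

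The only mildly delicate point is the binomial step used for (\ref{E:e-1}): because the interaction is attractive, $H_{N,\omega} - 2N\omega$ is not a priori sign-definite, so its odd moments can have either sign and are not directly controlled by (c'); the spectral Cauchy--Schwarz bridge between odd and adjacent even moments is what keeps the expansion under control. Everything else is elementary operator-theoretic bookkeeping riding on top of Theorem \ref{Theorem:Energy Estimate} and the already-recorded scaling relations.
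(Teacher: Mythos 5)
Your argument is correct and follows essentially the same route as the paper's: rescale the conclusion of Theorem~\ref{Theorem:Energy Estimate}, exploit conservation of the Hamiltonian functional, expand $(\alpha + N^{-1}\tilde H_{N,\omega}-2\omega)^k$ binomially (all coefficients nonnegative since $\alpha>0$), feed in the hypothesis on the initial data, and then deduce \eqref{E:e-2} and \eqref{E:e-3} from one-particle operator comparisons and Lemma~\ref{L:trace-of-tp-kernel} plus Cauchy--Schwarz.

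One point is worth correcting, though it does not create a gap. You write that for odd $j$ the moments $m_j(0)=\langle \psi_{N,\omega}(0),(H_{N,\omega}-2N\omega)^j\psi_{N,\omega}(0)\rangle$ ``are not directly controlled by (c')'' and you bridge them via the spectral Cauchy--Schwarz inequality $|m_j|\leq\sqrt{m_{j-1}m_{j+1}}$. But hypothesis (c') in Theorem~\ref{Theorem:3D->2D BEC} is stated as holding ``$\forall k\geqslant 1$'' --- odd \emph{and} even --- and the binomial expansion requires only one-sided upper bounds on $m_{k-j}$ (all the terms appear with nonnegative coefficients), which (c') supplies directly regardless of sign. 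So the interpolation step is logically valid but superfluous; the paper simply applies (c') termwise. If (c') had been phrased only for even $k$ your bridge would indeed be the right patch, and it costs nothing, but as written the hypothesis already covers what you need.

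For \eqref{E:e-2} you prove the one-particle inequality $L_j^2\leq 3\tilde S_j^2$ by a direct computation using $-\Delta_{x_j}+|x_j|^2\geq 2$ and $\omega\geq 1$; the paper instead invokes Lemma~\ref{L:coercivity} (estimate \eqref{E:tilde-S-1}), whose proof runs through the $P_0/P_{\geq 1}$ decomposition and Lemma~\ref{L:op-stuff}(1). Your direct route is a legitimate shortcut and reproduces exactly the constant $3^k$ noted in the paper's footnote. The rest --- $\mathcal P_1^j\leq(2\omega)^{-1}\tilde S_j^2$ from \eqref{E:tilde-S-3}, multiplying commuting factors across $j$, and identifying $\operatorname{Tr}\mathcal P_{\mathbf\alpha}\tilde\gamma_{N,\omega}^{(k)}\mathcal P_{\mathbf\beta}$ as an inner product --- matches the paper's argument.
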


\begin{proof}
Substituting \eqref{E:rescaled} into estimate \eqref{E:energy-nonscaled} and
rescaling, we obtain%
\begin{equation*}
\left\Vert \tilde{S}^{(k)}\tilde{\psi}_{N,\omega }(t)\right\Vert _{L^{2}(%
\mathbb{R}^{3N})}^{2}\leqslant C^{k}\langle \tilde{\psi}_{N,\omega
}(t),(\alpha +N^{-1}\tilde{H}_{N,\omega }-2\omega )^{k}\tilde{\psi}%
_{N,\omega }(t)\rangle .
\end{equation*}%
The quantity on the right hand side is conserved, therefore%
\begin{equation*}
=C^{k}\langle \tilde{\psi}_{N,\omega }(0),(\alpha +N^{-1}\tilde{H}_{N,\omega
}-2\omega )^{k}\tilde{\psi}_{N,\omega }(0)\rangle .
\end{equation*}%
Apply the binomial theorem twice, 
\begin{eqnarray*}
&\leqslant &C^{k}\sum_{j=0}^{k}%
\begin{pmatrix}
k \\ 
j%
\end{pmatrix}%
\alpha ^{j}\langle \tilde{\psi}_{N,\omega }(0),(N^{-1}\tilde{H}_{N,\omega
}-2\omega )^{k-j}\tilde{\psi}_{N,\omega }(0)\rangle \\
&\leqslant &C^{k}\sum_{j=0}^{k}%
\begin{pmatrix}
k \\ 
j%
\end{pmatrix}%
\alpha ^{j}\left( C\right) ^{k-j} \\
&=&C^{k}\left( \alpha +C\right) ^{k}\leqslant \tilde{C}^{k}.
\end{eqnarray*}%
where we used condition (\ref{Condition:EnergyBoundOnInitialData}) in the
second to last line. So we have proved (\ref{E:e-1}). Putting \eqref{E:e-1}
and \eqref{E:tilde-S-1} together, estimate \eqref{E:e-2} then follows.%
\footnote{%
We remark that, though $L^{(k)}\leqslant 3^{k}\tilde{S}^{(k)}$, it is not
true that $L^{(k)}\leqslant C^{k}S^{(k)}$ for any $C$ independent of $\omega 
$ because of the ground state case.} The first inequality of \eqref{E:e-3}
follows from \eqref{E:e-1} and \eqref{E:tilde-S-3}. By Lemma \ref%
{L:trace-of-tp-kernel}, $\func{Tr}\mathcal{P}_{\mathbf{\alpha }}\tilde{\gamma%
}_{N,\omega }^{(k)}\mathcal{P}_{\mathbf{\beta }}=\langle \mathcal{P}_{%
\mathbf{\alpha }}\tilde{\psi}_{N,\omega },\mathcal{P}_{\mathbf{\beta }}%
\tilde{\psi}_{N,\omega }\rangle $, so the second inequality of \eqref{E:e-3}
follows by Cauchy-Schwarz.
\end{proof}

\subsection{Proof of the Focusing Energy Estimate\label{Section:Pf of
EnergyEstimate}}

Note that 
\begin{equation*}
N^{-1}H_{N,\omega }-2\omega =N^{-1}\sum_{i=1}^{N}(-\Delta _{r_{i}}+\omega
^{2}|x_{i}|^{2}-2\omega )+N^{-2}\omega ^{-1}\sum_{1\leq i<j\leq N}V_{N\omega
}(r_{i}-r_{j}),
\end{equation*}%
where we have used the notation\footnote{%
We remind the reader that this $V_{N\omega }$ is different from $V_{N,\omega
}$ defined in (\ref{E:V}).} 
\begin{equation*}
V_{N\omega }(r)=(N\omega )^{3\beta }V((N\omega )^{\beta }r).
\end{equation*}

Define 
\begin{equation*}
H_{Kij}=(\alpha -\Delta _{r_{i}}+\omega ^{2}|x_{i}|^{2}-2\omega )+(\alpha
-\Delta _{r_{j}}+\omega ^{2}|x_{j}|^{2}-2\omega )
\end{equation*}%
where the $K$ stands for \textquotedblleft kinetic\textquotedblright\ and 
\begin{equation*}
H_{Iij}=\omega ^{-1}V_{N\omega ij}=\omega ^{-1}V_{N\omega }(r_{i}-r_{j})
\end{equation*}%
where the $I$ is for \textquotedblleft interaction\textquotedblright . If we
write%
\begin{equation*}
H_{ij}=H_{Kij}+H_{Iij},
\end{equation*}%
then 
\begin{equation}
\alpha +N^{-1}H_{N,\omega }-2\omega =\frac{1}{2}N^{-2}\sum_{1\leq i\neq
j\leq N}H_{ij}=N^{-2}\sum_{1\leq i<j\leq N}H_{ij}.  \label{E:3Dto1D6}
\end{equation}%
We will first prove Theorem \ref{Theorem:Energy Estimate} for $k=1$ and $k=2$%
. Then, by a two-step induction (result known for $k$ implies result for $%
k+2 $), we establish the general case. Before we proceed, we prove some
estimates regarding the Hermite operator.

\subsubsection{Estimates Needed to Prove Theorem \protect\ref{Theorem:Energy
Estimate}}

\begin{lemma}
Let $P_{l\omega }$ be defined as in (\ref{def:I projection for omega}).
There is a constant independent of $\ell $ and $\omega $ such that 
\begin{equation}
\Vert P_{\ell \omega }f\Vert _{L_{x}^{\infty }}\leqslant C\omega ^{1/2}\Vert
f\Vert _{L_{x}^{2}}.  \label{E:3Dto1D4}
\end{equation}%
with constant independent of $\ell $ and $\omega $.
\end{lemma}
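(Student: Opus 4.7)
The plan is to exploit the natural scaling symmetry of the $2$D isotropic harmonic oscillator to reduce to the case $\omega=1$, and then prove the uniform-in-$\ell$ bound via classical identities for Hermite/Laguerre functions.

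\emph{Scaling reduction.} Let $U_{\omega}:L^{2}(\mathbb{R}^{2})\to L^{2}(\mathbb{R}^{2})$ be the unitary dilation $(U_{\omega}g)(x)=\omega^{1/2}g(\omega^{1/2}x)$. A direct computation gives $U_{\omega}^{-1}(-\Delta_{x}+\omega^{2}|x|^{2})U_{\omega}=\omega(-\Delta_{x}+|x|^{2})$, so that $U_{\omega}^{-1}P_{\ell\omega}U_{\omega}=P_{\ell}$ (the projection at $\omega=1$ onto eigenvalue $2(\ell+1)$). Since $\|U_{\omega}g\|_{L^{\infty}}=\omega^{1/2}\|g\|_{L^{\infty}}$ while $U_{\omega}^{-1}$ is an $L^{2}$ isometry, writing $\|P_{\ell\omega}f\|_{L^{\infty}}=\omega^{1/2}\|P_{\ell}(U_{\omega}^{-1}f)\|_{L^{\infty}}$ reduces the problem to the $\omega=1$ case: prove $\|P_{\ell}g\|_{L^{\infty}(\mathbb{R}^{2})}\leqslant C\|g\|_{L^{2}(\mathbb{R}^{2})}$ with $C$ independent of $\ell$.

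\emph{Reduction to kernel diagonal.} Let $K_{\ell}(x,y)$ be the integral kernel of $P_{\ell}$. Since $P_{\ell}$ is a self-adjoint projection, $\int|K_{\ell}(x,y)|^{2}\,dy=K_{\ell}(x,x)$, and Cauchy--Schwarz gives $|P_{\ell}g(x)|^{2}\leqslant K_{\ell}(x,x)\,\|g\|_{L^{2}}^{2}$. Thus it suffices to prove $\sup_{x\in\mathbb{R}^{2}}K_{\ell}(x,x)\leqslant C$ uniformly in $\ell$.

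\emph{Uniform kernel bound.} Writing $K_{\ell}(x,y)=\sum_{n_{1}+n_{2}=\ell}h_{n_{1}}(x_{1})h_{n_{1}}(y_{1})h_{n_{2}}(x_{2})h_{n_{2}}(y_{2})$ and applying Mehler's formula in each coordinate yields the generating function
\begin{equation*}
\sum_{\ell\geqslant 0}t^{\ell}K_{\ell}(x,x)=\frac{1}{\pi(1-t^{2})}\exp\!\left(-\frac{(1-t)|x|^{2}}{1+t}\right),\qquad 0\leqslant t<1.
\end{equation*}
My approach would be to pass to polar coordinates and decompose in the angular-momentum basis $\psi_{n,m}(r,\theta)=c_{n,m}r^{|m|}L_{n}^{(|m|)}(r^{2})e^{-r^{2}/2}e^{im\theta}$, so that $K_{\ell}(x,x)=\sum_{2n+|m|=\ell}|\psi_{n,m}(x)|^{2}$ is a sum of squares of normalized Laguerre functions. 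The classical Askey--Wainger--Muckenhoupt uniform $L^\infty$ bounds for Laguerre functions then collapse this sum to $O(1)$, independent of $\ell$ and $x$. An alternative route is to extract $K_\ell(x,x)$ via Cauchy's integral formula applied to the generating function on $|t|=r$, splitting the contour into a neighborhood of $t=-1$ (where the Gaussian factor provides strong decay exploiting the $|x|$-dependence) and its complement, then optimizing in $r$.

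\emph{Main obstacle.} The only substantive step is the uniform-in-$\ell$ pointwise bound on $K_{\ell}(x,x)$; the scaling and Cauchy--Schwarz reductions are routine. A crude extraction of the Taylor coefficient from the generating function at an arbitrary $r\in(0,1)$ gives only $O(\ell)$ growth, so the argument must exploit either (i) the sharp Plancherel--Rotach/Askey--Wainger asymptotics for Laguerre functions, or (ii) the Gaussian exponential suppression in the generating function outside the classical region, which precisely balances the $(1-t^{2})^{-1}$ singularity when $|x|$ is of order $\sqrt{\ell}$.
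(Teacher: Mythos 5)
Your scaling reduction to $\omega=1$ and the Cauchy--Schwarz reduction to the diagonal bound $\sup_{x}K_{\ell}(x,x)\leqslant C$ are correct and standard, and the Mehler generating function you wrote is right. These steps match the general shape of the argument; the paper itself does not write out a self-contained proof here but instead cites two sources: Koch--Tataru (spectral cluster bounds for general at-most-quadratic potentials, via Littlewood--Paley) and Thangavelu / \cite{ChenDie} (a special closed form for the 2D Hermite projection kernel). Your route is in the spirit of the second citation.

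The difficulty is that the crux step, $\sup_{x}K_{\ell}(x,x)=O(1)$ uniformly in $\ell$, is genuinely where all the work is, and both of your sketches as stated have real gaps. For the angular-momentum route: the sum $\sum_{2n+|m|=\ell}|\psi_{n,m}(x)|^{2}$ has roughly $\ell$ terms, and the Laguerre parameter $\alpha=|m|$ ranges up to $\ell$. The Askey--Wainger / Muckenhoupt $L^{\infty}$ bounds are stated for fixed $\alpha$ and degrade as $\alpha$ grows; even granting a clean uniform $O(1)$ bound per term, that would only give $O(\ell)$ for the sum, not $O(1)$. What actually saves the day is that the different $m$-modes concentrate in disjoint radial annuli, so at any fixed $x$ only $O(1)$ terms contribute at full strength; or, equivalently, one uses the 2D-special closed form for $K_{\ell}(x,x)$ as a \emph{single} Laguerre expression (this is precisely Thangavelu's Lemma 3.2.2). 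Your proposal does not make this explicit. For the contour route: on $|t|=r$ one has $\operatorname{Re}\frac{(1-t)u}{1+t}=\frac{(1-r^{2})u}{|1+t|^{2}}\geqslant 0$, so $|e^{-\cdot}|\leqslant 1$ everywhere on the contour, but the angular integral of $|1-t^{2}|^{-1}$ then costs a factor $\log\frac{1}{1-r}$, and with $r=1-1/\ell$ this gives $O(\log\ell)$, not $O(1)$. Removing the log requires exploiting the exponential suppression near $t=-1$ nontrivially \emph{and} handling the region $u\approx 0$ (where there is no suppression) by a separate argument. In short, the reductions are fine and the plan is plausible, but the \textquotedblleft main obstacle\textquotedblright\ you flagged is not a side detail: it is exactly the content of the cited lemma, and neither sketch you gave closes it.
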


\begin{proof}
This estimate has more than one proof. It is a special result in 2D. It does
not follow from the Strichartz estimates. For a modern argument which proves
the estimate for general at most quadratic potentials, see \cite[Corollary
2.2]{KochTataru}.  In the special case of the quantum harmonic oscillator, one can also use a special
property of 2D Hermite projection kernels to yield a direct proof without using Littlewood-Paley theory -- see  \cite[Lemma 3.2.2]{Thangavelu}, \cite[Remark 8]{ChenDie}.
\end{proof}

\begin{lemma}
\label{L:potential-bound}There is an absolute constant $C_{3}>0$ and a
constant $C_{1}=C\left( \Vert V\Vert _{L^{1}},\Vert V\Vert _{L^{\infty
}}\right) $ such that if 
\begin{equation*}
\omega \geq C_{1}N^{\beta /(1-\beta )}
\end{equation*}%
then 
\begin{eqnarray}
&&\frac{1}{\omega }\int \left\vert V_{N\omega }(r_{1}-r_{2})\right\vert
\left\vert \psi (r_{1},r_{2})\right\vert ^{2}dr_{1}  \label{E:3Dto1D14} \\
&\leqslant &\frac{1}{100}\left\langle \psi (r_{1},r_{2}),(-\Delta
_{r_{1}}+\omega ^{2}|x_{1}|^{2}-2\omega )\psi (r_{1},r_{2})\right\rangle
_{r_{1}}+C_{3}\left\Vert V\right\Vert _{L^{1}}^{2}\left\Vert \psi
(r_{1},r_{2})\right\Vert _{L_{r_{1}}^{2}}^{2}.  \notag
\end{eqnarray}%
The above estimate is performed in one coordinate only (taken to be $r_{1}$%
), and the other coordinate $r_{2}$ are effectively \textquotedblleft
frozen\textquotedblright . In particular, let 
\begin{equation*}
f(r_{2},\ldots ,r_{N})=\int \left\vert V_{N\omega }(r_{1}-r_{2})\right\vert
\left\vert \psi _{1}(r_{1},\ldots ,r_{N})\right\vert \left\vert \psi
_{2}(r_{1},\ldots ,r_{N})\right\vert dr_{1}
\end{equation*}%
Then 
\begin{equation}
f(r_{2},\ldots ,r_{N})\lesssim \omega \Vert S_{1}\psi _{1}(r_{1},\cdots
,r_{N})\Vert _{L_{r_{1}}^{2}}\Vert S_{1}\psi _{2}(r_{1},\cdots ,r_{N})\Vert
_{L_{r_{1}}^{2}},  \label{E:3Dto1D11}
\end{equation}%
The implicit constant in $\lesssim $ is an absolute constant times $\Vert
V\Vert _{L^{1}}+\Vert V\Vert _{L^{\infty }}$.
\end{lemma}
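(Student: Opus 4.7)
The plan is to split $\psi$ via the Hermite ground-state projection $P_{0}^{1}$ acting in the $x_{1}$ variable and handle the two pieces by quite different Sobolev estimates: $P_{0}^{1}\psi = h_{\omega}(x_{1})\phi(z_{1},\mathbf{r}_{N-1})$ is essentially one-dimensional in $r_{1}$, while $\psi^{\perp} = (I-P_{0}^{1})\psi$ sees the spectral gap of $-\Delta_{x_{1}}+\omega^{2}\left\vert x_{1}\right\vert^{2}-2\omega$. Since $P_{0}^{1}$ commutes with $K_{1} = -\Delta_{r_{1}}+\omega^{2}\left\vert x_{1}\right\vert^{2}-2\omega$, the pointwise inequality $\left\vert \psi \right\vert^{2} \leqslant 2\left\vert P_{0}^{1}\psi \right\vert^{2} + 2\left\vert \psi^{\perp}\right\vert^{2}$ and the orthogonal identity $\langle \psi, K_{1}\psi \rangle_{r_{1}} = \langle P_{0}^{1}\psi, K_{1}P_{0}^{1}\psi \rangle + \langle \psi^{\perp}, K_{1}\psi^{\perp}\rangle$ reduce \eqref{E:3Dto1D14} to two separate estimates.

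For the ground-state piece I would integrate $x_{1}$ out first, using $\left\Vert h_{\omega}^{2}\right\Vert_{L_{x}^{\infty}} \leqslant C\omega$ (from \eqref{E:3Dto1D4} with $\ell = 0$) together with the elementary identity $\int \left\vert V_{N\omega}(x_{1}-x_{2}, z_{1}-z_{2})\right\vert dx_{1} = (N\omega)^{\beta}W((N\omega)^{\beta}(z_{1}-z_{2}))$, where $W(z) = \int \left\vert V(x,z)\right\vert dx$ satisfies $\left\Vert W\right\Vert_{L^{1}} = \left\Vert V\right\Vert_{L^{1}}$. This kernel is an approximate identity in $z_{1}$ of total mass $\left\Vert V\right\Vert_{L^{1}}$, which I pair with $\left\Vert \phi \right\Vert_{L_{z_{1}}^{\infty}}^{2}$; the 1D Sobolev inequality $\left\Vert \phi \right\Vert_{L_{z}^{\infty}}^{2} \leqslant 2\left\Vert \phi \right\Vert_{L^{2}}\left\Vert \partial_{z}\phi \right\Vert_{L^{2}}$ and AM--GM then convert this, after dividing by $\omega$, into $\tfrac{1}{200}\left\Vert \partial_{z_{1}}\phi \right\Vert_{L^{2}}^{2} + C\left\Vert V\right\Vert_{L^{1}}^{2}\left\Vert \phi \right\Vert_{L^{2}}^{2}$. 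Because $(-\Delta_{x_{1}}+\omega^{2}\left\vert x_{1}\right\vert^{2}-2\omega)h_{\omega} = 0$, the identity $\left\Vert \partial_{z_{1}}\phi \right\Vert_{L^{2}}^{2} = \langle P_{0}^{1}\psi, K_{1}P_{0}^{1}\psi \rangle$ then recovers the right-hand side of \eqref{E:3Dto1D14} on this piece.

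For the orthogonal piece I would chain 3D H\"{o}lder with the 3D Sobolev embedding:
\[
\int \left\vert V_{N\omega}\right\vert \left\vert \psi^{\perp}\right\vert^{2}\,dr_{1} \leqslant \left\Vert V_{N\omega}\right\Vert_{L^{3/2}} \left\Vert \psi^{\perp}\right\Vert_{L^{6}}^{2} \leqslant C(N\omega)^{\beta}\left\Vert V\right\Vert_{L^{3/2}}\left\Vert \nabla_{r_{1}}\psi^{\perp}\right\Vert_{L^{2}}^{2},
\]
where $\left\Vert V_{N\omega}\right\Vert_{L^{3/2}} = (N\omega)^{\beta}\left\Vert V\right\Vert_{L^{3/2}}$ by scaling and $\left\Vert V\right\Vert_{L^{3/2}} \leqslant \left\Vert V\right\Vert_{L^{1}}^{1/3}\left\Vert V\right\Vert_{L^{\infty}}^{2/3}$ by interpolation. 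Since $\psi^{\perp}$ lies in the span of Hermite modes with $(-\Delta_{x_{1}}+\omega^{2}\left\vert x_{1}\right\vert^{2}-2\omega)$-eigenvalue $\geqslant 2\omega$, the a priori bound $2\omega \left\Vert \psi^{\perp}\right\Vert_{L^{2}}^{2} \leqslant \langle \psi^{\perp}, K_{1}\psi^{\perp}\rangle$ holds, and together with the operator identity $-\Delta_{r_{1}}+\omega^{2}\left\vert x_{1}\right\vert^{2} = K_{1}+2\omega$ it yields $\left\Vert \nabla_{r_{1}}\psi^{\perp}\right\Vert_{L^{2}}^{2} \leqslant 2\langle \psi^{\perp}, K_{1}\psi^{\perp}\rangle$. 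Dividing by $\omega$ produces a coefficient $CN^{\beta}\omega^{\beta-1}\left\Vert V\right\Vert_{L^{3/2}}$, which is $\leqslant \tfrac{1}{200}$ exactly when $\omega \geqslant C_{1}N^{\beta/(1-\beta)}$ for a suitable $C_{1} = C_{1}(\left\Vert V\right\Vert_{L^{1}}, \left\Vert V\right\Vert_{L^{\infty}})$; combining the two pieces by orthogonality gives \eqref{E:3Dto1D14}.

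Once \eqref{E:3Dto1D14} is in hand, \eqref{E:3Dto1D11} follows by Cauchy--Schwarz in $r_{1}$, namely $f(r_{2}, \ldots) \leqslant \left( \int \left\vert V_{N\omega}\right\vert \left\vert \psi_{1}\right\vert^{2} dr_{1}\right)^{1/2} \left( \int \left\vert V_{N\omega}\right\vert \left\vert \psi_{2}\right\vert^{2} dr_{1}\right)^{1/2}$, combined with the observation that \eqref{E:3Dto1D14} is equivalent, after absorbing the $\left\Vert \psi \right\Vert^{2}$ term into $\left\Vert S_{1}\psi \right\Vert^{2}$, to $\omega^{-1}\int \left\vert V_{N\omega}\right\vert \left\vert \psi \right\vert^{2} dr_{1} \lesssim \left\Vert S_{1}\psi \right\Vert_{L_{r_{1}}^{2}}^{2}$. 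The hard part is calibrating the two regimes: the orthogonal modes need $\omega \gg N^{\beta/(1-\beta)}$ to tame the focusing singularity in 3D Sobolev, while the ground-state mode would itself be singular without the $\omega$-sized gain $\left\Vert h_{\omega}\right\Vert_{L_{x}^{\infty}}^{2} \sim \omega$ supplied by \eqref{E:3Dto1D4}, which exactly cancels the $\omega^{-1}$ prefactor in front of the potential term.
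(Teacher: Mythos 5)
Your proof is correct, but it takes a genuinely different and in fact more streamlined route than the paper's. The paper splits $\psi$ in the $x_1$-Hermite variable into \emph{three} pieces, $\psi = P_{0\omega}\psi + \sum_{\ell=1}^{e-1}P_{\ell\omega}\psi + P_{\geq e\omega}\psi$, handles the ground piece the same way you do, but then treats the middle energies via the Koch--Tataru type $L^\infty_x$ Hermite bound \eqref{E:3Dto1D4} for each projection $P_{\ell\omega}$, interpolation in $z$, and a careful sum over $\ell$ carrying a factor $\ell^{-1/4}$, followed by an optimization of the cutoff $e$ (taken $\sim\omega$); the high energies are then killed with the crude $\|V_{N\omega}\|_{L^\infty}$ bound and the $(\omega e)^{-1}$ gain from the harmonic oscillator. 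Your two-piece split $\psi = P_0^1\psi + \psi^\perp$ replaces the entire middle-and-high-energy analysis by one H\"older/Sobolev step, $\int |V_{N\omega}||\psi^\perp|^2\,dr_1 \leq \|V_{N\omega}\|_{L^{3/2}}\|\psi^\perp\|_{L^6}^2 \lesssim (N\omega)^\beta\|V\|_{L^{3/2}}\|\nabla_{r_1}\psi^\perp\|_{L^2}^2$, and then exploits the spectral gap $-\Delta_{x_1}+\omega^2|x_1|^2-2\omega \geq 2\omega$ on $(P_0^1)^\perp$ to convert $\|\nabla_{r_1}\psi^\perp\|^2$ into $\langle\psi^\perp, K_1\psi^\perp\rangle$. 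Dividing by $\omega$ gives $N^\beta\omega^{\beta-1}$, and the same threshold $\omega\gtrsim N^{\beta/(1-\beta)}$ emerges as in the paper. The only nontrivial Hermite fact you invoke is $\|h_\omega\|_{L^\infty_x}^2\sim\omega$, which is immediate from the explicit Gaussian and does not require the full strength of \eqref{E:3Dto1D4}; the paper, by contrast, genuinely needs the $\ell$-uniform bound in \eqref{E:3Dto1D4} for the middle-energy sum. Your approach buys simplicity and avoids the delicate interpolation-and-summation step; the paper's more granular Hermite decomposition is the template reused for harder estimates elsewhere in the paper (e.g.\ Lemma~\ref{L:higher-k-2}), so it is not pointless there, but for this lemma alone your argument is a clean and correct alternative. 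One small bookkeeping remark: the constant $C_1$ you produce carries an exponent $1/(1-\beta)$ and hence depends mildly on $\beta$ in addition to $\|V\|_{L^1}$ and $\|V\|_{L^\infty}$; the paper's constant has the same implicit dependence, so this is consistent with (though not quite emphasized by) the lemma statement.
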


\begin{proof}
By Cauchy-Schwarz, 
\begin{equation*}
\int |V_{N\omega 12}|\,|\psi _{1}|\,|\psi _{2}|dr_{1}\leqslant \left( \int
|V_{N\omega 12}||\psi _{1}|^{2}\,dr_{1}\right) ^{1/2}\left( \int |V_{N\omega
12}||\psi _{2}|^{2}\,dr_{1}\right) ^{1/2}.
\end{equation*}%
Thus, assuming (\ref{E:3Dto1D14}) and using the facts that%
\begin{eqnarray*}
S_{1}^{2} &\geqslant &1, \\
S_{1}^{2} &\geqslant &(-\Delta _{r_{1}}+\omega ^{2}|x_{1}|^{2}-2\omega ),
\end{eqnarray*}%
we obtain (\ref{E:3Dto1D11}). So we only need to to prove (\ref{E:3Dto1D14}).

Taking $P_{l\omega }$ to be the projection onto the $x_{1}$ component, we
decompose $\psi $ into ground state, middle energies, and high energies as
follows: 
\begin{equation*}
\psi =P_{0\omega }\psi +\sum_{\ell =1}^{e-1}P_{\ell \omega }\psi +P_{\geq
e\omega }\psi
\end{equation*}%
where $e$ is an integer, and the optimal choice of $e$ is determined below.
It then suffices to bound 
\begin{equation}
A_{\text{low}}\ \overset{\mathrm{def}}{=}\frac{1}{\omega }\int |V_{N\omega
}(r_{1}-r_{2})||P_{0\omega }\psi (r_{1},r_{2})|^{2}dr_{1}  \label{E:3Dto1D1}
\end{equation}%
\begin{equation}
A_{\text{mid}}\ \overset{\mathrm{def}}{=}\frac{1}{\omega }\int |V_{N\omega
}(r_{1}-r_{2})||\sum_{\ell =2}^{e-1}P_{\ell \omega }\psi
(r_{1},r_{2})|^{2}dr_{1}  \label{E:3Dto1D2}
\end{equation}%
\begin{equation}
A_{\text{high}}\ \overset{\mathrm{def}}{=}\frac{1}{\omega }\int |V_{N\omega
}(r_{1}-r_{2})||P_{\geq e\omega }\psi (r_{1},r_{2})|^{2}dr_{1}
\label{E:3Dto1D3}
\end{equation}%
For each estimate, we will only work in the $r_{1}=(x_{1},z_{1})$ component,
and thus will not even write the $r_{2}$ variable. First we consider %
\eqref{E:3Dto1D1}. 
\begin{equation*}
A_{\text{low}}\ \leqslant \frac{1}{\omega }\Vert V_{N\omega }\Vert
_{L^{1}}\Vert P_{0\omega }\psi \Vert _{L_{x}^{\infty }L_{z}^{\infty }}^{2}
\end{equation*}%
By the standard 1D Sobolev-type estimate 
\begin{equation*}
A_{\text{low}}\lesssim \frac{1}{\omega }\Vert V\Vert _{L^{1}}\Vert
P_{0\omega }\partial _{z}\psi \Vert _{L_{x}^{\infty }L_{z}^{2}}\Vert
P_{0\omega }\psi \Vert _{L_{x}^{\infty }L_{z}^{2}}
\end{equation*}%
Then use the estimate \eqref{E:3Dto1D4} 
\begin{align*}
A_{\text{low}}\ & \lesssim \Vert V\Vert _{L^{1}}\Vert P_{0\omega }\partial
_{z}\psi \Vert _{L_{r}^{2}}\Vert P_{0\omega }\psi \Vert _{L_{r}^{2}} \\
& \lesssim \Vert V\Vert _{L^{1}}\Vert \partial _{z}\psi \Vert _{L^{2}}\Vert
\psi \Vert _{L^{2}} \\
& \lesssim \epsilon \Vert \partial _{z}\psi \Vert _{L^{2}}^{2}+\frac{\Vert
V\Vert _{L^{1}}^{2}}{\epsilon }\Vert \psi \Vert _{L^{2}}^{2}\text{.}
\end{align*}%
Since, $(-\Delta _{r}+\omega ^{2}|x|^{2}-2\omega )$ is a sum of two positive
operators, namely, $-\Delta _{x}+\omega ^{2}|x|^{2}-2\omega $ and $-\partial
_{z}^{2}$, we conclude the estimate for $A_{\text{low}}$.

Now consider the middle harmonic energies given by \eqref{E:3Dto1D2}, and we
aim to estimate $A_{\text{mid}}$. For any $\ell \geq 1$, we have 
\begin{equation*}
\Vert P_{\ell \omega }\psi \Vert _{L_{z}^{\infty }L_{x}^{\infty }}\leq \Vert
P_{\ell \omega }\partial _{z}\psi \Vert _{L_{z}^{2}L_{x}^{\infty
}}^{1/2}\Vert P_{\ell \omega }\psi \Vert _{L_{z}^{2}L_{x}^{\infty }}^{1/2}
\end{equation*}%
By \eqref{E:3Dto1D4}, 
\begin{align*}
\Vert P_{\ell \omega }\psi \Vert _{L_{z}^{\infty }L_{x}^{\infty }}& \lesssim
\omega ^{1/2}\Vert P_{\ell \omega }\partial _{z}\psi \Vert
_{L_{z}^{2}L_{x}^{2}}^{1/2}\Vert P_{\ell \omega }\psi \Vert
_{L_{z}^{2}L_{x}^{2}}^{1/2} \\
& =\omega ^{1/4}\Vert P_{\ell \omega }\partial _{z}\psi \Vert
_{L^{2}}^{1/2}(\Vert P_{\ell \omega }\psi \Vert _{L^{2}}\ell ^{1/2}\omega
^{1/2})^{1/2}\,\ell ^{-1/4} \\
& =\omega ^{1/4}\Vert P_{\ell \omega }\partial _{z}\psi \Vert
_{L_{r}^{2}}^{1/2}\Vert P_{\ell \omega }(-\Delta _{x}+\omega
^{2}|x|^{2}-2\omega )^{1/2}\psi \Vert _{L^{2}}^{1/2}\,\ell ^{-1/4}
\end{align*}%
Sum over $1\leq \ell \leq e-1$, and do H\"{o}lder with exponents $4$, $4$,
and $2$: 
\begin{align*}
\sum_{\ell =1}^{e-1}\Vert P_{\ell \omega }\psi \Vert _{L_{z}^{\infty
}L_{x}^{\infty }}& \lesssim \omega ^{1/4}\left( \sum_{\ell =1}^{e-1}\Vert
P_{\ell \omega }\partial _{z}\psi \Vert _{L^{2}}^{2}\right) ^{1/4} \\
& \quad \quad \times \left( \sum_{\ell =1}^{e-1}\Vert P_{\ell \omega
}(-\Delta _{x}+\omega ^{2}|x|^{2}-2\omega )^{1/2}\psi \Vert
_{L^{2}}^{2}\right) ^{1/4}\left( \sum_{\ell =1}^{e}\ell ^{-1/2}\right) ^{1/2}
\\
& \lesssim \omega ^{1/4}e^{1/4}\Vert \partial _{z}\psi \Vert
_{L^{2}}^{1/2}\Vert (-\Delta _{x}+\omega ^{2}|x|^{2}-2\omega )^{1/2}\psi
\Vert _{L^{2}}^{1/2}
\end{align*}%
Applying this to estimate \eqref{E:3Dto1D2}, 
\begin{equation*}
A_{\text{mid}}\lesssim \omega ^{-1/2}e^{1/2}\Vert V\Vert _{L^{1}}\Vert
\partial _{z}\psi \Vert _{L^{2}}\Vert (-\Delta _{x}+\omega
^{2}|x|^{2}-2\omega )^{1/2}\psi \Vert _{L^{2}}
\end{equation*}%
Take $e$ so that $\omega ^{-1/2}e^{1/2}\Vert V\Vert _{L^{1}}=\epsilon $,
i.e. 
\begin{equation}
e=\frac{\epsilon ^{2}}{\Vert V\Vert _{L^{1}}^{2}}\omega  \label{E:3Dto1D5}
\end{equation}%
and then we have 
\begin{equation*}
A_{\text{mid}}\ \lesssim \epsilon \Vert \partial _{z}\psi \Vert
_{L^{2}}^{2}+\epsilon \Vert (-\Delta _{x}+\omega ^{2}|x|^{2}-2\omega
)^{1/2}\psi \Vert _{L^{2}}^{2}
\end{equation*}%
For \eqref{E:3Dto1D3}, 
\begin{align*}
A_{\text{high}}& \lesssim \omega ^{-1}\Vert V_{N\omega }\Vert _{L^{\infty
}}\Vert P_{\geq e\omega }\psi \Vert _{L^{2}}^{2} \\
& \lesssim \omega ^{-2}e^{-1}\Vert V_{N\omega }\Vert _{L^{\infty }}\Vert
e^{1/2}\omega ^{1/2}P_{\geq e\omega }\psi \Vert _{L^{2}}^{2} \\
& \lesssim \omega ^{-2}e^{-1}(N\omega )^{3\beta }\Vert V\Vert _{L^{\infty
}}\Vert (-\Delta _{x}+\omega ^{2}|x|^{2}-2\omega )^{1/2}\psi \Vert
_{L^{2}}^{2}
\end{align*}%
We need 
\begin{equation*}
\omega ^{-2}e^{-1}(N\omega )^{3\beta }\leq \epsilon
\end{equation*}%
Substituting the specification of $e$ given by \eqref{E:3Dto1D5}, we obtain 
\begin{equation*}
N^{3\beta }\omega ^{3\beta -3}\leq \frac{\epsilon ^{2}}{\Vert V\Vert
_{L^{1}}^{2}\Vert V\Vert _{L^{\infty }}}.
\end{equation*}%
That is $\omega \geq C_{1}N^{\beta /(1-\beta )}$ as required in the
statement of Lemma \ref{L:potential-bound}.
\end{proof}

In the following lemma, we have excited state estimates and ground state
estimates, and the ground state estimates are weaker (involve a loss of $%
\omega^{1/2}$)

\begin{lemma}
Taking $\psi =\psi (r)$, we have the following \textquotedblleft excited
state\textquotedblright\ estimate: 
\begin{equation}
\Vert \omega ^{1/2}P_{\geq 1\omega }\psi \Vert _{L^{2}}+\Vert \omega
|x|P_{\geq 1\omega }\psi \Vert _{L^{2}}+\Vert \nabla _{r}P_{\geq 1\omega
}\psi \Vert _{L^{2}}\lesssim \Vert S\psi \Vert _{L^{2}},  \label{E:3Dto1D7}
\end{equation}%
and the following \textquotedblleft ground state\textquotedblright\ estimate 
\begin{equation}
\Vert \omega ^{1/2}P_{0\omega }\psi \Vert _{L^{2}}+\Vert \omega
|x|P_{0\omega }\psi \Vert _{L^{2}}+\Vert \nabla _{r}P_{0\omega }\psi \Vert
_{L^{2}}\lesssim \omega ^{1/2}\Vert \psi \Vert _{L^{2}}  \label{E:3Dto1D8}
\end{equation}%
We are, however, spared from the $\omega ^{1/2}$ loss when working only with
the $z$-derivative 
\begin{equation}
\Vert \partial _{z}P_{0\omega }\psi \Vert _{L^{2}}\lesssim \Vert S\psi \Vert
_{L^{2}}  \label{E:3Dto1D9}
\end{equation}%
Putting the excited state and ground state estimates together gives 
\begin{equation}
\Vert \omega ^{1/2}\psi \Vert _{L^{2}}+\Vert \omega |x|\psi \Vert
_{L^{2}}+\Vert \nabla _{r}\psi \Vert _{L^{2}}\lesssim \omega ^{1/2}\Vert
S\psi \Vert _{L^{2}}  \label{E:3Dto1D10}
\end{equation}
\end{lemma}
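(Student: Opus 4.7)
The plan is to exploit the decomposition
\begin{equation*}
S^{2}=1+(-\Delta_{x}+\omega^{2}|x|^{2}-2\omega)-\partial_{z}^{2},
\end{equation*}
a sum of three non-negative operators on $L^{2}(\mathbb{R}^{3})$. Since $P_{0\omega}$ and $P_{\geq 1\omega}$ are spectral projections of $-\Delta_{x}+\omega^{2}|x|^{2}$, they commute with each summand and hence with $S$. This gives the fundamental inequalities $\Vert SP_{0\omega}\psi\Vert \le \Vert S\psi\Vert$ and $\Vert SP_{\geq 1\omega}\psi\Vert \le \Vert S\psi\Vert$, and means each of the three estimates \eqref{E:3Dto1D7}, \eqref{E:3Dto1D8}, \eqref{E:3Dto1D9} can be attacked independently on one projection range.

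For the excited-state estimate \eqref{E:3Dto1D7}, I would use the spectral gap: on the range of $P_{\geq 1\omega}$, the operator $-\Delta_{x}+\omega^{2}|x|^{2}$ has spectrum $\{2(l+1)\omega\}_{l\ge 1}$, so $-\Delta_{x}+\omega^{2}|x|^{2}-2\omega\ge 2\omega$. Since each non-negative summand in $S^{2}$ is individually bounded by its expectation in $P_{\geq 1\omega}\psi$, testing against $P_{\geq 1\omega}\psi$ yields simultaneously $2\omega\Vert P_{\geq 1\omega}\psi\Vert^{2}$, $\Vert\partial_{z}P_{\geq 1\omega}\psi\Vert^{2}$, and $\langle(-\Delta_{x}+\omega^{2}|x|^{2}-2\omega)P_{\geq 1\omega}\psi,P_{\geq 1\omega}\psi\rangle$, all controlled by $\Vert SP_{\geq 1\omega}\psi\Vert^{2}\le\Vert S\psi\Vert^{2}$. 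Combining the second of these with the spectral bound $\omega^{2}|x|^{2}\le -\Delta_{x}+\omega^{2}|x|^{2}$ controls both $\Vert\omega|x|P_{\geq 1\omega}\psi\Vert$ and $\Vert\nabla_{x}P_{\geq 1\omega}\psi\Vert$.

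For the ground-state estimate \eqref{E:3Dto1D8}, the key is that $P_{0\omega}\psi(x,z)=h_{\omega}(x)\phi(z)$ with $\phi(z)=\langle h_{\omega},\psi(\cdot,z)\rangle_{L^{2}_{x}}$ and $\Vert\phi\Vert_{L^{2}_{z}}\le\Vert\psi\Vert_{L^{2}}$. The scaling $h_{\omega}(x)=\omega^{1/2}h(\omega^{1/2}x)$ gives directly $\Vert\nabla_{x}h_{\omega}\Vert_{L^{2}}\lesssim\omega^{1/2}$, $\Vert|x|h_{\omega}\Vert_{L^{2}}\lesssim\omega^{-1/2}$, which propagate tensor-wise to the three claimed bounds with the inevitable $\omega^{1/2}$ loss. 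This $\omega^{1/2}$ is unavoidable precisely because $h_{\omega}$ concentrates at scale $\omega^{-1/2}$, amplifying the $x$-weighted quantities. The sharper $z$-derivative bound \eqref{E:3Dto1D9} is saved by the ground-state identity $(-\Delta_{x}+\omega^{2}|x|^{2}-2\omega)h_{\omega}=0$, which collapses $S^{2}P_{0\omega}\psi$ to $h_{\omega}(1-\partial_{z}^{2})\phi$, so $\Vert SP_{0\omega}\psi\Vert^{2}=\Vert\phi\Vert^{2}+\Vert\partial_{z}\phi\Vert^{2}\ge\Vert\partial_{z}P_{0\omega}\psi\Vert^{2}$. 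Finally, \eqref{E:3Dto1D10} follows by adding the two projection estimates and absorbing $\Vert\psi\Vert\le\Vert S\psi\Vert$ (since $S\ge 1$).

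There is no substantial obstacle: the entire argument is spectral-theory bookkeeping built on the commutation $[P_{0\omega},S]=[P_{\geq 1\omega},S]=0$. The only point requiring care is understanding why the ground-state loss $\omega^{1/2}$ is genuine for the $|x|$- and $\nabla_{x}$-quantities but absent for $\partial_{z}$, which is a consequence of the tensor structure $P_{0\omega}\psi=h_{\omega}\otimes\phi$ together with the exact annihilation of $h_{\omega}$ by the $x$-part of $S^{2}$.
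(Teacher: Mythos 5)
Your proof is correct and follows essentially the same route as the paper: spectral gap on the excited range, positivity of the summands of $S^{2}$, commutation of the projections with $S$, and the explicit Gaussian tensor structure $P_{0\omega}\psi=h_{\omega}\otimes\phi$ for the ground-state bounds. One small slip: in the excited-state paragraph, ``the second of these'' should be ``the first and third of these,'' since it is the sum $2\omega\|P_{\geq 1\omega}\psi\|^{2}+\langle(-\Delta_{x}+\omega^{2}|x|^{2}-2\omega)P_{\geq 1\omega}\psi,P_{\geq 1\omega}\psi\rangle$ that reconstitutes the full Hermite expectation dominating both $\|\omega|x|P_{\geq 1\omega}\psi\|$ and $\|\nabla_{x}P_{\geq 1\omega}\psi\|$.
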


\begin{proof}
For the excited state estimates, we note 
\begin{equation*}
0\leq \langle P_{\geq 1\omega }\psi ,(-\Delta _{x}+\omega
^{2}|x|^{2}-4\omega )P_{\geq 1\omega }\psi \rangle
\end{equation*}%
Adding $\frac{3}{2}\Vert \partial _{z}P_{\geq 1\omega }\psi \Vert
_{L^{2}}^{2}+\frac{1}{2}\Vert \nabla _{x}P_{\geq 1\omega }\psi \Vert
_{L^{2}}^{2}+\frac{1}{2}\Vert \omega |x|P_{\geq 1\omega }\psi \Vert
_{L^{2}}^{2}+\Vert \omega ^{1/2}P_{\geq 1\omega }\psi \Vert _{L^{2}}^{2}$ to
both sides 
\begin{equation*}
\frac{3}{2}\Vert \partial _{z}P_{\geq 1\omega }\psi \Vert _{L^{2}}^{2}+\frac{%
1}{2}\Vert \nabla _{x}P_{\geq 1\omega }\psi \Vert _{L^{2}}^{2}+\frac{1}{2}%
\Vert \omega |x|P_{\geq 1\omega }\psi \Vert _{L^{2}}^{2}+\Vert \omega
^{1/2}P_{\geq 1\omega }\psi \Vert _{L^{2}}^{2}
\end{equation*}%
\begin{equation*}
\leq \frac{3}{2}\langle P_{\geq 1\omega }\psi ,(-\Delta _{r}+\omega
^{2}|x|^{2}-2\omega )P_{\geq 1\omega }\psi \rangle
\end{equation*}%
This proves \eqref{E:3Dto1D7}. The ground state estimate \eqref{E:3Dto1D8}
and \eqref{E:3Dto1D9} are straightforward from the explicit definition of $%
P_{0\omega }$ which is merely projecting onto a Gaussian.
\end{proof}

\begin{lemma}
\label{L:higher-k-2}We have the following estimates:%
\begin{eqnarray}
\Vert |V_{N\omega 12}|^{1/2}S_{1}P_{0\omega }^{1}\psi _{2}\Vert
_{L_{r_{1}}^{2}} &\lesssim &\omega ^{\frac{1}{2}}N^{\frac{1}{4}}\Vert
S_{1}\psi _{2}\Vert _{L^{2}}^{1/2}\left( N^{-\frac{1}{4}}\Vert S_{1}^{2}\psi
_{2}\Vert _{L^{2}}^{1/2}\right)  \label{e:higher-k-2-1} \\
\Vert |V_{N\omega 12}|^{1/2}S_{1}P_{\geqslant 1\omega }^{1}\psi _{2}\Vert
_{L_{r_{1}}^{2}} &\lesssim &N^{\frac{\beta }{2}+\frac{1}{2}}\omega ^{\frac{%
\beta }{2}}\left( N^{-1/2}\Vert S_{1}^{2}\psi _{2}\Vert
_{L_{r_{1}}^{2}}\right)  \label{e:higher-k-2-2}
\end{eqnarray}%
In particluar, if $\omega \geqslant C_{1}N^{\beta /(1-\beta )}$ then%
\begin{eqnarray}
&&\int_{r_{1}}|V_{N\omega 12}||\psi _{1}||S_{1}\psi _{2}|\,dr_{1}
\label{e:higher-k-2-3} \\
&\lesssim &\omega N^{\frac{1}{4}}\left\Vert S_{1}\psi _{1}\right\Vert
_{L^{2}}\left\Vert S_{1}\psi _{2}\right\Vert _{L^{2}}^{\frac{1}{2}}N^{-\frac{%
1}{4}}\left\Vert S_{1}^{2}\psi _{2}\right\Vert _{L^{2}}^{\frac{1}{2}}  \notag
\\
&&+\left( N\omega \right) ^{\frac{\beta }{2}+\frac{1}{2}}\left\Vert
S_{1}\psi _{1}\right\Vert _{L^{2}}N^{-\frac{1}{2}}\left\Vert S_{1}^{2}\psi
_{2}\right\Vert _{L^{2}}  \notag
\end{eqnarray}
\end{lemma}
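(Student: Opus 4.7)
\medskip
\noindent\textbf{Plan.} The two bounds (\ref{e:higher-k-2-1}) and (\ref{e:higher-k-2-2}) correspond, respectively, to the ground-state and excited-state $x_1$-modes of $-\Delta_{x_1}+\omega^{2}|x_1|^{2}$. They behave very differently and are established separately, with $r_2,\dots,r_N$ treated as parameters throughout (so all norms below are in $r_1$ only). The composite bound (\ref{e:higher-k-2-3}) then follows routinely from Cauchy--Schwarz in $r_1$, combined with Lemma~\ref{L:potential-bound} for one factor and the decomposition $I=P_{0\omega}^{1}+P_{\geqslant 1\omega}^{1}$ for the other.

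For (\ref{e:higher-k-2-1}), I exploit the tensor-product structure: since $h_\omega$ is the $2\omega$-eigenfunction of $-\Delta_{x_1}+\omega^{2}|x_1|^{2}$, one may write $P_{0\omega}^{1}\psi_2 = h_\omega(x_1)\,\phi(z_1,r_2,\dots)$, and $S_1^{2}$ reduces to $1-\partial_{z_1}^{2}$ on such states. The task is thus to bound
\[
\int |V_{N\omega}(r_1-r_2)|\,|h_\omega(x_1)|^{2}\,|g(z_1,r_2,\dots)|^{2}\,dr_1,
\]
with $g=\langle\partial_{z_1}\rangle\phi$. Using $\|h_\omega\|_{L^\infty_x}^{2}\lesssim \omega$, the $x_1$-integral produces the $z_1$-marginal of $V_{N\omega}$, whose $L^{1}_{z_1}$-norm equals $\|V\|_{L^{1}}$ by the scaling $V_{N\omega}(r)=(N\omega)^{3\beta}V((N\omega)^{\beta}r)$ (the factors of $(N\omega)^{\pm\beta}$ cancel). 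The residual $z_1$-integral is handled by the one-dimensional Sobolev inequality $\|g\|_{L^\infty_{z_1}}^{2}\lesssim \|g\|_{L^2_{z_1}}\|\partial_{z_1}g\|_{L^2_{z_1}}$, which after integrating the $x_1$-factor back reconstructs $\|S_1 P_{0\omega}^{1}\psi_2\|_{L^2_{r_1}}$ and $\|S_1^{2} P_{0\omega}^{1}\psi_2\|_{L^2_{r_1}}$; these are in turn dominated by $\|S_1\psi_2\|_{L^2_{r_1}}$ and $\|S_1^{2}\psi_2\|_{L^2_{r_1}}$ because $P_{0\omega}^{1}$ is a contraction commuting with $S_1$. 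The prefactor is $\omega^{1/2}$; the factors $N^{1/4}\cdot N^{-1/4}=1$ appearing in the stated bound are cosmetic, chosen to make the $N^{-1/2}$-grouping in (\ref{e:higher-k-2-3}) uniform.

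For (\ref{e:higher-k-2-2}), I apply H\"older with exponents $(\tfrac{3}{2},3)$ in $r_1$:
\[
\int |V_{N\omega 12}|\,|f|^{2}\,dr_1 \le \|V_{N\omega}\|_{L^{3/2}(\mathbb R^{3})}\|f\|_{L^{6}_{r_1}}^{2},
\]
with $f=S_1 P_{\geqslant 1\omega}^{1}\psi_2$. Scaling gives $\|V_{N\omega}\|_{L^{3/2}}\lesssim (N\omega)^{\beta}$, and the 3D Sobolev embedding yields $\|f\|_{L^{6}_{r_1}}\lesssim \|\nabla_{r_1}f\|_{L^{2}_{r_1}}$. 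Since $P_{\geqslant 1\omega}^{1}$ commutes with $S_1$, the excited-state bound (\ref{E:3Dto1D7}) applied to $S_1\psi_2$ in the variable $r_1$ gives $\|\nabla_{r_1}P_{\geqslant 1\omega}^{1}S_1\psi_2\|_{L^2_{r_1}}\lesssim \|S_1^{2}\psi_2\|_{L^2_{r_1}}$ \emph{without} the $\omega^{1/2}$-loss that occurs in the ground-state case. Taking the square root yields (\ref{e:higher-k-2-2}). Finally, for (\ref{e:higher-k-2-3}) apply Cauchy--Schwarz,
\[
\int |V_{N\omega 12}|\,|\psi_1|\,|S_1\psi_2|\,dr_1 \le \bigl\||V_{N\omega 12}|^{1/2}\psi_1\bigr\|_{L^{2}_{r_1}}\bigl\||V_{N\omega 12}|^{1/2}S_1\psi_2\bigr\|_{L^{2}_{r_1}};
\]
bound the first factor by $\omega^{1/2}\|S_1\psi_1\|_{L^{2}_{r_1}}$ using (\ref{E:3Dto1D14}) (which is where the hypothesis $\omega\geqslant C_1 N^{\beta/(1-\beta)}$ is used) and the second by splitting with $I=P_{0\omega}^{1}+P_{\geqslant 1\omega}^{1}$ and invoking (\ref{e:higher-k-2-1}), (\ref{e:higher-k-2-2}).

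The main obstacle is (\ref{e:higher-k-2-1}): the ground-state contribution cannot be handled by a naive 3D Sobolev bound because $\|h_\omega\|_{L^\infty_x}\sim \omega^{1/2}$ grows with $\omega$. The resolution is to use that $S_1$ collapses to $\langle\partial_{z_1}\rangle$ on the $h_\omega$-mode and to extract the decisive cancellation between $(N\omega)^{\beta}$ and $(N\omega)^{-\beta}$ in the marginal of $V_{N\omega}$, so that only a single $\omega^{1/2}$ is paid and it comes entirely from the $L^\infty_x$-norm of $h_\omega$ via a one-dimensional (rather than three-dimensional) Sobolev embedding in $z_1$.
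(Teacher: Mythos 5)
Your proof is correct and follows essentially the same route as the paper: for (\ref{e:higher-k-2-1}) you use the tensor structure $P_{0\omega}^{1}\psi_2 = h_\omega(x_1)\phi(z_1,\dots)$ together with the $L^\infty_x$-bound $\|h_\omega\|_{L^\infty}\lesssim\omega^{1/2}$, the scale-invariance of $\|V_{N\omega}\|_{L^1}$, and a 1D Gagliardo--Nirenberg inequality in $z_1$, exactly what the paper does via the operator identity $P_{0\omega}^{1}S_1=(1-\partial_{z_1}^2)^{1/2}P_{0\omega}^{1}$ and (\ref{E:3Dto1D4}); for (\ref{e:higher-k-2-2}) the H\"older $(3/2,3)$ split, the scaling $\|V_{N\omega}\|_{L^{3/2}}\lesssim (N\omega)^\beta$, 3D Sobolev, and the excited-state bound (\ref{E:3Dto1D7}) are exactly the paper's steps; and (\ref{e:higher-k-2-3}) follows by Cauchy--Schwarz, Lemma \ref{L:potential-bound} on the $\psi_1$-factor, and the $I=P_{0\omega}^1+P_{\geqslant 1\omega}^1$ split, just as in the paper (which merely performs the split before the Cauchy--Schwarz rather than after). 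Your observation that the $N^{1/4}\cdot N^{-1/4}$ factors are cosmetic bookkeeping is also consistent with how they are used downstream.
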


\begin{proof}
To prove (\ref{e:higher-k-2-3}), substituting $\psi _{2}=P_{0\omega
}^{1}\psi _{2}+P_{\geq 1\omega }^{1}\psi _{2}$, we obtain 
\begin{equation*}
\int_{r_{1}}|V_{N\omega 12}||\psi _{1}||S_{1}\psi _{2}|dr_{1}\lesssim
F_{1}+F_{2}
\end{equation*}%
where%
\begin{eqnarray*}
F_{1} &=&\int_{r_{1}}|V_{N\omega 12}||\psi _{1}||P_{0\omega }^{1}S_{1}\psi
_{2}|dr_{1} \\
&\leqslant &\Vert |V_{N\omega 12}|^{1/2}\psi _{1}\Vert _{L_{r_{1}}^{2}}\Vert
|V_{N\omega 12}|^{1/2}P_{0\omega }^{1}S_{1}\psi _{2}\Vert _{L_{r_{1}}^{2}} \\
&\leqslant &\omega ^{1/2}\Vert S_{1}\psi _{1}\Vert _{L_{r_{1}}^{2}}\Vert
|V_{N\omega 12}|^{1/2}P_{0\omega }^{1}S_{1}\psi _{2}\Vert _{L_{r_{1}}^{2}}
\end{eqnarray*}%
\begin{eqnarray*}
F_{2} &=&\int_{r_{1}}|V_{N\omega 12}||\psi _{1}||P_{\geq 1\omega
}^{1}S_{1}\psi _{2}|dr_{1} \\
&\leqslant &\omega ^{1/2}\Vert S_{1}\psi _{1}\Vert _{L_{r_{1}}^{2}}\Vert
|V_{N\omega 12}|^{1/2}P_{\geq 1\omega }^{1}S_{1}\psi _{2}\Vert
_{L_{r_{1}}^{2}}
\end{eqnarray*}
by Cauchy-Schwarz and estimate (\ref{E:3Dto1D11}). Hence we only need to
prove (\ref{e:higher-k-2-1}) and (\ref{e:higher-k-2-2}).

On the one hand, use the fact that $P_{0\omega }^{1}S_{1}=(1-\partial
_{z_{1}}^{2})^{1/2}P_{0\omega }^{1}$, 
\begin{eqnarray*}
\Vert |V_{N\omega 12}|^{1/2}S_{1}P_{0\omega }^{1}\psi _{2}\Vert
_{L_{r_{1}}^{2}} &=&\Vert |V_{N\omega 12}|^{1/2}(1-\partial
_{z_{1}}^{2})^{1/2}P_{0\omega }^{1}\psi _{2}\Vert _{L_{r_{1}}^{2}} \\
&\leq &\Vert V_{N\omega 12}\Vert _{L_{r_{1}}^{1}}^{\frac{1}{2}}\Vert
(1-\partial _{z_{1}}^{2})^{1/2}P_{0\omega }^{1}\psi _{2}\Vert
_{L_{r_{1}}^{\infty }}
\end{eqnarray*}%
By Sobolev in $z_{1}$ and the estimate \eqref{E:3Dto1D4} in $x_{1}$, 
\begin{equation*}
\Vert |V_{N\omega 12}|^{1/2}S_{1}P_{0\omega }^{1}\psi _{2}\Vert
_{L_{r_{1}}^{2}}\lesssim \omega ^{1/2}\Vert (1-\partial
_{z_{1}}^{2})^{1/2}\psi _{2}\Vert _{L_{r_{1}}^{2}}^{1/2}\Vert (1-\partial
_{z_{1}}^{2})\psi _{2}\Vert _{L_{r_{1}}^{2}}^{1/2}
\end{equation*}%
That is (\ref{e:higher-k-2-1}): 
\begin{equation*}
\Vert |V_{N\omega 12}|^{1/2}S_{1}P_{0\omega }^{1}\psi _{2}\Vert
_{L_{r_{1}}^{2}}\lesssim \omega ^{\frac{1}{2}}N^{1/4}\Vert S_{1}\psi
_{2}\Vert _{L^{2}}^{1/2}\left( N^{-1/4}\Vert S_{1}^{2}\psi _{2}\Vert
_{L^{2}}^{1/2}\right)
\end{equation*}

On the other hand,%
\begin{eqnarray*}
&&\Vert |V_{N\omega 12}|^{1/2}S_{1}P_{\geqslant 1\omega }^{1}\psi _{2}\Vert
_{L_{r_{1}}^{2}} \\
&\lesssim &\left\Vert |V_{N\omega 12}|^{1/2}\right\Vert _{L^{3}}\left\Vert
P_{\geq 1\omega }^{1}S_{1}\psi _{2}\right\Vert _{L_{r_{1}}^{6}} \\
&\lesssim &(N\omega )^{\beta /2}\Vert S_{1}^{2}\psi _{2}\Vert
_{L_{r_{1}}^{2}} \\
&=&N^{\frac{\beta }{2}+\frac{1}{2}}\omega ^{\frac{\beta }{2}}\left(
N^{-1/2}\Vert S_{1}^{2}\psi _{2}\Vert _{L_{r_{1}}^{2}}\right)
\end{eqnarray*}%
which is (\ref{e:higher-k-2-2}).
\end{proof}

\subsubsection{The $k=1$ Case}

Recall (\ref{E:3Dto1D6}),

\begin{equation*}
\langle \psi ,(\alpha +N^{-1}H_{N,\omega }-2\omega )\psi \rangle =\tfrac{1}{2%
}N^{-2}\sum_{1\leq i\neq j\leq N}\langle H_{ij}\psi ,\psi \rangle
\end{equation*}%
By symmetry 
\begin{equation*}
=\frac{1}{2}\langle H_{12}\psi ,\psi \rangle
\end{equation*}%
Hence we need to prove 
\begin{equation}
\langle H_{12}\psi ,\psi \rangle \geqslant \Vert S_{1}\psi \Vert
_{L^{2}}^{2}.  \label{E:3Dto1D16}
\end{equation}%
We prove (\ref{E:3Dto1D16}) with the following lemma.

\begin{lemma}
\label{L:higher-k-3}Recall $\alpha =C_{3}\Vert V\Vert _{L^{2}}^{2}+1$. If $%
\omega \geq C_{1}N^{\beta /(1-\beta )}$ and $\psi _{j}(r_{1},r_{2})=\psi
_{j}(r_{2},r_{1})$ for $j=1,2$, then 
\begin{equation}
\left\vert \langle H_{12}\psi _{1},\psi _{2}\rangle _{r_{1}r_{2}}\right\vert
\lesssim \Vert S_{1}\psi _{1}\Vert _{L_{r_{1}r_{2}}^{2}}\Vert S_{1}\psi
_{2}\Vert _{L_{r_{1}r_{2}}^{2}}  \label{E:3Dto1D18}
\end{equation}%
Moreover 
\begin{equation}
\Vert S_{1}\psi \Vert _{L^{2}}^{2}\leqslant \langle H_{12}\psi ,\psi \rangle
\leqslant C\Vert S_{1}\psi \Vert _{L^{2}}^{2}  \label{E:3Dto1D17}
\end{equation}
\end{lemma}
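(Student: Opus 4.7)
The plan is to split $H_{12} = H_{K12} + H_{I12}$ and handle the two pieces with the machinery already set up. For the bilinear estimate (\ref{E:3Dto1D18}), I would use the identity $H_{K12} = S_1^2 + S_2^2 + 2(\alpha-1)$ together with self-adjointness of $S_j$ to rewrite $\langle H_{K12}\psi_1,\psi_2\rangle$ as $\langle S_1\psi_1, S_1\psi_2\rangle + \langle S_2\psi_1, S_2\psi_2\rangle + 2(\alpha-1)\langle\psi_1,\psi_2\rangle$, which Cauchy--Schwarz and the symmetry hypothesis $\psi_j(r_1,r_2) = \psi_j(r_2,r_1)$ (giving $\|S_2\psi_j\|_{L^2} = \|S_1\psi_j\|_{L^2}$) control by $\|S_1\psi_1\|_{L^2}\|S_1\psi_2\|_{L^2}$. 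The lower-order term $2(\alpha-1)\langle\psi_1,\psi_2\rangle$ is also absorbed using $S_1 \geq 1$. For the interaction piece, I invoke estimate (\ref{E:3Dto1D11}) of Lemma \ref{L:potential-bound} in the $r_1$ variable alone, obtaining
\begin{equation*}
\int_{r_1}|V_{N\omega 12}|\,|\psi_1|\,|\psi_2|\,dr_1 \;\lesssim\; \omega\,\|S_1\psi_1(\cdot,r_2)\|_{L^2_{r_1}}\,\|S_1\psi_2(\cdot,r_2)\|_{L^2_{r_1}}.
\end{equation*}
The factor of $\omega$ cancels the $\omega^{-1}$ in $H_{I12}$, and a final Cauchy--Schwarz in $r_2$ delivers (\ref{E:3Dto1D18}).

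The upper half of (\ref{E:3Dto1D17}) is immediate from (\ref{E:3Dto1D18}) with $\psi_1 = \psi_2 = \psi$. For the lower bound, I first write $\langle H_{K12}\psi,\psi\rangle = 2\|S_1\psi\|_{L^2}^2 + 2(\alpha-1)\|\psi\|_{L^2}^2$ using symmetry again. Since $V$ is not assumed pointwise negative, I control the interaction term from below by
\begin{equation*}
\omega^{-1}\langle V_{N\omega 12}\psi,\psi\rangle \;\geq\; -\omega^{-1}\int\int |V_{N\omega 12}|\,|\psi|^2\,dr_1\,dr_2,
\end{equation*}
and then apply estimate (\ref{E:3Dto1D14}) fiber-by-fiber in $r_2$, followed by integration in $r_2$. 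Using the identity $-\Delta_{r_1}+\omega^2|x_1|^2-2\omega = S_1^2 - 1$ to convert the leading remainder from (\ref{E:3Dto1D14}), the resulting bound becomes
\begin{equation*}
\langle H_{12}\psi,\psi\rangle \;\geq\; \bigl(2-\tfrac{1}{100}\bigr)\|S_1\psi\|_{L^2}^2 + \bigl(2(\alpha-1) + \tfrac{1}{100} - C_3\|V\|_{L^1}^2\bigr)\|\psi\|_{L^2}^2.
\end{equation*}
The choice $\alpha = C_3\|V\|_{L^1}^2 + 1$ is made precisely so the second coefficient is nonnegative, leaving $\langle H_{12}\psi,\psi\rangle \geq \|S_1\psi\|_{L^2}^2$ as required.

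The only substantive obstacle is the focusing sign of the interaction: $H_{I12}$ can be arbitrarily negative and, a priori, singular as $N\omega \to \infty$. This is exactly what Lemma \ref{L:potential-bound} is engineered to absorb into a small multiple of the harmonic-oscillator kinetic energy plus a tame $L^2$ remainder, and it is the source of the hypothesis $\omega \geq C_1 N^{\beta/(1-\beta)}$. Once this lemma is in hand, the rest of the argument is essentially bookkeeping around Cauchy--Schwarz and the symmetry identity $\|S_1\psi_j\|_{L^2} = \|S_2\psi_j\|_{L^2}$.
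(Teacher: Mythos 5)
Your proof is correct and follows essentially the same route as the paper: decompose $H_{12}=H_{K12}+H_{I12}$, use the identity $H_{K12}=S_1^2+S_2^2+2(\alpha-1)$ with the symmetry $\|S_1\psi_j\|=\|S_2\psi_j\|$ for the kinetic part, and invoke Lemma \ref{L:potential-bound} (equivalently \eqref{E:3Dto1D14}/\eqref{E:3Dto1D11}) to control the attractive interaction, with the choice $\alpha=C_3\|V\|_{L^1}^2+1$ closing the lower bound. The only cosmetic difference is that you combine the two one-variable contributions via symmetry before finishing, whereas the paper adds the two fiber-wise inequalities \eqref{estimate:part 1 of lower bound in k=1} and \eqref{estimate:part 2 of lower bound in k=1} and then identifies the result by symmetry; both yield \eqref{E:3Dto1D17}, and your bookkeeping even gives the slightly sharper coefficient $2-\tfrac{1}{100}$ in front of $\|S_1\psi\|_{L^2}^2$.
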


\begin{proof}
By Cauchy-Schwarz and \eqref{E:3Dto1D14},%
\begin{eqnarray*}
\left\vert \langle \psi _{1},H_{I12}\psi _{2}\rangle
_{r_{1}r_{2}}\right\vert &=&\omega ^{-1}\left\vert \left\langle V_{N\omega
12}\psi _{1},\psi _{2}\right\rangle \right\vert \\
&\lesssim &\left( \omega ^{-1}\int |V_{N\omega 12}||\psi _{1}|^{2}\right)
^{1/2}\left( \omega ^{-1}\int |V_{N\omega 12}||\psi _{2}|^{2}\right) ^{1/2}
\\
&\lesssim &\Vert S_{1}\psi _{1}\Vert _{L^{2}}\Vert S_{1}\psi _{2}\Vert
_{L^{2}}
\end{eqnarray*}%
Thus%
\begin{eqnarray*}
\left\vert \langle H_{12}\psi _{1},\psi _{2}\rangle _{r_{1}r_{2}}\right\vert
&\leqslant &\left\vert \langle H_{K12}\psi _{1},\psi _{2}\rangle
_{r_{1}r_{2}}\right\vert +\left\vert \langle H_{I12}\psi _{1},\psi
_{2}\rangle _{r_{1}r_{2}}\right\vert \\
&\lesssim &\Vert S_{1}\psi _{1}\Vert _{L_{r_{1}r_{2}}^{2}}\Vert S_{1}\psi
_{2}\Vert _{L_{r_{1}r_{2}}^{2}}\text{,}
\end{eqnarray*}%
which is \eqref{E:3Dto1D18}. It remains to prove the first inequality in %
\eqref{E:3Dto1D17}.

On the one hand, by \eqref{E:3Dto1D14}, we have the lower bound for the
potential term: 
\begin{equation*}
-\frac{1}{100}\langle \psi ,(-\Delta _{r_{1}}+\omega ^{2}|x_{1}|^{2}-2\omega
)\psi \rangle _{r_{1}r_{2}}-C_{3}\Vert V\Vert _{L^{1}}^{2}\Vert \psi \Vert
_{L_{r_{1}r_{2}}^{2}}^{2}\leq \omega ^{-1}\langle V_{N\omega 12}\psi ,\psi
\rangle _{r_{1}r_{2}}
\end{equation*}%
Adding $\langle \psi ,(\alpha -\Delta _{r_{1}}+\omega
^{2}|x_{1}|^{2}-2\omega )\psi \rangle _{r_{1}r_{2}}$ to both sides and
noticing the trivial inequalities: $\alpha -C_{3}\Vert V\Vert
_{L^{2}}^{2}=1\geqslant \frac{1}{2}$ and $\frac{99}{100}\geqslant \frac{1}{2}
$, we have 
\begin{equation}
\frac{1}{2}\langle \psi ,(1-\Delta _{r_{1}}+\omega ^{2}|x_{1}|^{2}-2\omega
)\psi \rangle _{r_{1}r_{2}}\leqslant \langle \psi ,(\alpha -\Delta
_{r_{1}}+\omega ^{2}|x_{1}|^{2}-2\omega +\omega ^{-1}V_{N\omega 12})\psi
\rangle _{r_{1}r_{2}}.  \label{estimate:part 1 of lower bound in k=1}
\end{equation}

On the other hand, we trivially have%
\begin{equation}
\frac{1}{2}\langle \psi ,(1-\Delta _{r_{2}}+\omega ^{2}|x_{2}|^{2}-2\omega
)\psi \rangle _{r_{1}r_{2}}\leqslant \langle \psi ,(\alpha -\Delta
_{r_{2}}+\omega ^{2}|x_{2}|^{2}-2\omega )\psi \rangle _{r_{1}r_{2}}
\label{estimate:part 2 of lower bound in k=1}
\end{equation}%
because $\alpha >\frac{1}{2}$.

Adding estimates (\ref{estimate:part 1 of lower bound in k=1}) and (\ref%
{estimate:part 2 of lower bound in k=1}) together, we have 
\begin{equation*}
\frac{1}{2}\langle \psi ,S_{1}^{2}\psi \rangle +\frac{1}{2}\langle \psi
,S_{2}^{2}\psi \rangle \leqslant \langle H_{12}\psi ,\psi \rangle .
\end{equation*}%
By symmetry in $r_{1}$ and $r_{2}$, this is precisely \eqref{E:3Dto1D17}.
\end{proof}

\subsubsection{The $k=2$ Case}

The $k=2$ energy estimate is the lower bound 
\begin{equation*}
\frac{1}{4}(\langle S_{1}^{2}S_{2}^{2}\psi ,\psi \rangle +N^{-1}\langle
S_{1}^{4}\psi ,\psi \rangle )\leq \langle (\alpha +N^{-1}H-2\omega )^{2}\psi
,\psi \rangle
\end{equation*}%
We will prove it under the hypothesis 
\begin{equation*}
N^{\beta /(1-\beta )}\leq \omega \leq N^{\min {((1-\beta )/\beta ,2})}
\end{equation*}

We substitute \eqref{E:3Dto1D6} to obtain%
\begin{eqnarray*}
\langle (\alpha +N^{-1}H-2\omega )^{2}\psi ,\psi \rangle &=&\frac{1}{4}%
N^{-4}\sum_{\substack{ 1\leqslant i_{1}\neq j_{1}\leqslant N  \\ 1\leqslant
i_{2}\neq j_{2}\leqslant N}}\langle H_{i_{1}j_{1}}H_{i_{2}j_{2}}\psi ,\psi
\rangle \\
&=&A_{1}+A_{2}+A_{3}
\end{eqnarray*}%
where

\begin{itemize}
\item $A_1$ consists of those terms with $\{i_1,j_1\} \cap \{i_2,j_2\} =
\varnothing$

\item $A_2$ consists of those terms with $|\{i_1,j_1\} \cap \{i_2,j_2\}| = 1$

\item $A_3$ consists of those terms with $|\{i_1,j_1\} \cap \{i_2,j_2\}| = 2$%
.
\end{itemize}

By symmetry, we have 
\begin{align*}
A_{1}& =\tfrac{1}{4}\langle H_{12}H_{34}\psi ,\psi \rangle \\
A_{2}& =\tfrac{1}{2}N^{-1}\langle H_{12}H_{23}\psi ,\psi \rangle \\
A_{3}& =\tfrac{1}{2}N^{-2}\langle H_{12}H_{12}\psi ,\psi \rangle
\end{align*}%
We discard $A_{3}$ since $A_{3}\geq 0$. By the analysis used in the $k=1$
case, 
\begin{equation*}
A_{1}\geq \tfrac{1}{4}\Vert S_{1}S_{3}\psi \Vert _{L^{2}}^{2}
\end{equation*}%
The main piece of work in the $k=2$ case is to estimate $A_{2}$.
Substituting $H_{12}=H_{K12}+H_{I12}$ and $H_{23}=H_{K23}+H_{I23}$, we
obtain the expansion 
\begin{equation*}
A_{2}=B_{0}+B_{1}+B_{2}
\end{equation*}%
where 
\begin{align*}
B_{0}& =\tfrac{1}{2}N^{-1}\langle H_{K12}H_{K23}\psi ,\psi \rangle \\
B_{1}& =\tfrac{1}{2}N^{-1}\langle H_{K12}H_{I23}\psi ,\psi \rangle +\tfrac{1%
}{2}N^{-1}\langle H_{I12}H_{K23}\psi ,\psi \rangle \\
B_{2}& =\tfrac{1}{2}N^{-1}\langle H_{I12}H_{I23}\psi ,\psi \rangle
\end{align*}%
Let $\sigma =\alpha -1\geq 0$. First note that 
\begin{equation*}
B_{0}=\tfrac{1}{2}N^{-1}\langle (S_{1}^{2}+S_{2}^{2}+2\sigma
)(S_{2}^{2}+S_{3}^{2}+2\sigma )\psi ,\psi \rangle
\end{equation*}%
Since $S_{1}^{2}$, $S_{2}^{2}$, $S_{3}^{2}$ all commute, 
\begin{equation*}
B_{0}\geq \tfrac{1}{2}N^{-1}\langle S_{2}^{4}\psi ,\psi \rangle
\end{equation*}%
which is a component of the claimed lower bound.

Next, we consider $B_{1}$. By symmetry 
\begin{equation*}
B_{1}=N^{-1}\Re \langle H_{K12}H_{I23}\psi ,\psi \rangle 
\end{equation*}%
Since every term in $B_{1}$ is estimated, we do not drop the imaginary part.
Decompose $I=P_{0\omega }^{2}+P_{\geq 1\omega }^{2}$ in the right $\psi $
factor 
\begin{equation*}
B_{1}=B_{10}+B_{11}+B_{12}
\end{equation*}%
where%
\begin{equation*}
B_{10}=(N\omega )^{-1}\langle \left[ \left( 2\alpha -1\right) +S_{1}^{2}%
\right] V_{N\omega 23}\psi ,\psi \rangle 
\end{equation*}
\begin{equation*}
B_{11}=(N\omega )^{-1}\langle (-\Delta _{r_{2}}+\omega
^{2}|x_{2}|^{2}-2\omega )V_{N\omega 23}\psi ,P_{0\omega }^{2}\psi \rangle 
\end{equation*}%
\begin{equation*}
B_{12}=(N\omega )^{-1}\langle (-\Delta _{r_{2}}+\omega
^{2}|x_{2}|^{2}-2\omega )V_{N\omega 23}\psi ,P_{\geq 1\omega }^{2}\psi
\rangle 
\end{equation*}%
The term $B_{10}$ is the simplest. In fact, by estimate (\ref{E:3Dto1D11})
at the $r_{2}$ coordinate, we have%
\begin{eqnarray*}
\left\vert B_{10}\right\vert  &=&\left\vert (N\omega )^{-1}\langle \left[
\left( 2\alpha -1\right) +S_{1}^{2}\right] V_{N\omega 23}\psi ,\psi \rangle
\right\vert  \\
&\lesssim &N^{-1}\left( \left\Vert S_{2}\psi \right\Vert
_{L^{2}}^{2}+\left\Vert S_{1}S_{2}\psi \right\Vert _{L^{2}}^{2}\right) \text{%
.}
\end{eqnarray*}
For $B_{12}$, we consider the four terms separately 
\begin{equation*}
B_{12}=B_{121}+B_{122}+B_{123}+B_{124}
\end{equation*}%
where 
\begin{equation*}
B_{121}=(N\omega )^{\beta -1}\langle (\nabla V)_{N\omega 23}\psi ,\nabla
_{r_{2}}P_{\geq 1\omega }^{2}\psi \rangle 
\end{equation*}%
\begin{equation*}
B_{122}=(N\omega )^{-1}\langle V_{N\omega 23}\nabla _{r_{2}}\psi ,\nabla
_{r_{2}}P_{\geq 1\omega }^{2}\psi \rangle 
\end{equation*}%
\begin{equation*}
B_{123}=(N\omega )^{-1}\langle V_{N\omega 23}\omega |x_{2}|\psi ,\omega
|x_{2}|P_{\geq 1\omega }^{2}\psi \rangle 
\end{equation*}%
\begin{equation*}
B_{124}=-2(N\omega )^{-1}\langle V_{N\omega 23}\omega ^{1/2}\psi ,\omega
^{1/2}P_{\geq 1\omega }^{2}\psi \rangle 
\end{equation*}%
By \eqref{E:3Dto1D11} applied with $r_{1}$ replaced by $r_{3}$, we obtain 
\begin{equation*}
|B_{121}|\lesssim (N\omega )^{\beta -1}\omega \Vert S_{3}\psi \Vert
_{L^{2}}\Vert \nabla _{r_{2}}P_{\geq 1\omega }^{2}S_{3}\psi \Vert _{L^{2}}
\end{equation*}%
By \eqref{E:3Dto1D7}, 
\begin{equation*}
|B_{121}|\lesssim (N\omega )^{\beta -1}\omega \Vert S_{3}\psi \Vert
_{L^{2}}\Vert S_{2}S_{3}\psi \Vert _{L^{2}}
\end{equation*}%
which yields the requirement $\omega \leq N^{(1-\beta )/\beta }$. By %
\eqref{E:3Dto1D11} applied with $r_{1}$ replaced by $r_{3}$, we obtain 
\begin{equation*}
|B_{122}|\lesssim (N\omega )^{-1}\omega \Vert \nabla _{r_{2}}S_{3}\psi \Vert
_{L^{2}}\Vert \nabla _{r_{2}}P_{\geq 1\omega }S_{3}\psi \Vert _{L^{2}}
\end{equation*}%
Utilizing \eqref{E:3Dto1D10} for the $\Vert \nabla _{r_{2}}S_{3}\psi \Vert
_{L^{2}}$ term and \eqref{E:3Dto1D7} for the $\Vert \nabla _{r_{2}}P_{\geq
1\omega }S_{3}\psi \Vert _{L^{2}}$ term, 
\begin{equation*}
|B_{122}|\lesssim (N\omega )^{-1}\omega ^{3/2}\Vert S_{2}S_{3}\Vert
_{L^{2}}^{2}
\end{equation*}%
This requires $\omega \leq N^{2}$. The terms $B_{123}$ and $B_{124}$ are
estimated in the same way as $B_{122}$, yielding the requirement $\omega
\leq N^{2}$. This completes the treatment of $B_{12}$.

For $B_{11}$, we move the operator $(-\Delta _{r_{2}}+\omega
^{2}|x_{2}|^{2}-2\omega )$ over to the right, and use the fact that $%
(-\Delta _{r_{2}}+\omega ^{2}|x_{2}|^{2}-2\omega )P_{0\omega }^{2}\psi
=-\partial _{z_{2}}^{2}P_{0\omega }^{2}\psi $ to obtain 
\begin{equation*}
B_{11}=B_{111}+B_{112}
\end{equation*}%
where 
\begin{equation*}
B_{111}=(N\omega )^{\beta -1}\langle (\partial _{z}V)_{N\omega 23}\psi
,\partial _{z_{2}}P_{0\omega }^{2}\psi \rangle
\end{equation*}%
\begin{equation*}
B_{112}=(N\omega )^{-1}\langle V_{N\omega 23}\partial _{z_{2}}\psi ,\partial
_{z_{2}}P_{0\omega }^{2}\psi \rangle
\end{equation*}%
By \eqref{E:3Dto1D11} applied with $r_{1}$ replaced by $r_{3}$, we obtain 
\begin{equation*}
|B_{111}|\lesssim (N\omega )^{\beta -1}\omega \Vert S_{3}\psi \Vert
_{L^{2}}\Vert \partial _{z_{2}}P_{0\omega }^{2}S_{3}\psi \Vert _{L^{2}}
\end{equation*}%
Using \eqref{E:3Dto1D9} for the $\Vert \partial _{z_{2}}P_{0\omega
}^{2}S_{3}\psi \Vert _{L^{2}}$ term (which saves us from the $\omega ^{1/2}$
loss), 
\begin{equation*}
|B_{111}|\lesssim (N\omega )^{\beta -1}\omega \Vert S_{3}\psi \Vert
_{L^{2}}\Vert S_{2}S_{3}\psi \Vert _{L^{2}}
\end{equation*}%
which again requires that $\omega \leq N^{(1-\beta )/\beta }$. By %
\eqref{E:3Dto1D11} applied with $r_{1}$ replaced by $r_{3}$, we obtain 
\begin{equation*}
|B_{112}|\lesssim (N\omega )^{-1}\omega \Vert \partial _{z_{2}}S_{3}\psi
\Vert _{L^{2}}\Vert \partial _{z_{2}}P_{0\omega }^{2}S_{3}\psi \Vert _{L^{2}}
\end{equation*}%
Using \eqref{E:3Dto1D9} 
\begin{equation*}
|B_{112}|\lesssim (N\omega )^{-1}\omega \Vert S_{2}S_{3}\psi \Vert
_{L^{2}}^{2}
\end{equation*}%
which has no requirement on $\omega $. This completes the treatment of $%
B_{11}$, and hence also $B_{1}$. Now let us proceed to consider $B_{2}$. 
\begin{equation*}
B_{2}=N^{-1}\omega ^{-2}\langle V_{N\omega 12}V_{N\omega 23}\psi ,\psi
\rangle
\end{equation*}%
\begin{equation*}
|B_{2}|\leq N^{-1}\omega ^{-2}\int |V_{N\omega 23}|\left(
\int_{r_{1}}|V_{N\omega 12}|\,|\psi (r_{1},\ldots
,r_{N})|^{2}\,dr_{1}\right) \,dr_{2}\cdots dr_{N}
\end{equation*}%
In the parenthesis, apply estimate (\ref{E:3Dto1D11}) in the $r_{1}$
coordinate to obtain 
\begin{equation*}
|B_{2}|\lesssim N^{-1}\omega ^{-2}\omega \int_{r_{2},\ldots
,r_{N}}|V_{N\omega 23}|\Vert S_{1}\psi \Vert
_{L_{r_{1}}^{2}}^{2}\,dr_{2}\cdots dr_{N}
\end{equation*}%
By Fubini, 
\begin{equation*}
=N^{-1}\omega ^{-2}\omega \int_{r_{1}}\left( \int_{r_{2},\ldots
,r_{N}}|V_{N\omega 23}||S_{1}\psi (r_{1},\cdots ,r_{N})|^{2}\,dr_{2}\cdots
dr_{N}\right) dr_{1}
\end{equation*}%
In the parenthesis, apply estimate (\ref{E:3Dto1D11}) in the $r_{2}$
coordinate to obtain 
\begin{equation*}
|B_{2}|\lesssim N^{-1}\omega ^{-2}\omega ^{2}\Vert S_{1}S_{2}\psi \Vert
_{L^{2}}^{2}
\end{equation*}%
Hence $B_{2}$ is bounded without additional restriction on $\omega $.
Therefore we end the proof for the $k=2$ case.

\subsubsection{The $k$ Case Implies The $k+2$ Case}

We assume that (\ref{E:energy-nonscaled}) holds for $k$. Applying it with $%
\psi $ replaced by $(\alpha +N^{-1}H_{N,\omega }-2\omega )\psi $, 
\begin{equation*}
\frac{1}{2^{k}}\Vert S^{(k)}(\alpha +N^{-1}H_{N,\omega }-2\omega )\psi \Vert
_{L^{2}}\leq \langle (\alpha +N^{-1}H_{N,\omega }-2\omega )^{k+2}\psi ,\psi
\rangle 
\end{equation*}%
Hence, to prove (\ref{E:energy-nonscaled}) in the case $k+2$, it suffices to
prove 
\begin{equation}
\frac{1}{4}\Big(\Vert S^{(k+2)}\psi \Vert _{L^{2}}^{2}+N^{-1}\Vert
S_{1}S^{(k+1)}\psi \Vert _{L^{2}}^{2}\Big)\leq \Vert S^{(k)}(\alpha
+N^{-1}H_{N,\omega }-2\omega )\psi \Vert _{L^{2}}^{2}  \label{E:3Dto1D23}
\end{equation}%
To prove \eqref{E:3Dto1D23}, we substitute \eqref{E:3Dto1D6} into 
\begin{equation*}
\langle S^{(k)}(\alpha +N^{-1}H_{N,\omega }-2\omega )\psi ,S^{(k)}(\alpha
+N^{-1}H_{N,\omega }-2\omega )\psi \rangle 
\end{equation*}%
which gives 
\begin{equation*}
N^{-4}\sum_{\substack{ 1\leq i_{1}<j_{1}\leq N \\ 1\leq i_{2}<j_{2}\leq N}}%
\langle S^{(k)}H_{i_{1}j_{1}}\psi ,S^{(k)}H_{i_{2}j_{2}}\psi \rangle 
\end{equation*}%
We decompose into three terms 
\begin{equation*}
=E_{1}+E_{2}+E_{3}
\end{equation*}%
according to the location of $i_{1}$ and $i_{2}$ relative to $k$. We place
no restriction on $j_{1}$, $j_{2}$ (other than $i_{1}<j_{1}$, $i_{2}<j_{2}$.)

\begin{itemize}
\item $E_1$ consists of those terms for which $i_1\leq k$ and $i_2 \leq k$.

\item $E_2$ consists of those terms for which both $i_1 > k$ and $i_2 >k$.

\item $E_3$ consists of those terms for which either ($i_1 \leq k$ and $i_2
>k$) or ($i_1>k$ and $i_2<k$).
\end{itemize}

We have $E_{1}\geq 0$, and we discard this term. We extract the key lower
bound from $E_{2}$ exactly as in the $k=2$ case. In fact, inside $E_{2}$, $%
H_{i_{1}j_{1}}$ and $H_{i_{2}j_{2}}$ commute with $S^{(k)}$ because $%
j_{1}>i_{1}>k$ and $j_{2}>i_{2}>k$, hence we indeed face the $k=2$ case
again. This leaves us with $E_{3}$. 
\begin{equation*}
E_{3}=2N^{-4}\sum_{\substack{ 1\leq i_{1}<j_{1}\leq N  \\ 1\leq
i_{2}<j_{2}\leq N  \\ i_{1}\leq k,i_{2}>k}}\Re \langle
S^{(k)}H_{i_{1}j_{1}}\psi ,S^{(k)}H_{i_{2}j_{2}}\psi \rangle
\end{equation*}%
We decompose 
\begin{equation*}
E_{3}=D_{1}+D_{2}+D_{3}
\end{equation*}%
where, in each case we require $i_{1}\leq k$ and $i_{2}>k$, but make the
additional distinctions as follows:

\begin{itemize}
\item $D_1$ consists of those terms where $j_1 \leq k$

\item $D_2$ consists of those terms where $j_1 > k$ and $j_1 \in \{i_2,j_2\}$

\item $D_3$ consists of those terms where $j_1 > k$ and $j_1 \notin
\{i_2,j_2\}$
\end{itemize}

By symmetry, 
\begin{equation*}
D_1= k^2 N^{-2} \langle S_1 \cdots S_k H_{12} \psi, S_1 \cdots S_k
H_{(k+1)(k+2)} \psi \rangle
\end{equation*}
\begin{equation*}
D_2 = k N^{-2} \langle S_1 \cdots S_k H_{1(k+1)} \psi, S_1 \cdots S_k
H_{(k+1)(k+2)} \psi \rangle
\end{equation*}
\begin{equation*}
D_3 = N^{-1} \langle S_1 \cdots S_k H_{1(k+1)} \psi, S_1 \cdots S_k
H_{(k+2)(k+3)} \psi \rangle
\end{equation*}

\paragraph{\textsc{Estimates for Term D}$_{1}$}

\begin{equation*}
D_{1}=D_{11}+D_{12}
\end{equation*}%
where 
\begin{equation*}
D_{11}=N^{-2}\langle H_{(k+1)(k+2)}[S_{1}S_{2},H_{12}]S_{3}\cdots S_{k}\psi
,S_{1}\cdots S_{k}\psi \rangle 
\end{equation*}%
\begin{equation*}
D_{12}=N^{-2}\langle H_{(k+1)(k+2)}H_{12}S_{1}\cdots S_{k}\psi ,S_{1}\cdots
S_{k}\psi \rangle 
\end{equation*}%
By Lemmas \ref{L:higher-k-3} and \ref{L:op-stuff}, $D_{12}$ is positive
because $H_{(k+1)(k+2)}$ and $H_{12}$ commutes. Therefore we discard $D_{12}$%
. For $D_{11}$, we take $[V_{N\omega 12},S_{1}S_{2}]\sim (N\omega )^{2\beta
}(\Delta V)_{N\omega 12}$. This gives

\begin{equation*}
|D_{11}|\lesssim N^{2\beta -2}\omega ^{2\beta -1}\langle
H_{(k+1)(k+2)}(\Delta V)_{N\omega 12}S_{3}\cdots S_{k}\psi ,S_{1}\cdots
S_{k}\psi \rangle
\end{equation*}%
By Lemma \ref{L:higher-k-3} in the $r_{k+1}$ coordinate to handle $%
H_{(k+1)(k+2)}$%
\begin{equation*}
|D_{11}|\lesssim N^{2\beta -2}\omega ^{2\beta -1}\left\Vert \left\vert
(\Delta V)_{N\omega 12}\right\vert ^{\frac{1}{2}}S_{3}\cdots S_{k+1}\psi
\right\Vert _{L^{2}}\left\Vert \left\vert (\Delta V)_{N\omega 12}\right\vert
^{\frac{1}{2}}S_{1}\cdots S_{k+1}\psi \right\Vert _{L^{2}}
\end{equation*}%
Use (\ref{E:3Dto1D11}) in the first factor%
\begin{equation*}
|D_{11}|\lesssim N^{2\beta -2}\omega ^{2\beta -\frac{1}{2}}\left\Vert
S_{1}S_{3}\cdots S_{k+1}\psi \right\Vert _{L^{2}}\left\Vert \left\vert
(\Delta V)_{N\omega 12}\right\vert ^{\frac{1}{2}}S_{1}\cdots S_{k+1}\psi
\right\Vert _{L^{2}}
\end{equation*}%
Decompose $\psi $ in the second factor into $P_{0\omega }^{1}\psi
+P_{\geqslant 1\omega }^{1}\psi $ 
\begin{eqnarray*}
&\lesssim &N^{2\beta -2}\omega ^{2\beta -\frac{1}{2}}\left\Vert
S_{1}S_{3}\cdots S_{k+1}\psi \right\Vert _{L^{2}} \\
&&\times \left( \left\Vert \left\vert (\Delta V)_{N\omega 12}\right\vert ^{%
\frac{1}{2}}S_{1}\cdots S_{k+1}P_{0\omega }^{1}\psi \right\Vert
_{L^{2}}+\left\Vert \left\vert (\Delta V)_{N\omega 12}\right\vert ^{\frac{1}{%
2}}S_{1}\cdots S_{k+1}P_{\geqslant 1\omega }^{1}\psi \right\Vert
_{L^{2}}\right)
\end{eqnarray*}%
Apply Lemma \ref{L:higher-k-2}%
\begin{eqnarray*}
&\lesssim &N^{2\beta -2}\omega ^{2\beta -\frac{1}{2}}\left\Vert
S_{1}S_{3}\cdots S_{k+1}\psi \right\Vert _{L^{2}}\omega ^{\frac{1}{2}}N^{%
\frac{1}{4}}\left\Vert S_{1}\cdots S_{k+1}\psi \right\Vert _{L^{2}}^{\frac{1%
}{2}}\left( N^{-\frac{1}{4}}\left\Vert S_{1}^{2}\cdots S_{k+1}\psi
\right\Vert _{L^{2}}^{\frac{1}{2}}\right) \\
&&+N^{2\beta -2}\omega ^{2\beta -\frac{1}{2}}\left\Vert S_{1}S_{3}\cdots
S_{k+1}\psi \right\Vert _{L^{2}}N^{\frac{\beta }{2}+\frac{1}{2}}\omega ^{%
\frac{\beta }{2}}\left( N^{-\frac{1}{2}}\left\Vert S_{1}^{2}\cdots
S_{k+1}\psi \right\Vert _{L^{2}}\right)
\end{eqnarray*}%
The coefficients simplify to $N^{2\beta -\frac{7}{4}}\omega ^{2\beta }$ and $%
N^{\frac{5}{2}\beta -\frac{3}{2}}\omega ^{\frac{5}{2}\beta -\frac{1}{2}}$.
This gives the constraints%
\begin{equation*}
\omega \leq N^{\frac{\frac{7}{4}-2\beta }{2\beta }}\text{ and }\omega \leq
N^{\frac{\frac{3}{5}-\beta }{\beta -\frac{1}{5}}}\text{.}
\end{equation*}%
The second one is the worst one. When combined with the lower bound $N^{%
\frac{\beta }{1-\beta }}\leq \omega $, it restricts us to $\beta \leq \frac{3%
}{7}$. Moreover, at $\beta =\frac{2}{5}$, the relation $\omega =N$ is within
the allowable range.

\paragraph{\textsc{Estimates for Term D}$_{2}$}

We write 
\begin{equation*}
D_{2}=D_{21}+D_{22}
\end{equation*}%
where 
\begin{equation*}
D_{21}=N^{-2}\langle H_{(k+1)(k+2)}[S_{1},H_{1(k+1)}]S_{2}\cdots S_{k}\psi
,S_{1}\cdots S_{k}\psi \rangle 
\end{equation*}%
\begin{equation*}
D_{22}=N^{-2}\langle H_{(k+1)(k+2)}H_{1(k+1)}S_{1}\cdots S_{k}\psi
,S_{1}\cdots S_{k}\psi \rangle 
\end{equation*}%
Let us begin with $D_{21}$. Use 
\begin{equation*}
\lbrack S_{1},H_{1(k+1)}]\sim (N\omega )^{\beta }\omega ^{-1}(\nabla
V)_{N\omega 1(k+1)}
\end{equation*}%
and 
\begin{equation*}
H_{(k+1)(k+2)}=2\sigma +S_{k+1}^{2}+S_{k+2}^{2}+\omega ^{-1}V_{N\omega
(k+1)(k+2)}
\end{equation*}%
to get 
\begin{equation*}
D_{21}=D_{210}+D_{211}+D_{212}+D_{213}
\end{equation*}%
where%
\begin{equation*}
D_{210}=2\sigma N^{-1}(N\omega )^{\beta -1}\langle (\nabla V)_{N\omega
1(k+1)}S_{2}\cdots S_{k}\psi ,S_{1}\cdots S_{k}\psi \rangle 
\end{equation*}
\begin{equation*}
D_{211}=N^{-1}(N\omega )^{\beta -1}\langle S_{k+1}^{2}(\nabla V)_{N\omega
1(k+1)}S_{2}\cdots S_{k}\psi ,S_{1}\cdots S_{k}\psi \rangle 
\end{equation*}%
\begin{equation*}
D_{212}=N^{-1}(N\omega )^{\beta -1}\langle S_{k+2}^{2}(\nabla V)_{N\omega
1(k+1)}S_{2}\cdots S_{k}\psi ,S_{1}\cdots S_{k}\psi \rangle 
\end{equation*}%
\begin{equation*}
D_{213}=N^{-2}(N\omega )^{\beta }\omega ^{-2}\langle V_{N\omega
(k+1)(k+2)}(\nabla V)_{N\omega 1(k+1)}S_{2}\cdots S_{k}\psi ,S_{1}\cdots
S_{k}\psi \rangle 
\end{equation*}%
For $D_{211}$,%
\begin{eqnarray*}
D_{211} &=&N^{-1}\left( N\omega \right) ^{\beta -1}\left\langle \left[
S_{k+1},\left( \nabla V\right) _{Nw1(k+1)}\right] S_{2}...S_{k}\psi
,S_{1}\cdots S_{k}\psi \right\rangle  \\
&&+N^{-1}\left( N\omega \right) ^{\beta -1}\left\langle \left( \nabla
V\right) _{Nw1(k+1)}S_{2}...S_{k}S_{k+1}\psi ,S_{1}\cdots S_{k}\psi
\right\rangle 
\end{eqnarray*}%
The first piece is estimated the same way as $D_{11}$. For the second term,
use Lemma \ref{L:higher-k-2} in the $r_{1}$ coordinate 
\begin{eqnarray*}
|\cdot | &\lesssim &N^{-1}\left( N\omega \right) ^{\beta -1}\omega N^{\frac{1%
}{4}}\Vert S_{1}\cdots S_{k+1}\psi \Vert _{L^{2}}\left\Vert S_{1}\cdots
S_{k}\psi \right\Vert _{L^{2}}^{\frac{1}{2}}\left( N^{-\frac{1}{4}}\Vert
S_{1}S_{1}\cdots S_{k}\psi \Vert _{L^{2}}\right)  \\
&&+N^{-1}\left( N\omega \right) ^{\beta -1}\left( N\omega \right) ^{\frac{%
\beta }{2}+\frac{1}{2}}\left\Vert S_{1}\cdots S_{k+1}\psi \right\Vert
_{L^{2}}\left( N^{-\frac{1}{2}}\left\Vert S_{1}S_{1}\cdots S_{k}\psi
\right\Vert _{L^{2}}\right) 
\end{eqnarray*}%
which gives the conditions $\omega \leqslant N^{\frac{^{\frac{7}{4}-\beta }}{%
\beta }}$ and $\omega \leqslant N^{\frac{^{3-3\beta }}{3\beta -1}}$. Since
this results in conditions better than those produced for $D_{11}$, we
neglect them.

For $D_{213}$, we apply estimate (\ref{E:3Dto1D11}) in the $r_{k+2}$
coordinate and again in the $r_{k+1}$ coordinate to obtain 
\begin{equation*}
|D_{213}|\lesssim N^{-2}(N\omega )^{\beta }\omega ^{-2}\omega ^{2}\Vert
S_{2}\cdots S_{k+2}\psi \Vert _{L^{2}}\Vert S_{1}\cdots S_{k+2}\psi \Vert
_{L^{2}}
\end{equation*}%
This gives the requirement $\omega \leqslant N^{\frac{2-\beta }{\beta }}$, which is clearly weaker than $\omega \leq N^{\frac{1-\beta}{\beta}}$, so we drop it.
The terms $D_{210}$ and $D_{212}$ are estimated in the same way. In fact,
utilizing estimate (\ref{E:3Dto1D11}) in the $r_{k+1}$ coordinate yields%
\begin{equation*}
|D_{210}|\lesssim N^{-1}(N\omega )^{\beta -1}\omega \Vert S_{2}\cdots
S_{k}\psi \Vert _{L^{2}}\Vert S_{1}\cdots S_{k}\psi \Vert _{L^{2}}
\end{equation*}%
and%
\begin{equation*}
|D_{212}|\lesssim N^{-1}(N\omega )^{\beta -1}\omega \Vert S_{2}\cdots
S_{k+2}\psi \Vert _{L^{2}}\Vert S_{1}\cdots S_{k+2}\psi \Vert _{L^{2}}.
\end{equation*}%
They give the same weaker condition $\omega \leqslant N^{\frac{2-\beta }{\beta }}$.

We now turn to $D_{22}$. Since $H_{(k+1)(k+2)}$ and $H_{1(k+1)}$ do not
commute, we can not directly quote Lemma \ref{L:higher-k-3} and conclude it
is positive. We estimate it. By the definition of $H_{ij}$, we only need to
look at the following terms%
\begin{equation*}
D_{220}=N^{-2}\omega ^{-1}\langle \sigma V_{N\omega 1(k+1)}S_{1}\cdots
S_{k}\psi ,S_{1}\cdots S_{k}\psi \rangle 
\end{equation*}%
\begin{equation*}
D_{221}=N^{-2}\omega ^{-1}\langle S_{k+1}^{2}V_{N\omega 1(k+1)}S_{1}\cdots
S_{k}\psi ,S_{1}\cdots S_{k}\psi \rangle 
\end{equation*}%
\begin{equation*}
D_{222}=N^{-2}\omega ^{-1}\langle S_{k+2}^{2}V_{N\omega 1(k+1)}S_{1}\cdots
S_{k}\psi ,S_{1}\cdots S_{k}\psi \rangle 
\end{equation*}%
\begin{equation*}
D_{223}=N^{-2}\omega ^{-2}\langle V_{N\omega (k+1)(k+2)}V_{N\omega
1(k+1)}S_{1}\cdots S_{k}\psi ,S_{1}\cdots S_{k}\psi \rangle 
\end{equation*}%
\begin{equation*}
D_{224}=N^{-2}\omega ^{-1}\langle \sigma V_{N\omega (k+1)(k+2)}S_{1}\cdots
S_{k}\psi ,S_{1}\cdots S_{k}\psi \rangle 
\end{equation*}%
\begin{equation*}
D_{225}=N^{-2}\omega ^{-1}\langle V_{N\omega (k+1)(k+2)}S_{1}^{2}S_{1}\cdots
S_{k}\psi ,S_{1}\cdots S_{k}\psi \rangle 
\end{equation*}%
\begin{equation*}
D_{226}=N^{-2}\omega ^{-1}\langle V_{N\omega
(k+1)(k+2)}S_{k+1}^{2}S_{1}\cdots S_{k}\psi ,S_{1}\cdots S_{k}\psi \rangle 
\end{equation*}%
because all the other terms inside the expansion of $D_{22}$ are positive.
It is easy to tell the following: $D_{220}$ and $D_{224}$ can be estimated
in the same way as $D_{210}$, $D_{221}$ and $D_{226}$ can be estimated in
the same way as $D_{211}$, $D_{222}$ and $D_{225}$ can be estimated in the
same way as $D_{212}$, and $D_{223}$ can be estimated in the same way as $%
D_{213}$. Moreover, all the $D_{22}$ terms are better than the corresponding 
$D_{21}$ terms since they do not have a $(N\omega )^{\beta }$ in front of
them. Hence, we get no new restrictions from $D_{22}$ and we conclude the
estimate for $D_{22}$.

\paragraph{\textsc{Estimates for Term D}$_{3}$}

Commuting terms as usual: 
\begin{equation*}
D_{3}=D_{31}+D_{32}
\end{equation*}%
where 
\begin{equation*}
D_{31}=N^{-1}\langle H_{(k+2)(k+3)}[S_{1},H_{1(k+1)}]S_{2}\cdots S_{k}\psi
,S_{1}\cdots S_{k}\psi \rangle 
\end{equation*}%
\begin{equation*}
D_{32}=N^{-1}\langle H_{(k+2)(k+3)}H_{1(k+1)}S_{1}\cdots S_{k}\psi
,S_{1}\cdots S_{k}\psi \rangle 
\end{equation*}%
Since $H_{(k+2)(k+3)}$ and $H_{1(k+1)}$ commute, $D_{32}$ is positive due to
Lemmas \ref{L:higher-k-3} and \ref{L:op-stuff}. Thus we discard $D_{32}$.
For $D_{31}$, we use that 
\begin{equation*}
\lbrack S_{1},H_{1(k+1)}]\sim (N\omega )^{\beta }\omega ^{-1}(\nabla
V)_{N\omega 1(k+1)}
\end{equation*}%
together with estimate (\ref{E:3Dto1D11}) in the $r_{k+1}$ coordinate (to
handle $[S_{1},H_{1(k+1)}]$) and Lemma \ref{L:higher-k-3} in the $r_{k+2}$
coordinate (to handle $H_{(k+2)(k+3)}$) 
\begin{equation*}
|D_{31}|\lesssim N^{-1}(N\omega )^{\beta }\Vert S_{2}\cdots S_{k+2}\psi
\Vert _{L^{2}}\Vert S_{1}\cdots S_{k+2}\psi \Vert _{L^{2}}
\end{equation*}%
This term again yields to the restriction 
\begin{equation*}
\omega \leq N^{\frac{1-\beta }{\beta }}
\end{equation*}%
So far, we have proved that all the terms in $E_{3}$ can be absorbed into
the key lower bound exacted from $E_{2}$ for all $N$ large enough as long as 
$C_{1}N^{v_{1}(\beta )}\leqslant \omega \leqslant C_{2}N^{v_{E}(\beta )}$.
Thence we have finished the two step induction argument and established
Theorem \ref{Theorem:Energy Estimate}.

\section{Compactness of the BBGKY sequence \label{Section:Compactness}}

\begin{theorem}
\label{Theorem:Compactness of the scaled marginal density}Assume $%
C_{1}N^{v_{1}(\beta )}\leqslant \omega \leqslant C_{2}N^{v_{2}(\beta )}$,
then the sequence 
\begin{equation*}
\left\{ \Gamma _{N,\omega }(t)=\left\{ \tilde{\gamma}_{N,\omega
}^{(k)}\right\} _{k=1}^{N}\right\} \subset \bigoplus_{k\geqslant 1}C\left( %
\left[ 0,T\right] ,\mathcal{L}_{k}^{1}\right)
\end{equation*}%
which satisfies the focusing "$\infty -\infty $" BBGKY hierarchy 
\eqref{hierarchy:BBGKY
hierarchy for scaled marginal densities}, is compact with respect to the
product topology $\tau _{prod}$. For any limit point $\Gamma (t)=\left\{ 
\tilde{\gamma}^{(k)}\right\} _{k=1}^{N},$ $\tilde{\gamma}^{(k)}$ is a
symmetric nonnegative trace class operator with trace bounded by $1$.
\end{theorem}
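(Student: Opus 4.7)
The plan is to use an Arzel\`a--Ascoli type argument adapted to the product topology $\tau_{prod}$. Since $\tau_{prod}$ is the product of the metrics $\hat{d}_k$ on each factor $C([0,T],\mathcal{L}_k^1)$, it suffices to prove, for each fixed $k\geq 1$, that $\{\tilde{\gamma}_{N,\omega}^{(k)}\}$ is compact in $(C([0,T],\mathcal{L}_k^1),\hat{d}_k)$. By the explicit form of $d_k$ in terms of the countable dense family $\{J_i^{(k)}\}\subset\mathcal{K}_k$, this reduces to two ingredients: a uniform trace bound $\|\tilde{\gamma}_{N,\omega}^{(k)}(t)\|_{\mathrm{tr}}\leq 1$, which is immediate from normalization of $\tilde{\psi}_{N,\omega}$, and the equicontinuity estimate
\[
|\mathrm{Tr}(J^{(k)}(\tilde{\gamma}_{N,\omega}^{(k)}(t)-\tilde{\gamma}_{N,\omega}^{(k)}(s)))|\leq C(J^{(k)})|t-s|,
\]
uniform in $(N,\omega)$ in the window $C_1N^{v_1(\beta)}\leq\omega\leq C_2N^{v_2(\beta)}$.

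To get equicontinuity, I would differentiate in $t$ via the BBGKY hierarchy \eqref{hierarchy:BBGKY hierarchy for scaled marginal densities}, producing four contributions: the divergent harmonic commutator $\omega\,\mathrm{Tr}(J^{(k)}[-\Delta_{x_j}+|x_j|^2,\tilde{\gamma}_{N,\omega}^{(k)}])$, the $z$-kinetic commutator, the internal pair interaction (which has a $1/N$ prefactor and only $k(k-1)/2$ pairs), and the trace-collision term involving $\tilde{\gamma}_{N,\omega}^{(k+1)}$. The $z$-kinetic piece is handled by cyclicity and the $H^1$ bound \eqref{E:e-2}. The two potential terms are bounded by expanding $V_{N,\omega}$ and applying a Sobolev-type estimate together with \eqref{E:e-2}, in a manner parallel to the corresponding estimates in \cite{Kirpatrick,ChenAnisotropic,Chen3DDerivation,C-H3Dto2D}; these are routine.

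The hard part will be taming the $\omega$ prefactor on the harmonic commutator. My plan is to set $O_j:=-\Delta_{x_j}+|x_j|^2$ and use that $O_j$ commutes with both $\mathcal{P}_0^j$ and $\mathcal{P}_{\geq 1}^j$, with $O_j\mathcal{P}_0^j=2\mathcal{P}_0^j$. Inserting $I=\mathcal{P}_0^j+\mathcal{P}_{\geq 1}^j$ on both sides of $\tilde{\gamma}_{N,\omega}^{(k)}$ and replacing $O_j$ by $O_j-2$ (legal since constants cancel in the commutator), the $(0,0)$ block vanishes identically. For the $(\geq 1,\geq 1)$ block, the operator inequality $\omega(O_j-2)\mathcal{P}_{\geq 1}^j\leq \tilde{S}_j^2$ combined with $\mathrm{Tr}\,\tilde{S}_j^2\tilde{\gamma}_{N,\omega}^{(k)}\leq C$ from \eqref{E:e-1} absorbs the full $\omega$. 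For the off-diagonal blocks $(0,\geq 1)$ and $(\geq 1,0)$, I would write $\omega(O_j-2)=\omega^{1/2}(O_j-2)^{1/2}\cdot\omega^{1/2}(O_j-2)^{1/2}$ and apply Cauchy--Schwarz in trace form; one factor is again absorbed by $\tilde{S}_j$, while the remaining off-diagonal trace is estimated by the sharp bound $|\mathrm{Tr}\,\mathcal{P}_{\mathbf{\alpha}}\tilde{\gamma}_{N,\omega}^{(k)}\mathcal{P}_{\mathbf{\beta}}|\leq C^k\omega^{-(|\mathbf{\alpha}|+|\mathbf{\beta}|)/2}$ from \eqref{E:e-3}, whose $\omega^{-1/2}$ decay precisely compensates the remaining $\omega^{1/2}$. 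This is the mechanism by which the limiting structure of the density matrices counterbalances the ``$\infty-\infty$'' in the hierarchy.

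Once equicontinuity is in hand, compactness in $\tau_{prod}$ follows by the diagonal procedure over $k$. The assertion that any limit point $\tilde{\gamma}^{(k)}$ is symmetric, nonnegative and of trace at most $1$ is then automatic: symmetry under permutation of coordinates and nonnegativity are preserved under weak-$*$ convergence of trace-class operators against compact observables, and the trace bound follows from the lower semicontinuity of the trace norm under weak-$*$ limits in $\mathcal{L}_k^1$.
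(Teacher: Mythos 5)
Your overall framework is the same as the paper's: reduce to equicontinuity via Arzel\`a--Ascoli and a diagonalization, insert the projection decomposition $I=\mathcal{P}_0+\mathcal{P}_{\geqslant 1}$ to handle the divergent harmonic commutator, and note that the $(0,0)$ block vanishes while the $(\geqslant 1,\geqslant 1)$ block is absorbed by the energy estimate. However, there is a genuine gap in your treatment of the off-diagonal blocks, and the Lipschitz-in-time bound you claim is too strong to be true.

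Concretely, for the $(0,\geqslant 1)$ block (say $j=1$) the surviving term is $\omega\,\mathrm{Tr}\,J^{(k)}(O_1-2)\mathcal{P}_{\geqslant 1}^1\tilde{\gamma}_{N,\omega}^{(k)}\mathcal{P}_0^1 = \omega\langle J^{(k)}(O_1-2)\mathcal{P}_{\geqslant 1}^1\tilde{\psi}_{N,\omega},\mathcal{P}_0^1\tilde{\psi}_{N,\omega}\rangle$. Your plan is to split $\omega(O_1-2)=\omega^{1/2}(O_1-2)^{1/2}\cdot\omega^{1/2}(O_1-2)^{1/2}$ and let $\tilde{S}_1$ swallow one of the two halves. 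That works for the half paired with $\mathcal{P}_{\geqslant 1}^1\tilde{\psi}$, since $\Vert\omega^{1/2}(O_1-2)^{1/2}\mathcal{P}_{\geqslant 1}^1\tilde{\psi}_{N,\omega}\Vert\leqslant\Vert\tilde{S}_1\tilde{\psi}_{N,\omega}\Vert\leqslant C$ by \eqref{E:e-1}--\eqref{E:tilde-S-3}. But the residual $\omega^{1/2}$ then lands on a factor proportional to $\Vert\mathcal{P}_0^1\tilde{\psi}_{N,\omega}\Vert$, and the ground-state projection has \emph{no} decay in $\omega$ --- \eqref{E:e-3} gives decay only for the excited projections. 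Nor can you move a $(O_1-2)^{1/2}$ across $J^{(k)}$ onto the $\mathcal{P}_0^1$ side (they do not commute, and once $J^{(k)}$ acts the state leaves the ground eigenspace). And once you insert a half-power of $\tilde{S}_1$ into the trace you no longer have a pure trace $\mathrm{Tr}\,\mathcal{P}_{\mathbf{\alpha}}\tilde{\gamma}\mathcal{P}_{\mathbf{\beta}}$, so the bound $\leqslant C^k\omega^{-(|\mathbf{\alpha}|+|\mathbf{\beta}|)/2}$ from \eqref{E:e-3} does not apply. The end result of the estimate you describe is $C_J\,\omega^{1/2}|t_2-t_1|$, which is not uniformly bounded in the allowed window.

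The correct resolution, which the paper carries out, is to prove two complementary bounds on the time increment and then interpolate. The BBGKY-derived estimate \eqref{E:cpct2} gives $C_J|t_2-t_1|\,\omega^{1-|\mathbf{\alpha}|/2-|\mathbf{\beta}|/2}$ (Lipschitz in time but diverging in $\omega$ when $|\mathbf{\alpha}|+|\mathbf{\beta}|=1$), while \eqref{E:cpct3} gives the static bound $C_J\omega^{-|\mathbf{\alpha}|/2-|\mathbf{\beta}|/2}$ (decaying in $\omega$ but with no time gain). Taking the geometric mean of the two yields the H\"older-$1/2$ modulus $C_J|t_2-t_1|^{1/2}$ uniformly in $(N,\omega)$, which is exactly what equicontinuity requires --- nothing stronger. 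So you should drop the claim of a Lipschitz modulus and add the auxiliary static bound together with the interpolation step; with that adjustment the remainder of your argument (the $z$-kinetic term, the two interaction terms with $W_{ij}$ and Lemma \ref{Lemma:ESYSoblevLemma}, and the closing diagonalization and weak-$*$ closedness arguments for symmetry, positivity, and $\mathrm{Tr}\leqslant 1$) goes through as you outline.
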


\begin{proof}
By the standard diagonalization argument, it suffices to show the
compactness of $\tilde{\gamma}_{N,\omega }^{(k)}$ for fixed $k$ with respect
to the metric $\hat{d}_{k}$. By the Arzel\`{a}-Ascoli theorem, this is
equivalent to the equicontinuity of $\tilde{\gamma}_{N,\omega }^{(k)}$. By 
\cite[Lemma 6.2]{E-S-Y3}, it suffice to prove that for every test function $%
J^{(k)}$ from a dense subset of $\mathcal{K}(L^{2}(\mathbb{R}^{3k}))$ and
for every $\varepsilon >0$, there exists $\delta (J^{(k)},\varepsilon )$
such that for all $t_{1},t_{2}\in \left[ 0,T\right] $ with $\left\vert
t_{1}-t_{2}\right\vert \leqslant \delta $, we write 
\begin{equation}
\sup_{N,\omega }\left\vert \func{Tr}J^{(k)}\tilde{\gamma}_{N,\omega
}^{(k)}(t_{1})-\func{Tr}J^{(k)}\tilde{\gamma}_{N,\omega
}^{(k)}(t_{2})\right\vert \leqslant \varepsilon \,.  \label{E:cpct5}
\end{equation}%
Here, we assume that our compact operators $J^{(k)}$ have been cut off in
frequency as in Lemma \ref{L:compact-operator-truncation}. Assume $%
t_{1}\leqslant t_{2}$. Inserting the decomposition \eqref{E:cpct1} on the
left and right side of $\gamma _{N,\omega }^{(k)}$, we obtain 
\begin{equation*}
\tilde{\gamma}_{N,\omega }^{(k)}=\sum_{\mathbf{\alpha },\mathbf{\beta }}%
\mathcal{P}_{\mathbf{\alpha }}\tilde{\gamma}_{N,\omega }^{(k)}\mathcal{P}_{%
\mathbf{\beta }}
\end{equation*}%
where the sum is taken over all $k$-tuples $\mathbf{\alpha }$ and $\mathbf{%
\beta }$ of the type described in (\ref{E:cpct1}).

To establish \eqref{E:cpct5} it suffices to prove that, for each $\mathbf{%
\alpha }$ and $\mathbf{\beta }$, we have 
\begin{equation}
\sup_{N,\omega }\left\vert \func{Tr}J^{(k)}\mathcal{P}_{\mathbf{\alpha }}%
\tilde{\gamma}_{N,\omega }^{(k)}\mathcal{P}_{\mathbf{\beta }}(t_{1})-\func{Tr%
}J^{(k)}\mathcal{P}_{\mathbf{\alpha }}\tilde{\gamma}_{N,\omega }^{(k)}%
\mathcal{P}_{\mathbf{\beta }}(t_{2})\right\vert \leqslant \varepsilon \,.
\label{E:cpct4}
\end{equation}%
To this end, we establish the estimate 
\begin{eqnarray}
&&\left\vert \func{Tr}J^{(k)}\mathcal{P}_{\mathbf{\alpha }}\tilde{\gamma}%
_{N,\omega }^{(k)}\mathcal{P}_{\mathbf{\beta }}(t_{1})-\func{Tr}J^{(k)}%
\mathcal{P}_{\mathbf{\alpha }}\tilde{\gamma}_{N,\omega }^{(k)}\mathcal{P}_{%
\mathbf{\beta }}(t_{2})\right\vert  \label{E:cpct2} \\
&\lesssim &C\left\vert t_{2}-t_{1}\right\vert \left( \mathbf{1}_{\alpha
=0\&\&\beta =0}+\max (1,\omega ^{1-\frac{\left\vert a\right\vert }{2}-\frac{%
\left\vert \beta \right\vert }{2}})\mathbf{1}_{\alpha \neq 0||\beta \neq
0}\right)  \notag
\end{eqnarray}%
At a glance, \eqref{E:cpct2} seems not quite enough in the $\left\vert 
\mathbf{\alpha }\right\vert =0$ and $\left\vert \mathbf{\beta }\right\vert
=1 $ case (or vice versa) because it grows in $\omega $. However, we can
also prove the (comparatively simpler) bound 
\begin{equation}
\left\vert \func{Tr}J^{(k)}\mathcal{P}_{\mathbf{\alpha }}\tilde{\gamma}%
_{N,\omega }^{(k)}\mathcal{P}_{\mathbf{\beta }}(t_{2})-\func{Tr}J^{(k)}%
\mathcal{P}_{\mathbf{\alpha }}\tilde{\gamma}_{N,\omega }^{(k)}\mathcal{P}_{%
\mathbf{\beta }}(t_{1})\right\vert \lesssim \omega ^{-\frac{1}{2}|\mathbf{%
\alpha }|-\frac{1}{2}|\mathbf{\beta }|}  \label{E:cpct3}
\end{equation}%
which provides a better power of $\omega $ but no gain as $t_{2}\rightarrow
t_{1}$. Interpolating between \eqref{E:cpct2} and \eqref{E:cpct3} in the $%
\left\vert \mathbf{\alpha }\right\vert =0$ and $\left\vert \mathbf{\beta }%
\right\vert =1$ case (or vice versa), we acquire 
\begin{equation*}
\left\vert \func{Tr}J^{(k)}\mathcal{P}_{\mathbf{\alpha }}\tilde{\gamma}%
_{N,\omega }^{(k)}\mathcal{P}_{\mathbf{\beta }}(t_{2})-\func{Tr}J^{(k)}%
\mathcal{P}_{\mathbf{\alpha }}\tilde{\gamma}_{N,\omega }^{(k)}\mathcal{P}_{%
\mathbf{\beta }}(t_{1})\right\vert \lesssim |t_{2}-t_{1}|^{1/2}
\end{equation*}%
which suffices to establish \eqref{E:cpct4}.

Below, we prove \eqref{E:cpct2} and \eqref{E:cpct3}. We first prove %
\eqref{E:cpct2}. The BBGKY hierarchy (\ref{hierarchy:BBGKY hierarchy for
scaled marginal densities}) yields 
\begin{equation}
\partial _{t}\func{Tr}J^{(k)}\mathcal{P}_{\mathbf{\alpha }}\tilde{\gamma}%
_{N,\omega }^{(k)}\mathcal{P}_{\mathbf{\beta }}=\text{I}+\text{II}+\text{III}%
+\text{IV.}  \label{E:cpct50}
\end{equation}%
where%
\begin{equation*}
\text{I}=-i\omega \sum_{j=1}^{k}\func{Tr}J^{(k)}[-\Delta _{x_{j}}+\left\vert
x_{j}\right\vert ^{2},\mathcal{P}_{\mathbf{\alpha }}\tilde{\gamma}_{N,\omega
}^{(k)}\mathcal{P}_{\mathbf{\beta }}]
\end{equation*}%
\begin{equation*}
\text{II}=-i\sum_{j=1}^{k}\func{Tr}J^{(k)}[-\partial _{z_{j}}^{2},\mathcal{P}%
_{\mathbf{\alpha }}\tilde{\gamma}_{N,\omega }^{(k)}\mathcal{P}_{\mathbf{%
\beta }}]
\end{equation*}%
\begin{equation*}
\text{III}=\frac{-i}{N}\sum_{1\leqslant i<j\leqslant k}\func{Tr}J^{(k)}%
\mathcal{P}_{\mathbf{\alpha }}[V_{N,\omega }(r_{i}-r_{j}),\tilde{\gamma}%
_{N,\omega }^{(k)}]\mathcal{P}_{\mathbf{\beta }}
\end{equation*}%
\begin{equation*}
\text{IV}=-i\frac{N-k}{N}\sum_{j=1}^{k}\func{Tr}J^{(k)}\mathcal{P}_{\mathbf{%
\alpha }}[V_{N,\omega }(r_{j}-r_{k+1}),\tilde{\gamma}_{N,\omega }^{(k+1)}]%
\mathcal{P}_{\mathbf{\beta }}
\end{equation*}%
We first consider \text{I}. When $\mathbf{\alpha =\beta }=0$, 
\begin{eqnarray*}
\text{I} &=&-i\omega \sum_{j=1}^{k}\func{Tr}J^{(k)}[-\Delta
_{x_{j}}+\left\vert x_{j}\right\vert ^{2},\mathcal{P}_{\mathbf{0}}\tilde{%
\gamma}_{N,\omega }^{(k)}\mathcal{P}_{\mathbf{0}}] \\
&=&-i\omega \sum_{j=1}^{k}\func{Tr}J^{(k)}[-2-\Delta _{x_{j}}+\left\vert
x_{j}\right\vert ^{2},\mathcal{P}_{\mathbf{0}}\tilde{\gamma}_{N,\omega
}^{(k)}\mathcal{P}_{\mathbf{0}}] \\
&=&0,
\end{eqnarray*}%
since constants commute with everything. When $\mathbf{\alpha }\neq 0$ or $%
\mathbf{\beta }\neq 0$, we apply Lemma \ref{L:trace-of-tp-kernel} and
integrate by parts to obtain 
\begin{align*}
\left\vert \text{I}\right\vert & \leqslant \omega \sum_{j=1}^{k}\left\vert
\langle J^{(k)}H_{j}\mathcal{P}_{\mathbf{\alpha }}\tilde{\psi}_{N,\omega },%
\mathcal{P}_{\mathbf{\beta }}\tilde{\psi}_{N,\omega }\rangle -\langle J^{(k)}%
\mathcal{P}_{\mathbf{\alpha }}\tilde{\psi}_{N,\omega },H_{j}\mathcal{P}_{%
\mathbf{\beta }}\tilde{\psi}_{N,\omega }\rangle \right\vert \\
& \leqslant \omega \sum_{j=1}^{k}\left( \left\vert \langle J^{(k)}H_{j}%
\mathcal{P}_{\mathbf{\alpha }}\tilde{\psi}_{N,\omega },\mathcal{P}_{\mathbf{%
\beta }}\tilde{\psi}_{N,\omega }\rangle \right\vert +\left\vert \langle
H_{j}J^{(k)}\mathcal{P}_{\mathbf{\alpha }}\tilde{\psi}_{N,\omega },\mathcal{P%
}_{\mathbf{\beta }}\tilde{\psi}_{N,\omega }\rangle \right\vert \right)
\end{align*}%
where $H_{j}=-\Delta _{x_{j}}+\left\vert x_{j}\right\vert ^{2}$. Hence 
\begin{equation*}
\left\vert \text{I}\right\vert \lesssim \omega \sum_{j=1}^{k}(\Vert
J^{(k)}H_{j}\Vert _{\func{op}}+\Vert H_{j}J^{(k)}\Vert _{\func{op}})\Vert 
\mathcal{P}_{\mathbf{\alpha }}\tilde{\psi}_{N,\omega }\Vert _{L^{2}(\mathbb{R%
}^{3N})}\Vert \mathcal{P}_{\mathbf{\beta }}\tilde{\psi}_{N,\omega }\Vert
_{L^{2}(\mathbb{R}^{3N})}
\end{equation*}%
By the energy estimate (\ref{E:e-3}), 
\begin{equation}
\left\vert \text{I}\right\vert 
\begin{cases}
=0 & \text{if }\mathbf{\alpha }=0\text{ and }\mathbf{\beta }=0 \\ 
\lesssim C_{k,J^{(k)}}\omega ^{1-\frac{1}{2}|\mathbf{\alpha }|-\frac{1}{2}|%
\mathbf{\beta }|} & \text{otherwise}%
\end{cases}
\label{E:cpct52}
\end{equation}%
Next, consider II. Proceed as in I, we have 
\begin{equation*}
\left\vert \text{II}\right\vert \leqslant \sum_{j=1}^{k}\left( \left\vert
\left\langle J^{(k)}\partial _{z_{j}}^{2}\mathcal{P}_{\mathbf{\alpha }}%
\tilde{\psi}_{N,\omega },\mathcal{P}_{\mathbf{\beta }}\tilde{\psi}_{N,\omega
}\right\rangle \right\vert +\left\vert \left\langle \partial
_{z_{j}}^{2}J^{(k)}\mathcal{P}_{\mathbf{\alpha }}\tilde{\psi}_{N,\omega },%
\mathcal{P}_{\mathbf{\beta }}\tilde{\psi}_{N,\omega }\right\rangle
\right\vert \right)
\end{equation*}%
That is 
\begin{equation}
\left\vert \text{II}\right\vert \leqslant \sum_{j=1}^{k}(\Vert
J^{(k)}\partial _{z_{j}}^{2}\Vert _{\func{op}}+\Vert \partial
_{z_{j}}^{2}J^{(k)}\Vert _{\func{op}})\Vert \mathcal{P}_{\mathbf{\alpha }}%
\tilde{\psi}_{N,\omega }\Vert _{L^{2}(\mathbb{R}^{3N})}\Vert \mathcal{P}_{%
\mathbf{\beta }}\tilde{\psi}_{N,\omega }\Vert _{L^{2}(\mathbb{R}%
^{3N})}\leqslant C_{k,J^{(k)}}.  \label{E:cpct51}
\end{equation}%
Now, consider $\text{III}$. 
\begin{eqnarray*}
\left\vert \text{III}\right\vert &\leqslant &N^{-1}\sum_{1\leqslant
i<j\leqslant k}\left\vert \left\langle J^{(k)}\mathcal{P}_{\mathbf{\alpha }%
}V_{N,\omega }(r_{i}-r_{j})\tilde{\psi}_{N,\omega },\mathcal{P}_{\mathbf{%
\beta }}\tilde{\psi}_{N,\omega }\right\rangle \right\vert + \\
&&N^{-1}\sum_{1\leqslant i<j\leqslant k}\left\vert \left\langle J^{(k)}%
\mathcal{P}_{\mathbf{\alpha }}\tilde{\psi}_{N,\omega },\mathcal{P}_{\mathbf{%
\beta }}V_{N,\omega }(r_{i}-r_{j})\tilde{\psi}_{N,\omega }\right\rangle
\right\vert
\end{eqnarray*}%
That is%
\begin{eqnarray*}
\left\vert \text{III}\right\vert &\leqslant &N^{-1}\sum_{1\leqslant
i<j\leqslant k}\left\vert \left\langle J^{(k)}\mathcal{P}_{\mathbf{\alpha }%
}L_{i}L_{j}W_{ij}L_{i}L_{j}\tilde{\psi}_{N,\omega },\mathcal{P}_{\mathbf{%
\beta }}\tilde{\psi}_{N,\omega }\right\rangle \right\vert \\
&&+N^{-1}\sum_{1\leqslant i<j\leqslant k}\left\vert \left\langle J^{(k)}%
\mathcal{P}_{\mathbf{\alpha }}\tilde{\psi}_{N,\omega },\mathcal{P}_{\mathbf{%
\beta }}L_{i}L_{j}W_{ij}L_{i}L_{j}\tilde{\psi}_{N,\omega }\right\rangle
\right\vert
\end{eqnarray*}%
if we write $L_{i}=(1-\Delta _{r_{i}})^{1/2}$ and 
\begin{equation*}
W_{ij}=L_{i}^{-1}L_{j}^{-1}V_{N,\omega }(r_{i}-r_{j})L_{i}^{-1}L_{j}^{-1}\,.
\end{equation*}%
Hence 
\begin{eqnarray*}
\left\vert \text{III}\right\vert &\leqslant &N^{-1}\sum_{1\leqslant
i<j\leqslant k}\left\Vert J^{(k)}L_{i}L_{j}\right\Vert _{op}\left\Vert
W_{ij}\right\Vert _{op}\left\Vert L_{i}L_{j}\tilde{\psi}_{N,\omega
}\right\Vert _{L^{2}(\mathbb{R}^{3N})}\left\Vert \mathcal{P}_{\mathbf{\beta }%
}\tilde{\psi}_{N,\omega }\right\Vert _{L^{2}(\mathbb{R}^{3N})} \\
&&+N^{-1}\sum_{1\leqslant i<j\leqslant k}\left\Vert
L_{i}L_{j}J^{(k)}\right\Vert _{op}\left\Vert W_{ij}\right\Vert
_{op}\left\Vert L_{i}L_{j}\tilde{\psi}_{N,\omega }\right\Vert _{L^{2}(%
\mathbb{R}^{3N})}\left\Vert \mathcal{P}_{\mathbf{\alpha }}\tilde{\psi}%
_{N,\omega }\right\Vert _{L^{2}(\mathbb{R}^{3N})}
\end{eqnarray*}%
Since $\Vert W_{ij}\Vert _{\func{op}}\lesssim \Vert V_{N,\omega }\Vert
_{L^{1}}=\Vert V\Vert _{L^{1}}$ (independent of $N$, $\omega $) by Lemma \ref%
{Lemma:ESYSoblevLemma}, the energy estimates (Corollary \ref%
{Corollary:Energy Bound for Marginal Densities}) imply that 
\begin{equation}
\left\vert \text{III}\right\vert \lesssim \frac{C_{k,J^{(k)}}}{N}
\label{E:cpct53}
\end{equation}%
Apply the same ideas to $\text{IV}$. 
\begin{eqnarray*}
\left\vert \text{IV}\right\vert &\leqslant &\sum_{j=1}^{k}\left\vert
\left\langle J^{(k)}\mathcal{P}_{\mathbf{\alpha }}L_{j}L_{k+1}W_{j\left(
k+1\right) }L_{j}L_{k+1}\tilde{\psi}_{N,\omega },\mathcal{P}_{\mathbf{\beta }%
}\tilde{\psi}_{N,\omega }\right\rangle \right\vert \\
&&\sum_{j=1}^{k}\left\vert \left\langle J^{(k)}\mathcal{P}_{\mathbf{\alpha }}%
\tilde{\psi}_{N,\omega },\mathcal{P}_{\mathbf{\beta }}L_{j}L_{k+1}W_{j\left(
k+1\right) }L_{j}L_{k+1}\tilde{\psi}_{N,\omega }\right\rangle \right\vert
\end{eqnarray*}%
Then, since $J^{(k)}L_{k+1}=L_{k+1}J^{(k)}$, 
\begin{eqnarray}
&&\left\vert \text{IV}\right\vert  \label{E:cpct54} \\
&\leqslant &\sum_{j=1}^{k}\left( \left\Vert J^{(k)}L_{j}\right\Vert
_{op}+\left\Vert L_{j}J^{(k)}\right\Vert _{op}\right) \left\Vert W_{j\left(
k+1\right) }\right\Vert _{op}\left\Vert L_{j}L_{k+1}\tilde{\psi}_{N,\omega
}\right\Vert _{L^{2}(\mathbb{R}^{3N})}\left\Vert L_{j}\tilde{\psi}_{N,\omega
}\right\Vert _{L^{2}(\mathbb{R}^{3N})}  \notag \\
&\lesssim &C_{k,J^{(k)}}.  \notag
\end{eqnarray}%
Integrating \eqref{E:cpct50} from $t_{1}$ to $t_{2}$ and applying the bounds
obtained in \eqref{E:cpct52}, \eqref{E:cpct51}, \eqref{E:cpct53}, and %
\eqref{E:cpct54}, we obtain \eqref{E:cpct2}.

Finally, we prove \eqref{E:cpct3}. By Lemma \ref{L:trace-of-tp-kernel},%
\begin{eqnarray*}
&&\left\vert \func{Tr}J^{(k)}\mathcal{P}_{\mathbf{\alpha }}\tilde{\gamma}%
_{N,\omega }^{(k)}\mathcal{P}_{\mathbf{\beta }}(t_{2})-\func{Tr}J^{(k)}%
\mathcal{P}_{\mathbf{\alpha }}\tilde{\gamma}_{N,\omega }^{(k)}\mathcal{P}_{%
\mathbf{\beta }}(t_{1})\right\vert \\
&\leqslant &2\sup_{t}\left\vert \left\langle J^{(k)}\mathcal{P}_{\mathbf{%
\alpha }}\tilde{\psi}_{N,\omega }(t),\mathcal{P}_{\mathbf{\beta }}\tilde{\psi%
}_{N,\omega }(t)\right\rangle \right\vert \\
&\lesssim &\Vert J^{(k)}\Vert _{\func{op}}\Vert \mathcal{P}_{\mathbf{\alpha }%
}\tilde{\psi}_{N,\omega }(t)\Vert _{L^{2}(\mathbb{R}^{3N})}\Vert \mathcal{P}%
_{\mathbf{\beta }}\tilde{\psi}_{N,\omega }(t)\Vert _{L^{2}(\mathbb{R}^{3N})}
\end{eqnarray*}%
that is%
\begin{equation*}
\left\vert \func{Tr}J^{(k)}\mathcal{P}_{\mathbf{\alpha }}\tilde{\gamma}%
_{N,\omega }^{(k)}\mathcal{P}_{\mathbf{\beta }}(t_{2})-\func{Tr}J^{(k)}%
\mathcal{P}_{\mathbf{\alpha }}\tilde{\gamma}_{N,\omega }^{(k)}\mathcal{P}_{%
\mathbf{\beta }}(t_{1})\right\vert \lesssim \omega ^{-\frac{1}{2}|\mathbf{%
\alpha }|-\frac{1}{2}|\mathbf{\beta }|}.
\end{equation*}%
once we apply \eqref{E:e-3}.
\end{proof}

With Theorem \ref{Theorem:Compactness of the scaled marginal density}, we
can start talking about the limit points of $\left\{ \Gamma _{N,\omega
}(t)=\{\tilde{\gamma}_{N,\omega }^{(k)}\}_{k=1}^{N}\right\} .$

\begin{corollary}
\label{Corollary:LimitMustBeAProduct}Let $\Gamma (t)=\{\tilde{\gamma}%
^{(k)}\}_{k=1}^{\infty }$ be a limit point of $\left\{ \Gamma _{N,\omega
}(t)=\{\tilde{\gamma}_{N,\omega }^{(k)}\}_{k=1}^{N}\right\} $, with respect
to the product topology $\tau _{prod}$, then $\tilde{\gamma}^{(k)}$
satisfies the a priori bound%
\begin{equation}
\limfunc{Tr}L^{(k)}\tilde{\gamma}^{(k)}L^{(k)}\leqslant C^{k}  \label{E:e-7}
\end{equation}%
and takes the structure%
\begin{equation}
\tilde{\gamma}^{(k)}\left( t,\left( \mathbf{x}_{k},\mathbf{z}_{k}\right)
;\left( \mathbf{x}_{k}^{\prime },\mathbf{z}_{k}^{\prime }\right) \right)
=\left( \dprod\limits_{j=1}^{k}h_{1}\left( x_{j}\right) h_{1}\left(
x_{j}^{\prime }\right) \right) \tilde{\gamma}_{z}^{(k)}(t,\mathbf{z}_{k};%
\mathbf{z}_{k}^{\prime }),  \label{E:e-8}
\end{equation}%
where $\tilde{\gamma}_{z}^{(k)}=\limfunc{Tr}_{x}\tilde{\gamma}^{(k)}$.
\end{corollary}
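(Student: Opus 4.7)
\medskip

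\noindent\textbf{Proof proposal.} The plan is to isolate the two claims and treat them separately, using only the weak$\ast$ convergence $\tilde\gamma_{N,\omega}^{(k)}\rightharpoonup\tilde\gamma^{(k)}$ along the subsequence together with the uniform energy bounds \eqref{E:e-1}--\eqref{E:e-3} from Corollary \ref{Corollary:Energy Bound for Marginal Densities}. The a priori bound \eqref{E:e-7} will follow from a standard lower semicontinuity argument, and the tensor structure \eqref{E:e-8} will follow by showing that every ``off-ground-state'' block of $\tilde\gamma_{N,\omega}^{(k)}$ is forced to vanish in the limit by a factor of $\omega^{-1/2}$ per excited mode.

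For the structure \eqref{E:e-8}, I would fix a compact $J^{(k)}\in\mathcal K_k$ and use the finite decomposition of the identity \eqref{E:cpct1} on both sides of $\tilde\gamma_{N,\omega}^{(k)}$:
\begin{equation*}
\operatorname{Tr} J^{(k)}\tilde\gamma_{N,\omega}^{(k)}
= \sum_{\mathbf{\alpha},\mathbf{\beta}\in\{0,1\}^k}
\operatorname{Tr}\bigl(J^{(k)}\mathcal P_{\mathbf{\beta}}\,\tilde\gamma_{N,\omega}^{(k)}\mathcal P_{\mathbf{\alpha}}\bigr).
\end{equation*}
The sum has only $2^{2k}$ terms. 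Using cyclicity, positivity of $\tilde\gamma_{N,\omega}^{(k)}$, and Cauchy--Schwarz on trace norms gives
\begin{equation*}
\bigl|\operatorname{Tr} J^{(k)}\mathcal P_{\mathbf{\beta}}\tilde\gamma_{N,\omega}^{(k)}\mathcal P_{\mathbf{\alpha}}\bigr|
\leq \|J^{(k)}\|_{\operatorname{op}}\bigl(\operatorname{Tr}\mathcal P_{\mathbf{\alpha}}\tilde\gamma_{N,\omega}^{(k)}\mathcal P_{\mathbf{\alpha}}\bigr)^{1/2}\bigl(\operatorname{Tr}\mathcal P_{\mathbf{\beta}}\tilde\gamma_{N,\omega}^{(k)}\mathcal P_{\mathbf{\beta}}\bigr)^{1/2},
\end{equation*}
and by \eqref{E:e-3} this is $\leq C^k\omega^{-|\mathbf{\alpha}|/2-|\mathbf{\beta}|/2}$, which vanishes as $\omega\to\infty$ whenever $(\mathbf{\alpha},\mathbf{\beta})\neq(\mathbf{0},\mathbf{0})$. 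Hence in the limit only the $(\mathbf{0},\mathbf{0})$ block survives and
\begin{equation*}
\operatorname{Tr} J^{(k)}\tilde\gamma^{(k)} = \lim\operatorname{Tr}\bigl(\mathcal P_{\mathbf 0}J^{(k)}\mathcal P_{\mathbf 0}\tilde\gamma_{N,\omega}^{(k)}\bigr) = \operatorname{Tr}\bigl(\mathcal P_{\mathbf 0}J^{(k)}\mathcal P_{\mathbf 0}\tilde\gamma^{(k)}\bigr),
\end{equation*}
where in the last equality I use that $\mathcal P_{\mathbf 0}J^{(k)}\mathcal P_{\mathbf 0}$ is again compact and so is a legitimate test operator. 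As this holds for every $J^{(k)}\in\mathcal K_k$, I conclude $\tilde\gamma^{(k)}=\mathcal P_{\mathbf 0}\tilde\gamma^{(k)}\mathcal P_{\mathbf 0}$. Since $\mathcal P_{\mathbf 0}=\prod_{j=1}^k|h_1(x_j)\rangle\langle h_1(x_j)|$ is the projection onto the Gaussian ground state in each $x$-variable, the kernel of $\tilde\gamma^{(k)}$ factors exactly as in \eqref{E:e-8}, with $\tilde\gamma_z^{(k)}=\operatorname{Tr}_x\tilde\gamma^{(k)}$.

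For \eqref{E:e-7}, $L^{(k)}$ is unbounded so one cannot test directly. I would pick an orthonormal basis $\{e_i\}_{i\geq 1}$ of $L^2(\mathbb R^{3k})$ contained in the domain of $L^{(k)}$ and write, for every $M\in\mathbb N$,
\begin{equation*}
\sum_{i=1}^{M}\bigl\langle L^{(k)}e_i,\tilde\gamma^{(k)}L^{(k)}e_i\bigr\rangle
= \sum_{i=1}^{M}\operatorname{Tr}\bigl(|L^{(k)}e_i\rangle\langle L^{(k)}e_i|\,\tilde\gamma^{(k)}\bigr).
\end{equation*}
Each rank-one operator $|L^{(k)}e_i\rangle\langle L^{(k)}e_i|$ is compact, so by weak$\ast$ convergence each term equals $\lim_{N,\omega}\operatorname{Tr}(|L^{(k)}e_i\rangle\langle L^{(k)}e_i|\tilde\gamma_{N,\omega}^{(k)})$. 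Since the partial sum is finite, it equals
\begin{equation*}
\lim_{N,\omega}\sum_{i=1}^{M}\bigl\langle L^{(k)}e_i,\tilde\gamma_{N,\omega}^{(k)}L^{(k)}e_i\bigr\rangle
\leq \liminf_{N,\omega}\operatorname{Tr}\bigl(L^{(k)}\tilde\gamma_{N,\omega}^{(k)}L^{(k)}\bigr)\leq C^k
\end{equation*}
by \eqref{E:e-2}. Sending $M\to\infty$ by monotone convergence yields \eqref{E:e-7}. The trace-class and nonnegativity properties of $\tilde\gamma^{(k)}$ inherited from Theorem \ref{Theorem:Compactness of the scaled marginal density} then complete the proof. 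The one place that will need a little care is checking that the Cauchy--Schwarz bound above really does combine with \eqref{E:e-3} in the trace-norm sense (rather than only the scalar trace sense); this is where I would invoke the factorization $\tilde\gamma_{N,\omega}^{(k)}=(\tilde\gamma_{N,\omega}^{(k)})^{1/2}(\tilde\gamma_{N,\omega}^{(k)})^{1/2}$ and split $\|\mathcal P_{\mathbf{\beta}}\tilde\gamma_{N,\omega}^{(k)}\mathcal P_{\mathbf{\alpha}}\|_1$ into two Hilbert--Schmidt factors whose squares are exactly $\operatorname{Tr}\mathcal P_{\mathbf{\alpha}}\tilde\gamma_{N,\omega}^{(k)}\mathcal P_{\mathbf{\alpha}}$ and $\operatorname{Tr}\mathcal P_{\mathbf{\beta}}\tilde\gamma_{N,\omega}^{(k)}\mathcal P_{\mathbf{\beta}}$.
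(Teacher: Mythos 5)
Your proof is correct and follows essentially the same route as the paper. For \eqref{E:e-8} the paper passes through the wavefunction level, writing $\operatorname{Tr} J^{(k)}\mathcal P_{\mathbf\alpha}\tilde\gamma_{N,\omega}^{(k)}\mathcal P_{\mathbf\beta}=\langle J^{(k)}\mathcal P_{\mathbf\alpha}\tilde\psi_{N,\omega},\mathcal P_{\mathbf\beta}\tilde\psi_{N,\omega}\rangle$ (Lemma \ref{L:trace-of-tp-kernel}) and applying Cauchy--Schwarz in $L^2$, whereas you do the operator-level Hilbert--Schmidt factorization of $\mathcal P_{\mathbf\beta}\tilde\gamma_{N,\omega}^{(k)}\mathcal P_{\mathbf\alpha}$; both routes yield the identical $C^k\omega^{-|\mathbf\alpha|/2-|\mathbf\beta|/2}$ bound from \eqref{E:e-3}, and your lower-semicontinuity argument for \eqref{E:e-7} is exactly the standard one the paper cites implicitly.
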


\begin{proof}
We only need to prove (\ref{E:e-8}) because the a priori bound \eqref{E:e-7}
directly follows from \eqref{E:e-2} in Corollary \ref{Corollary:Energy Bound
for Marginal Densities} and Theorem \ref{Theorem:Compactness of the scaled
marginal density}.

To prove \eqref{E:e-8}, it suffices to prove 
\begin{equation*}
\mathcal{P}_{\mathbf{\alpha }}\tilde{\gamma}^{(k)}\mathcal{P}_{\mathbf{\beta 
}}=0\text{, if }\mathbf{\alpha }\neq 0\text{ or }\mathbf{\beta }\neq 0.
\end{equation*}%
This is equivalent to the statement that 
\begin{equation*}
\func{Tr}J^{(k)}\mathcal{P}_{\mathbf{\alpha }}\tilde{\gamma}^{(k)}\mathcal{P}%
_{\mathbf{\beta }}=0\text{, }\forall J^{(k)}\in \mathcal{K}_{k}.
\end{equation*}%
In fact, 
\begin{equation}
\func{Tr}J^{(k)}\mathcal{P}_{\mathbf{\alpha }}\tilde{\gamma}^{(k)}\mathcal{P}%
_{\mathbf{\beta }}=\lim_{(N,\omega )\rightarrow \infty }\func{Tr}J^{(k)}%
\mathcal{P}_{\mathbf{\alpha }}\tilde{\gamma}_{N,\omega }^{(k)}\mathcal{P}_{%
\mathbf{\beta }}  \label{E:e-4}
\end{equation}%
where 
\begin{equation*}
\func{Tr}J^{(k)}\mathcal{P}_{\mathbf{\alpha }}\tilde{\gamma}_{N,\omega
}^{(k)}\mathcal{P}_{\mathbf{\beta }}=\langle J^{(k)}\mathcal{P}_{\mathbf{%
\alpha }}\tilde{\psi}_{N,\omega },\mathcal{P}_{\mathbf{\beta }}\tilde{\psi}%
_{N,\omega }\rangle .
\end{equation*}%
by Lemma \ref{L:trace-of-tp-kernel}. We remind the reader that, in the
above, $\mathcal{P}_{\mathbf{\alpha }}$ and $\mathcal{P}_{\mathbf{\beta }}$
are acting only on the first $k$ variables of $\tilde{\psi}_{N,\omega }$ as
defined in (\ref{def:multiple projection for omega=1}).

Applying Cauchy-Schwarz, we reach%
\begin{equation*}
\left\vert \func{Tr}J^{(k)}\mathcal{P}_{\mathbf{\alpha }}\tilde{\gamma}%
_{N,\omega }^{(k)}\mathcal{P}_{\mathbf{\beta }}\right\vert \leqslant \Vert
J^{(k)}\Vert _{\func{op}}\Vert \mathcal{P}_{\mathbf{\alpha }}\tilde{\psi}%
_{N,\omega }\Vert _{L^{2}(\mathbb{R}^{3N})}\Vert \mathcal{P}_{\mathbf{\beta }%
}\tilde{\psi}_{N,\omega }\Vert _{L^{2}(\mathbb{R}^{3N})}.
\end{equation*}%
Use \eqref{E:e-3}, we have 
\begin{equation*}
\left\vert \func{Tr}J^{(k)}\mathcal{P}_{\mathbf{\alpha }}\tilde{\gamma}%
_{N,\omega }^{(k)}\mathcal{P}_{\mathbf{\beta }}\right\vert \leqslant
C^{k}\omega ^{-\frac{1}{2}|\mathbf{\alpha }|-\frac{1}{2}|\mathbf{\beta }%
|}\rightarrow 0\text{ as }\omega \rightarrow \infty
\end{equation*}%
as claimed.
\end{proof}

We see from Corollary \ref{Corollary:LimitMustBeAProduct} that, the study of
the limit point of $\left\{ \Gamma _{N,\omega }(t)=\left\{ \tilde{\gamma}%
_{N,\omega }^{(k)}\right\} _{k=1}^{N}\right\} $ is directly related to the
sequence $\left\{ \Gamma _{z,N,\omega }(t)=\left\{ \tilde{\gamma}%
_{z,N,\omega }^{(k)}=\limfunc{Tr}_{x}\tilde{\gamma}_{N,\omega
}^{(k)}\right\} _{k=1}^{N}\right\} \subset \oplus _{k\geqslant 1}C\left( %
\left[ 0,T\right] ,\mathcal{L}_{k}^{1}\left( \mathbb{R}^{k}\right) \right) .$
Thus we analyze $\left\{ \Gamma _{z,N,\omega }(t)\right\} $ in \S \ref%
{Section:Convergence of The Infinite Hierarchy}. At the moment, we prove
that $\left\{ \Gamma _{z,N,\omega }(t)\right\} $ is compact with respect to
the one dimensional version of the product topology $\tau _{prod}$ used in
Theorem \ref{Theorem:Compactness of the scaled marginal density}. This is
straightforward since we do not need to deal with $\infty -\infty $ here.

\begin{theorem}
\label{Theorem:Compactness of the x-marginal density}Assume $%
C_{1}N^{v_{1}(\beta )}\leqslant \omega \leqslant C_{2}N^{v_{2}(\beta )}$,
then the sequence 
\begin{equation*}
\left\{ \Gamma _{z,N,\omega }(t)=\left\{ \tilde{\gamma}_{z,N,\omega }^{(k)}=%
\limfunc{Tr}\nolimits_{x}\tilde{\gamma}_{N,\omega }^{(k)}\right\}
_{k=1}^{N}\right\} \subset \bigoplus_{k\geqslant 1}C\left( \left[ 0,T\right]
,\mathcal{L}_{k}^{1}\left( \mathbb{R}^{k}\right) \right) .
\end{equation*}%
is compact with respect to the one dimensional version of the product
topology $\tau _{prod}$ used in Theorem \ref{Theorem:Compactness of the
scaled marginal density}.
\end{theorem}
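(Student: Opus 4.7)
The plan is to mimic the proof of Theorem \ref{Theorem:Compactness of the scaled marginal density} but exploit the fact that a 1D test operator $J^{(k)}$ acting only on the $\mathbf{z}_k$ variables commutes with $-\Delta_{x_j}+|x_j|^2$, so the dangerous $\omega$-diverging kinetic term disappears at the outset, and no $\mathcal{P}_{\boldsymbol{\alpha}}\tilde{\gamma}_{N,\omega}^{(k)}\mathcal{P}_{\boldsymbol{\beta}}$ decomposition or interpolation between estimates of type \eqref{E:cpct2} and \eqref{E:cpct3} is needed.

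By the standard diagonalization argument, it suffices to prove compactness for fixed $k$ with respect to $\hat d_k$. By the Arzel\`a--Ascoli theorem and \cite[Lemma 6.2]{E-S-Y3}, this reduces to showing: for every $J^{(k)}$ in a dense countable subset of $\mathcal{K}(L^{2}(\mathbb{R}^{k}))$ (truncated in frequency as in Lemma \ref{L:compact-operator-truncation}) and every $\varepsilon>0$, there exists $\delta(J^{(k)},\varepsilon)>0$ such that for all $t_1,t_2\in[0,T]$ with $|t_1-t_2|\leq\delta$,
\begin{equation*}
\sup_{N,\omega}\bigl|\limfunc{Tr}J^{(k)}\tilde{\gamma}_{z,N,\omega}^{(k)}(t_1)-\limfunc{Tr}J^{(k)}\tilde{\gamma}_{z,N,\omega}^{(k)}(t_2)\bigr|\leq\varepsilon.
\end{equation*}
Lift $J^{(k)}$ to an operator $\tilde J^{(k)}=J^{(k)}\otimes I_{x}$ on $L^{2}(\mathbb{R}^{3k})$; then $\limfunc{Tr}J^{(k)}\tilde{\gamma}_{z,N,\omega}^{(k)}=\limfunc{Tr}\tilde J^{(k)}\tilde{\gamma}_{N,\omega}^{(k)}$, so by differentiating along hierarchy \eqref{hierarchy:BBGKY hierarchy for scaled marginal densities} we obtain
\begin{equation*}
\partial_{t}\limfunc{Tr}\tilde J^{(k)}\tilde{\gamma}_{N,\omega}^{(k)}=\text{I}+\text{II}+\text{III}+\text{IV},
\end{equation*}
where the four terms are structured as in \eqref{E:cpct50}, with $\mathcal{P}_{\boldsymbol{\alpha}},\mathcal{P}_{\boldsymbol{\beta}}$ absent and $J^{(k)}$ replaced by $\tilde J^{(k)}$. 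Because $\tilde J^{(k)}$ is the identity on the $x_j$-factors, $[\tilde J^{(k)},-\Delta_{x_j}+|x_j|^{2}]=0$; by the cyclicity of the trace, term I vanishes identically. This removes the sole source of $\omega$-divergence and is the key simplification relative to the 3D proof.

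The remaining three terms are handled by direct analogues of the estimates \eqref{E:cpct51}, \eqref{E:cpct53}, and \eqref{E:cpct54}: term II is bounded by $\sum_{j}(\|\tilde J^{(k)}\partial_{z_j}^{2}\|_{\op}+\|\partial_{z_j}^{2}\tilde J^{(k)}\|_{\op})\|\tilde\psi_{N,\omega}\|_{L^2}^{2}\lesssim C_{k,J^{(k)}}$, while terms III and IV are controlled by inserting $L_{i}L_{j}W_{ij}L_{i}L_{j}$ with $\|W_{ij}\|_{\op}\lesssim\|V\|_{L^1}$ (Lemma \ref{Lemma:ESYSoblevLemma}) and invoking the uniform a priori bound \eqref{E:e-2} from Corollary \ref{Corollary:Energy Bound for Marginal Densities}, to get $|\text{III}|\lesssim C_{k,J^{(k)}}/N$ and $|\text{IV}|\lesssim C_{k,J^{(k)}}$. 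Integrating in $t$ yields the desired equicontinuity uniform in $(N,\omega)$, proving compactness. The main (minor) technical point will simply be the verification that the frequency-cutoff class of observables used for $J^{(k)}$ is dense in $\mathcal{K}(L^{2}(\mathbb{R}^{k}))$, which follows from the 1D analogue of Lemma \ref{L:compact-operator-truncation}.
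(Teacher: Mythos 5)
Your proof is correct and follows essentially the same route as the paper's: both exploit the $x$-independence of the test observable to kill the $\omega$-diverging Hermite commutator (the paper does this by taking $\limfunc{Tr}_x$ of the BBGKY hierarchy first to obtain \eqref{hierarchy:coupled BBGKY for the x-component}, you do it by testing directly against $J^{(k)}\otimes I_x$ and invoking cyclicity of the trace, which is the same computation), and then bound the remaining $z$-kinetic and interaction terms using the frequency cutoff, Lemma \ref{Lemma:ESYSoblevLemma}, and the a priori bound \eqref{E:e-2}. The only cosmetic difference is that the paper bounds the $z$-kinetic term via $\|\langle\partial_{z_j}\rangle^{\pm 1}J_z^{(k)}\langle\partial_{z_j}\rangle^{\mp 1}\|_{\op}$ paired with the trace bound $\limfunc{Tr}\langle\partial_{z_j}\rangle\tilde{\gamma}_{z,N,\omega}^{(k)}\langle\partial_{z_j}\rangle$, whereas you use $\|J^{(k)}\partial_{z_j}^2\|_{\op}$ directly against $\|\tilde\psi_{N,\omega}\|_{L^2}^2$ — both are valid for frequency-truncated $J^{(k)}$.
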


\begin{proof}
Similar to Theorem \ref{Theorem:Compactness of the scaled marginal density},
we show that for every test function $J_{z}^{(k)}$ from a dense subset of $%
\mathcal{K}\left( L^{2}\left( \mathbb{R}^{k}\right) \right) $ and for every $%
\varepsilon >0,$ $\exists \delta (J_{z}^{(k)},\varepsilon )$ s.t. $\forall
t_{1},t_{2}\in \left[ 0,T\right] $ with $\left\vert t_{1}-t_{2}\right\vert
\leqslant \delta ,$ we have%
\begin{equation*}
\sup_{N,\omega }\left\vert \limfunc{Tr}J_{z}^{(k)}\left( \tilde{\gamma}%
_{z,N,\omega }^{(k)}\left( t_{1}\right) -\tilde{\gamma}_{z,N,\omega
}^{(k)}\left( t_{2}\right) \right) \right\vert \leqslant \varepsilon .
\end{equation*}%
We again assume that our test function $J_{z}^{(k)}$ has been cut off in
frequency as in Lemma \ref{L:compact-operator-truncation}. Due to the fact
that $\tilde{\gamma}_{z,N,\omega }^{(k)}$ acts on $L^{2}\left( \mathbb{R}%
^{k}\right) $ instead of $L^{2}\left( \mathbb{R}^{3k}\right) $, the test
functions here are similar but different from the ones in the proof of
Theorem \ref{Theorem:Compactness of the scaled marginal density}. This does
not make any differences when we deal with the terms involving $\tilde{\gamma%
}_{N,\omega }^{(k)}$ though. In fact, since $J_{z}^{(k)}$ has no $x$%
-dependence, we have 
\begin{eqnarray*}
\left\Vert L_{j}^{-1}J_{z}^{(k)}L_{j}\right\Vert _{\func{op}} &\sim
&\left\Vert \frac{1}{\left( \left\langle \nabla _{x_{j}}\right\rangle
+\partial _{z_{j}}\right) }J_{z}^{(k)}\left( \left\langle \nabla
_{x_{j}}\right\rangle +\partial _{z_{j}}\right) \right\Vert _{\func{op}} \\
&\leqslant &\left\Vert \frac{1}{\left( \left\langle \nabla
_{x_{j}}\right\rangle +\partial _{z_{j}}\right) }J_{z}^{(k)}\left\langle
\partial _{z_{j}}\right\rangle \right\Vert _{\func{op}}+\left\Vert \frac{%
\left\langle \nabla _{x_{j}}\right\rangle }{\left( \left\langle \nabla
_{x_{j}}\right\rangle +\partial _{z_{j}}\right) }J_{z}^{(k)}\right\Vert _{%
\func{op}} \\
&\leqslant &\left\Vert \left\langle \partial _{z_{j}}\right\rangle
J_{z}^{(k)}\left\langle \partial _{z_{j}}\right\rangle ^{-1}\right\Vert _{%
\func{op}}+\left\Vert J_{z}^{(k)}\right\Vert _{\func{op}}\text{.}
\end{eqnarray*}%
For the same reason, $\Vert L_{j}J_{z}^{(k)}L_{j}^{-1}\Vert _{\func{op}%
},\Vert L_{i}L_{j}J_{z}^{(k)}L_{i}^{-1}L_{j}^{-1}\Vert _{\func{op}}$ and $%
\Vert L_{i}^{-1}L_{j}^{-1}J_{z}^{(k)}L_{i}L_{j}\Vert _{\func{op}}$ are all
finite. Although $J_{z}^{(k)}$ and the related operators listed are only in $%
\mathcal{L}^{\infty }\left( L^{2}\left( \mathbb{R}^{3k}\right) \right) $,
they are good enough for our purpose.

Taking $\limfunc{Tr}_{x}$ on both sides of hierarchy 
\eqref{hierarchy:BBGKY
hierarchy for scaled marginal densities}, we have that $\tilde{\gamma}%
_{z,N,\omega }^{(k)}$ satisfies the coupled BBGKY hierarchy:%
\begin{eqnarray}
i\partial _{t}\tilde{\gamma}_{z,N,\omega }^{(k)} &=&\sum_{j=1}^{k}\left[
-\partial _{z_{j}}^{2},\tilde{\gamma}_{z,N,\omega }^{(k)}\right] +\frac{1}{N}%
\sum_{i<j}^{k}\limfunc{Tr}\nolimits_{x}\left[ V_{N,\omega }\left(
r_{i}-r_{j}\right) ,\tilde{\gamma}_{N,\omega }^{(k)}\right]
\label{hierarchy:coupled BBGKY for the x-component} \\
&&+\frac{N-k}{N}\sum_{j=1}^{k}\limfunc{Tr}\nolimits_{z_{k+1}}\limfunc{Tr}%
\nolimits_{x}\left[ V_{N,\omega }\left( r_{j}-r_{k+1}\right) ,\tilde{\gamma}%
_{N,\omega }^{(k+1)}\right] .  \notag
\end{eqnarray}

Assume $t_{1}\leqslant t_{2},$ the above hierarchy yields%
\begin{eqnarray*}
&&\left\vert \limfunc{Tr}J_{z}^{(k)}\left( \tilde{\gamma}_{z,N,\omega
}^{(k)}\left( t_{1}\right) -\tilde{\gamma}_{z,N,\omega }^{(k)}\left(
t_{2}\right) \right) \right\vert \\
&\leqslant &\sum_{j=1}^{k}\int_{t_{1}}^{t_{2}}\left\vert \limfunc{Tr}%
J_{z}^{(k)}\left[ -\partial _{z_{j}}^{2},\tilde{\gamma}_{z,N,\omega }^{(k)}%
\right] \right\vert dt+\frac{1}{N}\sum_{i<j}^{k}\int_{t_{1}}^{t_{2}}\left%
\vert \limfunc{Tr}J_{z}^{(k)}\left[ V_{N,\omega }\left( r_{i}-r_{j}\right) ,%
\tilde{\gamma}_{N,\omega }^{(k)}\right] \right\vert dt \\
&&+\frac{N-k}{N}\sum_{j=1}^{k}\int_{t_{1}}^{t_{2}}\left\vert \limfunc{Tr}%
J_{z}^{(k)}\left[ V_{N,\omega }\left( r_{j}-r_{k+1}\right) ,\tilde{\gamma}%
_{N,\omega }^{(k+1)}\right] \right\vert dt. \\
&=&\sum_{j=1}^{k}\int_{t_{1}}^{t_{2}}\text{I}\left( t\right) dt+\frac{1}{N}%
\sum_{i<j}^{k}\int_{t_{1}}^{t_{2}}\text{II}\left( t\right) dt+\frac{N-k}{N}%
\sum_{j=1}^{k}\int_{t_{1}}^{t_{2}}\text{III}\left( t\right) dt.
\end{eqnarray*}%
For I, we have 
\begin{eqnarray*}
\text{I} &=&\left\vert \limfunc{Tr}J_{z}^{(k)}\left[ \left\langle \partial
_{z_{j}}\right\rangle ^{2},\tilde{\gamma}_{z,N,\omega }^{(k)}\right]
\right\vert \\
&=&\left\vert \limfunc{Tr}\left\langle \partial _{z_{j}}\right\rangle
^{-1}J_{z}^{(k)}\left\langle \partial _{z_{j}}\right\rangle ^{2}\tilde{\gamma%
}_{z,N,\omega }^{(k)}\left\langle \partial _{z_{j}}\right\rangle -\limfunc{Tr%
}\left\langle \partial _{z_{j}}\right\rangle J_{z}^{(k)}\left\langle
\partial _{z_{j}}\right\rangle ^{-1}\left\langle \partial
_{z_{j}}\right\rangle \tilde{\gamma}_{z,N,\omega }^{(k)}\left\langle
\partial _{z_{j}}\right\rangle \right\vert \\
&\leqslant &\left( \left\Vert \left\langle \partial _{z_{j}}\right\rangle
^{-1}J_{z}^{(k)}\left\langle \partial _{z_{j}}\right\rangle \right\Vert _{%
\func{op}}+\left\Vert \left\langle \partial _{z_{j}}\right\rangle
J_{z}^{(k)}\left\langle \partial _{z_{j}}\right\rangle ^{-1}\right\Vert _{%
\func{op}}\right) \limfunc{Tr}\left\langle \partial _{z_{j}}\right\rangle 
\tilde{\gamma}_{z,N,\omega }^{(k)}\left\langle \partial _{z_{j}}\right\rangle
\\
&=&C_{J}\limfunc{Tr}\left\langle \partial _{z_{j}}\right\rangle \tilde{\gamma%
}_{N,\omega }^{(k)}\left\langle \partial _{z_{j}}\right\rangle \\
&\leqslant &C_{J}
\end{eqnarray*}%
by the energy estimates (Corollary \ref{Corollary:Energy Bound for Marginal
Densities}).

Consider II and III, we have%
\begin{eqnarray*}
\text{II} &=&\left\vert \limfunc{Tr}J_{z}^{(k)}\left[ V_{N,\omega }\left(
r_{i}-r_{j}\right) ,\tilde{\gamma}_{N,\omega }^{(k)}\right] \right\vert \\
&=&|\limfunc{Tr}L_{i}^{-1}L_{j}^{-1}J_{z}^{(k)}L_{i}L_{j}W_{ij}L_{i}L_{j}%
\tilde{\gamma}_{N,\omega }^{(k)}L_{i}L_{j}-\limfunc{Tr}%
L_{i}L_{j}J_{z}^{(k)}L_{i}^{-1}L_{j}^{-1}L_{i}L_{j}\tilde{\gamma}_{N,\omega
}^{(k)}L_{i}L_{j}W_{ij}| \\
&\leqslant &\left( \left\Vert
L_{i}^{-1}L_{j}^{-1}J_{z}^{(k)}L_{i}L_{j}\right\Vert _{\func{op}}+\left\Vert
L_{i}L_{j}J_{z}^{(k)}L_{i}^{-1}L_{j}^{-1}\right\Vert _{\func{op}}\right)
\left\Vert W_{ij}\right\Vert _{\func{op}}\limfunc{Tr}L_{i}L_{j}\tilde{\gamma}%
_{N,\omega }^{(k)}L_{i}L_{j} \\
&\leqslant &C_{J}\text{,}
\end{eqnarray*}%
and similarly,%
\begin{eqnarray*}
\text{III} &=&\left\vert \limfunc{Tr}J_{z}^{(k)}\left[ V_{N,\omega }\left(
r_{j}-r_{k+1}\right) ,\tilde{\gamma}_{N,\omega }^{(k+1)}\right] \right\vert
\\
&=&|\limfunc{Tr}%
L_{j}^{-1}L_{k+1}^{-1}J_{z}^{(k)}L_{j}L_{k+1}W_{j(k+1)}L_{j}L_{k+1}\tilde{%
\gamma}_{N,\omega }^{(k+1)}L_{j}L_{k+1} \\
&&-\limfunc{Tr}L_{j}L_{k+1}J_{z}^{(k)}L_{j}^{-1}L_{k+1}^{-1}L_{j}L_{k+1}%
\tilde{\gamma}_{N,\omega }^{(k+1)}L_{j}L_{k+1}W_{j(k+1)}| \\
&\leqslant &\left( \left\Vert L_{j}^{-1}J_{z}^{(k)}L_{j}\right\Vert _{\func{%
op}}+\left\Vert L_{j}J_{z}^{(k)}L_{j}^{-1}\right\Vert _{\func{op}}\right)
\left\Vert W_{j(k+1)}\right\Vert _{\func{op}}\limfunc{Tr}L_{j}L_{k+1}\tilde{%
\gamma}_{N,\omega }^{(k+1)}L_{j}L_{k+1} \\
&\leqslant &C_{J},
\end{eqnarray*}%
where we have used the fact that $L_{k+1}$ and $L_{k+1}^{-1}$ commutes with $%
J_{z}^{(k)}$.

Collecting the estimates for I - III, we conclude the compactness of the
sequence $\Gamma _{z,N,\omega }(t)=\left\{ \tilde{\gamma}_{z,N,\omega
}^{(k)}\right\} _{k=1}^{N}$.
\end{proof}

\section{Limit Points Satisfy GP Hierarchy\label{Section:Convergence of The
Infinite Hierarchy}}

\begin{theorem}
\label{Theorem:Convergence to the Coupled Gross-Pitaevskii} Let $\Gamma
(t)=\left\{ \tilde{\gamma}^{(k)}\right\} _{k=1}^{\infty }$ be a $%
C_{1}N^{v_{1}(\beta )}\leqslant \omega \leqslant C_{2}N^{v_{2}(\beta )}$
limit point of $\left\{ \Gamma _{N,\omega }(t)=\left\{ \tilde{\gamma}%
_{N,\omega }^{(k)}\right\} _{k=1}^{N}\right\} $ with respect to the product
topology $\tau _{prod}$, then $\left\{ \tilde{\gamma}_{z}^{(k)}=\limfunc{Tr}%
_{x}\tilde{\gamma}^{(k)}\right\} _{k=1}^{\infty }$ is a solution to the
coupled focusing Gross-Pitaevskii hierarchy (\ref{hierarchy:Coupled GP in
differential form}) subject to initial data $\tilde{\gamma}_{z}^{(k)}\left(
0\right) =\left\vert \phi _{0}\right\rangle \left\langle \phi
_{0}\right\vert ^{\otimes k}$ with coupling constant $b_{0}=$ $\left\vert
\int V\left( r\right) dr\right\vert $, which, rewritten in integral form, is 
\begin{eqnarray}
\tilde{\gamma}_{z}^{(k)} &=&U^{(k)}(t)\tilde{\gamma}_{z}^{(k)}\left( 0\right)
\label{hierarchy:coupled Gross-Pitaevskii} \\
&&+ib_{0}\sum_{j=1}^{k}\int_{0}^{t}U^{(k)}(t-s)\limfunc{Tr}%
\nolimits_{z_{k+1}}\limfunc{Tr}\nolimits_{x}\left[ \delta \left(
r_{j}-r_{k+1}\right) ,\tilde{\gamma}^{(k+1)}\left( s\right) \right] ds, 
\notag
\end{eqnarray}%
where $U^{(k)}(t)=\dprod\limits_{j=1}^{k}e^{it\partial
_{z_{j}}^{2}}e^{-it\partial _{z_{j}^{\prime }}^{2}}.$
\end{theorem}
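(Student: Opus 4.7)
The plan is to take $\operatorname{Tr}_{x}$ of the BBGKY hierarchy \eqref{hierarchy:BBGKY hierarchy for scaled marginal densities} to arrive at the coupled BBGKY hierarchy \eqref{hierarchy:coupled BBGKY for the x-component} for $\tilde{\gamma}_{z,N,\omega}^{(k)}$, rewrite it in Duhamel form against the 1D free propagator $U^{(k)}(t)$, pair it against a compact observable $J_{z}^{(k)}\in\mathcal{K}_{k}$, and pass to the limit $(N,\omega)\rightarrow\infty$ along the regime $C_{1}N^{v_{1}(\beta)}\leq\omega\leq C_{2}N^{v_{2}(\beta)}$. A key observation is that the integrated potential is scale-invariant: a change of variables shows $\int V_{N,\omega}(r)\,dr=\int V(y)\,dy=-b_{0}$, so $V_{N,\omega}$ behaves like an approximate identity of mass $-b_{0}$ concentrated at $r=0$, both in the $x$- and the $z$-directions (with different rates). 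The target GP hierarchy \eqref{hierarchy:coupled Gross-Pitaevskii} is obtained formally by replacing $V_{N,\omega}(r_{j}-r_{k+1})$ with $-b_{0}\delta(r_{j}-r_{k+1})$ and $\frac{N-k}{N}$ with $1$.

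First I would handle the easy terms. By the definition of the product topology $\tau_{prod}$, together with Theorem \ref{Theorem:Compactness of the x-marginal density}, the sequence $\tilde{\gamma}_{z,N,\omega}^{(k)}(t)$ converges weak* uniformly in $t\in[0,T]$ to $\tilde{\gamma}_{z}^{(k)}(t)$; the free evolution piece $\operatorname{Tr}(J_{z}^{(k)}U^{(k)}(t)\tilde{\gamma}_{z,N,\omega}^{(k)}(0))$ then converges because $U^{(k)}(t)^{\ast}J_{z}^{(k)}U^{(k)}(t)$ is still compact, and the initial data convergence follows from hypothesis (b) plus asymptotic factorization. The first interaction sum carries a prefactor $1/N$, and for each $i<j\leq k$ the quantity $\operatorname{Tr}(J_{z}^{(k)}\operatorname{Tr}_{x}[V_{N,\omega}(r_{i}-r_{j}),\tilde{\gamma}_{N,\omega}^{(k)}])$ is uniformly bounded by a standard $L_{i}L_{j}$ sandwich together with $\|W_{ij}\|_{\mathrm{op}}\lesssim\|V\|_{L^{1}}$ and the energy bound \eqref{E:e-2}, so this term vanishes in the limit.

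The main step is the coupling term
\[
\frac{N-k}{N}\sum_{j=1}^{k}\int_{0}^{t}\operatorname{Tr}\!\left(J_{z}^{(k)}U^{(k)}(t-s)\operatorname{Tr}_{z_{k+1}}\operatorname{Tr}_{x}[V_{N,\omega}(r_{j}-r_{k+1}),\tilde{\gamma}_{N,\omega}^{(k+1)}(s)]\right)ds.
\]
I would establish its convergence to the analogous expression with $V_{N,\omega}$ replaced by $-b_{0}\delta$ by the standard ``approximate identity'' argument. Writing $V_{N,\omega}=-b_{0}\eta_{N,\omega}+(V_{N,\omega}+b_{0}\eta_{N,\omega})$, where $\eta_{N,\omega}$ is an $L^{1}$-normalized smooth bump with the same anisotropic scaling, one reduces to two pieces: (i) a \emph{mean-zero} piece whose interaction against a smooth trace class operator gains a small factor from cancellation, controlled via the energy bound \eqref{E:e-2} and the 3D Sobolev-type estimate used for $\|W_{ij}\|_{\mathrm{op}}$; (ii) the bump piece $\eta_{N,\omega}$, whose kernel against $\tilde{\gamma}_{N,\omega}^{(k+1)}$ converges to the delta kernel via a standard density argument combined with the compactness of $J_{z}^{(k)}$ and the uniform bound $\operatorname{Tr}L^{(k+1)}\tilde{\gamma}_{N,\omega}^{(k+1)}L^{(k+1)}\leq C^{k+1}$. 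The limit density $\tilde{\gamma}^{(k+1)}$ can then be used in place of $\tilde{\gamma}_{N,\omega}^{(k+1)}$ by weak* convergence and the continuity in the compact test operator $(J_{z}^{(k)}\otimes 1)[\delta(r_{j}-r_{k+1}),\,\cdot\,]$ once its boundedness has been verified via the a priori bound \eqref{E:e-7}.

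The hard part will be the quantitative bound on the mean-zero piece. Because $V_{N,\omega}$ has anisotropic concentration rates $(N\omega)^{\beta}/\sqrt{\omega}$ in $x$ and $(N\omega)^{\beta}$ in $z$, the error estimate requires $(N\omega)^{\beta}/\sqrt{\omega}\rightarrow\infty$, i.e.\ $N^{2\beta}\omega^{2\beta-1}\rightarrow\infty$; equivalently, $\omega\leq C\,N^{\frac{2\beta}{1-2\beta}-}$, which is exactly the extra restriction appearing in $v_{2}(\beta)$ but not in $v_{E}(\beta)$. Pairing this with the energy bound \eqref{E:e-2} to absorb a factor of $L_{j}L_{k+1}$ on $\tilde{\gamma}_{N,\omega}^{(k+1)}$ closes the estimate and yields the desired convergence. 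Substituting back into the Duhamel formula and taking $(N,\omega)\rightarrow\infty$ produces \eqref{hierarchy:coupled Gross-Pitaevskii} with $\tilde{\gamma}_{z}^{(k)}(0)=|\phi_{0}\rangle\langle\phi_{0}|^{\otimes k}$.
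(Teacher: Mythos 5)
Your high-level strategy matches the paper's: take $\operatorname{Tr}_x$ of the BBGKY hierarchy to get \eqref{hierarchy:coupled BBGKY for the x-component}, write a Duhamel formula against $U^{(k)}$, test against $J_z^{(k)}\in\mathcal{K}(L^2(\mathbb{R}^k))$, kill the $O(1/N)$ interaction sum via the $L_iL_j$-sandwich and Lemma~\ref{Lemma:ESYSoblevLemma}, and compare $V_{N,\omega}$ with $-b_0\delta$. You also correctly pinpoint where the restriction $\omega\lesssim N^{\frac{2\beta}{1-2\beta}-}$ arises: the anisotropy $V_{N,\omega}(r)=(N\omega)^{3\beta}V_\omega((N\omega)^\beta r)$ with $V_\omega(r)=\omega^{-1}V(x/\sqrt{\omega},z)$ makes $\int|V_\omega||r|^\kappa\,dr\sim\omega^{\kappa/2}$, so the $\delta$-approximation error is $\bigl(\sqrt{\omega}/(N\omega)^\beta\bigr)^\kappa$. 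Your ``mean-zero plus normalized bump'' split $V_{N,\omega}=-b_0\eta_{N,\omega}+(V_{N,\omega}+b_0\eta_{N,\omega})$ is a cosmetic variant of the paper's telescoping $(V_{N,\omega}-b_0\delta)+(b_0\delta-b_0\rho_\alpha)+\cdots$; unwinding the mean-zero estimate requires exactly Lemma~\ref{Lemma:ComparingDeltaFunctions} applied twice, so it buys nothing but is not wrong.

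There is, however, a genuine gap in the step where you transfer $\tilde{\gamma}_{N,\omega}^{(k+1)}\to\tilde{\gamma}^{(k+1)}$. You assert that $(J_z^{(k)}\otimes 1)[\delta(r_j-r_{k+1}),\,\cdot\,]$ is a ``compact test operator'' whose boundedness you would verify via \eqref{E:e-7}. But weak* convergence in $\mathcal{L}^1_{k+1}$ is tested against elements of $\mathcal{K}_{k+1}$, i.e.\ compact \emph{operators on} $L^2(\mathbb{R}^{3(k+1)})$, and multiplication by $\delta(r_j-r_{k+1})$ is not even a bounded operator on $L^2$, let alone compact; also $J_z^{(k)}\rho_\alpha(r_j-r_{k+1})$ alone fails to be compact because $\rho_\alpha$ concentrates in the \emph{difference} $r_j-r_{k+1}$ and imposes no localization or smoothing in $r_{k+1}$. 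The paper resolves this by first mollifying $\delta\to\rho_\alpha$ (which you do, implicitly, via $\eta_{N,\omega}$), then inserting $(1+\varepsilon L_{k+1})^{-1}$: the operator $J^{(k)}_{s-t}\,\rho_\alpha(r_j-r_{k+1})\,(1+\varepsilon L_{k+1})^{-1}$ is compact, so \eqref{condition:fast convergence} applies, and the leftover $\frac{\varepsilon L_{k+1}}{1+\varepsilon L_{k+1}}$ piece is controlled uniformly in $N,\omega$ by the energy bounds \eqref{E:e-2}, \eqref{E:e-7}, then sent to zero in $\varepsilon$. Without this $(1+\varepsilon L_{k+1})^{-1}$ truncation the limit transfer does not go through.
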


\begin{proof}
Passing to subsequences if necessary, we have 
\begin{eqnarray}
\lim_{\substack{ N,\omega \rightarrow \infty  \\ C_{1}N^{v_{1}(\beta
)}\leqslant \omega \leqslant C_{2}N^{v_{2}(\beta )}}}\sup_{t}\limfunc{Tr}%
J^{(k)}(\tilde{\gamma}_{N,\omega }^{(k)}(t)-\tilde{\gamma}^{(k)}(t)) &=&0%
\text{, }\forall J^{(k)}\in \mathcal{K}(L^{2}(\mathbb{R}^{3k})),
\label{condition:fast convergence} \\
\lim_{\substack{ N,\omega \rightarrow \infty  \\ C_{1}N^{v_{1}(\beta
)}\leqslant \omega \leqslant C_{2}N^{v_{2}(\beta )}}}\sup_{t}\limfunc{Tr}%
J_{z}^{(k)}(\tilde{\gamma}_{z,N,\omega }^{(k)}(t)-\tilde{\gamma}%
_{z}^{(k)}(t)) &=&0\text{, }\forall J_{z}^{(k)}\in \mathcal{K}(L^{2}(\mathbb{%
R}^{k})),  \notag
\end{eqnarray}%
via Theorems \ref{Theorem:Compactness of the scaled marginal density} and %
\ref{Theorem:Compactness of the x-marginal density}.

To establish \eqref{hierarchy:coupled Gross-Pitaevskii}, it suffices to test
the limit point against the test functions $J_{z}^{(k)}\in \mathcal{K}(L^{2}(%
\mathbb{R}^{k}))$ as in the proof of Theorem \ref{Theorem:Compactness of the
x-marginal density}. We will prove that the limit point satisfies 
\begin{equation}
\limfunc{Tr}J_{z}^{(k)}\tilde{\gamma}_{z}^{(k)}\left( 0\right) =\limfunc{Tr}%
J_{z}^{(k)}\left\vert \phi _{0}\right\rangle \left\langle \phi
_{0}\right\vert ^{\otimes k}
\label{equality:testing the limit pt with initial data}
\end{equation}%
and 
\begin{eqnarray}
&&\limfunc{Tr}J_{z}^{(k)}\tilde{\gamma}_{z}^{(k)}\left( t\right)
\label{hierarchy:testing the limit point} \\
&=&\limfunc{Tr}J_{z}^{(k)}U^{(k)}\left( t\right) \tilde{\gamma}%
_{z}^{(k)}\left( 0\right)  \notag \\
&&+ib_{0}\sum_{j=1}^{k}\int_{0}^{t}\limfunc{Tr}J_{z}^{(k)}U^{(k)}\left(
t-s\right) \left[ \delta \left( r_{j}-r_{k+1}\right) ,\tilde{\gamma}%
^{(k+1)}\left( s\right) \right] ds  \notag
\end{eqnarray}%
To this end, we use the coupled focusing BBGKY hierarchy 
\eqref{hierarchy:coupled
BBGKY for the x-component} satisfied by $\tilde{\gamma}_{z,N,\omega }^{(k)}$%
, which, written in the form needed here, is 
\begin{align*}
& \limfunc{Tr}J_{z}^{(k)}\tilde{\gamma}_{z,N,\omega }^{(k)}\left( t\right) \\
=& A+\frac{i}{N}\sum_{i<j}^{k}B+i\left( 1-\frac{k}{N}\right) \sum_{j=1}^{k}D,
\end{align*}%
where%
\begin{equation*}
A=\limfunc{Tr}J_{z}^{(k)}U^{(k)}\left( t\right) \tilde{\gamma}_{z,N,\omega
}^{(k)}\left( 0\right) ,
\end{equation*}%
\begin{equation*}
B=\int_{0}^{t}\limfunc{Tr}J_{z}^{(k)}U^{(k)}\left( t-s\right) \left[
-V_{N,\omega }\left( r_{i}-r_{j}\right) ,\tilde{\gamma}_{N,\omega
}^{(k)}\left( s\right) \right] ds,
\end{equation*}%
\begin{equation*}
D=\int_{0}^{t}\limfunc{Tr}J_{z}^{(k)}U^{(k)}\left( t-s\right) \left[
-V_{N,\omega }\left( r_{j}-r_{k+1}\right) ,\tilde{\gamma}_{N,\omega
}^{(k+1)}\left( s\right) \right] ds.
\end{equation*}%
By \eqref{condition:fast convergence}, we know 
\begin{eqnarray*}
\lim_{\substack{ N,\omega \rightarrow \infty  \\ C_{1}N^{v_{1}(\beta
)}\leqslant \omega \leqslant C_{2}N^{v_{2}(\beta )}}}\limfunc{Tr}J_{z}^{(k)}%
\tilde{\gamma}_{z,N,\omega }^{(k)}\left( t\right) &=&\limfunc{Tr}J_{z}^{(k)}%
\tilde{\gamma}_{z}^{(k)}\left( t\right) , \\
\lim_{\substack{ N,\omega \rightarrow \infty  \\ C_{1}N^{v_{1}(\beta
)}\leqslant \omega \leqslant C_{2}N^{v_{2}(\beta )}}}\limfunc{Tr}%
J_{z}^{(k)}U^{(k)}\left( t\right) \tilde{\gamma}_{z,N,\omega }^{(k)}\left(
0\right) &=&\limfunc{Tr}J_{z}^{(k)}U^{(k)}\left( t\right) \tilde{\gamma}%
_{z}^{(k)}\left( 0\right) .
\end{eqnarray*}%
With the argument in \cite[p.64]{Lieb2}, we infer, from assumption (b) of
Theorem \ref{Theorem:3D->2D BEC (Nonsmooth)}: 
\begin{equation*}
\tilde{\gamma}_{N,\omega }^{(1)}\left( 0\right) \rightarrow \left\vert
h_{1}\otimes \phi _{0}\right\rangle \left\langle h_{1}\otimes \phi
_{0}\right\vert \,,\quad \text{strongly in trace norm,}
\end{equation*}%
that 
\begin{equation*}
\tilde{\gamma}_{N,\omega }^{(k)}\left( 0\right) \rightarrow \left\vert
h_{1}\otimes \phi _{0}\right\rangle \left\langle h_{1}\otimes \phi
_{0}\right\vert ^{\otimes k}\,,\quad \text{strongly in trace norm.}
\end{equation*}%
Thus we have checked \eqref{equality:testing the limit pt with initial
data}, the left-hand side of \eqref{hierarchy:testing the limit point}, and
the first term on the right-hand side of 
\eqref{hierarchy:testing the limit
point} for the limit point. We are left to prove that 
\begin{eqnarray*}
\lim_{\substack{ N,\omega \rightarrow \infty  \\ C_{1}N^{v_{1}(\beta
)}\leqslant \omega \leqslant C_{2}N^{v_{2}(\beta )}}}\frac{B}{N} &=&0, \\
\lim_{\substack{ N,\omega \rightarrow \infty  \\ C_{1}N^{v_{1}(\beta
)}\leqslant \omega \leqslant C_{2}N^{v_{2}(\beta )}}}\left( 1-\frac{k}{N}%
\right) D &=&b_{0}\int_{0}^{t}J_{x}^{(k)}U^{(k)}(t-s)\left[ \delta \left(
r_{j}-r_{k+1}\right) ,\tilde{\gamma}^{(k+1)}\left( s\right) \right] ds.
\end{eqnarray*}%
We first use an argument similar to the estimate of $\text{II}$ and III in
the proof of Theorem \ref{Theorem:Compactness of the x-marginal density} to
prove that $\left\vert B\right\vert $ and $\left\vert D\right\vert $ are
bounded for every finite time $t$. In fact, since $U^{(k)}$ is a unitary
operator which commutes with Fourier multipliers, we have 
\begin{eqnarray*}
\left\vert B\right\vert &\leqslant &\int_{0}^{t}\left\vert \limfunc{Tr}%
J_{z}^{(k)}U^{(k)}\left( t-s\right) \left[ V_{N,\omega }\left(
r_{i}-r_{j}\right) ,\tilde{\gamma}_{N,\omega }^{(k)}\left( s\right) \right]
\right\vert ds \\
&=&\int_{0}^{t}ds|\limfunc{Tr}%
L_{i}^{-1}L_{j}^{-1}J_{z}^{(k)}L_{i}L_{j}U^{(k)}\left( t-s\right)
W_{ij}L_{i}L_{j}\tilde{\gamma}_{N,\omega }^{(k)}\left( s\right) L_{i}L_{j} \\
&&-\limfunc{Tr}L_{i}L_{j}J_{z}^{(k)}L_{i}^{-1}L_{j}^{-1}U^{(k)}\left(
t-s\right) L_{i}L_{j}\tilde{\gamma}_{N,\omega }^{(k)}\left( s\right)
L_{i}L_{j}W_{ij}| \\
&\leqslant &\int_{0}^{t}ds\left\Vert
L_{i}^{-1}L_{j}^{-1}J_{z}^{(k)}L_{i}L_{j}\right\Vert _{\func{op}}\left\Vert
U^{(k)}\right\Vert _{\func{op}}\left\Vert W_{ij}\right\Vert \limfunc{Tr}%
L_{i}L_{j}\tilde{\gamma}_{N,\omega }^{(k)}\left( s\right) L_{i}L_{j} \\
&&+\int_{0}^{t}ds\left\Vert
L_{i}L_{j}J_{z}^{(k)}L_{i}^{-1}L_{j}^{-1}\right\Vert _{\func{op}}\left\Vert
U^{(k)}\right\Vert _{\func{op}}\left\Vert W_{ij}\right\Vert \limfunc{Tr}%
L_{i}L_{j}\tilde{\gamma}_{N,\omega }^{(k)}\left( s\right) L_{i}L_{j} \\
&\leqslant &C_{J}t.
\end{eqnarray*}%
That is 
\begin{equation*}
\lim_{\substack{ N,\omega \rightarrow \infty  \\ C_{1}N^{v_{1}(\beta
)}\leqslant \omega \leqslant C_{2}N^{v_{2}(\beta )}}}\frac{B}{N}=\lim 
_{\substack{ N,\omega \rightarrow \infty  \\ C_{1}N^{v_{1}(\beta )}\leqslant
\omega \leqslant C_{2}N^{v_{2}(\beta )}}}\frac{kD}{N}=0.
\end{equation*}

We now use Lemma \ref{Lemma:ComparingDeltaFunctions} (stated and proved in
Appendix \ref{A:Sobolev}), which compares the $\delta -$function and its
approximation, to prove 
\begin{equation}
\lim_{\substack{ N,\omega \rightarrow \infty  \\ C_{1}N^{v_{1}(\beta
)}\leqslant \omega \leqslant C_{2}N^{v_{2}(\beta )}}}D=b_{0}\int_{0}^{t}%
\limfunc{Tr}J_{z}^{(k)}U^{(k)}(t-s)\left[ \delta \left( r_{j}-r_{k+1}\right)
,\tilde{\gamma}^{(k+1)}\left( s\right) \right] ds,
\label{limit:converges to delta function}
\end{equation}%
Pick a probability measure $\rho \in L^{1}\left( \mathbb{R}^{3}\right) $ and
define $\rho _{\alpha }\left( r\right) =\alpha ^{-3}\rho \left( \frac{r}{%
\alpha }\right) .$ Let $J_{s-t}^{(k)}=J_{z}^{(k)}U^{(k)}\left( t-s\right) $,
we have 
\begin{align*}
\hspace{0.3in}& \hspace{-0.3in}\left\vert \limfunc{Tr}J_{z}^{(k)}U^{(k)}%
\left( t-s\right) \left( -V_{N,\omega }\left( r_{j}-r_{k+1}\right) \tilde{%
\gamma}_{N,\omega }^{(k+1)}\left( s\right) -b_{0}\delta \left(
r_{j}-r_{k+1}\right) \tilde{\gamma}^{(k+1)}\left( s\right) \right)
\right\vert \\
& =\text{I}+\text{II}+\text{III}+\text{IV}
\end{align*}%
where%
\begin{equation*}
\text{I}=\left\vert \limfunc{Tr}J_{s-t}^{(k)}\left( -V_{N,\omega }\left(
r_{j}-r_{k+1}\right) -b_{0}\delta \left( r_{j}-r_{k+1}\right) \right) \tilde{%
\gamma}_{N,\omega }^{(k+1)}\left( s\right) \right\vert ,
\end{equation*}%
\begin{equation*}
\text{II}=b_{0}\left\vert \limfunc{Tr}J_{s-t}^{(k)}\left( \delta \left(
r_{j}-r_{k+1}\right) -\rho _{\alpha }\left( r_{j}-r_{k+1}\right) \right) 
\tilde{\gamma}_{N,\omega }^{(k+1)}\left( s\right) \right\vert ,
\end{equation*}%
\begin{equation*}
\text{III}=b_{0}\left\vert \limfunc{Tr}J_{s-t}^{(k)}\rho _{\alpha }\left(
r_{j}-r_{k+1}\right) \left( \tilde{\gamma}_{N,\omega }^{(k+1)}\left(
s\right) -\tilde{\gamma}^{(k+1)}\left( s\right) \right) \right\vert ,
\end{equation*}%
\begin{equation*}
\text{IV}=b_{0}\left\vert \limfunc{Tr}J_{s-t}^{(k)}\left( \rho _{\alpha
}\left( r_{j}-r_{k+1}\right) -\delta \left( r_{j}-r_{k+1}\right) \right) 
\tilde{\gamma}^{(k+1)}\left( s\right) \right\vert .
\end{equation*}

Consider I. Write $V_{\omega }(r)=\frac{1}{\omega }V(\frac{x}{\sqrt{\omega }}%
,z),$ we have $V_{N,\omega }=\left( N\omega \right) ^{3\beta }V_{\omega
}(\left( N\omega \right) ^{\beta }r)$, Lemma \ref%
{Lemma:ComparingDeltaFunctions} then yields 
\begin{eqnarray*}
\text{I} &\leqslant &\frac{Cb_{0}}{\left( N\omega \right) ^{\beta \kappa }}%
\left( \int \left\vert V_{\omega }(r)\right\vert \left\vert r\right\vert
^{\kappa }dr\right) \\
&&\times \left( \left\Vert L_{j}J_{z}^{(k)}L_{j}^{-1}\right\Vert _{\func{op}%
}+\left\Vert L_{j}^{-1}J_{z}^{(k)}L_{j}\right\Vert _{\func{op}}\right)
L_{j}L_{k+1}\tilde{\gamma}_{N,\omega }^{(k+1)}\left( s\right) L_{j}L_{k+1} \\
&=&C_{J}\frac{\left( \int \left\vert V_{\omega }(r)\right\vert \left\vert
r\right\vert ^{\kappa }dr\right) }{\left( N\omega \right) ^{\beta \kappa }}.
\end{eqnarray*}%
Notice that $\left( \int \left\vert V_{\omega }(r)\right\vert \left\vert
r\right\vert ^{\kappa }dr\right) $ grows like $\left( \sqrt{\omega }\right)
^{\kappa }$, so $I\leqslant C_{J}\left( \frac{\sqrt{\omega }}{\left( N\omega
\right) ^{\beta }}\right) ^{\kappa }$ which converges to zero as $N,\omega
\rightarrow \infty $ in the way in which $N\geqslant \omega ^{\frac{1}{%
2\beta }-1+}$. So we have proved 
\begin{equation*}
\lim_{\substack{ N,\omega \rightarrow \infty  \\ C_{1}N^{v_{1}(\beta
)}\leqslant \omega \leqslant C_{2}N^{v_{2}(\beta )}}}I=0.
\end{equation*}%
Similarly, for $\text{II}$ and $\text{IV}$, via Lemma \ref%
{Lemma:ComparingDeltaFunctions}, we have 
\begin{eqnarray*}
\text{II} &\leqslant &Cb_{0}\alpha ^{\kappa }\left( \left\Vert
L_{j}J_{z}^{(k)}L_{j}^{-1}\right\Vert _{\func{op}}+\left\Vert
L_{j}^{-1}J_{z}^{(k)}L_{j}\right\Vert _{\func{op}}\right) \limfunc{Tr}%
L_{j}L_{k+1}\tilde{\gamma}_{N,\omega }^{(k+1)}\left( s\right) L_{j}L_{k+1} \\
&\leqslant &C_{J}\alpha ^{\kappa }\text{ (Corollary \ref{Corollary:Energy
Bound for Marginal Densities})} \\
\text{IV} &\leqslant &Cb_{0}\alpha ^{\kappa }\left( \left\Vert
L_{j}J_{z}^{(k)}L_{j}^{-1}\right\Vert _{\func{op}}+\left\Vert
L_{j}^{-1}J_{z}^{(k)}L_{j}\right\Vert _{\func{op}}\right) \limfunc{Tr}%
L_{j}L_{k+1}\tilde{\gamma}^{(k+1)}\left( s\right) L_{j}L_{k+1} \\
&\leqslant &C_{J}\alpha ^{\kappa }\text{ (Corollary \ref%
{Corollary:LimitMustBeAProduct})}
\end{eqnarray*}%
that is%
\begin{equation*}
\text{II}\leqslant C_{J}\alpha ^{\kappa }\text{ and IV}\leqslant C_{J}\alpha
^{\kappa },
\end{equation*}%
due to the energy estimate (Corollary \ref{Corollary:LimitMustBeAProduct}).
Hence II and IV converges to $0$ as $\alpha \rightarrow 0$, uniformly in $%
N,\omega .$

For $\text{III}$, 
\begin{eqnarray*}
\text{III} &\leqslant &b_{0}\left\vert \limfunc{Tr}J_{s-t}^{(k)}\rho
_{\alpha }\left( r_{j}-r_{k+1}\right) \frac{1}{1+\varepsilon L_{k+1}}\left( 
\tilde{\gamma}_{N,\omega }^{(k+1)}\left( s\right) -\tilde{\gamma}%
^{(k+1)}\left( s\right) \right) \right\vert \\
&&+b_{0}\left\vert \limfunc{Tr}J_{s-t}^{(k)}\rho _{\alpha }\left(
r_{j}-r_{k+1}\right) \frac{\varepsilon L_{k+1}}{1+\varepsilon L_{k+1}}\left( 
\tilde{\gamma}_{N,\omega }^{(k+1)}\left( s\right) -\tilde{\gamma}%
^{(k+1)}\left( s\right) \right) \right\vert .
\end{eqnarray*}%
The first term in the above estimate goes to zero as $N,\omega \rightarrow
\infty $ for every $\varepsilon >0$, since we have assumed condition {(\ref%
{condition:fast convergence})} and $J_{s-t}^{(k)}\rho _{\alpha }\left(
r_{j}-r_{k+1}\right) \left( 1+\varepsilon L_{k+1}\right) ^{-1}$ is a compact
operator. Due to the energy bounds on $\tilde{\gamma}_{N,\omega }^{(k+1)}$
and $\tilde{\gamma}^{(k+1)}$, the second term tends to zero as $\varepsilon
\rightarrow 0$, uniformly in $N$ and $\omega $.

Putting together the estimates for $\text{I}$-$\text{IV}$, we have justified
limit ({\ref{limit:converges to delta function})}. Hence, we have obtained
Theorem \ref{Theorem:Convergence to the Coupled Gross-Pitaevskii}.
\end{proof}

Combining Corollary \ref{Corollary:LimitMustBeAProduct} and Theorem \ref%
{Theorem:Convergence to the Coupled Gross-Pitaevskii}, we see that $\tilde{%
\gamma}_{z}^{(k)}$ in fact solves the 1D focusing Gross-Pitaevskii hierarchy
with the desired coupling constant $b_{0}\left( \int \left\vert h_{1}\left(
x\right) \right\vert ^{4}dx\right) .$

\begin{corollary}
\label{Theorem:Convergence to the 1D GP}Let $\Gamma (t)=\left\{ \tilde{\gamma%
}^{(k)}\right\} _{k=1}^{\infty }$ be a $N\geqslant \omega ^{v(\beta
)+\varepsilon }$ limit point of $\left\{ \Gamma _{N,\omega }(t)=\left\{ 
\tilde{\gamma}_{N,\omega }^{(k)}\right\} _{k=1}^{N}\right\} $ with respect
to the product topology $\tau _{prod}$, then $\left\{ \tilde{\gamma}%
_{z}^{(k)}=\limfunc{Tr}_{x}\tilde{\gamma}^{(k)}\right\} _{k=1}^{\infty }$ is
a solution to the 1D Gross-Pitaevskii hierarchy (\ref{hierarchy:1D GP in
differential form}) subject to initial data $\tilde{\gamma}_{z}^{(k)}\left(
0\right) =\left\vert \phi _{0}\right\rangle \left\langle \phi
_{0}\right\vert ^{\otimes k}$ with coupling constant $b_{0}\left( \int
\left\vert h_{1}\left( x\right) \right\vert ^{4}dx\right) $, which,
rewritten in integral form, is 
\begin{eqnarray}
&&\tilde{\gamma}_{z}^{(k)}  \label{hierarchy:1D GP} \\
&=&U^{(k)}(t)\tilde{\gamma}_{z}^{(k)}\left( 0\right)  \notag \\
&&+ib_{0}\left( \int \left\vert h_{1}\left( x\right) \right\vert
^{4}dx\right) \sum_{j=1}^{k}\int_{0}^{t}U^{(k)}(t-s)\limfunc{Tr}%
\nolimits_{z_{k+1}}\left[ \delta \left( z_{j}-z_{k+1}\right) ,\tilde{\gamma}%
_{z}^{(k+1)}\left( s\right) \right] ds.  \notag
\end{eqnarray}
\end{corollary}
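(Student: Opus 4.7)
The plan is to derive this corollary as an essentially direct consequence of the two results that immediately precede it. By Theorem \ref{Theorem:Convergence to the Coupled Gross-Pitaevskii}, the sequence $\{\tilde{\gamma}_z^{(k)}\}_{k\geq 1}$ already solves the coupled GP hierarchy
\begin{equation*}
\tilde{\gamma}_{z}^{(k)}(t) = U^{(k)}(t)\tilde{\gamma}_{z}^{(k)}(0) + ib_{0}\sum_{j=1}^{k}\int_{0}^{t}U^{(k)}(t-s)\operatorname{Tr}_{z_{k+1}}\operatorname{Tr}_{x}[\delta(r_{j}-r_{k+1}),\tilde{\gamma}^{(k+1)}(s)]\,ds,
\end{equation*}
so the only thing that remains is to reduce the inner partial trace $\operatorname{Tr}_x[\delta(r_j - r_{k+1}),\tilde{\gamma}^{(k+1)}]$ to an expression in $\tilde{\gamma}_z^{(k+1)}$ alone, with the explicit coupling constant $b_0\int |h_1(x)|^4\,dx$.

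First I would insert the tensor-product form given by Corollary \ref{Corollary:LimitMustBeAProduct}, namely
\begin{equation*}
\tilde{\gamma}^{(k+1)}(t,(\mathbf{x}_{k+1},\mathbf{z}_{k+1});(\mathbf{x}'_{k+1},\mathbf{z}'_{k+1})) = \Bigl(\prod_{i=1}^{k+1} h_{1}(x_{i})h_{1}(x'_{i})\Bigr)\tilde{\gamma}_{z}^{(k+1)}(t,\mathbf{z}_{k+1};\mathbf{z}'_{k+1}),
\end{equation*}
into the kernel of $\delta(r_j - r_{k+1})\tilde{\gamma}^{(k+1)}$, writing $\delta(r_j-r_{k+1}) = \delta(x_j - x_{k+1})\delta(z_j-z_{k+1})$. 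Taking $\operatorname{Tr}_x$ then amounts to setting $\mathbf{x}_{k+1} = \mathbf{x}'_{k+1}$ and integrating against $\prod_{i=1}^{k+1}|h_1(x_i)|^2$; all factors with $i\notin\{j,k+1\}$ integrate to $1$ by normalization of $h_1$, while the $x_j, x_{k+1}$ integration becomes
\begin{equation*}
\int\!\!\int \delta(x_j - x_{k+1})\,|h_1(x_j)|^2|h_1(x_{k+1})|^2\,dx_j\,dx_{k+1} = \int |h_1(x)|^4\,dx,
\end{equation*}
which is exactly the prefactor claimed. Repeating the same computation on the other term of the commutator gives
\begin{equation*}
\operatorname{Tr}_x[\delta(r_j-r_{k+1}),\tilde{\gamma}^{(k+1)}] = \Bigl(\int |h_1(x)|^4\,dx\Bigr)\bigl[\delta(z_j - z_{k+1}),\tilde{\gamma}_z^{(k+1)}\bigr],
\end{equation*}
and substituting into the coupled hierarchy yields the 1D focusing GP hierarchy (\ref{hierarchy:1D GP}). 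The initial data $\tilde{\gamma}_z^{(k)}(0) = |\phi_0\rangle\langle\phi_0|^{\otimes k}$ is already recorded in Theorem \ref{Theorem:Convergence to the Coupled Gross-Pitaevskii}.

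The only genuinely technical point is that the identities above involve the singular object $\delta(x_j - x_{k+1})$, which must be interpreted in a distributional pairing. I would justify the computation by testing against the approximation $\rho_\alpha$ used in the proof of Theorem \ref{Theorem:Convergence to the Coupled Gross-Pitaevskii}: replace $\delta(r_j - r_{k+1})$ with $\rho_\alpha(r_j - r_{k+1})$, perform the $\operatorname{Tr}_x$ computation (which is now just an absolutely convergent integral, producing $\int |h_1|^2(x)|h_1|^2(x - \tilde{x})\rho^x_\alpha(\tilde{x})\,d\tilde{x}\,dx$ in the diagonal direction), and send $\alpha \to 0$. The energy bound $\operatorname{Tr} L^{(k+1)}\tilde{\gamma}^{(k+1)}L^{(k+1)}\leq C^{k+1}$ from Corollary \ref{Corollary:LimitMustBeAProduct}, combined with Lemma \ref{Lemma:ComparingDeltaFunctions} (already invoked in the previous theorem), guarantees the limit passes through uniformly when the whole expression is tested against a compact observable $J_z^{(k)}$. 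I do not anticipate a real obstacle here; once the product structure is in hand, the corollary is a one-line algebraic consequence modulo the mollifier argument that has already been carried out in a more delicate form immediately above.
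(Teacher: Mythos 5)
Your proposal is correct and follows exactly the route the paper takes, which states simply that the corollary is a direct computation obtained by plugging the factorized structure (\ref{E:e-8}) from Corollary \ref{Corollary:LimitMustBeAProduct} into the coupled GP hierarchy (\ref{hierarchy:coupled Gross-Pitaevskii}) from Theorem \ref{Theorem:Convergence to the Coupled Gross-Pitaevskii}. Your explicit evaluation of $\operatorname{Tr}_{x}$ on the commutator, yielding the $\int |h_{1}(x)|^{4}\,dx$ prefactor, is precisely the content of that one-line plug-in, and the mollifier remark is a sound (if not strictly required here, since the paper treats the computation as formal at this point) elaboration.
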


\begin{proof}
This is a direct computation by plugging (\ref{E:e-8}) into (\ref%
{hierarchy:coupled Gross-Pitaevskii}).
\end{proof}

\appendix

\section{Basic Operator Facts and Sobolev-type Lemmas}

\label{A:Sobolev}

\begin{lemma}[{\protect\cite[Lemma A.3]{E-S-Y2}}]
\label{Lemma:ESYSoblevLemma}Let $L_{j}=\left( 1-\triangle _{r_{j}}\right) ^{%
\frac{1}{2}}$, then we have 
\begin{equation*}
\left\Vert L_{i}^{-1}L_{j}^{-1}V\left( r_{i}-r_{j}\right)
L_{i}^{-1}L_{j}^{-1}\right\Vert _{\func{op}}\leqslant C\left\Vert
V\right\Vert _{L^{1}}.
\end{equation*}
\end{lemma}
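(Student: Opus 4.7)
Since $V(r_i - r_j)$ and $L_i, L_j$ act nontrivially only in the variables $r_i, r_j$, by tensoring with the identity on the other coordinates the claim reduces to the two-particle estimate on $L^2(\mathbb{R}^6)$:
\[
\bigl\| L_1^{-1} L_2^{-1} V(r_1 - r_2) L_1^{-1} L_2^{-1} \bigr\|_{\op} \lesssim \|V\|_{L^1}.
\]

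By duality it suffices to bound the bilinear form $|\langle V(r_1 - r_2) \phi, \psi\rangle|$ by $C \|V\|_{L^1} \|L_1 L_2 \phi\|_{L^2} \|L_1 L_2 \psi\|_{L^2}$. Passing to Fourier variables, the translation structure of $V(r_1 - r_2)$ yields
\[
\langle V \phi, \psi\rangle = c \int \hat V(p)\, \hat\phi(\xi_1 - p, \xi_2 + p)\, \overline{\hat\psi(\xi_1, \xi_2)}\, dp\, d\xi_1\, d\xi_2,
\]
so setting $F = \langle\xi_1\rangle\langle\xi_2\rangle|\hat\phi|$ and $G = \langle\xi_1\rangle\langle\xi_2\rangle|\hat\psi|$ (with $\|F\|_{L^2} = \|L_1 L_2\phi\|_{L^2}$, similarly for $G$) and using $\|\hat V\|_{L^\infty} \leq \|V\|_{L^1}$, it remains to establish
\[
I \defeq \int \frac{F(\xi_1 - p, \xi_2 + p)\, G(\xi_1, \xi_2)}{\langle\xi_1\rangle\langle\xi_2\rangle\langle\xi_1 - p\rangle\langle\xi_2 + p\rangle}\, dp\, d\xi_1\, d\xi_2 \ \lesssim \ \|F\|_{L^2} \|G\|_{L^2}.
\]

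The decisive step is the change to center-of-mass and relative coordinates $k = \xi_1 + \xi_2$, $\eta = \tfrac{1}{2}(\xi_1 - \xi_2)$, under which the shift $(\xi_1, \xi_2) \mapsto (\xi_1 - p, \xi_2 + p)$ fixes $k$ and sends $\eta \mapsto \eta - p$. After substituting $\eta' = \eta - p$, the integral factorizes:
\[
I = \int dk \biggl(\int \frac{\tilde F(k, \eta')}{\rho(k, \eta')}\, d\eta'\biggr) \biggl(\int \frac{\tilde G(k, \eta)}{\rho(k, \eta)}\, d\eta\biggr), \qquad \rho(k, \eta) = \langle \tfrac{k}{2} + \eta\rangle \langle \tfrac{k}{2} - \eta\rangle.
\]
Cauchy--Schwarz in each inner integral reduces matters to the three-dimensional Bessel-type bound
\[
\int_{\mathbb{R}^3} \rho(k, \eta)^{-2}\, d\eta \;=\; \int_{\mathbb{R}^3} \langle\mu\rangle^{-2} \langle \mu - k\rangle^{-2}\, d\mu \;\lesssim\; \langle k\rangle^{-1},
\]
which is proved by splitting $|\mu| < |k|/2$ and $|\mu| \geq |k|/2$. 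A final Cauchy--Schwarz in $k$ (using $\langle k\rangle^{-1} \leq 1$) yields $I \lesssim \|F\|_{L^2}\|G\|_{L^2}$ as required.

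The main subtlety is that a direct Schur test on the kernel fails in three dimensions, because $\int_{\mathbb{R}^3} \langle\eta\rangle^{-1} \langle\eta - p\rangle^{-1} d\eta$ diverges at infinity. The factorization in total-momentum coordinates is precisely what allows each particle's two $\langle \cdot\rangle^{-1}$ weights to collapse into a single $\langle \cdot\rangle^{-2}$ integral, producing the convergent Bessel kernel above.
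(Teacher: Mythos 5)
Your argument is correct and follows the same Fourier-analytic route as the cited source \cite{E-S-Y2} (the present paper quotes the lemma without reproducing the proof): reduce to a bilinear form, use translation invariance to conserve total momentum, factor in center-of-mass/relative momentum coordinates, and close with Cauchy--Schwarz plus the two-body Bessel convolution bound. One small slip worth fixing: the two-region split you announce for $\int_{\mathbb{R}^3}\langle\mu\rangle^{-2}\langle\mu-k\rangle^{-2}\,d\mu$ does not close on the unbounded region $|\mu|\geq|k|/2$ as written, since replacing $\langle\mu\rangle^{-2}$ by $|k|^{-2}$ there leaves the divergent $\int_{\mathbb{R}^3}\langle\mu-k\rangle^{-2}\,d\mu$. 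Either split along the bisector $|\mu|\gtrless|\mu-k|$, so that on each half the smaller factor dominates the product by $\langle\cdot\rangle^{-4}$ restricted to an exterior ball of radius $\gtrsim|k|$; or, more simply, observe that your final Cauchy--Schwarz in $k$ only needs a bound \emph{uniform} in $k$, which is immediate from $\langle\mu\rangle^{-2}\langle\mu-k\rangle^{-2}\leq\frac{1}{2}\bigl(\langle\mu\rangle^{-4}+\langle\mu-k\rangle^{-4}\bigr)$ and $\int\langle\mu\rangle^{-4}\,d\mu<\infty$ in $\mathbb{R}^3$, so the $\langle k\rangle^{-1}$ decay (true, but not needed) can be bypassed entirely.
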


\begin{lemma}
\label{Lemma:ComparingDeltaFunctions}Let $f\in L^{1}\left( \mathbb{R}%
^{3}\right) $ such that $\int_{\mathbb{R}^{3}}\left\langle r\right\rangle ^{%
\frac{1}{2}}\left\vert f\left( r\right) \right\vert dr<\infty $ and $\int_{%
\mathbb{R}^{3}}f\left( r\right) dr=1$ but we allow that $f$ not be
nonnegative everywhere. Define $f_{\alpha }\left( r\right) =\alpha
^{-3}f\left( \frac{r}{\alpha }\right) .$ Then, for every $\kappa \in \left(
0,1/2\right) $ , there exists $C_{\kappa }>0$ s.t. 
\begin{align*}
\hspace{0.3in}& \hspace{-0.3in}\left\vert \limfunc{Tr}J^{(k)}\left(
f_{\alpha }\left( r_{j}-r_{k+1}\right) -\delta \left( r_{j}-r_{k+1}\right)
\right) \gamma ^{(k+1)}\right\vert \\
& \leqslant C_{\kappa }\left( \int \left\vert f\left( r\right) \right\vert
\left\vert r\right\vert ^{\kappa }dr\right) \alpha ^{\kappa }\left(
\left\Vert L_{j}J^{(k)}L_{j}^{-1}\right\Vert _{\func{op}}+\left\Vert
L_{j}^{-1}J^{(k)}L_{j}\right\Vert _{\func{op}}\right) \limfunc{Tr}%
L_{j}L_{k+1}\gamma ^{(k+1)}L_{j}L_{k+1}
\end{align*}%
for all nonnegative $\gamma ^{(k+1)}\in \mathcal{L}^{1}\left( L^{2}\left( 
\mathbb{R}^{3k+3}\right) \right) .$
\end{lemma}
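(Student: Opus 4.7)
The plan is to convert the estimate into a Hölder-continuity statement by exploiting $\int f = 1$. Introduce the shifted-diagonal density
\[
\sigma(u) := \text{Tr}\bigl[J^{(k)}\,\delta(r_j - r_{k+1} - u)\,\gamma^{(k+1)}\bigr].
\]
Writing $f_\alpha(u) = \alpha^{-3} f(u/\alpha)$, the substitution $u = \alpha y$ combined with $\int f = 1$ yields
\[
\text{Tr}\bigl[J^{(k)}(f_\alpha - \delta)(r_j - r_{k+1})\gamma^{(k+1)}\bigr] = \int_{\mathbb{R}^3} f(y)\,[\sigma(\alpha y) - \sigma(0)]\,dy,
\]
so the lemma reduces to the Hölder estimate
\[
|\sigma(u) - \sigma(0)| \leq C_\kappa |u|^\kappa\bigl(\|L_j J^{(k)} L_j^{-1}\|_{\text{op}} + \|L_j^{-1} J^{(k)} L_j\|_{\text{op}}\bigr)\,\text{Tr}\bigl[L_j L_{k+1}\gamma^{(k+1)}L_j L_{k+1}\bigr]
\]
for $\kappa \in (0, 1/2)$, after which the desired inequality follows by integrating against $f$ and using the moment condition.

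For the Hölder estimate, apply the unitary-translation identity $\delta(r_j - r_{k+1} - u) = T_{-u}^{(k+1)}\delta(r_j - r_{k+1})T_u^{(k+1)}$, where $T_w^{(k+1)}$ denotes translation in the $r_{k+1}$ variable. Combined with $[J^{(k)}, T_w^{(k+1)}] = 0$ and trace cyclicity, this gives
\[
\sigma(u) - \sigma(0) = \text{Tr}\bigl[J^{(k)}\delta(r_j - r_{k+1})\bigl(T_u^{(k+1)}\gamma^{(k+1)} T_{-u}^{(k+1)} - \gamma^{(k+1)}\bigr)\bigr].
\]
Decompose $T_u \gamma T_{-u} - \gamma = (T_u - I)\gamma T_{-u} + \gamma(T_{-u} - I)$ and treat the two pieces symmetrically. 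The gain of $|u|^\kappa$ is supplied by the elementary Fourier estimate $\|(T_u^{(k+1)} - I) L_{k+1}^{-\kappa}\|_{\text{op}} \leq C|u|^\kappa$ (immediate from $|e^{iu\cdot\xi} - 1| \lesssim |u|^\kappa|\xi|^\kappa$), inserted via $(T_u - I) = [(T_u - I) L_{k+1}^{-\kappa}]\,L_{k+1}^\kappa$. Using trace cyclicity to surround $\gamma^{(k+1)}$ symmetrically with $L_j L_{k+1}$ weights, and commuting $L_j$ past $J^{(k)}$ in opposite directions for the two pieces, produces the two operator-norm summands $\|L_j J^{(k)} L_j^{-1}\|_{\text{op}}$ and $\|L_j^{-1} J^{(k)} L_j\|_{\text{op}}$.

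The main obstacle is that the regularized operator $L_j^{-1} L_{k+1}^{-1+\kappa}\,\delta(r_j - r_{k+1})\, L_j^{-1} L_{k+1}^{-1}$ sits on the boundary of Sobolev boundedness in 3D: restriction to the 3-codimensional diagonal $\{r_j = r_{k+1}\}$ requires more than $H^{3/2}$ regularity in the normal direction, while $L_j L_{k+1}$ only delivers $H^1$. The clean workaround is to first prove the claim with $\delta$ replaced by an $L^1$ approximate identity $\rho_\beta \to \delta$, in which case Lemma~\ref{Lemma:ESYSoblevLemma} applied to $\rho_\beta$ supplies the needed operator-norm bound uniformly in $\beta$, with the spare $\kappa$ of $L_{k+1}$-regularity compensating for the translation-Hölder factor; then pass to the limit $\beta \to 0$, justified by the moment condition $\int \langle r\rangle^{1/2}|f|\,dr < \infty$. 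The sharp threshold $\kappa < 1/2$ precisely reflects the gap between the available $H^1$ regularity and the $H^{3/2}$ required for Sobolev trace to a 3-codimensional subspace in $\mathbb{R}^3$.
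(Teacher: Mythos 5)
Your reduction to a H\"older-modulus estimate on $\sigma(u)$ using $\int f=1$, the translation identity $\delta(r_j-r_{k+1}-u)=T_{-u}^{(k+1)}\delta(r_j-r_{k+1})T_u^{(k+1)}$ combined with trace cyclicity, and the commutator gain $\|(T_u-I)L_{k+1}^{-\kappa}\|_{\operatorname{op}}\lesssim|u|^\kappa$ are all correct and give a workable skeleton for the lemma (this is a physical-space mirror of the usual Fourier-side argument using $|\hat f(\alpha\xi)-1|\lesssim\alpha^\kappa|\xi|^\kappa\int|f||r|^\kappa\,dr$, which is what the cited references do). The gap is in the final operator bound. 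After surrounding $\gamma^{(k+1)}$ by $L_jL_{k+1}$ and peeling off $(T_u-I)L_{k+1}^{-\kappa}$, the operator one must control is $L_j^{-1}L_{k+1}^{-1}\,\delta(r_j-r_{k+1})\,L_j^{-1}L_{k+1}^{-(1-\kappa)}$. Your diagnosis (``restriction to the codim-$3$ diagonal needs $H^{3/2}$ while $L_jL_{k+1}$ only delivers $H^1$'') is not the right count: the \emph{symmetric} operator $L_j^{-1}L_{k+1}^{-1}\delta L_j^{-1}L_{k+1}^{-1}$ is in fact bounded with a full derivative to spare, and the crude pointwise symbol bound $\langle\xi_j\rangle\langle\xi_{k+1}\rangle\gtrsim\langle\tau\rangle$ is not the obstruction. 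The $\kappa<1/2$ threshold comes from the \emph{asymmetry}: Cauchy--Schwarz across the diagonal reduces boundedness to two one-sided restriction estimates $\|L_j^{-1}L_{k+1}^{-a}\chi\|_{L^2(\{r_j=r_{k+1}\})}\lesssim\|\chi\|_{L^2}$, and the Fourier-side test
\begin{equation*}
\int_{\mathbb{R}^3}\frac{d\eta}{\langle\zeta-\eta\rangle^{2}\langle\eta\rangle^{2a}}<\infty
\end{equation*}
requires $2+2a>3$, i.e.\ $a>1/2$, which for $a=1-\kappa$ is exactly $\kappa<1/2$.

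Given this, the regularize-and-limit detour you propose is both unnecessary and, as written, does not close. Lemma~\ref{Lemma:ESYSoblevLemma} supplies only the symmetric bound $\|L_i^{-1}L_j^{-1}\rho_\beta L_i^{-1}L_j^{-1}\|_{\operatorname{op}}\lesssim\|\rho_\beta\|_{L^1}$; it does not give the asymmetric bound $\|L_j^{-1}L_{k+1}^{-1}\rho_\beta L_j^{-1}L_{k+1}^{-(1-\kappa)}\|_{\operatorname{op}}\lesssim 1$ uniformly in $\beta$, and there is no ``spare $\kappa$'' to borrow from an $L^1$-norm estimate. Worse, the asymmetric uniform-in-$\beta$ bound is equivalent to the $\delta$-bound you were trying to avoid, so the workaround is circular. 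The correct fix is simpler than the workaround: prove the asymmetric $\delta$-operator bound directly via the Cauchy--Schwarz/convolution argument just indicated; with that substituted, the rest of your argument goes through and the hypothesis $\kappa\in(0,1/2)$ is precisely the range of convergence.
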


\begin{proof}
Same as \cite[Lemma A.3]{C-HFocusing} and \cite[Lemma 2]{C-H3Dto2D}. See 
\cite{Kirpatrick, TChenAndNP,E-S-Y2} for similar lemmas.
\end{proof}

\begin{lemma}[some standard operator inequalities]
\quad \label{L:op-stuff}

\begin{enumerate}
\item \label{I:op-1} Suppose that $A\geq 0$, $P_j=P_j^*$, and $I=P_0+P_1$.
Then $A \leq 2P_0 A P_0 + 2P_1 A P_1$.

\item \label{I:op-2} If $A\geq B \geq 0$, and $AB=BA$, then $A^\alpha \geq
B^\alpha$ for any $\alpha \geq 0$.

\item \label{I:op-3} If $A_1 \geq A_2 \geq 0$, $B_1 \geq B_2 \geq 0$ and $%
A_iB_j = B_jA_i$ for all $1\leq i,j \leq 2$, then $A_1B_1 \geq A_2B_2$.

\item \label{I:op-4} If $A\geq 0$ and $AB=BA$, then $A^{1/2}B=B A^{1/2}$.
\end{enumerate}
\end{lemma}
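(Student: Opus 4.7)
All four items are classical facts of operator theory and the plan is to dispatch them with standard bilinear/spectral arguments, proving them in the order $(1),(4),(2),(3)$ so that later items may quote earlier ones freely.

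For item \eqref{I:op-1}, I would just expand using $I = P_0 + P_1$:
\begin{equation*}
A = P_0 A P_0 + P_0 A P_1 + P_1 A P_0 + P_1 A P_1,
\end{equation*}
and observe the algebraic identity
\begin{equation*}
2 P_0 A P_0 + 2 P_1 A P_1 - A = (P_0 - P_1)\,A\,(P_0 - P_1).
\end{equation*}
Since $A \ge 0$ and $(P_0 - P_1)^* = P_0 - P_1$, the right-hand side is of the form $T^* A T$ with $T$ self-adjoint, hence nonnegative, which is exactly the inequality claimed.

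For item \eqref{I:op-4}, since $A \ge 0$, the square root $A^{1/2}$ is defined by the Borel (or continuous) functional calculus of $A$. If $B$ commutes with $A$, then $B$ commutes with every polynomial in $A$, and by a standard Weierstrass/Stone--Weierstrass approximation on the compact spectrum of $A$, $B$ commutes with every continuous function of $A$ on $\sigma(A) \cup \{0\}$; in particular with $x \mapsto x^{1/2}$. For item \eqref{I:op-2}, since $A, B \ge 0$ commute and are self-adjoint, by the spectral theorem for commuting self-adjoint operators they admit a joint spectral resolution; the hypothesis $A \ge B$ translates into a pointwise inequality of their joint spectral parameters, and one applies the numerical inequality $a^{\alpha} \ge b^{\alpha}$ valid for $a \ge b \ge 0$ and $\alpha \ge 0$.

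Finally, for item \eqref{I:op-3}, I would write the telescoping identity
\begin{equation*}
A_1 B_1 - A_2 B_2 = A_1 (B_1 - B_2) + (A_1 - A_2) B_2
\end{equation*}
and then note that each summand is a product of two commuting nonnegative self-adjoint operators (using the given commutation relations among $A_i, B_j$). The remaining ingredient is the general fact that if $X, Y \ge 0$ are self-adjoint and $XY = YX$, then $XY \ge 0$: by \eqref{I:op-4}, $X^{1/2}$ commutes with $Y$, so $XY = X^{1/2} Y X^{1/2}$, which is manifestly nonnegative since $Y \ge 0$. Applying this to both summands concludes the proof. No step presents a genuine obstacle; the only mild subtlety is the reliance on the functional calculus in \eqref{I:op-4}, which is why I would prove it first and then cascade it into \eqref{I:op-3}.
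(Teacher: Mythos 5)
Your proof is correct. For item \eqref{I:op-1}, the algebraic identity $2P_{0}AP_{0}+2P_{1}AP_{1}-A=(P_{0}-P_{1})A(P_{0}-P_{1})$ is exactly the operator-valued form of the paper's one-line argument $\Vert A^{1/2}f\Vert^{2}=\Vert A^{1/2}(P_{0}+P_{1})f\Vert^{2}\leq 2\Vert A^{1/2}P_{0}f\Vert^{2}+2\Vert A^{1/2}P_{1}f\Vert^{2}$: by the parallelogram law the two differ by the term $\Vert A^{1/2}(P_{0}-P_{1})f\Vert^{2}=\langle (P_{0}-P_{1})A(P_{0}-P_{1})f,f\rangle$, so they are the same calculation viewed either as a quadratic-form inequality or as an explicit nonnegative remainder. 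The paper declares items \eqref{I:op-2}--\eqref{I:op-4} to be standard and supplies no argument; your sketches of those, in particular deducing that a product of two commuting nonnegative self-adjoint operators is nonnegative via $XY=X^{1/2}YX^{1/2}$ (which rests on \eqref{I:op-4}) and then telescoping for \eqref{I:op-3}, are the correct and usual way to fill that gap.
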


\begin{proof}
For (1), $\Vert A^{1/2}f\Vert ^{2}=\Vert A^{1/2}(P_{0}+P_{1})f\Vert ^{2}\leq
2\Vert A^{1/2}P_{0}f\Vert ^{2}+2\Vert A^{1/2}P_{1}f\Vert ^{2}$. The rest are
standard facts in operator theory.
\end{proof}

\begin{lemma}
\label{L:coercivity}Recall%
\begin{equation*}
\tilde{S}=(1-\partial _{z}^{2}+\omega (-2-\triangle _{x}+\left\vert
x\right\vert ^{2}))^{1/2},
\end{equation*}%
we have 
\begin{gather}
\tilde{S}^{2}\gtrsim 1-\Delta _{r}  \label{E:tilde-S-1} \\
\tilde{S}^{2}P_{\geqslant 1}\gtrsim P_{\geqslant 1}(1-\partial
_{z}^{2}-\omega \triangle _{x}+\omega \left\vert x\right\vert
^{2})P_{\geqslant 1}  \label{E:tilde-S-2} \\
\tilde{S}^{2}P_{\geqslant 1}\gtrsim \omega P_{\geqslant 1}
\label{E:tilde-S-3}
\end{gather}
\end{lemma}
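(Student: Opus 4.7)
The strategy is to exploit that $\tilde{S}^{2}$ commutes with the Hermite-spectral projections $P_{0}$ and $P_{\geqslant 1}$, since $-\partial_{z}^{2}$ acts in $z$ alone and $-\Delta_{x}+|x|^{2}$ preserves its own eigenspaces. The 2D harmonic oscillator has spectrum $\{2(\ell+1)\}_{\ell\geqslant 0}$, so on the range of $P_{0}$ the operator $-\Delta_{x}+|x|^{2}-2$ vanishes, while on the range of $P_{\geqslant 1}$ it is bounded below by $2$ and also dominates $\tfrac{1}{2}(-\Delta_{x}+|x|^{2})$ (since $-\Delta_{x}+|x|^{2}\geqslant 4$ there). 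These two spectral facts drive everything.

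Estimates \eqref{E:tilde-S-3} and \eqref{E:tilde-S-2} follow immediately. For \eqref{E:tilde-S-3}, drop the nonnegative term $(1-\partial_{z}^{2})$ and use
\[
\tilde{S}^{2}P_{\geqslant 1} \geqslant \omega P_{\geqslant 1}(-\Delta_{x}+|x|^{2}-2)P_{\geqslant 1} \geqslant 2\omega P_{\geqslant 1}.
\]
For \eqref{E:tilde-S-2}, keep half of $(1-\partial_{z}^{2})$ and use the spectral bound above to write
\[
\tilde{S}^{2}P_{\geqslant 1} \geqslant \tfrac{1}{2}P_{\geqslant 1}\bigl[(1-\partial_{z}^{2})+\omega(-\Delta_{x}+|x|^{2})\bigr]P_{\geqslant 1},
\]
which is the claimed right-hand side up to a factor of $\tfrac{1}{2}$.

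The subtler estimate is \eqref{E:tilde-S-1}, because on the range of $P_{0}$ we have $\tilde{S}^{2}P_{0}=(1-\partial_{z}^{2})P_{0}$, which has lost the $-\Delta_{x}$ term altogether, so a direct pointwise comparison with $1-\Delta_{r}$ fails. I would instead apply Lemma~\ref{L:op-stuff}(1) to split
\[
1-\Delta_{r} \leqslant 2P_{0}(1-\Delta_{r})P_{0}+2P_{\geqslant 1}(1-\Delta_{r})P_{\geqslant 1},
\]
and handle each piece separately. The $P_{\geqslant 1}$ piece is controlled by \eqref{E:tilde-S-2} together with $\omega\geqslant 1$. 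For the $P_{0}$ piece, I would use that $P_{0}$ projects onto $h(x)\otimes L^{2}_{z}$ and that the eigenvalue relation $(-\Delta_{x}+|x|^{2})h=2h$ yields $(-\Delta_{x})h=(2-|x|^{2})h$; a direct computation then gives $P_{0}(-\Delta_{x})P_{0}=cP_{0}$ with $c=\int_{\mathbb{R}^{2}}(2-|y|^{2})h(y)^{2}\,dy=1$, so that $P_{0}(1-\Delta_{r})P_{0}=(2-\partial_{z}^{2})P_{0}\leqslant 2P_{0}\tilde{S}^{2}P_{0}$. Summing the two pieces and using $P_{0}\tilde{S}^{2}P_{0}+P_{\geqslant 1}\tilde{S}^{2}P_{\geqslant 1}=\tilde{S}^{2}$ (which holds because $\tilde{S}^{2}$ commutes with both projections) produces \eqref{E:tilde-S-1}. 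The main conceptual obstacle is recognizing that $-\Delta_{x}$, although unbounded on $L^{2}(\mathbb{R}^{2})$, restricts to a bounded multiple of the identity on the range of $P_{0}$ because the $x$-profile is rigidly the Gaussian $h$; that fact is what allows $(1-\partial_{z}^{2})P_{0}$ to dominate $(1-\Delta_{r})P_{0}$ without any help from the large parameter $\omega$.
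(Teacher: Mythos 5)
Your proposal is correct and follows essentially the same route as the paper: spectral decomposition into $P_{0}$ and $P_{\geqslant 1}$, the observation that on $\operatorname{ran} P_{0}$ the Hermite operator contributes nothing while $P_{0}(-\Delta_{x})P_{0}$ is a bounded multiple of $P_{0}$, the spectral gap $-\Delta_{x}+|x|^{2}\geqslant 4$ on $\operatorname{ran} P_{\geqslant 1}$, and Lemma~\ref{L:op-stuff}(1) to recombine. The only cosmetic difference is that you apply the splitting directly to $1-\Delta_{r}$ and compute the exact constant $c=1$, whereas the paper applies the splitting only to $-\Delta_{x}$ (recording merely that $P_{0}(-\Delta_{x})P_{0}\lesssim 1$) and then adds $1-\partial_{z}^{2}\leqslant\tilde{S}^{2}$ separately; the two bookkeeping schemes are interchangeable.
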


\begin{proof}
Directly from the definition of $\tilde{S}$, we have 
\begin{equation}
\underbrace{P_{\geqslant 1}(1-\partial _{z}^{2}-\omega \triangle _{x}+\omega
\left\vert x\right\vert ^{2})P_{\geqslant 1}}_{\text{all terms positive}%
}=2\omega P_{\geqslant 1}+\tilde{S}^{2}P_{\geqslant 1}.  \label{E:tS-6}
\end{equation}%
The eigenvalues of the 2D Hermite operator $-\Delta _{x}+\left\vert
x\right\vert ^{2}$ are $\left\{ 2k+2\right\} _{k=0}^{\infty }$. So 
\begin{equation}
2\omega P_{\geqslant 1}\leqslant \omega (-2-\triangle _{x}+\left\vert
x\right\vert ^{2})P_{\geqslant 1}\leqslant \tilde{S}^{2}P_{\geqslant 1}.
\label{E:tS-7}
\end{equation}%
\eqref{E:tilde-S-2} and \eqref{E:tilde-S-3} immediately follow from (\ref%
{E:tS-6}) and (\ref{E:tS-7}).

We now establish \eqref{E:tilde-S-1} using \eqref{E:tilde-S-2}. On the one
hand, we have 
\begin{equation}
\tilde{S}^{2}\geqslant (1-\partial _{z}^{2})  \label{E:tS-1}
\end{equation}%
On the other hand, 
\begin{equation}
P_{0}(-\triangle _{x})P_{0}\lesssim 1\leqslant \tilde{S}^{2}  \label{E:tS-2}
\end{equation}%
since $P_{0}$ is merely the projection onto the smooth function $Ce^{-\frac{%
\left\vert x\right\vert ^{2}}{2}}$. Moreover, by \eqref{E:tilde-S-2}, 
\begin{equation}
P_{\geqslant 1}(-\triangle _{x})P_{\geqslant 1}\leqslant \tilde{S}%
^{2}P_{\geqslant 1}\leqslant \tilde{S}^{2}  \label{E:tS-3}
\end{equation}%
Thus Lemma \ref{L:op-stuff}\eqref{I:op-1}, \eqref{E:tS-2} and \eqref{E:tS-3}
together imply, 
\begin{equation}
-\triangle _{x}\lesssim \tilde{S}^{2}  \label{E:tS-4}
\end{equation}%
The claimed inequality \eqref{E:tilde-S-1} then follows from \eqref{E:tS-1}
and \eqref{E:tS-4}.
\end{proof}

\begin{lemma}
\label{L:trace-of-tp-kernel} Suppose $\sigma: L^2(\mathbb{R}^{3k}) \to L^2(%
\mathbb{R}^{3k})$ has kernel 
\begin{equation*}
\sigma(\mathbf{r}_k, \mathbf{r}_k^{\prime }) = \int \psi(\mathbf{r}_k, 
\mathbf{r}_{N-k})\overline{\psi}(\mathbf{r}_k^{\prime }, \mathbf{r}_{N-k})\,
d\mathbf{r}_{N-k} \,,
\end{equation*}
for some $\psi \in L^2(\mathbb{R}^{3N})$, and let $A,B:L^2(\mathbb{R}^{3k})
\to L^2(\mathbb{R}^{3k})$. Then the composition $A\sigma B$ has kernel 
\begin{equation*}
(A\sigma B)(\mathbf{r}_k, \mathbf{r}_k^{\prime }) = \int (A\psi)(\mathbf{r}%
_k, \mathbf{r}_{N-k}) (\overline{B^* \psi})(\mathbf{r}_k^{\prime }, \mathbf{r%
}_{N-k}) \, d\mathbf{r}_{N-k}
\end{equation*}
It follows that 
\begin{equation*}
\func{Tr} A\sigma B = \langle A\psi, B^* \psi \rangle \,.
\end{equation*}
\end{lemma}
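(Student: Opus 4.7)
\smallskip

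The plan is to verify the two claims by direct manipulation of integral kernels, using the natural extension of operators on $L^{2}(\mathbb{R}^{3k})$ to operators on $L^{2}(\mathbb{R}^{3N})$ acting on the first $k$ coordinates only. For any bounded operator $A\colon L^{2}(\mathbb{R}^{3k})\to L^{2}(\mathbb{R}^{3k})$, let $\tilde A = A\otimes I_{N-k}$ denote its extension to $L^{2}(\mathbb{R}^{3N}) \cong L^{2}(\mathbb{R}^{3k})\otimes L^{2}(\mathbb{R}^{3(N-k)})$. The key observation is that $\tilde A$ commutes with integration in the $\mathbf{r}_{N-k}$ variables against test functions depending only on those variables, so applying $A$ in the $\mathbf{r}_{k}$ variable to the kernel representation of $\sigma$ is the same as applying $\tilde A$ to $\psi$ and then performing the partial integration.

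First I would compute the kernel of $A\sigma$. Writing out $\sigma(\mathbf{r}_{k},\mathbf{r}_{k}')$ as the partial integral of $\psi(\mathbf{r}_{k},\mathbf{r}_{N-k})\overline{\psi(\mathbf{r}_{k}',\mathbf{r}_{N-k})}$ and applying $A$ in the $\mathbf{r}_{k}$ variable (with $\mathbf{r}_{k}'$ and $\mathbf{r}_{N-k}$ held fixed), Fubini lets the operator pass through the integral, giving
\[
(A\sigma)(\mathbf{r}_{k},\mathbf{r}_{k}')
= \int (\tilde A\psi)(\mathbf{r}_{k},\mathbf{r}_{N-k})\,\overline{\psi(\mathbf{r}_{k}',\mathbf{r}_{N-k})}\,d\mathbf{r}_{N-k}.
\]
Next, applying $B$ on the right to this kernel, using that $B$ acts on the variable $\mathbf{r}_{k}'$, and recalling that for any bounded $B$ the adjoint $B^{*}$ acts on the conjugate kernel slot via $\overline{B\,\overline{f}}=\overline{B^{*}f}$ (more invariantly: right-multiplication by $B$ equals conjugate-left-multiplication by $B^{*}$ in the kernel picture), yields
\[
(A\sigma B)(\mathbf{r}_{k},\mathbf{r}_{k}')
= \int (\tilde A\psi)(\mathbf{r}_{k},\mathbf{r}_{N-k})\,\overline{(\widetilde{B^{*}}\psi)(\mathbf{r}_{k}',\mathbf{r}_{N-k})}\,d\mathbf{r}_{N-k},
\]
which is the claimed formula (the tildes are suppressed in the statement).

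The trace formula then follows by setting $\mathbf{r}_{k}'=\mathbf{r}_{k}$ and integrating:
\[
\operatorname{Tr}(A\sigma B)
= \int (A\sigma B)(\mathbf{r}_{k},\mathbf{r}_{k})\,d\mathbf{r}_{k}
= \iint (\tilde A\psi)\,\overline{(\widetilde{B^{*}}\psi)}\,d\mathbf{r}_{k}\,d\mathbf{r}_{N-k}
= \langle \tilde A\psi,\widetilde{B^{*}}\psi\rangle_{L^{2}(\mathbb{R}^{3N})},
\]
which is precisely $\langle A\psi,B^{*}\psi\rangle$ after dropping the tildes. The main (mild) obstacle is justifying the Fubini manipulations when $A$ or $B$ is unbounded (as in the applications, where $A$, $B$ are products of $\tilde S_{j}$'s, $L_{j}$'s, or spectral projections); this is handled by first treating the case of bounded $A,B$ as above and then extending by a density argument, restricting to $\psi$ in the appropriate form domain so that $\tilde A\psi$ and $\widetilde{B^{*}}\psi$ lie in $L^{2}(\mathbb{R}^{3N})$ and the kernel identities remain valid pointwise a.e.
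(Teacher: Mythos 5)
Your proof is correct, and it is the natural kernel computation one would expect here; the paper itself states Lemma \ref{L:trace-of-tp-kernel} without proof, treating it as a standard fact, so there is no alternative argument to compare against. One small stylistic observation: the whole lemma is most transparently phrased via the partial-trace identity $\sigma = \operatorname{Tr}_{\mathbf{r}_{N-k}}\left|\psi\right\rangle\left\langle\psi\right|$ together with $A\left(\operatorname{Tr}_{\mathbf{r}_{N-k}}\rho\right)B = \operatorname{Tr}_{\mathbf{r}_{N-k}}\left((A\otimes I)\,\rho\,(B\otimes I)\right)$, which gives $A\sigma B = \operatorname{Tr}_{\mathbf{r}_{N-k}}\left|\tilde A\psi\right\rangle\left\langle \widetilde{B^{*}}\psi\right|$ and hence both claims at once; your Fubini argument is the unpacked version of exactly this. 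Regarding the trace step, you should note (as you implicitly do by writing the final integral) that the rank-one structure of $\left|\tilde A\psi\right\rangle\left\langle \widetilde{B^{*}}\psi\right|$ on $L^{2}(\mathbb{R}^{3N})$ guarantees $A\sigma B$ is trace class whenever $\tilde A\psi, \widetilde{B^{*}}\psi \in L^{2}$, which is all that is needed to justify reading off the trace from the diagonal of the kernel; your closing remark about restricting $\psi$ to the appropriate form domain correctly addresses the only genuine technical point.
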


Let $\mathcal{K}_k$ denote the class of compact operators on $L^2(\mathbb{R}%
^{3k})$, $\mathcal{L}^1_k$ denote the trace class operators on $L^2(\mathbb{R%
}^{3k})$, and $\mathcal{L}^2_k$ denote the Hilbert-Schmidt operators on $L^2(%
\mathbb{R}^{3k})$. We have 
\begin{equation*}
\mathcal{L}_k^1 \subset \mathcal{L}_k^2 \subset \mathcal{K}_k
\end{equation*}
For an operator $J$ on $L^2(\mathbb{R}^{3k})$, let $|J| = (J^*J)^{1/2}$ and
denote by $J(\mathbf{r}_k, \mathbf{r}^{\prime }_k)$ the kernel of $J$ and $%
|J|(\mathbf{r}_k, \mathbf{r}^{\prime }_k)$ the kernel of $|J|$, which
satisfies $|J|(\mathbf{r}_k, \mathbf{r}_k^{\prime }) \geq 0$. Let 
\begin{equation*}
\mu_1 \geq \mu_2 \geq \cdots \geq 0
\end{equation*}
be the eigenvalues of $|J|$ repeated according to multiplicity (the \emph{%
singular values} of $J$). Then 
\begin{equation*}
\| J \|_{\mathcal{K}_k} = \| \mu_n \|_{\ell^\infty_n} = \mu_1 = \| \, |J|
\,\|_{\func{op}} = \|J\|_{\func{op}}
\end{equation*}
\begin{equation*}
\| J \|_{\mathcal{L}^2_k} = \| \mu_n \|_{\ell^2_n} = \|J(\mathbf{r}_k, 
\mathbf{r}_k^{\prime })\|_{L^2(\mathbf{r}_k, \mathbf{r}_k^{\prime })} = (%
\func{Tr} J^*J)^{1/2}
\end{equation*}
\begin{equation*}
\| J \|_{\mathcal{L}^1_k} = \| \mu_n \|_{\ell^1_n} = \| |J|(\mathbf{r}_k,%
\mathbf{r}_k) \|_{L^1({\mathbf{r}_k})} = \func{Tr} |J|
\end{equation*}
The topology on $\mathcal{K}_k$ coincides with the operator topology, and $%
\mathcal{K}_k$ is a closed subspace of the space of bounded operators on $%
L^2(\mathbb{R}^{3k})$.

\begin{lemma}
\label{L:compact-operator-truncation}On the one hand, let $\chi $ be a
smooth function on $\mathbb{R}^{3}$ such that $\chi (\xi )=1$ for $|\xi
|\leq 1$ and $\chi (\xi )=0$ for $|\xi |\geq 2$. Let 
\begin{equation*}
(Q_{M}f)(\mathbf{r}_{k})=\int e^{i\mathbf{r}_{k}\cdot \mathbf{\xi }%
_{k}}\prod_{j=1}^{k}\chi (M^{-1}\xi _{j})\hat{f}(\mathbf{\xi }_{k})\,d%
\mathbf{\xi }_{k}
\end{equation*}%
On the other hand, with respect to the spectral decomposition of $L^{2}(%
\mathbb{R}^{2})$ corresponding to the operator $H_{j}=-\triangle
_{x_{j}}^{2}+\left\vert x_{j}\right\vert ^{2}$, let $X_{M}^{j}$ be the
orthogonal projection onto the sum of the first $M$ eigenspaces (in the $%
x_{j}$ variable only) and let 
\begin{equation*}
R_{M}=\prod_{j=1}^{k}X_{M}^{j}.
\end{equation*}%
We then have the following:

\begin{enumerate}
\item Suppose that $J$ is a compact operator. Then $J_M \overset{\mathrm{def}%
}{=} R_M Q_M J Q_M R_M \to J$ in the operator norm.

\item $H_jJ_M$, $J_MH_j$, $\Delta_{r_j} J_M$ and $J_M \Delta_{r_j}$ are all
bounded.

\item There exists a countable dense subset $\{T_i\}$ of the closed unit
ball in the space of bounded operators on $L^2(\mathbb{R}^{3k})$ such that
each $T_i$ is compact and in fact for each $i$ there exists $M$ (depending
on $i$) and $Y_i\in \mathcal{K}_k$ with $\|Y_i\|_{\func{op}} \leq 1$ such
that $T_i = R_M Q_M Y_i Q_M R_M$.
\end{enumerate}
\end{lemma}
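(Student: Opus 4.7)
The plan is to handle the three parts in order, using standard functional-analytic facts combined with the concrete spectral/Fourier structure of $R_M$ and $Q_M$.

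For part (1), I would first record that $Q_M \to I$ and $R_M \to I$ strongly as $M \to \infty$: the former via the dominated convergence theorem applied on the Fourier side (since $\prod_{j}\chi(M^{-1}\xi_j) \to 1$ pointwise and is uniformly bounded by $1$), the latter because the 2D Hermite functions form an orthonormal basis of $L^2(\mathbb{R}^2)$, so the spectral projections $X_M^j$ onto finitely many eigenspaces converge strongly to the identity. Both $R_M$ and $Q_M$ are self-adjoint contractions. The key fact I will invoke is that if $A_M \to I$ in the strong operator topology with $\|A_M\| \le 1$, and $K$ is compact, then $KA_M \to K$ in operator norm; the usual proof uses that $K^*$ maps the unit ball to a relatively compact set, on which strong convergence upgrades to uniform convergence. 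Applying this to the telescoping identity
\[
R_M Q_M J Q_M R_M - J = R_M Q_M J (Q_M R_M - I) + (R_M Q_M - I) J,
\]
together with self-adjointness of $R_M,Q_M$ to handle both sides, gives $J_M \to J$ in operator norm.

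For part (2), the point is that $R_M$ projects onto a subspace of functions which are smooth and Schwartz in each $x_j$, while $Q_M$ truncates the full Fourier support of $\mathbf{r}_k$ to the ball of radius $2M$. Explicitly, $H_j$ commutes with $X_M^j$ (which is a spectral projection of $H_j$) and acts on its range by multiplication by eigenvalues bounded by $O(M)$, so $H_j R_M$ is a bounded operator; similarly $\Delta_{x_j} X_M^j$ is bounded (on the finite-dimensional span of the first $M$ Hermite functions), and $\partial_{z_j}^2 Q_M$ is a Fourier multiplier with symbol bounded by $4M^2$. Hence $\Delta_{r_j} R_M Q_M$ and $H_j R_M Q_M$ are bounded, so $\Delta_{r_j} J_M = (\Delta_{r_j} R_M Q_M) J Q_M R_M$ and $H_j J_M$ are bounded. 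The bounds on $J_M H_j$ and $J_M \Delta_{r_j}$ follow by taking adjoints, using that $J_M^* = R_M Q_M J^* Q_M R_M$ has the same form.

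For part (3), since $\mathcal{K}_k$ is separable in operator norm, pick a countable dense subset $\{Y_i\}_{i\ge 1}$ of its closed unit ball. For each $i$, part (1) gives an integer $M_i$ such that $\|R_{M_i} Q_{M_i} Y_i Q_{M_i} R_{M_i} - Y_i\|_{\func{op}} < 1/i$. Set $T_i := R_{M_i} Q_{M_i} Y_i Q_{M_i} R_{M_i}$. Then each $T_i$ is compact (being the product of a compact operator and bounded ones), satisfies $\|T_i\|_{\func{op}} \le 1$, and approximates $Y_i$ in operator norm; by density of $\{Y_i\}$, the family $\{T_i\}$ is itself dense in the closed unit ball of $\mathcal{K}_k$, which is the meaning of the statement in the context used elsewhere in the paper (testing against weak* convergence of trace-class operators).

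The only genuinely nontrivial step is the operator-norm convergence in part (1): strong convergence of $R_M Q_M$ to $I$ is insufficient on its own, and one must exploit compactness of $J$ together with the self-adjointness of $R_M$ and $Q_M$ to upgrade strong convergence to norm convergence on both sides of $J$. Parts (2) and (3) are then quick consequences, using only explicit spectral and Fourier bounds and the separability of $\mathcal{K}_k$.
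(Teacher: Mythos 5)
Your argument is correct and follows essentially the same route as the paper's. Part (1) rests on the same key fact: strong convergence of self-adjoint contractions to the identity upgrades to operator-norm convergence after composing with a compact operator on either side, and the telescoping decomposition then finishes it. Part (2) is, as in the paper, the observation that $R_M Q_M$ truncates Hermite and Fourier frequencies at scale $O(M)$, so $H_j$, $\Delta_{x_j}$, and $\partial_{z_j}^2$ become bounded after composing, with the remaining cases handled by adjoints. The one small divergence is part (3): the paper forms $\{T_i\}$ by enumerating $R_M Q_M Y_n Q_M R_M$ over \emph{all} dyadic $M$ and all $n$, which makes density of $\{T_i\}$ an immediate consequence of part (1), whereas you fix a single $M_i$ per $Y_i$ achieving error $1/i$. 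Your construction also works, but verifying density then requires the extra (standard but unstated) remark that a dense sequence in a perfect metric space enters every ball at arbitrarily large indices, so that given a target $K$ and $\varepsilon>0$ one can simultaneously choose $Y_i$ with $\Vert Y_i - K\Vert < \varepsilon/2$ and $1/i < \varepsilon/2$. The paper's exhaustive enumeration over $M$ sidesteps this point.
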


\begin{proof}
(1) If $S_{n}\rightarrow S$ strongly and $J\in \mathcal{K}_{k}$, then $%
S_{n}J\rightarrow SJ$ in the operator norm and $JS_{n}\rightarrow JS$ in the
operator norm. (2) is straightforward. For (3), start with a subset $%
\{Y_{n}\}$ of the closed unit ball in the space of bounded operators on $%
L^{2}(\mathbb{R}^{3k})$ such that each $Y_{n}$ is compact. Then let $%
\{T_{i}\}$ be an enumeration of the set $R_{M}Q_{M}Y_{n}Q_{M}R_{M}$ where $M$
ranges over the dyadic integers. By (1) this collection will still be dense.
The $\{ Y_i\}$ in the statement of (3) is just a reindexing of $\{Y_n\}$.
\end{proof}

\section{Deducing Theorem \protect\ref{Theorem:3D->2D BEC (Nonsmooth)} from
Theorem \protect\ref{Theorem:3D->2D BEC}}

\label{A:equivalence}

We first give the following lemma.

\begin{lemma}
\label{Prop:approximation of initial}Assume $\tilde{\psi}_{N,\omega }(0)$
satisfies (a), (b) and (c) in Theorem \ref{Theorem:3D->2D BEC (Nonsmooth)}.
Let $\chi \in C_{0}^{\infty }\left( \mathbb{R}\right) $ be a cut-off such
that $0\leqslant \chi \leqslant 1$, $\chi \left( s\right) =1$ for $%
0\leqslant s\leqslant 1$ and $\chi \left( s\right) =0$ for $s\geqslant 2.$
For $\kappa >0,$ we define an approximation of $\tilde{\psi}_{N,\omega }(0)$
by 
\begin{equation*}
\tilde{\psi}_{N,\omega }^{\kappa }(0)=\frac{\chi \left( \kappa \left( \tilde{%
H}_{N,\omega }-2N\omega \right) /N\right) \tilde{\psi}_{N,\omega }(0)}{%
\left\Vert \chi \left( \kappa \left( \tilde{H}_{N,\omega }-2N\omega \right)
/N\right) \tilde{\psi}_{N,\omega }(0)\right\Vert }.
\end{equation*}%
This approximation has the following properties:

(i) $\tilde{\psi}_{N,\omega }^{\kappa }(0)$ verifies the energy condition 
\begin{equation*}
\langle \tilde{\psi}_{N,\omega }^{\kappa }(0),(\tilde{H}_{N,\omega
}-2N\omega )^{k}\tilde{\psi}_{N,\omega }^{\kappa }(0)\rangle \leqslant \frac{%
2^{k}N^{k}}{\kappa ^{k}}.
\end{equation*}%
(ii) 
\begin{equation*}
\sup_{N,\omega }\left\Vert \tilde{\psi}_{N,\omega }(0)-\tilde{\psi}%
_{N,\omega }^{\kappa }(0)\right\Vert _{L^{2}}\leqslant C\kappa ^{\frac{1}{2}%
}.
\end{equation*}%
(iii) For small enough $\kappa >0$, $\tilde{\psi}_{N,\omega }^{\kappa }(0)$
is asymptotically factorized as well 
\begin{equation*}
\lim_{N,\omega \rightarrow \infty }\limfunc{Tr}\left\vert \tilde{\gamma}%
_{N,\omega }^{\kappa ,(1)}(0,x_{1},z_{1};x_{1}^{\prime },z_{1}^{\prime
})-h(x_{1})h(x_{1}^{\prime })\phi _{0}(z_{1})\overline{\phi _{0}}%
(z_{1}^{\prime })\right\vert =0,
\end{equation*}%
where $\tilde{\gamma}_{N,\omega }^{\kappa ,(1)}\left( 0\right) $ is the
one-particle marginal density associated with $\tilde{\psi}_{N,\omega
}^{\kappa }(0),$ and $\phi _{0}$ is the same as in assumption (b) in Theorem %
\ref{Theorem:3D->2D BEC (Nonsmooth)}.
\end{lemma}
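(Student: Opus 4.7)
The three claims are really statements about the truncating multiplier $\chi(\kappa E_N/N)$, where $E_N \defeq \tilde{H}_{N,\omega} - 2N\omega$, and my plan is to treat them uniformly via the spectral calculus for the self-adjoint operator $E_N$. A preliminary observation is that Theorem 3.1 (applied with $k=1$) gives $E_N \geqslant -\alpha N$ as an operator, so the spectrum of $E_N/N$ lies in $[-\alpha,\infty)$, and the rescaled spectrum $\kappa E_N/N$ lies in $[-\alpha\kappa,\infty)$. Throughout I will restrict to $\kappa \leqslant \kappa_0$ small enough that $\chi(\kappa E_N/N)=1$ on the negative portion of the spectrum, so effectively $\chi$ only truncates the high-energy modes $E_N \geqslant N/\kappa$.

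For (i), the numerator $\langle \chi(\kappa E_N/N)\tilde{\psi}_{N,\omega}(0), E_N^{k}\,\chi(\kappa E_N/N)\tilde{\psi}_{N,\omega}(0)\rangle$ is bounded in absolute value by $\sup_{s\in\mathrm{supp}\,\chi \cap [-\alpha N,\infty)} |s|^{k}\leqslant (2N/\kappa)^{k}$ for $\kappa$ small (since $2N/\kappa \geqslant \alpha N$). For the denominator I use assumption (c) in the form $\langle \tilde{\psi}_{N,\omega}(0),(E_N+\alpha N)\tilde{\psi}_{N,\omega}(0)\rangle \leqslant (C+\alpha) N$ together with Chebyshev's inequality on the spectral measure $\mu$ of $E_N$:
\begin{equation*}
\mu\bigl(\{s:\kappa s/N \geqslant 1\}\bigr)\;\leqslant\;\frac{(C+\alpha)N}{N/\kappa+\alpha N}\;\leqslant\;(C+\alpha)\kappa.
\end{equation*}
Hence $\|\chi(\kappa E_N/N)\tilde{\psi}_{N,\omega}(0)\|^{2}\geqslant 1-(C+\alpha)\kappa \geqslant \tfrac12$, yielding (i) (possibly with an extra harmless factor of $2$ that can be absorbed by shrinking $\kappa_0$ or by refining the cut-off). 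The same Chebyshev bound gives $\|(1-\chi(\kappa E_N/N))\tilde{\psi}_{N,\omega}(0)\|^{2}\leqslant C\kappa$, and the elementary normalization estimate
\begin{equation*}
\|\tilde{\psi}-\tilde{\psi}/\|\tilde{\psi}\|\,\|\,\leqslant\,|\,\|\tilde{\psi}\|-1\,|
\end{equation*}
together with $1-\|\chi\tilde{\psi}\|\leqslant 1-\|\chi\tilde{\psi}\|^{2}\leqslant C\kappa$ immediately yields (ii): $\|\tilde{\psi}_{N,\omega}(0)-\tilde{\psi}_{N,\omega}^{\kappa}(0)\|_{L^{2}}\leqslant C\kappa^{1/2}$.

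For (iii), the marginal density of a normalized pure state is Lipschitz in $L^{2}$ with respect to the trace norm: if $\tilde\gamma_{1}^{(1)}, \tilde\gamma_{2}^{(1)}$ are the one-particle marginals of normalized wave functions $\tilde\psi_{1},\tilde\psi_{2}$, then $\limfunc{Tr}|\tilde\gamma_{1}^{(1)}-\tilde\gamma_{2}^{(1)}|\leqslant 2\|\tilde\psi_{1}-\tilde\psi_{2}\|_{L^{2}}$. Combining this with (ii) and the triangle inequality against the assumed factorized limit in hypothesis (b), we obtain
\begin{equation*}
\limsup_{N,\omega\to\infty}\limfunc{Tr}\bigl|\tilde\gamma_{N,\omega}^{\kappa,(1)}(0)-|h\otimes\phi_{0}\rangle\langle h\otimes\phi_{0}|\bigr|\,\leqslant\,C\kappa^{1/2},
\end{equation*}
which tends to $0$ as $\kappa\to 0$; by a standard diagonal subsequence argument this gives the claimed convergence for each fixed small $\kappa$.

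The only conceptually non-routine point is the lower bound $E_N \geqslant -\alpha N$ needed to make the functional calculus meaningful in the first place, which is provided by the focusing energy estimate Theorem 3.1; without it the spectral support of $\kappa E_N/N$ could drift to $-\infty$ and $\chi$ would not see the low-energy part of $\tilde{\psi}_{N,\omega}(0)$ at all. With that ingredient in hand everything else is bookkeeping based on Chebyshev's inequality and the contractivity of the partial trace.
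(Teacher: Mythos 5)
Parts (i) and (ii) are essentially correct and follow the paper's own line of reasoning: functional calculus of $\tilde{H}_{N,\omega}-2N\omega$, the support property of $\chi$, Chebyshev on the spectral measure, and the positivity $\tilde{H}_{N,\omega}-2N\omega+\alpha N\geqslant 0$ from Theorem \ref{Theorem:Energy Estimate}. Your observation that the lower bound is needed to control the negative part of the spectrum is a good one and makes explicit something the paper leaves implicit.

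Part (iii), however, has a genuine gap. Your trace-norm Lipschitz estimate
\begin{equation*}
\limfunc{Tr}\bigl|\tilde{\gamma}_1^{(1)}-\tilde{\gamma}_2^{(1)}\bigr|\leqslant 2\|\tilde{\psi}_1-\tilde{\psi}_2\|_{L^2}
\end{equation*}
is correct (partial trace is trace-norm contractive), but combined with (ii) and hypothesis (b) it yields only
\begin{equation*}
\limsup_{N,\omega\rightarrow\infty}\limfunc{Tr}\bigl|\tilde{\gamma}_{N,\omega}^{\kappa,(1)}(0)-|h\otimes\phi_0\rangle\langle h\otimes\phi_0|\bigr|\leqslant C\kappa^{1/2},
\end{equation*}
which for \emph{fixed} $\kappa>0$ is a small but nonzero bound — not the asserted $\lim_{N,\omega\rightarrow\infty}(\cdot)=0$. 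No diagonal argument can upgrade this: a diagonal subsequence along $\kappa_j\to 0$ proves a statement about the coupled limit $(N_j,\omega_j,\kappa_j)$, not the convergence for fixed $\kappa$ that the lemma asserts and that the application in Appendix B requires (one must verify hypothesis (b) of Theorem \ref{Theorem:3D->2D BEC} for the \emph{fixed} approximant $\tilde{\psi}^{\kappa}_{N,\omega}(0)$ before that theorem can be invoked). The paper's proof of (iii) instead adapts the argument of \cite[Proposition 5.1 (iii)]{E-S-Y2}: one shows that applying the smooth high-energy cutoff does not disturb the leading-order factorized structure of the one-particle marginal, by comparing $\tilde{H}_{N,\omega}-2N\omega$ with the operator $\hat{H}_{N,\omega}$ in which the first $k$ coordinates are decoupled, and tracking the error in terms of $(N\omega)^{\frac{3\beta}{2}-1}\to 0$. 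This is a substantively different mechanism from the perturbation bound (ii), and the paper explicitly flags that (iii) does \emph{not} follow from (ii) or from the analogous argument in the defocusing setting \cite{E-S-Y3} (which uses $V\geqslant 0$). You would need to supply that argument, or an equivalent, to close (iii).
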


\begin{proof}
Let us write $\chi \left( \kappa \left( \tilde{H}_{N,\omega }-2N\omega
\right) \right) $ as $\chi $ and $\tilde{\psi}_{N,\omega }(0)$ as $\tilde{%
\psi}_{N,\omega }.$ This proof closely follows \cite[Proposition 8.1 (i)-(ii)%
]{E-S-Y3} and \cite[Proposition 5.1 (iii)]{E-S-Y2}

(i) is from definition. In fact, denote the characteristic function of $%
\left[ 0,\lambda \right] $ with $\mathbf{1}(s\leqslant \lambda ).$ We see
that 
\begin{equation*}
\chi \left( \kappa \left( \tilde{H}_{N,\omega }-2N\omega \right) /N\right) =%
\mathbf{1}(\tilde{H}_{N,\omega }-2N\omega \leqslant 2N/\kappa )\chi \left(
\kappa \left( \tilde{H}_{N,\omega }-2N\omega \right) /N\right) .
\end{equation*}%
Thus%
\begin{eqnarray*}
&&\left\langle \tilde{\psi}_{N,\omega }^{\kappa }(0),\left( \tilde{H}%
_{N,\omega }-2N\omega \right) ^{k}\tilde{\psi}_{N,\omega }^{\kappa
}(0)\right\rangle \\
&=&\left\langle \frac{\chi \tilde{\psi}_{N,\omega }}{\left\Vert \chi \tilde{%
\psi}_{N,\omega }\right\Vert },\mathbf{1}(\tilde{H}_{N,\omega }-2N\omega
\leqslant 2N/\kappa )\left( \tilde{H}_{N,\omega }-2N\omega \right) ^{k}\frac{%
\chi \tilde{\psi}_{N,\omega }}{\left\Vert \chi \tilde{\psi}_{N,\omega
}\right\Vert }\right\rangle \\
&\leqslant &\left\Vert \mathbf{1}(\tilde{H}_{N,\omega }-2N\omega \leqslant
2N/\kappa )\left( \tilde{H}_{N,\omega }-2N\omega \right) ^{k}\right\Vert
_{op} \\
&\leqslant &\frac{2^{k}N^{k}}{\kappa ^{k}}.
\end{eqnarray*}%
We prove (ii) with a slightly modified proof of \cite[Proposition 8.1 (ii)]%
{E-S-Y3}. We still have%
\begin{eqnarray*}
&&\left\Vert \tilde{\psi}_{N,\omega }^{\kappa }-\tilde{\psi}_{N,\omega
}\right\Vert _{L^{2}} \\
&\leqslant &\left\Vert \chi \tilde{\psi}_{N,\omega }-\tilde{\psi}_{N,\omega
}\right\Vert _{L^{2}}+\left\Vert \frac{\chi \tilde{\psi}_{N,\omega }}{%
\left\Vert \chi \tilde{\psi}_{N,\omega }\right\Vert }-\chi \tilde{\psi}%
_{N,\omega }\right\Vert _{L^{2}} \\
&\leqslant &\left\Vert \chi \tilde{\psi}_{N,\omega }-\tilde{\psi}_{N,\omega
}\right\Vert _{L^{2}}+\left\vert 1-\left\Vert \chi \tilde{\psi}_{N,\omega
}\right\Vert \right\vert \\
&\leqslant &2\left\Vert \chi \tilde{\psi}_{N,\omega }-\tilde{\psi}_{N,\omega
}\right\Vert _{L^{2}},
\end{eqnarray*}%
where%
\begin{eqnarray*}
\left\Vert \chi \tilde{\psi}_{N,\omega }-\tilde{\psi}_{N,\omega }\right\Vert
_{L^{2}}^{2} &=&\left\langle \psi _{N},\left( 1-\chi \left( \frac{\kappa
\left( \tilde{H}_{N,\omega }-2N\omega \right) }{N}\right) \right) ^{2}\psi
_{N}\right\rangle \\
&\leqslant &\left\langle \psi _{N},\mathbf{1}(\frac{\kappa \left( \tilde{H}%
_{N,\omega }-2N\omega \right) }{N}\geqslant 1)\psi _{N}\right\rangle .
\end{eqnarray*}%
To continue estimating, we notice that if $C\geqslant 0$, then $\mathbf{1}%
(s\geqslant 1)\leqslant \mathbf{1}(s+C\geqslant 1)$ for all $s$. So%
\begin{eqnarray*}
\left\Vert \chi \tilde{\psi}_{N,\omega }-\tilde{\psi}_{N,\omega }\right\Vert
_{L^{2}}^{2} &\leqslant &\left\langle \tilde{\psi}_{N,\omega },\mathbf{1}(%
\frac{\kappa \left( \tilde{H}_{N,\omega }-2N\omega \right) }{N}\geqslant 1)%
\tilde{\psi}_{N,\omega }\right\rangle \\
&\leqslant &\left\langle \tilde{\psi}_{N,\omega },\mathbf{1}(\frac{\kappa
\left( \tilde{H}_{N,\omega }-2N\omega +N\alpha \right) }{N}\geqslant 1)%
\tilde{\psi}_{N,\omega }\right\rangle
\end{eqnarray*}%
With the inequality that $\mathbf{1}(s\geqslant 1)\leqslant s$ for all $%
s\geqslant 0$ and the fact that 
\begin{equation*}
\tilde{H}_{N,\omega }-2N\omega +N\alpha \geqslant 0
\end{equation*}%
proved in Theorem \ref{Theorem:Energy Estimate}, we arrive at%
\begin{eqnarray*}
\left\Vert \chi \tilde{\psi}_{N,\omega }-\tilde{\psi}_{N,\omega }\right\Vert
_{L^{2}}^{2} &\leqslant &\frac{\kappa }{N}\left\langle \tilde{\psi}%
_{N,\omega },\left( \tilde{H}_{N,\omega }-2N\omega +N\alpha \right) \tilde{%
\psi}_{N,\omega }\right\rangle \\
&\leqslant &\frac{\kappa }{N}\left\langle \tilde{\psi}_{N,\omega },\left( 
\tilde{H}_{N,\omega }-2N\omega \right) \tilde{\psi}_{N,\omega }\right\rangle
+\alpha \kappa \left\langle \tilde{\psi}_{N,\omega },\tilde{\psi}_{N,\omega
}\right\rangle ,
\end{eqnarray*}%
Using (a) and (c) in the assumptions of Theorem \ref{Theorem:3D->2D BEC
(Nonsmooth)}, we deduce that%
\begin{equation*}
\left\Vert \chi \tilde{\psi}_{N,\omega }-\tilde{\psi}_{N,\omega }\right\Vert
_{L^{2}}^{2}\leqslant C\kappa
\end{equation*}%
which implies%
\begin{equation*}
\left\Vert \tilde{\psi}_{N,\omega }^{\kappa }-\tilde{\psi}_{N,\omega
}\right\Vert _{L^{2}}\leqslant C\kappa ^{\frac{1}{2}}.
\end{equation*}

(iii) does not follow from the proof of \cite[Proposition 8.1 (iii)]{E-S-Y3}
in which the positivity of $V$ is used. (iii) follows from the proof of \cite%
[Proposition 5.1 (iii)]{E-S-Y2} which does not require $V$ to hold a
definite sign. Proposition \ref{Prop:approximation of initial} follows the
same proof as \cite[Proposition 5.1 (iii)]{E-S-Y2} if one replaces $H_{N}$
by $(\tilde{H}_{N,\omega }-2N\omega )$ and $\hat{H}_{N}$ by 
\begin{equation*}
\sum_{j\geqslant k+1}^{N}(-\partial _{z_{j}}+\omega (-2-\Delta
_{x_{j}}+\left\vert x_{j}\right\vert ^{2}))+\frac{1}{N}\sum_{k+1<i<j\leq
N}V_{N,\omega }(r_{i}-r_{j}).
\end{equation*}%
Notice that we are working with $V_{N,\omega }=\left( N\omega \right)
^{3\beta }V_{\omega }(\left( N\omega \right) ^{\beta }r)$ where $V_{\omega
}(r)=\frac{1}{\omega }V(\frac{x}{\sqrt{\omega }},z),$ thus we get a $\left(
N\omega \right) ^{\frac{3\beta }{2}}\left\Vert V_{\omega }\right\Vert
_{L^{2}}^{2}\sim \frac{\left( N\omega \right) ^{\frac{3\beta }{2}}}{\omega }$
instead of a $N^{\frac{3\beta }{2}}$ in \cite[(5.20)]{E-S-Y2} and hence we
get a $\left( N\omega \right) ^{\frac{3\beta }{2}-1}$ in the estimate of 
\cite[(5.18)]{E-S-Y2} which tends to zero as $N,\omega \rightarrow \infty $
for $\beta \in \left( 0,2/3\right) $.
\end{proof}

Via (i) and (iii) of Lemma \ref{Prop:approximation of initial}, $\tilde{\psi}%
_{N,\omega }^{\kappa }(0)$ verifies the hypothesis of Theorem \ref%
{Theorem:3D->2D BEC} for small enough $\kappa >0.$ Therefore, for $\tilde{%
\gamma}_{N,\omega }^{\kappa ,(1)}\left( t\right) ,$ the marginal density
associated with $e^{it\tilde{H}_{N,\omega }}\tilde{\psi}_{N,\omega }^{\kappa
}(0),$ Theorem \ref{Theorem:3D->2D BEC} gives the convergence 
\begin{equation}
\lim_{\substack{ N,\omega \rightarrow \infty  \\ C_{1}N^{v_{1}(\beta
)}\leqslant \omega \leqslant C_{2}N^{v_{2}(\beta )}}}\limfunc{Tr}\left\vert 
\tilde{\gamma}_{N,\omega }^{\kappa ,(k)}(t,\mathbf{x}_{k},\mathbf{z}_{k};%
\mathbf{x}_{k}^{\prime },\mathbf{z}_{k}^{\prime
})-\dprod\limits_{j=1}^{k}h_{1}(x_{j})h_{1}(x_{j}^{\prime })\phi (t,z_{j})%
\overline{\phi }(t,z_{j}^{\prime })\right\vert =0.
\label{convergence:smooth}
\end{equation}%
for all small enough $\kappa >0,$ all $k\geqslant 1$, and all $t\in \mathbb{R%
}$.

For $\tilde{\gamma}_{N,\omega }^{(k)}\left( t\right) $ in Theorem \ref%
{Theorem:3D->2D BEC (Nonsmooth)}, we notice that, $\forall J^{(k)}\in 
\mathcal{K}_{k}$, $\forall t\in \mathbb{R}$, we have 
\begin{eqnarray*}
&&\left\vert \limfunc{Tr}J^{(k)}\left( \tilde{\gamma}_{N,\omega
}^{(k)}\left( t\right) -\left\vert h_{1}\otimes \phi \left( t\right)
\right\rangle \left\langle h_{1}\otimes \phi \left( t\right) \right\vert
^{\otimes k}\right) \right\vert \\
&\leqslant &\left\vert \limfunc{Tr}J^{(k)}\left( \tilde{\gamma}_{N,\omega
}^{(k)}\left( t\right) -\tilde{\gamma}_{N,\omega }^{\kappa ,(k)}\left(
t\right) \right) \right\vert +\left\vert \limfunc{Tr}J^{(k)}\left( \tilde{%
\gamma}_{N,\omega }^{\kappa ,(k)}\left( t\right) -\left\vert h_{1}\otimes
\phi \left( t\right) \right\rangle \left\langle h_{1}\otimes \phi \left(
t\right) \right\vert ^{\otimes k}\right) \right\vert \\
&=&\text{I}+\text{II}.
\end{eqnarray*}%
Convergence \eqref{convergence:smooth} then takes care of $\text{II}$. To
handle $\text{I}$ , part (ii) of Lemma \ref{Prop:approximation of initial}
yields 
\begin{equation*}
\left\Vert e^{it\tilde{H}_{N,\omega }}\tilde{\psi}_{N,\omega }(0)-e^{it%
\tilde{H}_{N,\omega }}\tilde{\psi}_{N,\omega }^{\kappa }(0)\right\Vert
_{L^{2}}=\left\Vert \tilde{\psi}_{N,\omega }(0)-\tilde{\psi}_{N,\omega
}^{\kappa }(0)\right\Vert _{L^{2}}\leqslant C\kappa ^{\frac{1}{2}}
\end{equation*}%
which implies 
\begin{equation*}
I=\left\vert \limfunc{Tr}J^{(k)}\left( \tilde{\gamma}_{N,\omega
}^{(k)}\left( t\right) -\tilde{\gamma}_{N,\omega }^{\kappa ,(k)}\left(
t\right) \right) \right\vert \leqslant C\left\Vert J^{(k)}\right\Vert
_{op}\kappa ^{\frac{1}{2}}.
\end{equation*}%
Since $\kappa >0$ is arbitrary, we deduce that 
\begin{equation*}
\lim_{\substack{ N,\omega \rightarrow \infty  \\ C_{1}N^{v_{1}(\beta
)}\leqslant \omega \leqslant C_{2}N^{v_{2}(\beta )}}}\left\vert \limfunc{Tr}%
J^{(k)}\left( \tilde{\gamma}_{N,\omega }^{(k)}\left( t\right) -\left\vert
h_{1}\otimes \phi \left( t\right) \right\rangle \left\langle h_{1}\otimes
\phi \left( t\right) \right\vert ^{\otimes k}\right) \right\vert =0.
\end{equation*}%
i.e. as trace class operators 
\begin{equation*}
\tilde{\gamma}_{N,\omega }^{(k)}\left( t\right) \rightarrow \left\vert
h_{1}\otimes \phi \left( t\right) \right\rangle \left\langle h_{1}\otimes
\phi \left( t\right) \right\vert ^{\otimes k}\text{ weak*.}
\end{equation*}%
Then again, the Gr\"{u}mm's convergence theorem upgrades the above weak*
convergence to strong. Thence, we have concluded Theorem \ref{Theorem:3D->2D
BEC (Nonsmooth)} via Theorem \ref{Theorem:3D->2D BEC} and Lemma \ref%
{Prop:approximation of initial}.


\begin{thebibliography}{99}
\bibitem{AGT} R. Adami, F. Golse, and A. Teta, \emph{Rigorous derivation of
the cubic NLS in dimension one}, J. Stat. Phys. 127 (2007), 1194--1220.

\bibitem{Ammari2} Z. Ammari and F. Nier, \emph{Mean Field Propagation of
Wigner Measures and BBGKY Hierarchies for General Bosonic States}, J. Math.
Pures. Appl. \textbf{95} (2011), 585-626.

\bibitem{Ammari1} Z. Ammari and F. Nier, \emph{Mean field limit for bosons
and infinite dimensional phase-space analysis}, Ann. H. Poincare \textbf{9}
(2008), 1503--1574.

\bibitem{Anderson} M. H. Anderson, J. R. Ensher, M. R. Matthews, C. E.
Wieman, and E. A. Cornell, \emph{Observation of Bose-Einstein Condensation
in a Dilute Atomic Vapor}, Science \textbf{269 }(1995), 198--201.

\bibitem{Beckner} W. Beckner, \emph{Multilinear Embedding -- Convolution
Estimates on Smooth Submanifolds}, Proc. Amer. Math. Soc. \textbf{142}
(2014), 1217--1228.

\bibitem{SchleinNew} N. Benedikter, G. Oliveira, and B. Schlein, \emph{%
Quantitative Derivation of the Gross-Pitaevskii Equation, }to appear in
Comm. Pure. Appl. Math. (arXiv:1208.0373)

\bibitem{Carles} R. Carles, \emph{Nonlinear Schr\"{o}dinger Equation with
Time Dependent Potential}, Commun. Math. Sci. \textbf{9} (2011), 937-964.

\bibitem{LChen} L. Chen, J. O. Lee and B. Schlein, \emph{Rate of Convergence
Towards Hartree Dynamics}, J. Stat. Phys. \textbf{144 (}2011\textbf{)},
872--903.

\bibitem{TCNPdeFinitte} T. Chen, C. Hainzl, N. Pavlovi\'{c}, and R.
Seiringer, \emph{Unconditional Uniqueness for the Cubic Gross-Pitaevskii
Hierarchy via Quantum de Finetti}, to appear in Comm. Pure Appl. Math.
(arXiv:1307.3168)

\bibitem{TChenAndNpGP1} T. Chen and N. Pavlovi\'{c}, \emph{On the Cauchy
Problem for Focusing and Defocusing Gross-Pitaevskii Hierarchies}, Discrete
Contin. Dyn. Syst. \textbf{27} (2010), 715--739.

\bibitem{TChenAndNP} T. Chen and N. Pavlovi\'{c}, \emph{The Quintic NLS as
the Mean Field Limit of a Boson Gas with Three-Body Interactions}, J. Funct.
Anal. \textbf{260} (2011), 959--997.

\bibitem{TChenAndNPSpace-Time} T. Chen and N. Pavlovi\'{c}, \emph{Derivation
of the cubic NLS and Gross-Pitaevskii hierarchy from manybody dynamics in }$%
d=3$\emph{\ based on spacetime norms}, Ann. H. Poincare, \textbf{15} (2014),
543 - 588.

\bibitem{TCNPNT} T. Chen, N. Pavlovi\'{c}, and N. Tzirakis, \emph{Energy
Conservation and Blowup of Solutions for Focusing Gross--Pitaevskii
Hierarchies}, Ann. I. H. Poincar\'{e} \textbf{27} (2010), 1271-1290.

\bibitem{TCNPNT1} T. Chen, N. Pavlovi\'{c}, and N. Tzirakis, \emph{\
Multilinear Morawetz identities for the Gross-Pitaevskii hierarchy},
Contemporary Mathematics \textbf{581} (2012), 39-62.

\bibitem{TCKT} T. Chen and K. Taliaferro, \emph{Derivation in Strong
Topology and Global Well-posedness of Solutions to the Gross-Pitaevskii
Hierarchy}, Commun. PDE, DOI: 10.1080/03605302.2014.917380. (arXiv:1305.1404)

\bibitem{ChenDie} X. Chen, \emph{Classical Proofs Of Kato Type Smoothing
Estimates for The Schr\"{o}dinger Equation with Quadratic Potential in }$%
\mathbb{R}^{n+1}$\emph{\ with Application}, Differential and Integral
Equations \textbf{24} (2011), 209-230.

\bibitem{Chen2ndOrder} X. Chen, \emph{Second Order Corrections to Mean Field
Evolution for Weakly Interacting Bosons in the Case of Three-body
Interactions\textbf{,} }Arch. Rational Mech. Anal. \textbf{203} (2012),
455-497.

\bibitem{ChenAnisotropic} X. Chen, \emph{Collapsing Estimates and the
Rigorous Derivation of the 2d Cubic Nonlinear Schr\"{o}dinger Equation with
Anisotropic Switchable Quadratic Traps}, J. Math. Pures Appl. \textbf{98}
(2012), 450--478.

\bibitem{Chen3DDerivation} X. Chen, \emph{On the Rigorous Derivation of the
3D Cubic Nonlinear Schr\"{o}dinger Equation with A Quadratic Trap}, Arch.
Rational Mech. Anal. \textbf{210} (2013), no. 2, 365-408.

\bibitem{C-H3Dto2D} X. Chen and J. Holmer, \emph{On the Rigorous Derivation
of the 2D Cubic Nonlinear Schr\"{o}dinger Equation from 3D Quantum Many-Body
Dynamics}, Arch. Rational Mech. Anal., \textbf{210} (2013), no. 3, 909-954.

\bibitem{C-H2/3} X. Chen and J. Holmer, \emph{On the Klainerman-Machedon
Conjecture of the Quantum BBGKY Hierarchy with Self-interaction}, to appear
in Journal of the European Mathematical Society. (arXiv:1303.5385)

\bibitem{C-HFocusing} X. Chen and J. Holmer, \emph{Focusing Quantum
Many-body Dynamics: The Rigorous Derivation of the 1D Focusing Cubic
Nonlinear Schr\"{o}dinger Equation,} 41pp, arXiv:1308.3895, submitted.

\bibitem{C-PUniqueness} X. Chen and P. Smith, \emph{On the Unconditional
Uniqueness of Solutions to the Infinite Radial Chern-Simons-Schr\"{o}dinger
Hierarchy}, 27pp, arXiv:1406.2649, submitted.

\bibitem{Cornish} S. L. Cornish, N. R. Claussen, J. L. Roberts, E. A.
Cornell, and C. E. Wieman, \emph{Stable }$^{85}$\emph{Rb Bose-Einstein
Condensates with Widely Turnable Interactions, }Phys. Rev. Lett. \textbf{85 }
(2000), 1795-1798.

\bibitem{Davis} K. B. Davis, M. -O. Mewes, M. R. Andrews, N. J. van Druten,
D. S. Durfee, D. M. Kurn, and W. Ketterle, \emph{Bose-Einstein condensation
in a gas of sodium atoms}, Phys. Rev. Lett. \textbf{75 }(1995), 3969--3973.

\bibitem{Another2DExperiment} R. Desbuquois, L. Chomaz, T. Yefsah, J. L\'{e}
onard, J. Beugnon, C. Weitenberg, J. Dalibard, \emph{Superfluid Behaviour of
A Two-dimensional Bose Gas}, Nature Physics \textbf{8} (2012), 645-648.

\bibitem{JILA2} E. A. Donley, N. R. Claussen, S. L. Cornish, J. L. Roberts,
E. A. Cornell, and C. E. Wieman, \emph{Dynamics of Collapsing and Exploding
Bose-Einstein Condensates}, Nature \textbf{412} (2001), 295-299.

\bibitem{E-E-S-Y1} A. Elgart, L. Erd\"{o}s, B. Schlein, and H. T. Yau, \emph{%
\ Gross-Pitaevskii Equation as the Mean Field Limit of Weakly Coupled Bosons,%
} Arch. Rational Mech. Anal. \textbf{179\ }(2006), 265--283.

\bibitem{E-Y1} L. Erd\"{o}s and H. T. Yau, \emph{Derivation of the
Non-linear Schr\"{o}dinger Equation from a Many-body Coulomb System,} Adv.
Theor. Math. Phys. \textbf{5\ }(2001), 1169--1205.

\bibitem{E-S-Y1} L. Erd\"{o}s, B. Schlein, and H. T. Yau, \emph{Derivation
of the Gross-Pitaevskii Hierarchy for the Dynamics of Bose-Einstein
Condensate}, Comm. Pure Appl. Math. \textbf{59} (2006), 1659--1741.

\bibitem{E-S-Y2} L. Erd\"{o}s, B. Schlein, and H. T. Yau, \emph{Derivation
of the Cubic non-linear Schr\"{o}dinger Equation from Quantum Dynamics of
Many-body Systems}, Invent. Math. \textbf{167} (2007), 515--614.

\bibitem{E-S-Y5} L. Erd\"{o}s, B. Schlein, and H. T. Yau, \emph{Rigorous
Derivation of the Gross-Pitaevskii Equation with a Large Interaction
Potential}, J. Amer. Math. Soc. \textbf{22} (2009), 1099-1156.

\bibitem{E-S-Y3} L. Erd\"{o}s, B. Schlein, and H. T. Yau, \emph{Derivation
of the Gross-Pitaevskii Equation for the Dynamics of Bose-Einstein Condensate%
}, Annals Math. \textbf{172} (2010), 291-370.{}

\bibitem{Frolich} J. Fr\"{o}hlich, A. Knowles, and S. Schwarz, \emph{On the
Mean-Field Limit of Bosons with Coulomb Two-Body Interaction, }Commun. Math.
Phys. \textbf{288} (2009), 1023--1059.

\bibitem{Kettle3Dto2DExperiment} A. G\"{o}rlitz, J. M. Vogels, A. E.
Leanhardt, C. Raman, T. L. Gustavson, J. R. Abo-Shaeer, A. P. Chikkatur, S.
Gupta, S. Inouye, T. Rosenband, and W. Ketterle, \emph{Realization of
Bose-Einstein Condensates in Lower Dimensions, }Phys. Rev. Lett. 87 (2001),
130402.

\bibitem{Sohinger} P. Gressman, V. Sohinger, and G. Staffilani, \emph{On the
Uniqueness of Solutions to the Periodic 3D Gross-Pitaevskii Hierarchy, }J.
Funct. Anal. \textbf{266} (2014), 4705--4764.

\bibitem{GM} M. G. Grillakis and D. Margetis, \emph{A Priori Estimates for
Many-Body Hamiltonian Evolution of Interacting Boson System}, J. Hyperb.
Diff. Eqs. \textbf{5} (2008), 857-883.

\bibitem{GM1} M. G. Grillakis and M. Machedon, \emph{Pair excitations and
the mean field approximation of interacting Bosons, I, }Comm. Math. Phys. 
\textbf{294} (2010), 273--301.

\bibitem{GMM1} M. G. Grillakis, M. Machedon, and D. Margetis, \emph{Second
Order Corrections to Mean Field Evolution for Weakly Interacting Bosons. I,}
Commun. Math. Phys. \textbf{294} (2010), 273-301.

\bibitem{GMM2} M. G. Grillakis, M. Machedon, and D. Margetis, \emph{Second
Order Corrections to Mean Field Evolution for Weakly Interacting Bosons. II,}
Adv. Math. \textbf{228} (2011) 1788-1815.

\bibitem{NatureExperiment} Z. Hadzibabic, P. Kr\"{u}ger, M. Cheneau, B.
Battelier and J. Dalibard, \emph{Berezinskii--Kosterlitz--Thouless crossover
in a trapped atomic gas, }Nature, \textbf{441} (2006), 1118-1121.

\bibitem{HoTaXi14} Y.~{Hong}, K.~{Taliaferro}, and Z.~{Xie}, \emph{%
Unconditional Uniqueness of the cubic Gross-Pitaevskii Hierarchy with Low
Regularity}, 26pp, arXiv:1402.5347.

\bibitem{Ketterle} W. Ketterle and N. J. van Druten, \emph{Evaporative
Cooling of Trapped Atoms, }Advances In Atomic, Molecular, and Optical
Physics \textbf{37} (1996), 181-236.

\bibitem{Khaykovich} L. Khaykovich, F. Schreck, G. Ferrari, T. Bourdel, J.
Cubizolles, L. D. Carr, Y. Castin, C. Salomon, \emph{Formation of a
Matter-Wave Bright Soliton}, Science \textbf{296 }(2002), 1290-1293.

\bibitem{Kirpatrick} K. Kirkpatrick, B. Schlein and G. Staffilani, \emph{\
Derivation of the Two Dimensional Nonlinear Schr\"{o}dinger Equation from
Many Body Quantum Dynamics}, Amer. J. Math. \textbf{133} (2011), 91-130.

\bibitem{KlainermanAndMachedon} S. Klainerman and M. Machedon, \emph{On the
Uniqueness of Solutions to the Gross-Pitaevskii Hierarchy}, Commun. Math.
Phys. \textbf{279} (2008), 169-185.

\bibitem{KnowlesAndPickl} A. Knowles and P. Pickl, \emph{Mean-Field
Dynamics: Singular Potentials and Rate of Convergence}, Commum. Math. Phys. 
\textbf{298} (2010), 101-138.

\bibitem{KochTataru} H. Koch and D. Tataru, $L^{p}$\emph{\ Eigenfunction
Bounds for the Hermite Operator}, Duke Math. J., \textbf{128} (2005),
369--392.

\bibitem{Lewin} M. Lewin, P. T. Nam, and N. Rougerie, \emph{Derivation of {H}%
artree's theory for generic mean-field {B}ose systems}, Adv. Math. \textbf{%
254} (2014), 570--621.

\bibitem{Lieb3Dto1D} E. H. Lieb, R. Seiringer, J. Yngvason, \emph{%
One-Dimensional Behavior of Dilute, Trapped Bose Gases}, Commun. Math. Phys. 
\textbf{244} (2004), 347--393.

\bibitem{Lieb2} E. H. Lieb, R. Seiringer, J. P. Solovej and J. Yngvason, 
\emph{The Mathematics of the Bose Gas and Its Condensation}, Basel,
Switzerland: Birkha\"{u}ser Verlag, 2005.

\bibitem{Medley} P. Medley, D.M. Weld, H. Miyake, D.E. Pritchard, and W.
Ketterle, \emph{Spin Gradient Demagnetization Cooling of Ultracold Atoms,}
Phys. Rev. Lett. \textbf{106} (2011), 195301.

\bibitem{MichelangeliSchlein} A. Michelangeli and B. Schlein,\emph{\
Dynamical Collapse of Boson Stars}, Commum. Math. Phys. \textbf{311} (2012),
645-687.

\bibitem{Pickl} P. Pickl, \emph{A Simple Derivation of Mean Field Limits for
Quantum Systems}, Lett. Math. Phys. \textbf{97} (2011), 151-164.

\bibitem{RodnianskiAndSchlein} I. Rodnianski and B. Schlein, \emph{Quantum
Fluctuations and Rate of Convergence Towards Mean Field Dynamics}, Commun.
Math. Phys. \textbf{291} (2009), 31-61.

\bibitem{Simon} B. Simon, \emph{Trace Ideals and Their Applications: Second
Edition}, Mathematical Surveys Monogr. \textbf{120}, Amer. Math. Soc.,
Providence, RI, 2005.

\bibitem{Sohinger2} V.~{Sohinger}, \emph{Local existence of solutions to
Randomized Gross-Pitaevskii hierarchies}, to appear in Trans. Amer. Math.
Soc. (arXiv:1401.0326)

\bibitem{Sohinger3} V.~{Sohinger}, \emph{{A Rigorous Derivation of the
Defocusing Cubic Nonlinear {S}chr\"{o}dinger Equation on }}$\mathbb{T}$\emph{%
{$^{3}$ from the Dynamics of Many-body Quantum Systems}}, 38pp,
arXiv:1405.3003.

\bibitem{SoSt13} V.~{Sohinger} and G.~{Staffilani}, \emph{Randomization and
the Gross-Pitaevskii hierarchy}, 55pp, arXiv:1308.3714.

\bibitem{Spohn} H. Spohn, \emph{Kinetic Equations from Hamiltonian Dynamics}%
, Rev. Mod. Phys. \textbf{52} (1980), 569-615.

\bibitem{Stamper} D. M. Stamper-Kurn, M. R. Andrews, A. P. Chikkatur, S.
Inouye, H. -J. Miesner, J. Stenger, and W. Ketterle, \emph{Optical
Confinement of a Bose-Einstein Condensate}, Phys. Rev. Lett. \textbf{80 }
(1998), 2027-2030.

\bibitem{FrenchExperiment} S. Stock, Z. Hadzibabic, B. Battelier, M.
Cheneau, and J. Dalibard, \emph{Observation of Phase Defects in
Quasi-Two-Dimensional Bose-Einstein Condensates, }Phys. Rev. Lett. 95
(2005), 190403.

\bibitem{Streker} K.E. Strecker; G.B. Partridge; A.G. Truscott; R.G. Hulet, 
\emph{Formation and Propagation of Matter-wave Soliton Trains, }Nature 
\textbf{417} (2002), 150-153.

\bibitem{Thangavelu} S. Thangavelu, \emph{Lectures on Hermite and Laguerre
Expansions}, Princeton Univ. Press, Priceton, NJ, 1993.
\end{thebibliography}
\end{document}